\documentclass[11pt]{amsart}
\usepackage[english]{babel}
\usepackage{amsmath}
\usepackage{amsfonts}
\usepackage{amssymb}
\usepackage{amsthm}
\usepackage{color}

\usepackage[english]{babel}
\usepackage{yfonts}
\usepackage[T1]{fontenc}
\usepackage[utf8x]{inputenc}
\usepackage{enumerate}
\usepackage{verbatim}
\usepackage{graphicx}
\usepackage{verbatim}
\usepackage{faktor}
\usepackage{xcolor}
\usepackage{xfrac}
\usepackage{tikz}
\usepackage[all]{xy}

\newcommand{\calV}{\mathcal V}
\newcommand{\LL}{\mathbb L}
\newcommand{\calG}{\mathcal G}
\renewcommand{\L}{\Lambda}
\newcommand{\PSL}{{\rm PSL}}
\renewcommand{\r}{\rho}
\newcommand{\Bb}{\mathcal B}
\newcommand{\deH}{\partial \mathbb H}
\renewcommand{\P}{{\rm P}}

\newcommand{\frakc}{\mathfrak{C}}
\newcommand{\fc}{\mathfrak{C}}

\newcommand{\ax}{{\rm ax}}
\newcommand{\YY}{\mathbb Y}
\newcommand{\pr}{{\rm pr}}
\newcommand{\ov}{\overline}

\newcommand{\wt}{\widetilde}
\newcommand{\Uu}{\mathcal U}
\newcommand{\Mm}{\mathcal M}
\newcommand{\s}{\sigma}

\newcommand{\Gr}{{\rm Gr}}
\newcommand{\Sym}{{\rm Sym}}

\newcommand{\End}{{\rm End}}

\newcommand{\os}{{\o,\sigma}}
\newcommand{\D}{\Delta}

\newcommand{\Yy}{\mathcal Y}
\newcommand{\g}{\gamma}

\renewcommand{\l}{\lambda}
\renewcommand{\s}{\sigma}
\renewcommand{\o}{\omega}
\newcommand{\G}{\Gamma}
\newcommand{\K}{\mathbb K}
\newcommand{\Id}{{\rm Id}}
\newcommand{\R}{\mathbb R}
\newcommand{\Z}{\mathbb Z}
\renewcommand{\P}{\mathbb P}
\renewcommand{\S}{\Sigma}
\renewcommand{\H}{\mathbb H}
\newcommand{\F}{\mathbb F}

\newcommand{\C}{\mathbb C}
\newcommand{\N}{\mathbb N}

\newcommand{\diag}{\mathrm{diag}}
\newcommand{\calC}{\mathcal C}
\newcommand{\SL}{{\rm SL}}
\renewcommand{\l}{\lambda}

\newcommand{\Aa}{\mathcal A}
\newcommand{\Ii}{\mathcal I}

\newcommand{\Gg}{\mathcal G}

\newcommand{\Oo}{\mathcal O}
\newcommand{\Xx}{\mathcal X}
\newcommand{\calP}{\mathcal P}
\newcommand{\calO}{\mathcal O}

\newcommand{\calL}{\mathcal L}
\newcommand{\Ll}{\mathcal L}
\newcommand{\Tt}{\mathcal T}
\newcommand{\calT}{\mathcal T}
\newcommand{\calB}{\mathcal B}
\newcommand{\tra}{\pitchfork}
\newcommand{\I}{\mathbb I}
\newcommand{\Qq}{\mathcal Q}

\newcommand{\Sp}{{\rm Sp}}
\newcommand{\GL}{{\rm GL}}

\newcommand{\<}{\langle}
\renewcommand{\>}{\rangle}

\newcommand{\bq}{\begin{equation}}
\newcommand{\eq}{\end{equation}}
\newcommand{\ba}{\begin{aligned}}
\newcommand{\ea}{\end{aligned}}
\newcommand{\be}{\begin{enumerate}}
\newcommand{\ee}{\end{enumerate}}
\newcommand{\bsm}{\left(\begin{smallmatrix}}
\newcommand{\esm}{\end{smallmatrix}\right)}                   
\newcommand{\bpm}{\begin{pmatrix}}
\newcommand{\epm}{\end{pmatrix}}
\newcommand{\barr}{\begin{displaymath}\begin{array}{cccc}}
\newcommand{\earr}{\end{array}\end{displaymath}}
\newcommand{\barrl}{\begin{displaymath}\begin{array}{lcl}}
\newcommand{\earrl}{\end{array}\end{displaymath}}
\newcommand{\barl}{\begin{displaymath}\begin{array}{l}}
\newcommand{\earl}{\end{array}\end{displaymath}}
\newcommand{\bxym}{ \begin{displaymath}\xymatrix }
\newcommand{\exym}{\end{displaymath}}

\newcommand{\tr}{{\rm tr}}

\theoremstyle{plain}
\newtheorem{thm}{Theorem}[section]
\newtheorem{lem}[thm]{Lemma}
\newtheorem{prop}[thm]{Proposition}
\newtheorem{cor}[thm]{Corollary}
\newtheorem*{teo*}{Theorem}

\newtheorem{claim}{Claim}

\theoremstyle{definition}
\newtheorem{example}[thm]{Example}

\newtheorem{defn}[thm]{Definition}
\newtheorem{remark}[thm]{Remark}

\newtheorem{rem}[thm]{Remark}

\newcommand{\thismonth}{\ifcase\month % case 0 --- impossible!
  \or January\or February\or March\or April\or May\or June%
  \or July\or August\or September\or October\or November%
  \or December\fi}

\begin{document}
\title[Maximal representations and buildings]{Maximal representations, non Archimedean Siegel spaces, and buildings}

\author[]{M. Burger}
\address{Department Mathematik, ETH Zentrum, 
R\"amistrasse 101, CH-8092 Z\"urich, Switzerland}
\email{burger@math.ethz.ch}

\author[]{M. B. Pozzetti}
\address{Mathematics Institute, Zeeman Building, University of Warwick, Coventry CV4 7AL – UK}
\email{B.Pozzetti@Warwick.ac.uk}

\date{\today}

\begin{abstract}
Let $\F$ be a real closed field. We define the notion of a maximal framing for a representation of the fundamental group of a surface with values in $\Sp(2n,\F)$. We show that ultralimits of maximal representations in $\Sp(2n,\R)$ admit such a framing, and that all maximal framed representations satisfy a suitable generalization of the classical Collar Lemma.  In particular this establishes a Collar Lemma for all maximal representations into $\Sp(2n,\R)$. We then describe a procedure to get from representations in $\Sp(2n,\F)$ interesting actions on affine buildings, and, in the case of representations admitting a maximal framing, we describe the structure of the elements of the group acting with zero translation length.
\end{abstract}
\maketitle

\section{Introduction}

Let $\S$ be a surface of genus $g$ with $p\geq 0$ punctures, and $V$ be a symplectic vector space over $\R$. A current theme in higher Teichm\"uller theory is to which extent classical hyperbolic geometry and some fundamental structures on the Teichm\"uller space of $\S$ carry over to the geometry and the moduli space of maximal representations of $\G=\pi_1(\S)$ into $\Sp(V)$. For instance compactifications of spaces of representations of $\G$ have been introduced and studied in \cite{APcomp,Alessandrini,Le}. In the context of Hitchin representations, asymptotic properties of diverging sequences are studied in \cite{Tengren1, Tengren2, Collier-Li, Loftin, AP3, KNPS, Mazzeo}.

The purpose of this paper is to study the action on an asymptotic cone of the symmetric space $\Xx$ associated to $\Sp(V)$ defined by a sequence $(\rho_k)_{k\in\N}$ of maximal representations $\rho_k:\G\to\Sp(V)$. More precisely, we fix a non principal ultrafilter $\o$ on $\N$ and let $(x_k)_{k\in\N}\in\Xx^\N$ be a sequence of basepoints. We say that a sequence of scales $(\l_k)_{k\in\N}$ is adapted to $(\rho_k,x_k)_{k\in\N}$ if 

$$\lim_\o\frac{D_S(\rho_k)(x_k)}{\l_k}<\infty$$
where for a representation $\rho$ and a generating set $S$ for $\G$ we define $D_S(\rho)(x)=\max_{\g\in S}d(\rho(\g)x,x)$. Observe that the above property is independent of the choice of the finite generating set $S$.

In this situation we obtain an action $^\o\!\rho_\l:\G\to {\rm Iso}(^\o\!\Xx_{\l})$ by isometries on the asymptotic cone $^\o\!\Xx_{\l}$ of the sequence $(\Xx,x_k,\frac d{\l_k})$. The space $^\o\!\Xx_{\l}$ is not only CAT(0)-complete, but, when the limit $\lim_\o \l_k$ is infinite, it is an affine building associated to the algebraic group $\Sp(V)$ over a specific field \cite{KleinerLeeb, APbuil, Thornton,  KT1}; more on this below. 
Depending on the choice of scales the representation $^\o\!\rho_\l$ might have a global fixed point, but, as it turns out, if the representations $\rho_k$ are maximal, the limiting action is always faithful. Our main result gives then the underlying geometric structure of the set of elements $\g$ in $\G$ whose translation length $L(^\o\!\rho_\l(\g))$ in $^\o\!\Xx_{\l}$ is zero; notice that  for an isometry of an affine building having zero translation length is equivalent to having a fixed point. 

For convenience we fix once and for all a complete hyperbolic metric on $\S$ of finite area, and identify $\G$ with a subgroup of $\PSL(2,\R)$.
In order to state this result we recall that a decomposition $\S=\bigcup_{v\in\calV}\S_v$ into subsurfaces with geodesic boundary gives rise to a presentation of $\G$ as fundamental group of a graph of groups with vertex set $\calV$ and vertex groups $\pi_1(\S_v)$. The group $\G$ acts on the associated Bass-Serre tree $\calT$ and in particular on its vertex set $\wt\calV$; observe that for $v\in\calV$ and $w\in\wt\calV$ lying above $v$, the stabilizer $\G_w$ of $w$ in $\G$ is isomorphic to $\pi_1(\S_v)$.
\begin{thm}\label{thm:1}
Let $\rho_k:\G\to\Sp(V)$ be a sequence of maximal representations, $(\l_k)_{k\geq 1}$ an adapted sequence of scales and $^\o\!\rho_{\l}$ the action of $\G$ on the asymptotic cone $^\o\!\Xx_{\l}$. Then $^\o\!\rho_\l$ is faithful. Moreover there is a decomposition $\S=\bigcup_{v\in \calV}\Sigma_v$ of $\S$ into subsurfaces with geodesic boundary such that
\begin{enumerate}
\item for every $\g\in\G$ whose corresponding closed geodesic is not contained in any subsurface,  $L(^\o\!\rho_\l(\g))>0$;
\item for every $v\in \calV$  there is the following dichotomy:
\begin{enumerate}
\item[(A)] for every $w\in\wt \calV$ lying above $v$, and any $\g\in\G_w$ which is not boundary parallel,  $L(^\o\!\rho_\l(\g))>0$,
\item[(B)]  for every $w\in\wt \calV$ lying above $v$, there is a point $b_w\in^\o\!\!\Xx_{\l}$ which is fixed by $\G_w$.
\end{enumerate}
\end{enumerate}
\end{thm}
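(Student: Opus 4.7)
The plan is to exploit the three ingredients announced in the abstract: (i) the ultralimit $\rho_\o := \olim \rho_k$ with values in $\Sp(2n,\F)$, for $\F$ a suitable ultrapower of $\R$, admits a maximal framing; (ii) any maximal framed representation satisfies a generalized Collar Lemma; (iii) when $\olim \l_k = \infty$ the asymptotic cone $^\o\!\Xx_\l$ is an affine building over $\F$, so that zero translation length is equivalent to having a fixed point.

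First I would extract the multicurve cutting $\S$. Let $\Ll_0$ denote the set of isotopy classes of essential simple closed curves $\g \subset \S$ with $L(^\o\!\rho_\l(\g)) = 0$. A direct consequence of the generalized Collar Lemma should be that any two elements of $\Ll_0$ can be realized disjointly: were $\g_1,\g_2 \in \Ll_0$ to intersect transversally, the Collar Lemma applied to $\g_1$ would force $L(^\o\!\rho_\l(\g_2)) > 0$. Thus $\Ll_0$ is represented by a (possibly empty) multicurve $\Lambda$, and cutting $\S$ along $\Lambda$ yields the decomposition $\S = \bigcup_{v\in\calV}\S_v$. Part (1) is then immediate: a closed geodesic $\g$ not contained in any $\S_v$ crosses some component $\delta \subset \Lambda$ transversally, and applying the Collar Lemma to the pair $(\delta,\g)$ forces $L(^\o\!\rho_\l(\g)) > 0$.

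For part (2), fix $v \in \calV$ and $w \in \wt\calV$ above $v$. I split on whether $\S_v$ contains any non-boundary-parallel essential simple closed curve with zero translation length. In alternative (A) no such curve exists, so every essential simple closed curve in $\S_v$ has positive translation length; for an arbitrary non-boundary-parallel $\g \in \G_w$, I would apply the Collar Lemma to $\g$ against a filling family of simple closed curves of $\S_v$ to bootstrap positivity, yielding $L(^\o\!\rho_\l(\g)) > 0$. In alternative (B), maximality of $\Lambda$ in the previous step forces \emph{every} non-boundary-parallel essential simple closed curve of $\S_v$ to have zero translation length, for otherwise disjointness would allow us to enlarge $\Lambda$. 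The maximal framing of $\rho_\o|_{\G_w}$ should then pin down a common fixed point $b_w \in {^\o\!\Xx_\l}$: the Lagrangian subspaces of $\F^{2n}$ attached by the framing to the vanishing-length simple closed curves and to the boundary of $\S_v$ determine a canonical point of the building, whose $\G_w$-invariance follows from equivariance of the framing.

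Faithfulness follows from (1)--(2): a nontrivial $\g$ with $^\o\!\rho_\l(\g) = \Id$ would satisfy $L(^\o\!\rho_\l(\g)) = 0$, hence be conjugate into a vertex group $\G_w$ of type (B) and be boundary-parallel; but then $\rho_\o(\g)$ would act trivially on the framing Lagrangians assigned to a genuine boundary component, which the non-degeneracy of the maximal framing excludes. The main obstacle is alternative (B): a group of elliptic isometries of a CAT(0) space need not share a global fixed point, so the construction of $b_w$ must genuinely exploit the affine building structure of $^\o\!\Xx_\l$ together with the rigid combinatorial data provided by the maximal framing on $\rho_\o|_{\G_w}$, and this is the step where the building and the algebraic (non-Archimedean) nature of the ambient group enter most essentially.
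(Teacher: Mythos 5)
Your proposal founders on its very first step. You claim that the Collar Lemma forces any two transversally intersecting simple closed curves $\g_1,\g_2$ to satisfy $L(^\o\!\rho_\l(\g_1))>0$ or $L(^\o\!\rho_\l(\g_2))>0$, and you build the multicurve $\Lambda$ out of \emph{all} zero-length simple closed curves. This is false in the non-Archimedean setting, and it is precisely the point of alternative (B): an entire subsurface group $\G_w$ (which contains many pairs of intersecting hyperbolic elements) can fix a point of the building, so that all of its elements, intersecting or not, have zero translation length. The Collar Lemma only controls eigenvalues in the ordered field $\F$; translation length in the building is computed from the \emph{norms} $\|\l_i\|=e^{-v(\l_i)}$, and the inequality $(\prod|\l_i(\g)|^{2/n}-1)(\prod|\l_i(\eta)|^{2/n}-1)\geq 1$ is perfectly compatible with $\|\l_i(\g)\|=\|\l_i(\eta)\|=1$ for all $i$ (one only gets an obstruction when $\||\l_n(\eta)|-1\|<1$, cf.\ Corollary \ref{cor:7.5}). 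Consequently your $\Ll_0$ is not a multicurve in general, your proof of part (1) (``apply the Collar Lemma to the pair $(\delta,\g)$'') does not go through, and the (A)/(B) dichotomy as you set it up collapses. The paper instead forms the graph $\Gg_\rho$ on primitive hyperbolic elements of zero translation length, with adjacency given by intersecting axes, and cuts $\S$ only along the \emph{isolated} components of this graph; part (1) and the dichotomy are then combinatorial consequences of this definition, while the non-isolated components produce the type (B) subsurfaces.

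Two further points. First, the construction of the common fixed point $b_w$ in case (B) — which you correctly flag as the main obstacle — is left entirely to ``the framing Lagrangians determine a canonical point''; the actual argument requires the canonical points $b^\pm_\g$ obtained by orthogonally projecting framing Lagrangians onto the $\F$-tube $\Yy_\g$ (Proposition \ref{prop:lazy}), and the crossratio/valuation computations of Lemmas \ref{lem:8.12} and \ref{lem:8.14} showing that $b^\pm_\g=b^\pm_\eta$ whenever the axes of two zero-length elements cross, whence a single point fixed by the whole component stabilizer. Second, your faithfulness argument is circular for type (B): a zero-length element of a type (B) vertex group need \emph{not} be boundary parallel, so you cannot conclude as you do; faithfulness instead follows from the Collar Lemma directly, since every hyperbolic $\g$ has $\rho_{\o,\s}(\g)$ Shilov hyperbolic with transverse attracting and repelling Lagrangians, hence nontrivial in $\PSL$-type quotients acting on the cone.
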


An natural question is, given a sequence of maximal representations, how the choice of basepoints and scales influences the action of $\G$ on the asymptotic cone and in particular the decomposition given in Theorem \ref{thm:1}. Turning to this issue recall that for a maximal representation $\rho:\G\to \Sp(V)$ the displacement function $x\mapsto D_S(\rho(x))$ with respect to a generating set $S\subset \G$ achieves its minimum $\mu_S(\rho)$  in a compact region of the symmetric space $\Xx$. Given a sequence $(\rho_k)_{k\in\N}$ of maximal representations we have $\lim_\o \mu_S(\rho_k)<\infty$ if and only if, up to modifying the sequence on a set of $\o$-measure zero, $(\rho_k)_{k\in\N}$ is contained in a compact subset of the character variety of maximal representations. 

Assume thus $\lim_\o\mu_S(\rho_k)=\infty$. Then, choosing a basepoint $x_k\in\Xx$ such that $D_S(\rho_k)(x_k)=\mu_S(\rho_k)$, the sequence of scales $\mu_k:=\mu_S(\rho_k)$ is obviously adapted to the sequence $(\rho_k,x_k)$ and the resulting $\G$-action $^\o\!\rho_\mu$ on $^\o\!\Xx_\mu$ has no global fixed point. We show then (see Proposition \ref{prop:int2}) that if $y_k\in \Xx$ is a sequence of basepoints and $(\l_k)_{k\in\N}$ is an adapted sequence of scales such that $^\o\!\rho_\l$ has no global fixed point, then $^\o\!\Xx_\l$ equals $^\o\!\Xx_\mu$ with homothetic distance function and the actions $^\o\!\rho_\l$ and $^\o\!\rho_\mu$ coincide. In particular the decomposition of $\S$ into subsurfaces given by Theorem \ref{thm:1} is uniquely determined by the sequence $(\rho_k)_{k\in\N}$.

We say that a subsurface is of type A (resp. B) if the first (resp. the second) possibility in Theorem \ref{thm:1}(2) holds. One can show that any decomposition of the surface $\S$ and any assignment of type A or B to the subsurfaces can be realized by the limiting action for an appropriate sequence $(\rho_k)_{k\in\N}$.
 On the other hand  Theorem \ref{thm:1} suggests that in a generic limiting action without a global fixed point,  no element of $\G$ should have zero translation length. We plan on analyzing the properties of such representations in future work.

In case there is a subsurface of type B, the restriction $\rho_{w,k}:=\rho_k|_{\G_w}:\G_w\to \Sp(V)$ is a sequence of maximal representations to which the preceding discussion applies, that is either up to $\o$-measure zero the sequence is relatively compact in the character variety of $\G_w$ or there is an essentially unique choice of basepoints and scales such that the limiting action does not have a global fixed point. Since at each step the topological complexity of the surface decreases, this procedure stops after finitely many iterations and can be seen as an asymptotic expansion of the initial sequence $(\rho_k)_{k\in\N}$.

 When each subsurface in the decomposition of Theorem \ref{thm:1} is of type B, we can use the fixed points $b_w$ to construct a map from the Bass-Serre tree $\calT$ to  the asymptotic cone:
\begin{thm}\label{thm:2}
Assume that { for any subsurface of the decomposition possibility B holds.}
Then there is a $^\o\!\rho_\l$-equivariant quasi isometric embedding $\calT\to ^\o\!\Xx_{\l}$.
\end{thm}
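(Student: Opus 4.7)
The plan is to define an equivariant map $f:\calT\to {}^\o\!\Xx_\l$ on vertices by $f(w):=b_w$ using the fixed points provided by hypothesis, extend it along edges by geodesic segments in the CAT(0) space ${}^\o\!\Xx_\l$, and then verify the two linear bounds characterizing a quasi-isometric embedding. For the construction, I would choose a vertex $w_v\in\wt\calV$ above each $v\in\calV$ together with a fixed point $b_{w_v}$ of $\G_{w_v}$, and for any other vertex $w=\g\, w_v$ set $f(w):={}^\o\!\rho_\l(\g) b_{w_v}$. This is well-defined because different choices of $\g$ differ by an element of $\G_{w_v}$, which fixes $b_{w_v}$, and $\G$-equivariance is then automatic; the extension to edges by geodesics preserves equivariance since $\G$ acts on ${}^\o\!\Xx_\l$ by isometries of a uniquely geodesic space.

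The upper Lipschitz bound is then immediate: the graph of groups underlying $\calT$ is finite, so $\G$ acts cocompactly on the edge set of $\calT$; taking the maximum $C$ of the finitely many distances $d(f(w),f(w'))$ as $(w,w')$ ranges over orbit representatives of edges gives a global Lipschitz constant, since a combinatorial geodesic in $\calT$ maps to a concatenation of geodesic segments in ${}^\o\!\Xx_\l$ each of length at most $C$.

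The lower bound is where the real work lies and is the main obstacle. Given $w_0,w_n\in\wt\calV$ at $\calT$-distance $n$, I would find a hyperbolic element $\g\in\G$ whose $\calT$-axis runs along $[w_0,w_n]$ and whose $\calT$-translation length is bounded by a uniform constant $M$, obtained by closing up the $\calT$-geodesic to a loop in the quotient graph of groups using cocompactness of the $\G$-action on $\calT$. Because $\g$ is hyperbolic on $\calT$, its conjugacy class in $\G$ does not lie in any vertex group, so the associated closed geodesic on $\S$ is not contained in any subsurface $\S_v$; Theorem \ref{thm:1}(1) then yields $L({}^\o\!\rho_\l(\g))>0$. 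Equivariance gives $d(f(w_0),f(\g^k w_0))=d(f(w_0),{}^\o\!\rho_\l(\g)^k f(w_0))\geq k\, L({}^\o\!\rho_\l(\g))$, so iterating along the axis produces the desired linear growth. The delicate point, and where most of the technical effort must go, is to obtain a uniform positive lower bound on the ratio $L({}^\o\!\rho_\l(\g))/M$ independent of the pair $(w_0,w_n)$: this should follow from cocompactness, which allows $\g$ to be chosen from finitely many conjugacy classes parametrized by the relative position of $[w_0,w_n]$ in a fundamental domain for the $\G$-action on $\calT$, combined with the fact that Theorem \ref{thm:1}(1) gives strict positivity on this finite list.
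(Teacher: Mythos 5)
Your construction of the map and the upper Lipschitz bound match the paper's proof of Theorem \ref{thm:qi}. The gap is in the lower bound, and it is twofold. First, the auxiliary element you want does not exist in general: if $\g$ is hyperbolic on $\calT$ with translation length at most a uniform $M$ and its axis \emph{contains} the segment $[w_0,w_n]$, then that segment sits inside a $\g$-periodic line of period at most $M$, so the segment itself must be $M$-periodic --- which an arbitrary geodesic segment of $\calT$ is not. One can always find a hyperbolic $\eta$ whose axis contains $[w_0,w_n]$ (this is what the paper does), but its $\calT$-translation length is then at least $n$, and the inequality $d(f(w_0),f(\eta w_0))\geq L(^\o\!\rho_\l(\eta))>0$ yields only a constant, not a bound growing linearly in $n$. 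Second, even for elements of bounded $\calT$-translation length the finiteness claim fails: the vertex groups $\G_w$ are infinite, so the Bass-Serre tree is not locally finite and there are infinitely many conjugacy classes of hyperbolic elements with, say, translation length $2$ on $\calT$; Theorem \ref{thm:1}(1) gives strict positivity of $L(^\o\!\rho_\l(\g))$ for each of them, but positivity on an infinite family gives no uniform lower bound. In short, your argument has no mechanism for making the contributions of the $n$ edges of $[w_0,w_n]$ add up.

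The paper supplies exactly that mechanism. It fixes a single $\eta$ whose axis crosses the subsurfaces $S_{v_0},\dots,S_{v_k}$ in order, shows (Lemma \ref{lem:10.1}) that every $b_{v_i}$ lies on the projected tube $\mathbb Y_\eta$ as the projection of $\phi(\g_i^+)$ for a suitable $\g_i\in\G_{v_i}$, and then uses the multiplicativity of $\det R$ along the tube (Lemma \ref{lem:10.2}) to prove the reverse triangle inequality $\sum_{i}d_\Bb(b_{v_i},b_{v_{i+1}})\leq\sqrt n\, d_\Bb(b_{v_0},b_{v_k})$. Only after this does finiteness enter, and in a legitimate form: there are finitely many $\G$-orbits of \emph{edges} of $\calT$ and $w\mapsto b_w$ is equivariant, so $d_\Bb(b_l,b_r)\geq C_1>0$ uniformly over adjacent pairs, whence $d_\Bb(b_{v_0},b_{v_k})\geq kC_1/\sqrt n$. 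To repair your argument you would need an analogue of this additivity (a monotonicity statement for the projections of the $b_{v_i}$ to $\mathbb Y_\eta$); qualitative positivity of translation lengths cannot replace it.
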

 In the case of a vector space of dimension 2, maximal representations correspond to holonomies of hyperbolizations; in this case the second possibility in Theorem \ref{thm:1} occurs for example for sequences of hyperbolizations obtained by pinching a multicurve. In this case the image of the quasi isometric embedding of Theorem \ref{thm:2} is a simplicial subtree of the asymptotic cone $^\o\!\Xx_\l$. In higher rank it is possible to construct examples in which the image of the Bass-Serre tree is not totally geodesic in the affine building $^\o\Xx_\l$.

 We finish our discussion about ultralimits of maximal representations mentioning two interesting geometric properties of maximal representations that can be deduced from our work. Let $S$  be a connected generating set, namely a generating set for $\G$ such that the union of the closed geodesics representing the elements of $S$ is a connected subset of $\S$ and  let $L_S(\rho)$ denote the maximal displacement of an element in the generating set S:
$$L_S(\rho)=\max_{\g\in S}L(\rho(\g)).$$
\begin{cor}\label{cor:intro1}
 Let $S\subset \G$ be a connected generating set for $\G$. Then there is a constant $C$ depending only on $S$ and $2n=\dim V$ such that for any maximal representation $\rho:\G\to\Sp(V)$ we have
 $$(\ln 2)\sqrt n\leq L_S(\rho)\leq \mu_S(\rho)\leq C L_S(\rho).$$
\end{cor}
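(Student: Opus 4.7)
The plan is to handle the three inequalities separately. The middle inequality $L_S(\rho)\leq \mu_S(\rho)$ is formal: for any isometry $g$ and any point $x$ one has $L(g)\leq d(gx,x)$, so taking $\max_{\g\in S}$ and then $\inf_{x\in\Xx}$ yields the bound. The lower bound $(\ln 2)\sqrt n\leq L_S(\rho)$ rests on a uniform translation length estimate for every non-trivial element of a maximal representation into $\Sp(2n,\R)$, which is a consequence of the Collar Lemma package developed earlier in the paper; since $S$ generates the non-trivial group $\G$, some $\g\in S$ is non-trivial and $L_S(\rho)\geq L(\rho(\g))\geq (\ln 2)\sqrt n$.

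For the upper bound $\mu_S(\rho)\leq C L_S(\rho)$ I proceed by contradiction using Theorem~\ref{thm:1}. Assume no such $C=C(S,n)$ exists: then there is a sequence $(\rho_k)_{k\in\N}$ of maximal representations and a non-principal ultrafilter $\o$ with $\lim_\o\mu_S(\rho_k)/L_S(\rho_k)=\infty$; the lower bound forces $\lim_\o\mu_S(\rho_k)=\infty$ as well. Set $\mu_k:=\mu_S(\rho_k)$ and choose basepoints $x_k\in\Xx$ with $D_S(\rho_k)(x_k)=\mu_k$. As recalled after Theorem~\ref{thm:1} in the introduction, the scales $(\mu_k)$ are then adapted and $^\o\!\rho_\mu$ acts on the asymptotic cone $^\o\!\Xx_\mu$ without global fixed point. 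Translation lengths rescale by $\mu_k$, whence
$$L_S(^\o\!\rho_\mu)=\lim_\o \frac{L_S(\rho_k)}{\mu_k}=0,$$
so every generator of $S$ fixes a point in $^\o\!\Xx_\mu$.

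Applying Theorem~\ref{thm:1} to $^\o\!\rho_\mu$ produces a decomposition $\S=\bigcup_{v\in\calV}\S_v$. By part~(1) no $\g\in S$ can have its closed geodesic crossing a boundary curve of the decomposition. By the dichotomy in part~(2), whenever $\g\in S$ has its closed geodesic in the interior of some $\S_v$, that subsurface must be of type B, since elements of the vertex group interior to a type A subsurface would be non-boundary-parallel and have positive translation length. Writing $c_\g\subset \S$ for the closed geodesic representative of $\g$ in the fixed hyperbolic metric on $\S$, set $U:=\bigcup_{\g\in S}c_\g$. Each $c_\g$ is either interior to a single subsurface $\S_v$ — in which case, by the collar lemma for the fixed metric, it is bounded away from $\partial\S_v$ — or is itself one of the boundary curves of the decomposition, in which case it is disjoint from every other boundary curve and from every interior geodesic. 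Consequently the connected components of $U$ are either sub-unions of interior geodesics living in pairwise disjoint subsurface interiors, or isolated boundary curves. Connectedness of $U$ leaves only two possibilities: $U$ is a single boundary curve, impossible since then $S$ would reduce to a single element unable to generate the non-cyclic group $\G$; or all of $U$ lies in the interior of the same type B subsurface $\S_v$. Hence $S\subseteq \pi_1(\S_v)$ and, since $S$ generates $\G=\pi_1(\S)$, one has $\pi_1(\S_v)=\pi_1(\S)$, which forces $\S_v=\S$. The decomposition is therefore trivial with $\S$ itself of type B, and Theorem~\ref{thm:1}(2)(B) produces a global fixed point for $\G$ on $^\o\!\Xx_\mu$, contradicting the choice of scales.

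The main obstacle I anticipate is the component analysis of $U$: converting the purely topological connectedness hypothesis on $S$ into the combinatorial statement that the Bass--Serre decomposition given by Theorem~\ref{thm:1} must be trivial. This requires cleanly separating interior geodesics from boundary curves via the collar lemma for the fixed metric on $\S$, exploiting disjointness of distinct decomposition boundaries, and invoking injectivity of $\pi_1(\S_v)\hookrightarrow\pi_1(\S)$ to upgrade ``$S$ generates $\G$'' to ``$\S_v=\S$''. Once these are assembled, the contradiction via Theorem~\ref{thm:1}(2)(B) is immediate.
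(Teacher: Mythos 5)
Your handling of the middle inequality is fine, and your strategy for the upper bound (contradiction via the asymptotic cone at the scales $\mu_k=\mu_S(\rho_k)$, using Proposition \ref{prop:int2} to rule out a global fixed point) is the paper's. But there are two genuine gaps. The first concerns the lower bound: there is no ``uniform translation length estimate for every non-trivial element'' of a maximal representation. Already for $n=1$ a maximal representation is a Fuchsian holonomy and a pinched simple closed geodesic makes $L(\rho(\g))$ arbitrarily small; this is precisely the type~B phenomenon of Theorem \ref{thm:1}. The constant $(\ln 2)\sqrt n$ comes from applying the Collar Lemma to a \emph{pair} of intersecting elements: Theorem \ref{lem:collar_i}(2) together with Siegel's formula gives $\bigl(e^{L(\rho(\g))/\sqrt n}-1\bigr)\bigl(e^{L(\rho(\eta))/\sqrt n}-1\bigr)\geq 1$ when $\g,\eta$ intersect, so once connectedness of $S$ produces two elements of $S$ (or conjugates thereof, which have the same translation lengths) with intersecting axes, one gets $\bigl(e^{L_S(\rho)/\sqrt n}-1\bigr)^2\geq 1$, i.e.\ $L_S(\rho)\geq(\ln 2)\sqrt n$. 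As written, your argument would prove the lower bound for arbitrary generating sets, which is false.

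The second gap is in the upper bound, where you re-derive from Theorem \ref{thm:1} the implication ``every generator has zero translation length $\Rightarrow$ global fixed point'' (this is exactly Corollary \ref{cor:3}, which you could simply cite). The step ``Hence $S\subseteq\pi_1(\S_v)$'' does not follow: containment of the closed geodesic $c_\g$ in the interior of $\S_v$ only says $\g\in\G_w$ for \emph{some} vertex $w$ of the Bass--Serre tree above $v$, i.e.\ $\g$ is conjugate into $\pi_1(\S_v)$, and distinct elements of $S$ may lie in distinct conjugates $\G_w$, $\G_{w'}$. Connectedness of $U$ downstairs does not repair this: an intersection point of $c_\g$ and $c_\eta$ lifts to an intersection of $\ax(\g)$ with the axis of a \emph{conjugate} of $\eta$, so you cannot conclude that all of $S$ sits in one vertex group, hence cannot conclude $\S_v=\S$. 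The mechanism the paper uses instead is Proposition \ref{prop:intersectinglazy}: intersecting axes with zero translation length share the canonical fixed points $b^\pm$, and connectedness of the intersection graph propagates this to a single point fixed by every generator, hence by $\G$; this yields the contradiction with Proposition \ref{prop:int2} directly, without any analysis of the decomposition.
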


We say that two diverging sequences of real numbers $(\l_k)$ and $(\mu_k)$ have the same growth rate according to the ultrafilter $\o$ if $\lim_\o \l_k/\mu_k$ is finite and non-zero. 
\begin{cor}\label{cor:intro2}
 Let $(\rho_k)_{k\in\N}$ be a sequence of maximal representations of the fundamental group $\G$ of a surface of genus $g$ with $p$ punctures. Then, varying $\g\in\G$  there are at most  $8g-8+4p$ distinct growth rate classes among the sequences $L(\rho_k(\g))_{k\in\N}$.  
\end{cor}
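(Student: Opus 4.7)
The plan is to combine Theorem~\ref{thm:1} with the iterated asymptotic expansion procedure described in the paragraphs preceding Theorem~\ref{thm:2}, and then to extract a combinatorial bound from the resulting tree of decompositions.

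First I would dispose of the relatively compact case: if $\lim_\o \mu_S(\rho_k) < \infty$, then, up to modifying the sequence on a set of $\o$-measure zero, $(\rho_k)$ is contained in a compact subset of the character variety of maximal representations, so $L(\rho_k(\g))$ is uniformly bounded for each $\g$ and there is at most one nonzero growth rate class. Otherwise we apply Theorem~\ref{thm:1} with $\mu_k:=\mu_S(\rho_k)$ and basepoints minimizing the displacement. The resulting decomposition $\S=\bigcup_{v\in\calV}\S_v$ sorts $\G$ into two groups: by Theorem~\ref{thm:1}(1) and (2A), any $\g$ whose geodesic either crosses a decomposition curve or lies non-boundary-parallel in a type A subsurface satisfies $\lim_\o L(\rho_k(\g))/\mu_k \in (0,\infty)$ and hence belongs to the single growth rate class $[\mu_k]$; any $\g \in \G_w$ above a type B subsurface instead fixes $b_w$ in the asymptotic cone, so $L(\rho_k(\g))/\mu_k \to_\o 0$ and requires further analysis.

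Next I would recurse: for each type B subsurface $\S_v$, the restriction $\rho_k|_{\G_w}$ is itself a sequence of maximal representations of a surface of strictly smaller topological complexity, so the same dichotomy applies and produces its own decomposition. Iterating yields a rooted expansion tree whose nodes are subsurfaces of $\S$; termination follows from the strict drop of complexity at each step. Every $\g \in \G$ is coherently assigned to a unique node of the tree --- the deepest one that ``sees'' $\g$ as either a decomposition-crossing element or a non-boundary-parallel element of a type A piece --- and its growth rate class equals the adapted scale at that node. Consequently the number of distinct growth rate classes is bounded by the total number of nodes of the tree.

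The final step is the combinatorial bound. Completing the expansion until the leaves form a pants decomposition of $\S$ produces at most $2g-2+p$ pairs of pants at the bottom. Each pair of pants contributes at most four distinct growth rate classes: one for its own adapted scale (realized by its non-boundary-parallel interior elements), and one for each of its three boundary components (whose growth rate is governed by an ancestor's or neighbor's scale in the tree). Summing over all pants yields the bound $4(2g-2+p)=8g-8+4p$. The main obstacle is this last bookkeeping: one must verify that the growth rate of every element of $\G$ --- in particular the boundary-parallel elements and the decomposition curves, whose rates depend on the adjacent subsurfaces rather than on their own node --- is consistently and uniquely attributed, and that the per-pants count of four suffices to account for the overlap of boundary curves shared between adjacent pants.
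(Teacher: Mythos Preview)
Your overall strategy --- iterate Theorem~\ref{thm:1} on the type~B pieces until the process terminates, then count --- is exactly the paper's approach. The paper's bookkeeping, however, is organised differently from yours: rather than counting ``four per pants'', it sorts elements of $\G$ into four categories and bounds each separately. Elements whose geodesic crosses some curve of the final collection $\calC$ have growth rate equal to the scale at the iteration step where that curve appeared; since each such step introduces at least one new curve of $\calC$, there are at most $|\calC|\le 3g-3+p$ such scales. Elements contained in a complementary subsurface contribute at most one growth rate per subsurface, hence at most $2g-2+p$. The curves of $\calC$ themselves contribute at most $3g-3+p$ further rates, and peripheral elements at most $p$. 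Summing gives $8g-8+4p$.

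Your per-pants count has a genuine gap, which you correctly flagged as the obstacle. The issue is this: at a node $N$ of the expansion tree where the decomposition is nontrivial but every complementary piece is of type~B, the scale $\mu^{(N)}$ is the growth rate of every element crossing a curve introduced at $N$ --- yet $\mu^{(N)}$ need not equal the interior rate of any pants (all pants lie strictly below $N$ and carry strictly smaller scales), nor the growth rate of any boundary curve (those curves satisfy $L=0$ at scale $\mu^{(N)}$, so their own growth rate is strictly smaller). Thus ``one interior rate plus three boundary rates'' per pants does not obviously capture $\mu^{(N)}$. Your parenthetical claim that boundary-component growth rates are ``governed by an ancestor's or neighbor's scale'' is also not right: a decomposition curve has translation length zero at the scale of the node where it is introduced and is boundary-parallel in every descendant, so its growth rate is in general a new number, not any node's scale. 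The paper avoids this by counting node scales and decomposition curves in separate categories rather than trying to attribute both to pairs of pants.
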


\subsection{Real closed fields}
The building structure on $^\o\!\Xx_{\l}$ alluded to previously comes about as follows. Assume that the sequence of scales $\l_k$ is unbounded. Then $\s=(e^{-\l_k})$ is an infinitesimal in the field $\R_\o$ of the hyperreals and the building $^\o\!\Xx_{\l}$ is associated to $\Sp(V\otimes \R_{\o,\s})$ where $\R_{\o,\s}$ is the valuation field introduced by Robinson \cite{ APbuil, Thornton}.
The characterizing properties of the representations arising as ultralimits of maximal representations make sense in the more general context of symplectic groups over arbitrary real closed fields. %Indeed when $V_\F$  is a $2n$-dimensional symplectic vector space over a real closed field $\F$, it is possible to define the Kashiwara cocycle, a $\Z$-valued invariant $\tau(L_1,L_2,L_3)$ of triples of Lagrangians $(L_1,L_2,L_3)$ which takes values in $\Z\cap [-n,n]$ and which, for triples of pairwise transverse Lagrangians, is a complete $\Sp(V_\F)$-invariant. We say that $(L_1,L_2,L_3)$ is maximal if $\tau(L_1,L_2,L_3)=n$.
Indeed when $V_\F$ is a symplectic vector space over a real closed field $\F$, the Kashiwara cocycle classifies the orbits of $\Sp(V_\F)$ on triples of pairwise transverse Lagrangians and can be used to select maximal triples (see Section \ref{sec:2.2} for a precise definition of maximal triples).
The general object of our study are representations which admit a maximal framing:
\begin{defn}
A representation $\rho:\G\to \Sp(V_\F)$ admits a maximal framing if there exists an equivariant map $\phi:S\to \Ll(V_\F)$ from a $\G$-invariant subset $S\subset \partial \H^2$ including the fixed points of hyperbolic elements of $\G$, into the space $\Ll(V_\F)$ of Lagrangians, such that for every positively oriented triple $(x,y,z)$ in $S$, the image $(\phi(x),\phi(y),\phi(z))$ is maximal.
\end{defn}
If $\F=\R$ any maximal representation admits a maximal framing \cite[Theorem 8]{BIW}, and we show in Corollary \ref{cor:10.4} that this is also true for all ultralimits of maximal representations.  Even more, the class of representations admitting a maximal framing is closed under the natural reduction process we are now going to describe. Let $\Oo\subset\F$ be an order convex local subring, its quotient by the maximal ideal, denoted $\F_{_\Oo}$, is real closed as well. Assume now that there exists a symplectic basis of $V_\F$ such that $\rho(\G)\subset \Sp(2n,\Oo)$.  We can then consider the composition $\rho_{_\Oo}$ of $\rho$ with the quotient homomorphism $\Sp(2n,\Oo)\to \Sp(2n,\F_{_\Oo})$:
\begin{thm}\label{thm:3}
Assume that $\rho:\G\to \Sp(V_\F)$ admits a maximal framing, then the reduction $\rho_{_{_\Oo}}:\G\to \Sp(V_{\F_{_{_\Oo}}})$ admits a maximal framing as well.
\end{thm}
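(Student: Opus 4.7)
The strategy is to construct the framing for $\rho_\Oo$ by composing $\phi$ with a natural reduction map at the level of Lagrangians, and then to verify that the composition is a maximal framing.

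First, using the symplectic basis in which $\rho(\G)\subset\Sp(2n,\Oo)$, the $\Oo$-submodule $V_\Oo:=\Oo^{2n}\subset V_\F$ is a free $\Oo$-module with non-degenerate symplectic form, and reducing modulo the maximal ideal $\mathfrak m\subset\Oo$ gives $V_{\F_\Oo}$. For any Lagrangian $L\in\Ll(V_\F)$, I would normalize its Pl\"ucker class in $\P(\wedge^n V_\F)$ so that its coordinates lie in $\Oo$ with at least one being a unit, and then reduce modulo $\mathfrak m$. Since the Pl\"ucker and Lagrangian relations are polynomial with integer coefficients, the reduced class is non-zero, decomposable, and represents a Lagrangian $\pi(L)\in\Ll(V_{\F_\Oo})$. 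The resulting map $\pi\colon\Ll(V_\F)\to\Ll(V_{\F_\Oo})$ is by construction $\Sp(2n,\Oo)$-equivariant with respect to the reduction homomorphism $\Sp(2n,\Oo)\to\Sp(2n,\F_\Oo)$.

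Setting $\phi_\Oo:=\pi\circ\phi\colon S\to\Ll(V_{\F_\Oo})$, the $\rho_\Oo$-equivariance of $\phi_\Oo$ follows from the equivariances of $\phi$ and $\pi$, namely
$$\phi_\Oo(\g x)=\pi(\rho(\g)\phi(x))=\rho_\Oo(\g)\pi(\phi(x))=\rho_\Oo(\g)\phi_\Oo(x),$$
and the domain $S$ continues to contain the fixed points of all hyperbolic elements of $\G$.

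The core step is then to show that for every positively oriented triple $(x,y,z)\in S$, the image $(\phi_\Oo(x),\phi_\Oo(y),\phi_\Oo(z))$ is maximal, which reduces to the algebraic claim that $\pi$ sends maximal triples in $\Ll(V_\F)$ to maximal triples in $\Ll(V_{\F_\Oo})$. Using the characterization of maximality via the Kashiwara cocycle from Section \ref{sec:2.2}---a triple $(L_1,L_2,L_3)$ is maximal iff the symmetric bilinear form on $L_2$ obtained from the symplectic form and the splitting $V_\F=L_1\oplus L_3$ is positive definite of rank $n$---I would normalize the triple using the $\Sp(2n,\Oo)$-equivariance of $\pi$ to bring $(L_1,L_3)$ into a standard transverse coordinate configuration, realizing $L_2$ as the graph of a positive definite $B\in\Sym_n(\F)$. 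A further $\GL_n(\Oo)$-congruence diagonalizes $B$ so that its leading principal minors are units of $\Oo$; Sylvester's criterion applied in the real closed field $\F_\Oo$ then yields that $\bar B$ is positive definite, hence the reduced triple is maximal.

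The main obstacle is this last normalization: a priori, a positive definite $B\in\Sym_n(\Oo)$ may reduce to a merely positive semi-definite $\bar B$, yielding a non-transverse reduced triple. Overcoming this requires combining the order convexity of $\Oo$, the real closedness of both $\F$ and $\F_\Oo$, and the fact that the triples to be reduced arise from the image of a globally $\G$-equivariant maximal framing of an integral representation, which constrains the $\Oo$-integral data strongly enough that a coherent normalization into a form whose reduction stays in the open $\Sp(2n,\F_\Oo)$-orbit of maximal triples can always be carried out. Once this algebraic claim is established, $\phi_\Oo$ satisfies all the requirements of a maximal framing for $\rho_\Oo$.
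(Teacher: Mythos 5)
Your construction of the reduction map $\pi$ and the reduction of the problem to showing that maximal triples in the image of $\phi$ reduce to maximal triples are both sound, and you correctly identify the genuine obstacle: a positive definite $B\in\Sym_n(\Oo)$ can reduce to a degenerate form, so transversality (and hence maximality) can be destroyed by reduction. However, your final paragraph does not resolve this obstacle; it only asserts that the combination of order convexity, real closedness, and equivariance ``constrains the data strongly enough.'' This is precisely where the content of the theorem lies, and as stated your algebraic claim is false for arbitrary maximal triples of Lagrangians over $\F$ --- two transverse Lagrangians over $\F$ can easily become non-transverse modulo the maximal ideal. No purely algebraic normalization argument can work; one must use the dynamics of the representation.

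The paper's mechanism, which is absent from your proposal, has two steps. First, a monotonicity argument for the quadratic forms $Q_{a,l,b}$ (Lemma \ref{lem:5.6}) shows that if any two points of $S$ have non-transverse reductions, then so do the two fixed points $\g^\pm$ of some hyperbolic $\g\in\G$ (Lemma \ref{lem:5.8}); this localizes the potential failure to an eigenvalue problem for the single element $\rho(\g)$. Second --- and this is the key input --- the Collar Lemma (Theorem \ref{lem:collar}) applied to $\g$ and any hyperbolic $\delta$ crossing it gives the quantitative bound $|\l_n(\g)|-1\geq \tfrac{1}{3|\l_1(\delta)|^{2n}}$, and since $\rho(\delta)\in\Sp(2n,\Oo)$ forces $|\l_1(\delta)|\in\Oo$, order convexity yields $|\l_n(\g)|-1\in\Oo\setminus\Ii$ (Lemma \ref{cor:1}). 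Hence all eigenvalues of $\rho_\Oo(\g)$ on $q_\Oo(\phi(\g^+))$ have absolute value strictly greater than $1$ and those on $q_\Oo(\phi(\g^-))$ strictly less than $1$, so the invariant subspace $q_\Oo(\phi(\g^+))\cap q_\Oo(\phi(\g^-))$ must vanish, contradicting non-transversality. Without this eigenvalue control coming from the Collar Lemma, your argument has a gap exactly at the step you flag as ``the main obstacle.''
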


Theorem \ref{thm:3} allows in general to obtain  well controlled actions on affine buildings. Indeed, for each infinitesimal $\s>0$, the set of elements of $\F$ which are comparable with $\s$:
$$\Oo_\s=\{x\in\F|\; |x|\leq \s^{-k} \text{ for some $k\in \Z$}\}$$
forms an order convex subring of $\F$. We denote by $\F_\s$ its residue field which inherits from $\Oo_\s$ an order compatible valuation. As a consequence, to any reductive algebraic group over $\F_\s$ is associated an affine Bruhat-Tits building \cite{BT}. Since $\G$ is finitely generated, for each representation $\rho:\G\to\Sp(V_\F)$ and every choice of a basis, it is possible to chose an infinitesimal $\s$ such that $\rho(\G)\subset\Sp(2n,\Oo_\s)$. By passing to the quotient $\rho_\s:\G\to \Sp(2n,\F_\s)$ we get an action on the affine building associated to $\Sp(2n,\F_\s)$.
The main result for maximal framed representations over real closed fields with valuation is
\begin{thm}\label{thm:6}
Let $\rho:\G\to \Sp(V_\LL)$ be a maximal framed representation, where $\LL$ is real closed with order compatible valuation, and let $\Bb$ be the Bruhat-Tits affine building associated to $\Sp(V_\LL)$. Then the action of $\G$ on $\Bb$ satisfies the conclusions of  Theorem \ref{thm:1}.
\end{thm}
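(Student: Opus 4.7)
The plan is to deduce Theorem \ref{thm:6} from the generalized Collar Lemma for maximal framed representations, established earlier in the paper, together with an inductive construction of the subsurface decomposition along axes of elliptic elements in $\G$.

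For faithfulness, given $\g\in\G\setminus\{1\}$, I fix a hyperbolic $\eta\in\G$ whose axis on $\de\H^2$ is crossed transversally by the axis of $\g$. Applying $\phi$ to the fixed points of $\eta$ and to their $\g$-translates produces a positively oriented quadruple on which maximality forces the four Lagrangians to be pairwise transverse. This transversality precludes $\rho(\g)$ from stabilizing $\phi(\eta^+)$ and $\phi(\eta^-)$, and hence $\rho(\g)\neq\Id$.

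For the decomposition I proceed by induction on the Euler characteristic of a subsurface $\S'\subset\S$ with geodesic boundary. At each stage consider the restricted representation $\rho|_{\pi_1(\S')}:\pi_1(\S')\to\Sp(V_\LL)$, which inherits a maximal framing by restricting $\phi$ to the $\pi_1(\S')$-orbit of fixed points of its hyperbolic elements. If $\rho(\pi_1(\S'))$ admits a global fixed point in $\Bb$, declare $\S'$ of type B and stop; any such fixed point serves as $b_w$ in case (B). Otherwise, look for a non-boundary-parallel $\g\in\pi_1(\S')$ with $L(\rho(\g))=0$. If none exists, declare $\S'$ of type A and stop, which gives case (A). Otherwise apply the Collar Lemma first to $\g$ together with a conjugate that witnesses a self-crossing of the closed geodesic $c_\g$, concluding that $c_\g$ must in fact be simple, and then to $\g$ together with any other candidate element to conclude that the candidate geodesics are pairwise disjoint. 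Cut $\S'$ along $c_\g$ and iterate on the components. Euler characteristic strictly decreases, so the procedure terminates. Part (1) of the conclusion then follows by a final Collar Lemma application: any $\g\in\G$ whose geodesic crosses a curve of the resulting multicurve $M$ must satisfy $L(\rho(\g))>0$; part (2) holds by construction.

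The central obstacle I anticipate is matching the geometric Collar Lemma hypothesis, which is phrased in terms of transversal positions of endpoints on $\de\H^2$, to the topological condition of self-crossing or mutual crossing of closed geodesics on $\S$; this matching requires lifting the intersection to the universal cover and carefully selecting the conjugating element. A secondary technical point is handling the case where $\g$ is a proper power, which forces a preliminary reduction to its primitive root—also of vanishing translation length—before invoking the standard form of the lemma.
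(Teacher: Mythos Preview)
Your inductive scheme has a genuine gap at its core step: the Collar Lemma does \emph{not} force the closed geodesic $c_\g$ of an element with $L_\Bb(\rho(\g))=0$ to be simple, nor does it force two such geodesics to be disjoint. The Collar Lemma inequalities live in the real closed field $\F$, whereas vanishing of the translation length on $\Bb$ only says that the eigenvalues of $\rho(\g)$ have valuation norm equal to $1$ (equivalently $\|\det M_{\rho(\g)}\|=1$, see Lemma~\ref{lem:2.29}). Concretely, if $\g$ and $\eta$ intersect and both satisfy $L_\Bb=0$, then the inequality $(\det M_{\rho(\g)}^{2/n}-1)(\det M_{\rho(\eta)}^{2/n}-1)\geq 1$ together with the ultrametric inequality yields only $\|\det M_{\rho(\g)}^{2/n}-1\|=\|\det M_{\rho(\eta)}^{2/n}-1\|=1$, which is perfectly consistent and gives no contradiction. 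The same remark defeats your ``final Collar Lemma application'' for part~(1): crossing a zero-translation-length curve does not, by itself, force positive translation length. In fact such crossing pairs are not only allowed but are precisely what generates the type-(B) subsurfaces. (Your faithfulness sketch is essentially fine for hyperbolic $\g$, though as written it does not cover parabolic elements, which have no axis; injectivity of $\phi$ handles both cases uniformly.)

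The paper's argument is structurally different. It assembles all primitive hyperbolic classes with $L_\Bb(\rho(\g))=0$ into a graph $\Gg_\rho$ whose edges record axis intersections, and analyzes its connected components globally. Isolated vertices project to the simple closed curves cutting $\S$; a component $\fc$ with more than one vertex determines a subsurface of type~(B). The crucial input is not the Collar Lemma but Propositions~\ref{prop:lazy} and~\ref{prop:intersectinglazy}, proved via crossratio estimates on the $\F$-tubes $\Yy_\g$: they produce canonical fixed points $b_\g^\pm\in\Bb$ for each $\g$ with $L_\Bb(\rho(\g))=0$ and show that whenever two such elements have intersecting axes their canonical fixed points coincide. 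This is what forces each $\G_\fc$ to fix a common point $p_\fc$, and it is also what makes part~(1) work: if a geodesic crossed one of the cutting curves and had $L_\Bb=0$, it would be adjacent in $\Gg_\rho$ to an isolated vertex, a contradiction by definition of the decomposition rather than by any eigenvalue inequality.
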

When $\LL$ is a real closed field with order compatible valuation, we denote by $\Uu$ the order convex  valuation ring with residue field $\LL_\Uu$. We already mentioned that the action on the affine building associated to a representation $\rho:\G\to \Sp(V_\LL)$ might have a global fixed point. However, when this is the case, it is possible to find a symplectic basis of $V_\LL$ such that $\rho(\G)\subset \Sp(2n,\Uu)$ and, if $\rho$ admits a maximal framing, then it follows from Theorem \ref{thm:3} that the reduction $\rho_\Uu:\G\to \Sp(2n,\LL_\Uu)$ has the same property. In particular this can be used to study the restriction of the representation $\rho$ to the subsurfaces defined in Theorem \ref{thm:6}.

As a consequence of Theorem \ref{thm:6} we get a concrete way of checking if a representation $\rho$ admitting a maximal framing has a global fixedpoint: if $S$  is a connected generating set for $\G$,  then $\rho$ has a global fixed point if and only if each element of $S$ has a fixed point (see Corollary \ref{cor:3} for a precise formulation of this result and some further comments).

\subsection{Tools} 
We now turn to a short description of the key tools we develop in this paper. In the context of his approach to the compactification of the Teichm\"uller space \cite{Brum2}, Brumfiel studied non-Archimedean hyperbolic planes \cite{Brumfiel}: for any ordered field $\F$, he associates to $\PSL_2(\F)$ a non-standard hyperbolic plane $\H\F^2$ and, for fields with valuation, he introduces a pseudodistance on $\H\F^2$ whose Hausdorff quotient is the $\R$-tree associated to $\PSL_2(\F)$.
Inspired by Brumfiel's work (cfr. also \cite{KT1}), we associate to a symplectic group $\Sp(2n,\F)$ over a real closed field $\F$ the space 
$$\Xx_{\F}=\{X+iY| X,Y \in\Sym(n,\F), Y \text{ positive definite}\}$$
on which $\Sp(2n,\F)$ acts by fractional linear transformations. The $\Sp(2n,\F)$-space $\Xx_{\F}$ can be thought of as a non-standard version of the Siegel upper half-space. Using a matrix valued crossratio we define, for any two transverse Lagrangians $a,b\in\Ll(\F^{2n})$, the \emph{ $\F$-tube} $\Yy_{a,b}$ which is the non-standard symmetric space associated to the stabilizer in $\Sp(2n,\F)$ of the pair $(a,b)$, a group isomorphic to $\GL(n,\F)$. In the case of the hyperbolic plane, the $\F$-tubes are just the Euclidean half-circles joining the ideal points $a,b$. Given a representation $\rho:\G\to \Sp(2n,\F)$ admitting a maximal framing $\phi:S\to\Ll(\F^{2n})$ we can associate to every hyperbolic element $\g\in\G$ the $\F$-tube $\Yy_\g=\Yy_{\phi(\g^-),\phi(\g^+)}$ where $\{\g^-,\g^+\}$ are the fixed points of $\g$ in $\partial\H^2$. One key property that we exploit is that the intersection pattern of the axis of hyperbolic elements in $\G$ is reflected in the intersection pattern of the corresponding $\F$-
tubes.
When the field $\F$ has an order compatible valuation, there is a natural $\R_{\geq 0}$-valued pseudodistance on $\Xx_{\F}$, and the relation between crossratios and this pseudodistance allows us to quantify the intersection pattern of the $\F$-tubes. Finally we exploit that the Hausdorff quotient of $\Xx_{\F}$ can be identified with the set of vertices of the affine Bruhat-Tits building associated to $\Sp(2n,\F)$.

\subsection{Collar Lemma}
We finish this introduction discussing another geometric property of representations admitting a  maximal framing, which is at the basis of most of the results we discussed so far. Recall that, since any element $g\in\Sp(V)$ is conjugate to $^t\!g^{-1}$, the set of eigenvalues of a symplectic element is closed with respect to inverses: if $\l$ is an eigenvalue of $g$, the same is true for $\l^{-1}$. With a slight abuse of terminology we say that two hyperbolic elements $\g,\eta\in\G<\PSL_2(\R)$ intersect if their axis do. 
\begin{thm}[Collar Lemma]\label{lem:collar_i}
Let $\F$ be a real closed field and $\rho:\G\to\Sp(V_\F)$ be a representation admitting a maximal framing. Then if $\g\in\G$ is hyperbolic, $\rho(\g)$ has no eigenvalue of absolute value 1. 
Let $|\l_1(\g)|\geq\ldots\geq |\l_n(\g)|> 1$ be  the eigenvalues of absolute value larger than 1.  If the hyperbolic elements $\g,\eta$ in $\G$ have positive intersection number, then:
\begin{eqnarray}
& {\displaystyle |\l_1(\g)|^{2n}\geq\frac 1{|\l_n(\eta)|^{2}-1}}.\\
&{\displaystyle \left(\prod_{i=1}^n\left|\l_i(\g)\right|^{2/n}-1\right)\left(\prod_{i=1}^n\left|\l_i(\eta)\right|^{2/n}-1\right)\geq 1}.
\end{eqnarray}
\end{thm}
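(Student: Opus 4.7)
The plan is to use the maximal framing $\phi$ to translate the dynamics of $\g$ and $\eta$ on $\partial\H^2$ into positivity constraints in a suitable affine chart of the Lagrangian Grassmannian, and then to extract eigenvalue bounds by iterating.

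First I would show $\rho(\g)$ has no eigenvalue of absolute value one. By equivariance of $\phi$, the element $\rho(\g)$ stabilizes the transverse Lagrangians $L^\pm := \phi(\g^\pm)$, so relative to the splitting $V_\F = L^+ \oplus L^-$ it is block diagonal of the form $\diag(A, {}^tA^{-1})$, with $A \in \GL(n,\F)$ having eigenvalues $\mu_i$ satisfying $\{\l_i(\g)\} \cup \{\l_i(\g)^{-1}\} = \{\mu_i\} \cup \{\mu_i^{-1}\}$, so it suffices to show $|\mu_i| \neq 1$. In the affine chart of Lagrangians transverse to $L^-$, parametrized by $S \in \Sym(n,\F)$ as graphs $L^+ \to L^-$, the Lagrangian $L^+$ is at $S = 0$ and $\rho(\g)$ acts by $S \mapsto {}^tA^{-1} S A^{-1}$. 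Picking any $z_0$ in $S$ lying in the positive open arc $(\g^-, \g^+)$ of $\partial\H^2$, maximality of $(\phi(\g^-), \phi(z_0), \phi(\g^+))$ gives a positive definite $S_0$, and combining maximality of $(\phi(\g^-), \phi(\g^{k-1}z_0), \phi(\g^k z_0))$ and $(\phi(\g^{k-1}z_0), \phi(\g^k z_0), \phi(\g^+))$ yields a strictly decreasing chain $S_0 > S_1 > \cdots > 0$ in the positive cone, where $S_k = {}^tA^{-k}S_0 A^{-k}$. If $A$ had an eigenvalue $\mu$ with $|\mu|=1$, let $W \subset L^+$ be the minimal $A$-invariant $\F$-subspace carrying it (one- or two-dimensional depending on whether $\mu \in \F$ or $\mu \in \F[i]\setminus\F$); then $|\det(A|_W)| = 1$, so $\det(S_k|_W) = \det(S_0|_W)$ is constant, contradicting the strict decrease of $S_k|_W$ in the positive cone of $W$.

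For the Collar Lemma inequalities, assume $\g$ and $\eta$ have positive intersection number, so the points $\g^\pm, \eta^\pm$ are interleaved on $\partial\H^2$. In the chart of graphs over $L^{\g^+}$, maximality of $(L^{\g^-}, L^{\eta^-}, L^{\g^+})$ and $(L^{\g^-}, L^{\g^+}, L^{\eta^+})$ forces $S^- > 0 > S^+$, where $S^{\pm}$ represents $L^{\eta^\pm}$. The element $\rho(\eta)$ stabilizes $L^{\eta^\pm}$ and, by the first part applied to $\eta$, acts on $L^{\eta^+}$ by a matrix $B$ with eigenvalues $\l_i(\eta)$ of absolute value $>1$. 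For inequality $(1)$, I would iterate $\rho(\g)$: the images ${}^tA^{-k}S^\pm A^{-k}$ stay in their respective cones and shrink toward $0$, and the maximality of the resulting triples forces quantitative positivity relations between ${}^tA^{-k}S^-A^{-k}-{}^tA^{-k}S^+A^{-k}$ and $S^- - S^+$. Simultaneously, the displacement of $\rho(\eta)$ along its own tube is governed by the matrix cross-ratio $(S^- - S^+)^{-1}\cdot\,$(appropriate transform), whose spectrum is controlled by $|\l_i(\eta)|^2$; combining these with a bound of the form $\|A^n\|^{-2} \leq |\l_n(\eta)|^{2} - 1$ inverted yields $(1)$.

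For inequality $(2)$, one exploits the symmetry between $\g$ and $\eta$ in the four-Lagrangian configuration. Computing the matrix cross-ratio $[L^{\g^-}, L^{\eta^-}; L^{\g^+}, L^{\eta^+}]$ in two different charts (one centered at $L^{\g^+}$ using $A$, the other at $L^{\eta^+}$ using $B$) produces an identity relating a positive operator built from $A$ to one built from $B$; taking determinants and applying the AM--GM inequality to the eigenvalues $|\mu_i|$ of $A$ and their $B$-analogues produces the symmetric product bound on $\prod|\l_i(\g)|^{2/n}$ and $\prod|\l_i(\eta)|^{2/n}$. The main obstacle is working entirely over an arbitrary real closed field $\F$: eigenvalues can live in $\F[i]$ rather than $\F$, and analytic limits must be replaced throughout by finite-order positivity estimates, such as controlling $S_k - S_{k+N}$ in the positive cone via the eigenvalues of $A^{-N}$. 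Keeping track of sign conventions across the different affine charts of $\Ll(V_\F)$ and verifying that the matrix cross-ratio transforms correctly under the symplectic changes of chart will also require care.
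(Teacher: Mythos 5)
Your first step (no eigenvalue of absolute value $1$) is a correct argument, and it differs from the paper's: you derive it intrinsically from the monotone chain $\phi(\g^k z_0)$ along the axis of $\g$ together with the constancy of $\det(S_k|_W)$ on an $A$-invariant subspace carrying a unimodular eigenvalue, whereas the paper gets Shilov hyperbolicity as a by-product of the crossing configuration (maximality of $(\phi(a^+),\phi(ab^+),\phi(b^+))$ forces all eigenvalues of $A\,^t\!A$ to exceed $1$, and then a min--max lemma transfers this to $|\alpha_n|$). Your route is self-contained and fine.

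The two quantitative inequalities, however, have a genuine gap: nowhere do you use the images of the fixed Lagrangians under the group elements themselves, and this is exactly where the eigenvalues of $M_{\rho(\g)}$ and $M_{\rho(\eta)}$ enter. The cross-ratio $R(\phi(\g^-),\phi(\eta^-),\phi(\g^+),\phi(\eta^+))$ of the four \emph{fixed} Lagrangians is an invariant of the configuration alone (after normalizing to $(-\Id,-\L^2,0,l_\infty)$ it is determined by the matrix $\L$), and it carries no information about $A=\rho(\g)|_{\phi(\g^+)}$ or $B=\rho(\eta)|_{\phi(\eta^+)}$ beyond their block structure; so "computing it in two charts" cannot produce an identity relating operators built from $A$ and $B$, and inequality (2) cannot follow from that configuration. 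Likewise, for inequality (1) your final step "combining these with a bound of the form $\|A^n\|^{-2}\le|\l_n(\eta)|^2-1$" is essentially a restatement of the conclusion rather than a derivation. The missing ingredient is the maximality of the $4$-tuple $(\phi(\g^+),\,\rho(\g)\phi(\eta^+),\,\rho(\eta)\phi(\g^+),\,\phi(\eta^+))$, which is legitimate because the cyclic order of $(\g^-,\eta^-,\g\eta^-,\g^+,\g\eta^+,\eta\g^+,\eta^+,\eta\g^-)$ on $\partial\H^2$ is preserved by the framing. In the normalized picture this maximality reads $\L(C\,^t\!C-\Id)\L-(A\,^t\!A-\Id)^{-1}>\!>0$ with $C=\L^{-1}B\L$, and it is this single matrix inequality, compared eigenvalue by eigenvalue (together with the elementary inequality $\prod_i(a_i^n-1)\le(\prod_i a_i-1)^n$ for the determinant version), that yields both (1) and (2). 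Without introducing the translated points $\g\eta^+$ and $\eta\g^+$ (or an equivalent orbit point coupling the two dynamics), your outline does not close.
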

Here $|\cdot|$ denotes the $\F$-valued absolute value on $\F[i]$, and we count the eigenvalues with their multiplicity as roots of the characteristic polynomial.
 
We now draw some consequences in the case of classical maximal representations (see Corollary \ref{cor:7.5} for similar considerations in the case of actions on buildings).
It was established by Siegel (cfr. \cite[Theorem 3]{Siegel}) that, under suitable normalizations, the translation length of an isometry $g\in \Sp(2n,\R)$ on the symmetric space $\Xx_\R$ is 
$$L(g)=2\sqrt{\sum_{i=1}^n \ln^2(|\l_i(g)|)}.$$
Using this formula we get, from Theorem \ref{lem:collar_i} (2), the following  
\begin{cor}
Let $\rho:\G\to\Sp(2n,\R)$ be a maximal representation. If $\g$ and $\eta$ have positive intersection number, then
$${ \left(e^{\frac{ L(\rho(\g))}{\sqrt n}}-1\right)\left(e^{\frac{L(\rho(\eta))}{\sqrt n}}-1\right)\geq 1}.$$
\end{cor}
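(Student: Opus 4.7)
The plan is to derive this directly from inequality (2) of Theorem \ref{lem:collar_i} by bounding $\prod_{i=1}^n |\l_i(\g)|^{2/n}$ from above in terms of the translation length, using Siegel's formula.

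Setting $a_i=\ln|\l_i(\g)|>0$, Siegel's formula reads $L(\rho(\g))=2\sqrt{\sum_{i=1}^n a_i^2}$, while $\prod_{i=1}^n|\l_i(\g)|^{2/n}=\exp\bigl(\tfrac{2}{n}\sum_{i=1}^n a_i\bigr)$. The Cauchy--Schwarz inequality applied to the vectors $(a_1,\dots,a_n)$ and $(1,\dots,1)$ gives $\sum_i a_i\le\sqrt n\,\sqrt{\sum_i a_i^2}$, whence
\[
\prod_{i=1}^n|\l_i(\g)|^{2/n}=\exp\!\Bigl(\tfrac{2}{n}\sum_{i=1}^n a_i\Bigr)\le\exp\!\Bigl(\tfrac{2}{\sqrt n}\sqrt{\textstyle\sum_i a_i^2}\Bigr)=\exp\!\Bigl(\tfrac{L(\rho(\g))}{\sqrt n}\Bigr),
\]
and the same bound holds for $\eta$.

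Since $|\l_i(\g)|>1$ for every $i$ by the first assertion of Theorem \ref{lem:collar_i}, both factors $\prod_i|\l_i(\g)|^{2/n}-1$ and $\prod_i|\l_i(\eta)|^{2/n}-1$ are strictly positive, so we may multiply the two instances of the inequality above (after subtracting $1$) to obtain
\[
\Bigl(\prod_{i=1}^n|\l_i(\g)|^{2/n}-1\Bigr)\Bigl(\prod_{i=1}^n|\l_i(\eta)|^{2/n}-1\Bigr)\le\Bigl(e^{L(\rho(\g))/\sqrt n}-1\Bigr)\Bigl(e^{L(\rho(\eta))/\sqrt n}-1\Bigr).
\]
The left-hand side is bounded below by $1$ by Theorem \ref{lem:collar_i}(2), and the claim follows at once.

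There is essentially no obstacle here: the only nontrivial input beyond Theorem \ref{lem:collar_i} is the direction of the Cauchy--Schwarz estimate, which fortunately goes the right way (the geometric-type mean of $|\l_i|^2$ is dominated by the exponential of the $\ell^2$-norm of the $a_i$'s normalized by $\sqrt n$). One should only check that nothing degenerates when some $|\l_i(\g)|$ approaches $1$, but this is harmless because both sides of the final inequality vary continuously and the factors remain positive.
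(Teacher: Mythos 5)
Your proof is correct and is exactly the derivation the paper intends: the paper gives no further detail beyond citing Siegel's formula and Theorem \ref{lem:collar_i}(2), and the missing step is precisely your Cauchy--Schwarz (arithmetic--quadratic mean) bound $\frac2n\sum_i\ln|\l_i|\le\frac{2}{\sqrt n}\bigl(\sum_i\ln^2|\l_i|\bigr)^{1/2}$ combined with the positivity of both factors coming from $|\l_i|>1$. Nothing to add.
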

Using that $e^x-1\leq 2x$ for $x\leq 1$, we get that, if $L(\rho(\eta))\leq \sqrt n$, then
$$\frac {L(\rho(\g))}{\sqrt n}\geq \ln\left(\frac{\sqrt n}{2 L(\rho(\eta))}\right)$$
which exhibits the same asymptotic growth relation as in the Teichm\"uller setting. However it is worth remarking that, as opposed to the classical Collar Lemma, Theorem \ref{lem:collar_i} is not just a consequence of the Margulis Lemma: in our setting the sets of minimal displacement of the isometries $\rho(\g)$ and $\rho(\eta)$ do not necessarily intersect.
A similar version of the Collar Lemma in the framework of Hitchin representations has been recently established in \cite{collar} (see Remark \ref{rem:collar} for a comparison with our results).
\subsection{Outline of the paper}
In Section \ref{sec:1} we define three different models for the nonstandard symmetric space and we study the action of $\Sp(V)$ on $n$-tuples of transverse Lagrangians. Section \ref{sec:collar} is devoted to the proof of the Collar Lemma, Theorem \ref{lem:collar}, for representations admitting a maximal framing. The matrix valued crossratio and the $\F$-tubes are introduced and studied in Section \ref{sec:3}. In Section \ref{sec:4} we focus on order convex subrings and describe how to obtain representations over the residue field. The main result of the section is Theorem \ref{thm:bdrymap} (Theorem \ref{thm:3} in the introduction), whose proof also exploits the geometric input coming from the Collar Lemma. In Section \ref{sec:building} we restrict to fields with valuations and use the crossratio to describe the projection from the nonstandard symmetric space to the affine Bruhat-Tits building. In  Section \ref{sec:fixedpoint} we initiate our study of elements with zero translation length: to each such 
element we associate a pair of canonical fixed points (Proposition \ref{prop:lazy}) and give sufficient conditions for these points to coincide (Proposition \ref{prop:intersectinglazy}). The proof of the Decomposition Theorem \ref{thm:6} (Theorem \ref{thm:dec}) occupies Section \ref{sec:dec}, while Theorem \ref{thm:2} is proven in Section \ref{sec:qi}. In the last section of the paper we discuss the relation between ultralimits of maximal representations and representations in symplectic groups over the Robinson field $\R_{\os}$. This allows us to deduce Theorem \ref{thm:1} from the more general Theorem \ref{thm:6}, and, in the case of closed surfaces, to completely characterize representations in $\Sp(2n,\R_\os)$ which admit a maximal framing (Theorem \ref{thm:10.5}).

\subsection{Acknowledgments}
The major part of this work was done during the MSRI program "Dynamics on Moduli Spaces of Geometric structures" 2015; we thank the organizers and the participants of this program who have created an intellectually stimulating atmosphere.  We thank in particular Anne Parreau for important discussions and more specifically for encouraging us to prove Theorem \ref{thm:1}, Domingo Toledo for his helpful comments and Thomas Huber for providing a proof of Proposition \ref{prop:appendix} valid in the general context of real closed fields. The first named author thanks the Clay foundation and the Simons foundation for support. This material is based upon work supported by the National Science Foundation under Grant No. 0932078 000 while the authors were in residence at  MSRI.
\section{Symplectic geometry over real closed fields}\label{sec:1}

\subsection{Basic objects}\label{sec:basic}
Let $V$ be a $2n$-dimensional vector space over a field $\F$, endowed with a  symplectic form $\<\cdot,\cdot\>$. 
We denote by $\Sp(V)$ the symplectic group, the subgroup of $\GL(V)$ preserving the form $\langle\cdot,\cdot \rangle$, and by $\Ll(V)\subset \Gr_{n}(V)$  the set of  Lagrangian subspaces:  the set of maximal isotropic subspaces of $V$. Whenever a Lagrangian $l$ is  fixed we denote by $\Ll(V)^l$ the set of Lagrangians transverse to $l$, and by $\Qq(l)$ the vector space of quadratic forms on $l$.

Given $a,b$ in $\Ll(V)$ transverse, we recall the construction of an affine chart
$$j_{a,b}:\Qq(a)\to \Ll(V)^{b}.$$

For each element $f$ in $\Qq(a)$ we denote by $b_f:a\times a\to \F$ the associated symmetric bilinear form. Since $a$ and $b$ are transverse, the  symplectic pairing induces an isomorphism of $a$ with the dual of $b$. We denote by $T_f:a\to b$ the unique linear map satisfying $$\<v,T_f(w)\>=b_f(v,w), \quad \text{ for $v,w\in a$}$$ 
The subspace of $V$ defined by
$$j_{a,b}(f):=\{v+T_f(v)|v\in a\}$$
is a Lagrangian subspace transverse to $b$.

Conversely if $l$ is transverse to $b$, any vector $v$ in $a$ can be written uniquely as a combination of a vector in $b$ and a vector in $l$. This allows us to define a linear map $T_{a,b}^{l}:a\to b$ by requiring that $v+T_{a,b}^{l}(v)\in l$. In turn we can use $T^{l}_{a,b}$ to define the quadratic form $Q_{a,l,b}$ on $a$:
$$Q_{a,l,b}(v)=\<v,T^{l}_{a,b}(v)\>,\quad v\in a$$
which satisfies $j_{a,b}(Q_{a,l,b})=l$. When $a$ and $b$ are clear from the context we will often abbreviate $Q_{a,l,b}$ by $Q_l$.

\subsection{Three models of the Siegel space}\label{sec:models}

The symmetric space associated to the symplectic group $\Sp(2n,\R)$ was extensively studied by Siegel \cite{Siegel} and is often referred to as the Siegel space.
We now show that the three most studied models for the Siegel space can be defined over arbitrary ordered fields, are always equivariantly isomorphic, and give rise to interesting geometries.

We fix an ordered field $\F$. Clearly the polynomial $f(x)=x^2+1$ is irreducible in $\F[x]$, we denote by $i\in \ov \F$  a root of the polynomial $f$ and  by $\K$ the splitting field of $f$, the degree two extension $\K=\F[i].$  If $V$ is a $2n$-dimensional vector space over $\F$ endowed with a  symplectic form $\<\cdot,\cdot\>$, we denote by $V_\K$ the ``complexification'' $V_\K=V\otimes \K$ and by $\langle\cdot,\cdot \rangle_\K:V_\K^2\to \K$ the $\K$-linear extension to $V_\K$ of  $\<\cdot,\cdot\>$.

The first model of the Siegel space consists of the set of \emph{compatible complex structures} on $V$, that is
$$\mathbb X_V=\{J\in \GL(V)|\; J^2=-\Id, \<J\cdot,\cdot\> \text{ is a scalar product}\}.$$ 
The set $\mathbb X_V$ is a semialgebraic subset of $\End(V)$ on which the symplectic group $\Sp(V)$ acts  by conjugation. For $J\in \mathbb X_V$ we will denote by $(\cdot,\cdot)_J:=\<J\cdot,\cdot\>$ the corresponding scalar product.

The second model of the Siegel space corresponds to the image of the \emph{Borel embedding} (cfr. \cite[Section 2.1.1]{BILW}, \cite{Satake}). As in the real case we realize $\mathbb X_V$ as a semialgebraic subset $\Tt_V$ of $\Ll(V_\K)$. Indeed if $J\in \GL(V)$ is an element of $\mathbb X_V$ the complexification $J\otimes \mathbb{I}_\K$ is diagonalizable over $\K$. It is easy to verify that the eigenspaces $L_J^\pm$  of $J\otimes\mathbb I_\K$ with respect to the eigenvalues $\pm i$ are elements  of $\Ll(V_\K)$. 
If we denote by $\sigma:V_\K\to V_\K$ the complex conjugation with respect to the real form $V$, we get that $\sigma (L_J^{\pm})=L_J^\mp$. The image $\Tt_V$ of the Borel embedding can be characterized as the set
$$\Tt_V=\{L\in \Ll(V_\K)|\;i\langle\cdot,\sigma(\cdot)\rangle_\K|_{L\times L} \text{ is positive definite}\}.$$
\begin{lem}
The algebraic map
$$\begin{array}{ccc}
   \mathbb X_V &\to&\Tt_V\\
   J&\mapsto& L^+_J
  \end{array}
$$
induces an $\Sp(V)$-equivariant bijection.
\end{lem}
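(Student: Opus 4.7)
The plan is to verify four things in sequence: well-definedness of the map, $\Sp(V)$-equivariance, injectivity, and surjectivity. The only two spots where the ordered structure of $\F$ really enters are a transversality check and a positive-definiteness check; everything else is formal linear algebra over any field not containing $i$.

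For well-definedness, I would first observe that for $J \in \mathbb X_V$, symmetry of $(v,w)_J=\langle Jv,w\rangle$ combined with $J^2=-\Id$ gives $\langle Jv,w\rangle=-\langle v,Jw\rangle$, hence $\langle Jv,Jw\rangle=\langle v,w\rangle$, so $J\in\Sp(V)$. Since $\F$ is ordered, $x^2+1$ has no root in $\F$, so the minimal polynomial of $J$ is exactly $x^2+1$ and $J\otimes\Id_\K$ diagonalizes on $V_\K$ with eigenvalues $\pm i$. Complex conjugation intertwines these eigenvalues, so $\s(L_J^+)=L_J^-$ and $\dim_\K L_J^\pm=n$. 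Isotropy of $L_J^+$ comes from $\langle v,w\rangle_\K=\langle Jv,Jw\rangle_\K=i^2\langle v,w\rangle_\K$ for $v,w\in L_J^+$, which forces $\langle v,w\rangle_\K=0$; hence $L_J^+\in\Ll(V_\K)$. For positivity, I would write $v=x+iy\in L_J^+$, so that the eigenspace relation $J\otimes\Id_\K(v)=iv$ forces $Jx=-y$ and $Jy=x$; a direct expansion then yields $i\langle v,\s(v)\rangle_\K=2(x,x)_J$, which is positive because $v\neq 0$ forces $x\neq 0$.

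Equivariance is immediate from $L_{gJg^{-1}}^+=g\cdot L_J^+$, and injectivity follows because $J\otimes\Id_\K$ is determined by its $\pm i$ eigenspaces, the second of which is $\s(L_J^+)$. The only real step is surjectivity. Given $L\in\Tt_V$, I would first show $L\cap\s(L)=0$: if $0\neq v$ lay in this intersection, then $\s(v)\in L$ too, and the Lagrangian property of $L$ would give $\langle v,\s(v)\rangle_\K=0$, contradicting positivity. So $V_\K=L\oplus\s(L)$, and one can define $\tilde J$ to act as $i$ on $L$ and as $-i$ on $\s(L)$. By construction $\tilde J\s=\s\tilde J$, so $\tilde J$ descends to $J\in\End(V)$ with $J^2=-\Id$, and a short case check over $L\times L$, $\s(L)\times\s(L)$, and $L\times\s(L)$ shows $\tilde J\in\Sp(V_\K)$, hence $J\in\Sp(V)$ and $(v,w)_J$ is symmetric. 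For positive definiteness, I would decompose $x\in V$ as $v+\s(v)$ with $v\in L$ (using that $x$ is $\s$-fixed) and compute $\langle Jx,x\rangle=2i\langle v,\s(v)\rangle_\K$, which is positive whenever $x$ (equivalently $v$) is nonzero. Finally $L\subseteq L_J^+$ by construction, and dimension forces equality.

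The main obstacle I expect is just bookkeeping with the two natural involutions on $V_\K$, namely $J\otimes\Id_\K$ and $\s$, and keeping signs straight in the positivity calculations; once the identity $i\langle v,\s(v)\rangle_\K=2(x,x)_J$ in one direction and $\langle Jx,x\rangle=2i\langle v,\s(v)\rangle_\K$ in the other are pinned down, the two halves of the bijection are mirror images of each other.
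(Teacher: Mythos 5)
Your proposal is correct and follows essentially the same route as the paper: the forward direction rests on the identity $i\langle v,\sigma(v)\rangle_\K=2\langle Jx,x\rangle$ for $v=x+iy\in L_J^+$, and surjectivity is obtained by defining $J_L$ to act as $\pm i$ on $L$ and $\sigma(L)$ and checking it descends to a compatible complex structure on $V$. The extra details you supply (that $J\in\Sp(V)$, isotropy of $L_J^+$, injectivity, equivariance) are points the paper treats as immediate, and your transversality argument via isotropy of $L$ is an equally valid minor variant of the paper's negative-definiteness argument on $\sigma(L)$.
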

\begin{proof}
If $v=x+iy$ is an eigenvector for the endomorphism $J\otimes \I_\K$ of eigenvalue $i$, it follows that $y=-Jx$. In particular the restriction of $i\langle\cdot,\sigma(\cdot)\rangle_\K$ satisfies 
$$\begin{array}{rl}
i\langle v,\sigma(v)\rangle_\K&=i\langle x,iJx\rangle_\K+i\langle -iJx,x\rangle_\K\\
&=2\langle Jx,x\rangle_\K
\end{array}$$
and this implies that the image of $\mathbb X_V$ is contained in $\Tt_V$.

Conversely if $L\in\Ll(V_\K)$ is such that  $i\langle\cdot,\sigma(\cdot)\rangle_\K|_{L\times L}$ is positive definite, $L$ is transverse to $\sigma (L)$ since the restriction of the aforementioned Hermitian form to this second subspace is negative definite. We denote by $J_L$ the endomorphism of $V_\K$ defined by imposing that $J_L(v)=iv$ for each $v$ in $L$ and $J_L(v')=-iv'$ for each $v'\in \sigma(L)$. 

Since any element $w$ of $V$ can be written uniquely as $w=v+\sigma(v)$ for some $v\in L$, and in particular $Jw=iv-i\sigma(v)=iv+\sigma(iv)\in V$, the endomorphism $J_L$ preserves the real structure $V$. Moreover since the Hermitian form  $i\langle\cdot,\sigma(\cdot)\rangle_\K|_{L\times L}$ is by assumption positive definite, the quadratic form $\langle J\cdot,\cdot\rangle$ is positive definite.
\end{proof}

The third and most concrete model for the Siegel space is the \emph{upper half-space} $\Xx_\F$, a specific set of $\K$-valued symmetric matrices:  
$$\Xx_\F=\{X+iY|\;X\in {\rm Sym} (n,\F),\; Y\in {\rm Sym}^+(n,\F)\}.$$
Here ${\rm Sym}(n,\F)$ denotes the vector space of symmetric $n$-dimensional matrices with coefficients in $\F$ and  ${\rm Sym}^+(n,\F)$ denotes the properly convex cone in ${\rm Sym}(n,\F)$ consisting of positive definite symmetric matrices. 

In order to establish a bijection between $\Tt_V$ and $\Xx_\F$, we fix a Lagrangian $l_\infty$ in $\Ll(V)$, a complex structure $J\in\mathbb X_\F$ and a basis $e_1,\ldots,e_n$ of $l_\infty$ which is orthonormal for $(\cdot,\cdot)_J$. The matrix representing the symplectic form with respect to the basis 
$$\Bb=\{e_1,\ldots,e_n,-Je_1,\ldots,-Je_{n}\}$$ 
of $V$ is $\bsm 0&-\Id\\\Id&0\esm$. Moreover using the basis $\Bb$ we can associate to any $2n\times n$ dimensional matrix $M$ of maximal rank the $n$-dimensional subspace of $V$ spanned by the columns of $M$. We use this to give an explicit identification of $\Sym(n,\K)$ with the affine chart of $\Ll(V_\K)$ which consists of subspaces transverse to $l_\infty$:
$$\begin{array}{cccc}
   \iota:{\rm Sym}(n,\K)&\to&\Ll(V_\K)\\
   X&\mapsto&\bsm X\\\Id\esm.
  \end{array}
$$
It is easy to verify that if we use the basis $\{-Je_1,\ldots,-Je_n\}$ to identify the space ${\rm Sym}(n,\K)$ with $\Qq(Jl_\infty)$, we get that the map $\iota$ corresponds to the map $j_{Jl_\infty,l_\infty}$ described in Section \ref{sec:basic}.

Since the restriction of $i\langle\cdot,\sigma(\cdot)\rangle_\K$ to $l_\infty$ is identically zero, every element $l$ of $\Tt_V$ belongs to the image of $\iota$, and it is easy to verify that the restriction of $i\langle\cdot,\sigma(\cdot)\rangle_\K$ to $\iota(X+iY)$ can be represented by the matrix $2Y$. In particular
$\iota$ restricts to a bijection between $\Tt_V$ and $\Xx_\F$.
Notice that the restriction of $\iota$ to the subset of $\F$-valued symmetric matrices has image in $\Ll(V)$ and gives a parametrization of the affine chart of $\Ll(V)$ consisting of Lagrangians transverse to $l_\infty$.

It follows from the identification between $\mathbb X_V$ and $\Xx_\F$ that the symplectic group $\Sp(2n,\F)$ acts on $\Xx_\F$ by fractional linear transformations:
$$\bpm A&B\\C&D\epm\cdot Z=(AZ+B)(CZ+D)^{-1}.$$

It will be useful in the following to record that, with our choice for the symplectic form, an element $\bsm A&B\\C&D\esm$ belongs to $\Sp(2n,\F)$ if and only if
$$\left\{\begin{array}{l}
   ^t\!AD-^t\!CB=\Id\\
   ^t\!AC=C^t\!A\\
   ^t\!BD=^t\!DB.
  \end{array}\right.
$$
In order to achieve transitivity of the symplectic group on the Siegel upper half-space, we need to restrict to real closed fields:
\begin{defn}
 A \emph{real closed field} is an ordered field $\F$ in which every positive element is a square and such that every polynomial in one variable over $\F$ factors into linear and quadratic factors.
\end{defn}
\begin{lem}\label{lem:2.3}
If $\F$ is a real closed field, the symplectic group $\Sp(2n,\F)$ acts transitively on $\Xx_\F$. 
\end{lem}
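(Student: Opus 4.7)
Proof plan. I will work in the upper half-space model $\Xx_\F$ and show that the basepoint $Z_0 = i\Id$ can be moved to an arbitrary $X + iY \in \Xx_\F$ by an explicit element of $\Sp(2n,\F)$, using the block decomposition in the symplectic basis $\Bb$ already fixed in the paper.

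First I would introduce two simple one-parameter families of symplectic matrices and compute their action on $\Xx_\F$ from the formula $g\cdot Z = (AZ+B)(CZ+D)^{-1}$. The upper triangular element
\[
u_X = \bpm \Id & X \\ 0 & \Id \epm, \qquad X \in \Sym(n,\F),
\]
satisfies the three symplectic relations stated in the excerpt (with $A=D=\Id$, $C=0$, $B=X$) and acts on $\Xx_\F$ by $Z \mapsto Z + X$, the ``real translation''. The block-diagonal element
\[
d_T = \bpm T & 0 \\ 0 &\; ^t\!T^{-1} \epm, \qquad T \in \GL(n,\F),
\]
is also symplectic (the relations reduce to $^t\!T \cdot{}^t\!T^{-1} = \Id$, the other two being trivial) and acts by $Z \mapsto T Z \,^t\!T$. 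In particular $d_T\cdot (i\Id) = i\, T\,^t\!T$.

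Given $X+iY \in \Xx_\F$, the strategy is then clear: choose $T$ so that $T\,^t\!T = Y$, and the composition $u_X \circ d_T$ sends $i\Id$ to $X + iY$. The key step is thus to produce such a $T$, i.e.\ a square root of the positive definite symmetric matrix $Y$. This is the one place where the hypothesis that $\F$ is real closed is essential. I would establish the spectral theorem for symmetric matrices over $\F$ as follows: since $\F$ is real closed, the minimal polynomial of a symmetric $n\times n$ matrix $Y$ factors over $\F$ into linear and quadratic factors; a short computation shows that if $p(x) = x^2 + bx + c$ is an irreducible quadratic factor then $p(Y) = 0$ on a nonzero subspace of $\F^n$, and evaluating $\langle p(Y)v,v\rangle$ on a unit vector $v$ in that subspace contradicts positivity of the discriminant. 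Hence $Y$ has only real eigenvalues. Orthogonality of eigenspaces for distinct eigenvalues is automatic, and within a single eigenspace one can orthonormalize by induction, using at each stage that a positive element of $\F$ has a square root. Thus $Y = O \,D\, {}^t\!O$ for some $O \in \mathrm{O}(n,\F)$ and a diagonal $D$ with positive entries; taking square roots entrywise yields $D^{1/2}$, and $T := O\, D^{1/2}\, {}^t\!O$ is the desired symmetric positive definite square root of $Y$.

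The main obstacle is really just this spectral theorem: once a square root of $Y$ exists in $\F$, the rest of the argument is the same identity-verifying computation that one does over $\R$. One could alternatively invoke Tarski's transfer principle to conclude the spectral theorem from the $\R$-case, since the statement ``every positive definite symmetric matrix has a positive definite square root'' is first-order in the language of ordered fields; but the elementary argument sketched above is self-contained and, I think, more in the spirit of the paper's concrete matrix calculations.
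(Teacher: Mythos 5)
Your proposal is correct and is essentially the paper's own proof: the composition $u_X\circ d_T$ with $T=Y^{1/2}$ is exactly the matrix $\bigl(\begin{smallmatrix} S& XS^{-1}\\0&S^{-1}\end{smallmatrix}\bigr)$ that the paper applies to $i\Id$. The only difference is that you prove the spectral theorem and existence of positive square roots over a real closed field from scratch, where the paper simply cites Kaplansky \cite[Section 2-4]{Kap}.
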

\begin{proof}
Since $\F$ is, by assumption, real closed, every symmetric matrix is diagonalizable by an orthogonal matrix, and, as soon as it is positive definite, it admits an unique positive square root \cite[Section 2-4]{Kap}.
Let now $X+iY$ be a point in $\Xx_\F$ and let $S$ be the square root of $Y$. We have
$$X+iY=\bpm S&XS^{-1}\\0&S^{-1}\epm\cdot i\Id.$$
\end{proof}

\subsection{Action on $\F$-Lagrangians}\label{sec:2.2}
We now want to understand the action of $\Sp(V)$ on $n$-tuples of pairwise transverse Lagrangians. We denote this set by $\Ll(V)^{(n)}$:
$$\Ll(V)^{(n)}=\{(l_1,\ldots,l_n)\in\Ll(V)^{n}|\;l_i\tra l_j\}.$$

It is a general fact that, for any field $\F$, the symplectic group acts transitively on pairs of transverse Lagrangians. 
\begin{lem}
 The symplectic group $\Sp(V)$ acts transitively on $\Ll(V)^{(2)}$.
\end{lem}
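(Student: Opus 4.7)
The plan is to show that any pair of transverse Lagrangians can be completed to a symplectic basis of $V$, and then use the transitivity of $\Sp(V)$ on symplectic bases.

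More concretely, given $(l_1,l_2) \in \Ll(V)^{(2)}$, the transversality $V = l_1 \oplus l_2$ combined with the nondegeneracy of $\<\cdot,\cdot\>$ implies that the symplectic pairing restricted to $l_1 \times l_2$ is a perfect pairing; here one uses that $l_1$ is its own symplectic orthogonal, so the kernel of the induced map $l_2 \to l_1^*$ would be a vector in $l_2$ orthogonal to all of $l_1$, hence lying in $l_1 \cap l_2 = 0$. First I would pick any basis $e_1,\dots,e_n$ of $l_1$ and let $f_1,\dots,f_n \in l_2$ be the dual basis determined by $\<e_i, f_j\> = \delta_{ij}$. Since $l_1$ and $l_2$ are isotropic, the basis $\{e_1,\dots,e_n,f_1,\dots,f_n\}$ is a symplectic basis of $V$.

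Doing the same for a second pair $(l_1',l_2')$ produces another symplectic basis $\{e_1',\dots,e_n',f_1',\dots,f_n'\}$. The unique linear automorphism $g \in \GL(V)$ sending $e_i \mapsto e_i'$ and $f_i \mapsto f_i'$ preserves the symplectic form (since it preserves it on a basis) and hence lies in $\Sp(V)$; by construction $g(l_1) = l_1'$ and $g(l_2) = l_2'$.

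I do not expect any real obstacle here: the argument is purely linear-algebraic and works over any field, requiring neither the order nor the real-closed hypothesis on $\F$. The only small point to be careful about is the identification $l_2 \simeq l_1^*$, which is why transversality (not just the Lagrangian condition) is essential.
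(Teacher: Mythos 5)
Your proof is correct and follows essentially the same route as the paper's: both complete the transverse pair $(l_1,l_2)$ to a symplectic basis by using the perfect pairing $l_1\times l_2\to\F$ induced by the symplectic form, and then invoke transitivity of $\Sp(V)$ on symplectic bases. Your added justification that the pairing is perfect (via $l_1$ being its own symplectic orthogonal) is a correct filling-in of a detail the paper leaves implicit.
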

\begin{comment}
\begin{proof}
Indeed there exists a symplectic basis $\Bb=\{e_1,\ldots,e_{2n}\}$ of $V$ such that $a=\<e_1,\ldots,e_n\>$ and $b=\<e_{n+1},\ldots,e_{2n}\>$: since $a$ and $b$ are transverse,  the symplectic pairing induces an isomorphism of $a$ with the dual of $b$. It is then enough to chose an arbitrary basis $\{e_1,\ldots,e_n\}$ of $a$ and let $\{e_{n+1},\ldots,e_{2n}\}$ be the basis of $b$ which is dual to $\{e_1,\ldots,e_n\}$ in this pairing.
\end{proof}
\end{comment}
Recall from Section \ref{sec:basic} that, whenever two transverse Lagrangians $a,b$ are fixed, we have an identification $j_{a,b}:\Qq(a)\cong\Ll(V)^{b}$, and we denote by $Q_{a,l,b}$ the inverse image $j_{a,b}^{-1}(l)$. Clearly for any element $g$ in $\Sp(V)$ the quadratic forms $Q_{l_1,l_2,l_3}$ and $Q_{gl_1,gl_2,gl_3}$ are equivalent. As it turns out, the equivalence class of the quadratic form $Q_{l_1,l_2,l_3}$ is a complete invariant of the triple $(l_1,l_2,l_3)$ up to the symplectic group action: 
\begin{prop}
 The triples $(l_1,l_2,l_3), (m_1,m_2,m_3)$ in $\Ll(V)^{(3)}$ are equivalent modulo the symplectic group action if and only if the quadratic forms $Q_{l_1,l_2,l_3}$ and $Q_{m_1,m_2,m_3}$ are equivalent.
\end{prop}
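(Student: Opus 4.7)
The plan is to prove both implications, with the reverse direction being the substantive one.

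\textbf{Forward direction.} If $g\in\Sp(V)$ sends $(l_1,l_2,l_3)$ to $(m_1,m_2,m_3)$, then the restriction $g|_{l_1}\colon l_1\to m_1$ is a linear isomorphism. Because $g$ is symplectic, it intertwines $T^{l_2}_{l_1,l_3}$ with $T^{m_2}_{m_1,m_3}$, and hence directly from the definition $Q_{a,l,b}(v)=\langle v,T^l_{a,b}(v)\rangle$ one checks $Q_{m_1,m_2,m_3}(g v)=Q_{l_1,l_2,l_3}(v)$ for $v\in l_1$. So the quadratic forms are equivalent.

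\textbf{Reverse direction.} By the preceding lemma, $\Sp(V)$ is transitive on $\Ll(V)^{(2)}$, so after replacing $(m_1,m_2,m_3)$ by a suitable translate we may assume $m_1=l_1=:a$ and $m_3=l_3=:b$. Now $m_2$ and $l_2$ are both elements of $\Ll(V)^b$, so by the identification $j_{a,b}\colon\Qq(a)\xrightarrow{\sim}\Ll(V)^b$ they correspond to quadratic forms $q,q'\in\Qq(a)$, namely $q=Q_{a,l_2,b}$ and $q'=Q_{a,m_2,b}$. The hypothesis gives an isomorphism $\phi\in\GL(a)$ with $q'(\phi v)=q(v)$ for all $v\in a$; by polarization this means $b_{q'}(\phi v,\phi w)=b_q(v,w)$.

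\textbf{Extension to a symplectic transformation.} I would extend $\phi$ to a $g\in\Sp(V)$ fixing $a$ and $b$ as follows. Since $a,b$ are transverse Lagrangians, the symplectic form induces an isomorphism $\iota\colon b\to a^*$ by $\iota(w)(v)=\langle v,w\rangle$. Define $g|_a=\phi$ and $g|_b=\iota^{-1}\circ(\phi^*)^{-1}\circ\iota$. A direct check shows
\[
\langle g v,g w\rangle=\iota(gw)(gv)=(\phi^*)^{-1}\iota(w)\bigl(\phi v\bigr)=\iota(w)(v)=\langle v,w\rangle
\]
for $v\in a$, $w\in b$, and vanishing on $a\times a$ and $b\times b$ is automatic. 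So $g\in\Sp(V)$.

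\textbf{Verification that $g\cdot l_2=m_2$.} Writing $l_2=\{v+T_q(v):v\in a\}$, we have $g(v+T_q(v))=\phi v+g|_b T_q(v)$, so it suffices to show $g|_b\circ T_q=T_{q'}\circ\phi$. For $w\in a$,
\[
\langle w,g|_b T_q(v)\rangle=\langle \phi(\phi^{-1}w),g|_b T_q(v)\rangle=\langle \phi^{-1}w,T_q(v)\rangle=b_q(\phi^{-1}w,v),
\]
while $\langle w,T_{q'}(\phi v)\rangle=b_{q'}(w,\phi v)$. The two sides agree because $b_{q'}(\phi v',\phi w')=b_q(v',w')$ applied with $v'=\phi^{-1}w$ and $w'=v$ gives $b_{q'}(w,\phi v)=b_q(\phi^{-1}w,v)$. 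Hence $g\cdot l_2=m_2$ and the proof is complete.

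\textbf{Main obstacle.} There is no substantive obstacle; the argument is essentially bookkeeping. The only point demanding care is the construction of the symplectic extension of $\phi$ via the contragredient on $b\cong a^*$, and the verification that this extension sends $l_2$ to $m_2$ through the identity $b_{q'}(\phi v,\phi w)=b_q(v,w)$.
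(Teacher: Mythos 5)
Your proof is correct and follows the same route as the paper: reduce by transitivity on $\Ll(V)^{(2)}$ to the case $l_1=m_1=a$, $l_3=m_3=b$, and then use that the stabilizer of the pair $(a,b)$ is $\GL(a)$ acting on $\Qq(a)$ by congruence. The paper states this stabilizer fact without proof; your explicit construction of the symplectic extension $g|_b=\iota^{-1}(\phi^*)^{-1}\iota$ and the check that $g|_b\circ T_q=T_{q'}\circ\phi$ simply fill in that detail correctly.
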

\begin{proof}
Since $\Sp(V)$ is transitive on pairs of transverse subspace we can assume that $l_1=m_1=a$ and $l_3=m_3=b$. The result now follows from the fact that the stabilizer in $\Sp(V)$ of the pair $a,b$ is $\GL(n,\F)$ acting on $\Qq(a)$ by congruence.
\end{proof}
In particular Sylvester's theorem allows to count the number of $\Sp(V)$-orbits when the field $\F$ is real closed: since in this case the signature ${\rm sign}(Q)$ is a complete invariant of a quadratic form $Q$ up to equivalence (see \cite[Theorem 9]{Kap}), we have 
\begin{cor}
 Let $\F$ be a real closed field, and let $V$ be a symplectic $\F$-vector space of dimension $2n$. Then there are $n+1$ orbits of $\Sp(V)$ in $\Ll(V)^{(3)}$.
\end{cor}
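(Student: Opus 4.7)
The plan is to combine the preceding proposition with Sylvester's theorem for real closed fields. By the proposition, the map $(l_1,l_2,l_3)\mapsto [Q_{l_1,l_2,l_3}]$ induces an injection of the set of $\Sp(V)$-orbits on $\Ll(V)^{(3)}$ into the set of equivalence classes of quadratic forms on an $n$-dimensional $\F$-vector space. Thus it suffices to identify the image of this map.

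First I would check that the transversality requirement on triples corresponds exactly to non-degeneracy of $Q_{l_1,l_2,l_3}$. Fixing $a=l_1$ and $b=l_3$ and writing $l_2=j_{a,b}(Q)$ for $Q\in\Qq(a)$, the transversality of $l_2$ and $b$ is automatic by construction of $j_{a,b}$, while the intersection $l_2\cap a$ is readily computed to coincide with $\ker T_Q$, where $T_Q:a\to b$ is the linear map from Section \ref{sec:basic}. Since $\langle v,T_Q w\rangle=b_Q(v,w)$ and $a,b$ are in symplectic duality, $T_Q$ is injective if and only if the bilinear form $b_Q$ is non-degenerate. Hence the triples in $\Ll(V)^{(3)}$ with $l_1=a$, $l_3=b$ correspond bijectively to non-degenerate quadratic forms on $a$.

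Finally, since $\F$ is real closed, Sylvester's theorem (\cite[Theorem 9]{Kap}, referenced in the preceding proposition) classifies non-degenerate quadratic forms on an $n$-dimensional $\F$-vector space up to equivalence by their signature $(p,q)$ with $p+q=n$. There are exactly $n+1$ such signatures, namely $(0,n),(1,n-1),\ldots,(n,0)$, and these are realized: given any $(p,q)$ with $p+q=n$, pick a symplectic basis identifying $a$ with $\F^n$ and take $Q$ to be the diagonal form of signature $(p,q)$; the corresponding Lagrangian $j_{a,b}(Q)$ is transverse to both $a$ and $b$ by the previous paragraph.

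The only real content is the passage from ``all quadratic forms'' to ``non-degenerate quadratic forms'' forced by pairwise transversality; the rest is bookkeeping via the preceding proposition and Sylvester. Thus $\Sp(V)$ has exactly $n+1$ orbits on $\Ll(V)^{(3)}$.
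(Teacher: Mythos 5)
Your proof is correct and follows the same route as the paper: reduce to classifying the forms $Q_{l_1,l_2,l_3}$ via the preceding proposition and then apply Sylvester's theorem over the real closed field $\F$. The one point you spell out that the paper leaves implicit --- that pairwise transversality forces $Q_{l_1,l_2,l_3}$ to be non-degenerate, since $l_2\cap l_1=\ker T_Q$, which is what cuts the count down from all signatures to the $n+1$ non-degenerate ones --- is exactly the right detail to supply, and your argument for it is correct.
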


A fundamental tool in the study of Lagrangian subspaces is the Kashiwara cocycle, which, at least  when $\F=\R$ is also known as the Maslov cocycle:
\begin{defn}
The Kashiwara cocycle is the function
$$\begin{array}{cccc}
\tau&\Ll^{3}(V)&\to&\Z\\
&(l_1,l_2,l_3)&\mapsto&{\rm sign}(Q)
\end{array}$$
where $Q$ is the quadratic form on the abstract direct sum $l_1\oplus l_2\oplus l_3$ defined by 
$$Q(x_1,x_2,x_3)=\langle x_1,x_2\rangle+\langle x_2,x_3\rangle+\langle x_3,x_1\rangle.$$
\end{defn}
The following properties of the Kashiwara cocycle are well known:
\begin{prop}[{See \cite[Section 1.5]{LV}}]\label{lem:1}
 Let $(V,\<\cdot,\cdot\>)$ be a $2n$-dimensional symplectic vector space over a real closed field:
 \begin{enumerate}
 \item $\tau$ is alternating and invariant for the diagonal action of $\Sp(V)$ on $\calL(V)^3$;
  \item $\tau$ has values in $\{-n,-n+1,\ldots,n\}$. On triples consisting of pairwise transverse Lagrangians it only achieves the values $\{-n,-n+2,\ldots,n\}$. If $|\tau(l_1,l_2,l_3)|=n$ then $l_i$ are pairwise transverse;
  \item If $(l_1,l_2,l_3)$ are pairwise transverse, then $$\tau(l_1, l_2, l_3) ={\rm sign}(Q_{l_1,l_2,l_3})$$
  \item $\tau$ is a cocycle: for each 4-tuple $(l_1,l_2,l_3,l_4)$, we have
  $$\tau(l_2,l_3,l_4)-\tau(l_1,l_3,l_4)+\tau(l_1,l_2,l_4)-\tau(l_1,l_2,l_3)=0.$$ 
 \end{enumerate}
\end{prop}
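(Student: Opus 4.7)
The plan is to derive all four assertions directly from the defining formula $\tau(l_1,l_2,l_3)=\mathrm{sign}(Q)$, with $Q$ the quadratic form on the abstract sum $l_1\oplus l_2\oplus l_3$.

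For (1), the $\Sp(V)$-invariance is immediate: any $g\in\Sp(V)$ induces an isomorphism $l_1\oplus l_2\oplus l_3\to gl_1\oplus gl_2\oplus gl_3$ that intertwines the two versions of $Q$, so the signatures agree. For the alternating property I would compute, for a permutation $\sigma$ of the three factors, how the corresponding relabeling isomorphism pulls back the permuted $Q$ to a form on $l_1\oplus l_2\oplus l_3$: antisymmetry of $\langle\cdot,\cdot\rangle$ renders cyclic permutations trivial, while a transposition simultaneously flips the sign of each of the three cross terms $\langle x_i,x_{i+1}\rangle$, so the pullback equals $\mathrm{sgn}(\sigma)\cdot Q$, yielding the claim on signatures.

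For (3), assume the triple is pairwise transverse. Then $V=l_1\oplus l_3$ and $l_2$ is the graph $\{v+Tv:v\in l_1\}$ of the linear map $T=T^{l_2}_{l_1,l_3}\colon l_1\to l_3$ from Section \ref{sec:basic}. I would parametrize $l_2$ by $l_1$ via this graph, substitute into $Q$, and check that the restriction of $Q$ to the diagonal subspace $\{(v,v+Tv,-Tv):v\in l_1\}$ equals $\langle v,Tv\rangle=Q_{l_1,l_2,l_3}(v)$; a short calculation shows the orthogonal complement of this subspace in $l_1\oplus l_2\oplus l_3$ carries a hyperbolic form of signature zero, so $\tau=\mathrm{sign}(Q_{l_1,l_2,l_3})$. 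This simultaneously proves the formula in (3), bounds $|\tau|\le n$, and gives the parity statement $\tau\equiv n\pmod{2}$ on transverse triples. For the general bound in (2), I would analyze the radical $R$ of the bilinear polarization of $Q$: a direct computation shows $(x_1,x_2,x_3)\in R$ iff $x_i+x_j\in l_k$ for each cyclic triple $(i,j,k)$, and maximal isotropy of each $l_i$ then forces enough dimensions into $R$ to cap $\mathrm{rank}(Q)$ and yield $|\tau|\le n$; the sharp case $|\tau|=n$ is traced back to equality in these dimension counts, which forces pairwise transversality.

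The cocycle identity in (4) is the main obstacle. My plan is to realize the four terms as signatures arising from a single four-point quadratic form $\widetilde Q$ on $l_1\oplus l_2\oplus l_3\oplus l_4$, polarized by a weighted combination of the six pairings $\langle x_i,x_j\rangle$ with signs prescribed by a cyclic ordering. Additivity of signature under orthogonal direct-sum decomposition — available over any real closed field since symmetric matrices diagonalize — then lets the four $\tau$'s telescope, and the alternating sum collapses by construction. The delicate point is the sign bookkeeping and the treatment of non-transverse pairs. I would handle this by first establishing (4) on the open stratum of pairwise-transverse quadruples, using the explicit matrix formula from (3) together with a block matrix identity, and then extending to the closed strata either by an infinitesimal deformation argument (exploiting that $\F$ admits square roots of positive elements) or by a direct stratum-by-stratum analysis of the radical contributions.
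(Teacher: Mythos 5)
The paper gives no proof of this proposition: it is quoted from Lion--Vergne \cite[Section 1.5]{LV}, where it is established over $\R$; for a general real closed field the cleanest justification is Tarski's transfer principle, since for each fixed $n$ every assertion here is a first-order sentence in the language of ordered fields. Your treatment of (1) and (3) is correct and standard: the transposition computation does flip the sign of $Q$, and the graph subspace $\{(v,v+Tv,-Tv)\}$ does carry $Q_{l_1,l_2,l_3}$ with a split orthogonal complement. But two of your steps have genuine gaps.

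For the bound $|\tau|\le n$ in (2), your mechanism cannot work as stated. For a pairwise transverse triple the radical of the polarization of $Q$ is \emph{zero} (the radical is cut out by the conditions $x_2-x_3\in l_1$, $x_3-x_1\in l_2$, $x_1-x_2\in l_3$ --- note these are differences, not the sums you wrote --- and transversality kills all solutions), so $\mathrm{rank}(Q)=3n$ and no cap on the rank can yield $|\tau|\le n$. What is needed is a \emph{lower} bound $\min(p,q)\ge n$, i.e.\ the exhibition of an $n$-dimensional totally isotropic subspace meeting the radical trivially; in the transverse case this comes from the hyperbolic complement you already constructed for (3), and in the degenerate cases it requires a separate construction. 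The characterization of $|\tau|=n$ then follows from sharpening that construction, not from "equality in the dimension counts" of the radical.

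For the cocycle identity (4), the proposed "single four-point form whose signature telescopes" is not an argument: the alternating sum of four signatures of forms on \emph{different} spaces does not arise as the signature of one form on $l_1\oplus l_2\oplus l_3\oplus l_4$ by any orthogonal decomposition you have specified, and this is precisely where the difficulty lives. Moreover, the extension from the open stratum of transverse quadruples to the degenerate strata cannot be done by "infinitesimal deformation": over a non-Archimedean real closed field there is no order topology in which such a limiting argument makes sense, and even over $\R$ the signature is discontinuous exactly on the strata you need --- the content of (4) is that the four jumps cancel. Either carry out the stratum-by-stratum bookkeeping in full (essentially reproving Lion--Vergne), or prove everything over $\R$ and invoke transfer; as it stands, (2) and (4) are not proved.
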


The second and the third statement in Proposition \ref{lem:1} justify the following definition:
\begin{defn}\label{def:2.11}
 A triple $(l_1,l_2,l_3)\in \Ll(V)^{(3)}$ is \emph{maximal} if $Q_{l_1,l_2,l_3}$ is positive definite. More generally an $n$-tuple $(l_1,\ldots,l_n)$ is \emph{maximal} if $Q_{l_i,l_j,l_k}$ is positive definite for any ordered triple of indices $i<j<k$.
\end{defn}

Let  $a,b\in\Ll(V)$ be transverse. We denote by $(\!(a,b)\!)$ the subset of $\Ll(V)$ consisting of points $c$ such that the triple $(a,c,b)$ is maximal:
$$(\!(a,b)\!)=\{c\in \Ll(V)|(a,c,b)\in \Ll(V)^{(3)} \text{ is maximal}\}.$$
Similarly for $a,b$ in $S^1$ we denote by $(\!(a,b)\!)$ the interval consisting of points $c$ such that $(a,c,b)$ is positively oriented.
The following lemma follows from Proposition \ref{lem:1} together with the observation that the unipotent radical of the stabilizer in $\Sp(V)$ of $b$ is isomorphic to $\Sym(n,\F)$ and  acts on $\Qq(a)$ by translation:
\begin{lem}\label{lem:2.10}
\begin{enumerate}
\item A triple $(a,l,b)$ is maximal  if and only if $l=j_{a,b}(q)$ for a positive definite quadratic form $q\in\Qq(a)$.
\item A 4-tuple $(l_1,l_2,l_3,l_4)$ is maximal if and only if $q_2$ and $q_3-q_2$ are positive definite where $l_2=j_{l_1,l_4}(q_2)$, $l_3=j_{l_1,l_4}(q_3)$.
\end{enumerate}
\end{lem}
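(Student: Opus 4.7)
The plan is as follows. Part (1) is essentially an unpacking of definitions: the construction of Section \ref{sec:basic} gives $l = j_{a,b}(q)$ if and only if $q = Q_{a,l,b}$, and Definition \ref{def:2.11} states that $(a,l,b)$ is maximal precisely when $Q_{a,l,b}$ is positive definite; combining these shows immediately that $(a,l,b)$ is maximal if and only if the parametrizing quadratic form $q$ is positive definite.

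For part (2), I will verify the four maximality conditions for the sub-triples $(l_i, l_j, l_k)$ with $i<j<k$ separately. Two of them, namely $(l_1, l_2, l_4)$ and $(l_1, l_3, l_4)$, already have $l_1$ and $l_4$ in the extremal positions, so part (1) applies directly and these are maximal if and only if $q_2 > 0$ and $q_3 > 0$ respectively. To handle $(l_2, l_3, l_4)$, I will invoke the observation that the unipotent radical $U \cong \Sym(n,\F)$ of $\Stab(l_4)$ acts on $\Qq(l_1)$ by translation in the chart $j_{l_1, l_4}$. The element translating by $-q_2$ fixes $l_4$ and sends the triple $(j_{l_1,l_4}(q_2), j_{l_1,l_4}(q_3), l_4)$ to $(j_{l_1,l_4}(0), j_{l_1,l_4}(q_3 - q_2), l_4) = (l_1, j_{l_1, l_4}(q_3 - q_2), l_4)$, so $\Sp(V)$-invariance of maximality together with part (1) applied to the image yields that $(l_2, l_3, l_4)$ is maximal if and only if $q_3 - q_2 > 0$. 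Consequently, if the 4-tuple is maximal then both $q_2$ and $q_3 - q_2$ are positive definite.

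For the converse, I will assume $q_2$ and $q_3 - q_2$ are positive definite; then $q_3 = q_2 + (q_3 - q_2)$ is positive definite as well, so by the preceding discussion the three triples $(l_1, l_2, l_4)$, $(l_1, l_3, l_4)$, $(l_2, l_3, l_4)$ are all maximal, and in particular the four Lagrangians are pairwise transverse. The remaining triple $(l_1, l_2, l_3)$ will be handled by the cocycle identity of Proposition \ref{lem:1}(4):
$$\tau(l_1, l_2, l_3) = \tau(l_2, l_3, l_4) - \tau(l_1, l_3, l_4) + \tau(l_1, l_2, l_4) = n - n + n = n,$$
whereupon Proposition \ref{lem:1}(3) forces $Q_{l_1, l_2, l_3}$ to be positive definite, so $(l_1, l_2, l_3)$ is maximal too and the 4-tuple is maximal by definition. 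The only point requiring any real verification is the translation formula for the $U$-action, but this is just the classical $Z \mapsto Z + B$ on the Siegel upper half-space from Section \ref{sec:models}, and the same computation goes through over any real closed field.
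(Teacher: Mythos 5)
Your proof is correct and uses exactly the two ingredients the paper itself cites for this lemma (the Kashiwara cocycle properties of Proposition \ref{lem:1} and the translation action of the unipotent radical of $\Stab(l_4)$ on the affine chart), so it matches the paper's intended argument, merely written out in full. The cocycle computation for the remaining triple $(l_1,l_2,l_3)$ and the reduction of $(l_2,l_3,l_4)$ to part (1) via the translation by $-q_2$ are both sound.
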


We finish this subsection by analyzing the $\Sp(V)$-orbits in $\Ll(V)^{(4)}$. Using the objects and notations introduced in Section \ref{sec:models}, we fix a Lagrangian subspace $l_\infty$, a complex structure $J$ and a symplectic basis $\Bb$ of $V$ of the form $\Bb=\{e_1,\ldots,e_n,-Je_1,\ldots,-Je_n\}$. Moreover, when this does not seem to cause confusion, we suppress $\iota:{\rm Sym}(n,\F)\to \Ll(V)$  and simply represent an element in $\Ll(V)^{l_\infty}$ by an $\F$-valued symmetric matrix.
 
\begin{prop}\label{prop:4tran} Let $\F$ be a real closed field, and let $(l_1,l_2,l_3,l_4)\in\Ll(V)^{(4)}$ be a maximal 4-tuple. Then there exists a diagonal matrix $D=\diag(d_1,\ldots,d_n)$ satisfying $d_1\geq\ldots\geq d_n>0$, and an element $g_1\in\Sp(V)$ such that $$ g_1(l_1,l_2,l_3,l_4)=(-\Id,0,D,l_\infty).$$ 
Moreover there exists $g_2\in\Sp(V)$ such that $g_2(l_1,l_2,l_3,l_4)=(-\Id,\L,0,l_\infty)$ where $\L$ is diagonal with eigenvalues $-1<\l_i=-d_i/(1+d_i)<0$
\end{prop}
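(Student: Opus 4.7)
\medskip

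\textbf{Proof plan.} The strategy is to progressively normalize $(l_1,l_2,l_3,l_4)$ by exploiting transitivity of $\Sp(V)$ on transverse pairs and then by exhausting the stabilizer of the resulting pair. I would begin by using Lemma 2.3 (or rather the transitivity on $\Ll(V)^{(2)}$) to replace $(l_2,l_4)$ with $(\iota(0),l_\infty)$. Since $l_1,l_3$ are transverse to both $l_2$ and $l_4$, they lie in the affine chart $\iota$ and can be written as $l_1=\iota(X_1)$, $l_3=\iota(X_3)$ for invertible symmetric matrices $X_1,X_3\in\Sym(n,\F)$.

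Next I would extract the maximality constraints on $X_1,X_3$. The natural chart to apply Lemma \ref{lem:2.10}(2) is $j_{l_1,l_4}=j_{\iota(X_1),l_\infty}$, which is just a translate of $\iota$: explicitly $j_{\iota(X_1),l_\infty}(q)=\iota(X_1+q)$. Writing $l_2=j_{l_1,l_4}(q_2)$ and $l_3=j_{l_1,l_4}(q_3)$ gives $q_2=-X_1$ and $q_3=X_3-X_1$, so by Lemma \ref{lem:2.10}(2) maximality of the $4$-tuple is equivalent to $q_2\succ 0$ and $q_3-q_2=X_3\succ 0$; in other words, $X_1\prec 0$ and $X_3\succ 0$ (the third inequality $X_3-X_1\succ 0$ then follows automatically, consistent with $(l_1,l_3,l_4)$ being maximal as a sub-triple).

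Having identified the stabilizer of $(\iota(0),l_\infty)$ in $\Sp(V)$ with $\GL(n,\F)$ acting by congruence $X\mapsto gXg^t$, I would now perform two successive congruences. First, since $-X_1\succ 0$ and $\F$ is real closed, Sylvester's theorem (cf.\ \cite[Section 2-4]{Kap}, already invoked in the proof of Lemma \ref{lem:2.3}) yields $g_0\in\GL(n,\F)$ with $g_0X_1g_0^t=-\Id$; in parallel $X_3$ is transformed into $X_3':=g_0X_3g_0^t\succ 0$. The remaining freedom is the stabilizer of $(-\Id,l_\infty)\cap \Stab(\iota(0),l_\infty)$, namely $O(n,\F)$, and the spectral theorem over real closed fields (available since every symmetric matrix is orthogonally diagonalizable) produces $h\in O(n,\F)$ with $hX_3'h^t=D=\diag(d_1,\ldots,d_n)$, $d_1\geq\ldots\geq d_n>0$. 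Composing gives the desired $g_1\in\Sp(V)$.

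For the second assertion, I would work inside the stabilizer of $(\iota(-\Id),l_\infty)$. A direct computation (using the condition $S=\Id-(AA^t)^{-1}$ that enforces fixing $\iota(-\Id)$) shows that such an element acts on the $\iota$-chart by $Y\mapsto AYA^t+AA^t-\Id$. Imposing $0\mapsto \L$ and $D\mapsto 0$ forces $AA^t=\Id+\L$ and $A(\Id+D)A^t=\Id$, whose consistency yields exactly $\l_i=-d_i/(1+d_i)\in(-1,0)$. Taking $A=\diag(1/\sqrt{1+d_i})$ (positive square roots exist because $\F$ is real closed and $1+d_i>0$) and the corresponding $S=-D$ produces an element $h\in\Sp(V)$ with the required action; then $g_2=h\circ g_1$. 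The only delicate point in the whole argument is keeping track of the chart conventions so that Lemma \ref{lem:2.10}(2) is applied correctly; once this bookkeeping is clean, each individual step (congruence to $-\Id$, orthogonal diagonalization, diagonal solution of the final system) is routine in the real closed setting.
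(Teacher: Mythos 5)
Your argument is correct and follows essentially the same route as the paper's: the paper invokes transitivity of $\Sp(V)$ on maximal triples to reach $(-\Id,0,Z,l_\infty)$ with $Z\succ 0$ and then orthogonally diagonalizes $Z$ using the stabilizer $\{\bsm A&0\\0&A\esm : A\in O(n)\}$, which is exactly your "transitivity on pairs, then congruence via Sylvester, then $O(n)$" unpacked into one step; your explicit element for the second assertion coincides with the paper's $g_2=\bsm(\Id+D)^{-1/2}&-D(\Id+D)^{-1/2}\\0&(\Id+D)^{1/2}\esm g_1$.
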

\begin{proof}
Since $\Sp(V)$ is transitive on maximal triples of  Lagrangians and the triple $(-\Id,0,l_\infty)$ is maximal, we can find an element $g_1\in\Sp(V)$ such that $g_1(l_1,l_2,l_3,l_4)=(-\Id,0,Z,l_\infty)$ for some positive definite matrix $Z$.

It is easy to verify that the stabilizer of the triple $(-\Id,0,l_\infty)$ in $\Sp(2n,\F)$ consists of matrices that have the form $\left\{\bsm A&0\\0&A\esm|\; A\in O(n)\right\}$ with respect to the basis $\Bb$ and acts by congruence. This allows to conclude: since $\F$ is real closed, every positive definite matrix is orthogonally congruent to a diagonal matrix $d=\diag(d_1,\ldots,d_n)$ with $d_1\geq\ldots\geq d_n$ (see \cite[Theorem 48]{Kap}). 

For the second assertion it is enough to take $$g_2=\bpm(\Id+D)^{-1/2}&-D(\Id+D)^{-1/2}\\0&(\Id+D)^{1/2}\epm g_1.$$
\end{proof}

An important role in the rest of the paper will be played by \emph{Shilov hyperbolic} elements of $\Sp(V)$.
We denote by $|\cdot|:\K\to \F^{>0}$ the absolute value $|a+ib|=\sqrt{a^2+b^2}$. 
\begin{defn}
 An element $g\in \Sp(V)$ is \emph{Shilov hyperbolic} if there exists a $g$-invariant decomposition $V=L_g^+\oplus L_g^-$, with  $L^\pm_g\in\Ll(V)$, such that all eigenvalues of the restriction of $g$ to $L_g^-$ have absolute value strictly smaller than one and all eigenvalues of the restriction of $g$ to $L_g^+$ have absolute value strictly bigger than one. In this case we denote by $M_g$ the restriction of $g$ to $L^+_g$.
\end{defn}

\begin{remark}\label{rem:Shyp}
When $V$ is a real vector space, the set of Lagrangians $\Ll(V)$ is the Shilov boundary of the symmetric space $\Tt_V$. Moreover if $g\in \Sp(V)$ is Shilov hyperbolic then there exists a Zariski open subset of $\Ll(V)$, the set of points transverse to $L_g^-$, which is contracted by $g$ to $L_g^+$. %In general a sufficient condition for $g$ to be Shilov hyperbolic is that there exists a fixed Lagrangian on which $g$ expands. 
\end{remark}

\section{Representations admitting a maximal framing. The Collar Lemma}\label{sec:collar}
Let $\S$ be an oriented surface of genus $g$ and $p$ punctures. As mentioned in the introduction, we endow $\S$ with a complete hyperbolic metric of finite area and identify it with $\G\backslash\H^2$ where $\H^2$ is the Poincar\'e upper half plane. 

We now turn to the study of representations $\rho:\G\to \Sp(V)$  where $V$ is a symplectic space over a real closed field $\F$. We denote by $S \subseteq \deH^2$ any  $\G$-invariant subset containing all the fixed points of hyperbolic elements in $\G$.
\begin{defn}
We say that the representation $\rho:\G\to \Sp(V)$ admits a maximal framing if there exists an equivariant map $\phi:S\to \Ll(V)$ such that, whenever $x,y,z$ in $S$ are positively oriented, the triple of Lagrangians $(\phi(x),\phi(y),\phi(z))$ is maximal.
\end{defn}
\begin{remark}
It is a fundamental result \cite[Theorem 8]{BIW} that if $\F=\R$ then any maximal representation admits a maximal framing. In addition one can take $S=\deH^2$ and $\phi$ either left or right continuous.
\end{remark}

In this section we prove a generalization of the classical Collar Lemma of hyperbolic geometry to the context of representations which admit a maximal framing. In the case where $\F$ is the field of ordinary reals $\R$ this establishes a Collar Lemma for all maximal representations and gives a quantitative form of the fact due to Strubel \cite{Strubel} that for every hyperbolic element $\g$ in $\G$ the image $\rho(\g)$ is Shilov hyperbolic. 

\begin{thm}[Collar Lemma]\label{lem:collar}
If $\rho:\G\to\Sp(V)$ is a representation admitting a maximal framing, then for every hyperbolic element $\g$, $\rho(\g)$ is Shilov hyperbolic. Let $a,b$ be elements of $\G$ with positive intersection number and denote by $|\alpha_1|\geq\ldots\geq |\alpha_n|> 1$  the eigenvalues of the restriction of $\rho(a)$ to the attractive invariant Lagrangian $L^+_{\rho(a)}$ and analogously for $|\beta_1|\geq\ldots\geq |\beta_n|>1$ and $\rho(b)$. Then
\begin{enumerate}
 \item $(\det M_{\rho(a)}^{2/n}-1)(\det M_{\rho(b)}^{2/n}-1)\geq 1$;
 \item $ |\beta_1|^{2n}\geq{\displaystyle \frac 1{|\alpha_n|^{2}-1}}.$
\end{enumerate}
\end{thm}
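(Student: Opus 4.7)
The proof divides into two parts: the Shilov hyperbolicity of each $\rho(\g)$, for $\g$ hyperbolic, and the quantitative estimates (1) and (2) for intersecting pairs. For Shilov hyperbolicity, equivariance of $\phi$ gives $\rho(\g)\phi(\g^\pm) = \phi(\g^\pm)$. Picking any $x \in S$ with $\g^- < x < \g^+$ in positive cyclic order, the triple $(\phi(\g^-), \phi(x), \phi(\g^+))$ is maximal, hence pairwise transverse, and $V = \phi(\g^-) \oplus \phi(\g^+)$. In a chart with $\phi(\g^-) = 0$ and $\phi(\g^+) = l_\infty$ we have $\rho(\g) = \bsm A & 0 \\ 0 & (A^t)^{-1}\esm$ and $\phi(x) = q \in \Sym^+(n, \F)$; maximality of the 4-tuple $(0, q, AqA^t, l_\infty)$ via Lemma \ref{lem:2.10}(2) gives the Lyapunov inequality $AqA^t - q > 0$. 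Testing this against an eigenvector $v \in \F[i]^n$ of $A^t$ with eigenvalue $\l$ (extending $q$ to a positive Hermitian form on $\F[i]^n$, which remains positive since $v^*qv = x^tqx + y^tqy$ for $v = x + iy$) yields $(|\l|^2 - 1)\,v^*qv > 0$, forcing $|\l| > 1$. Hence $\rho(\g)$ is Shilov hyperbolic with $L^\pm_{\rho(\g)} = \phi(\g^\pm)$ and $M_{\rho(\g)}$ conjugate to $A$.

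For the quantitative bounds, replacing $b$ by $b^{-1}$ if needed, assume $(a^-, b^-, a^+, b^+)$ is positively oriented. The maximal 4-tuple $(\phi(a^-), \phi(b^-), \phi(a^+), \phi(b^+))$ is, by Proposition \ref{prop:4tran}, symplectically conjugate to $(-\Id, 0, D, l_\infty)$ with $D = \diag(d_1, \ldots, d_n)$, $d_1 \geq \cdots \geq d_n > 0$. In these coordinates $\rho(b)$ fixes $(0, l_\infty)$, hence $\rho(b) = \bsm B & 0 \\ 0 & (B^t)^{-1}\esm$ with $B$ conjugate to $M_{\rho(b)}$; a further conjugation by a symplectic element $P$ sending $(-\Id, D)$ to $(0, l_\infty)$ puts $\rho(a)$ in the analogous block form with $A$ conjugate to $M_{\rho(a)}$. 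The positively ordered 6-tuple $(a^-, b^-, a^+, ba^+, b^+, ba^-)$ of boundary points produces, via the framing, two maximal Lagrangian 4-tuples: $(0, D, BDB^t, l_\infty)$ (from the sub-tuple $(b^-, a^+, ba^+, b^+)$) and $(-BB^t, -\Id, 0, l_\infty)$ (from the sub-tuple $(ba^-, a^-, b^-, b^+)$). Applying Lemma \ref{lem:2.10}(2) in the standard chart gives $BDB^t > D$; applying it in the chart translated by $-BB^t$ gives $BB^t > \Id$. Running the parallel argument in the $a$-adapted frame yields $A(\Id + D^{-1})A^t > \Id + D^{-1}$ and $A(D + \Id)A^t > D + \Id$.

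Inequalities (1) and (2) emerge by combining these matrix inequalities through the parameter $D$. For (1), take determinants of the positive-definite differences $A(D+\Id)A^t - (D+\Id)$ and $BDB^t - D$ and apply the AM-GM inequality to the eigenvalue products $\prod|\alpha_i|^{2/n}$ and $\prod|\beta_i|^{2/n}$, symmetrizing through the factor $\det(D+\Id)/\det D$. For (2), pair $A(D + \Id)A^t > D + \Id$ against an eigenvector of $A^t$ realizing $|\alpha_n|$, obtaining $(|\alpha_n|^2 - 1)(v^*Dv + \|v\|^2) > 0$; bounding $v^*Dv \leq d_1 \|v\|^2$ and using an iterated form of $BDB^t > D$ to control $d_1$ in terms of $|\beta_1|^{2n}$ produces the stated bound. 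The \emph{main obstacle} is precisely this final reduction: a single Lyapunov inequality only recovers Shilov hyperbolicity, so the sharp Collar bound requires combining all four matrix inequalities through $D$, which simultaneously encodes the geometric gap between $\phi(b^\pm)$ and $\phi(a^\pm)$ and appears as the operand of every Lyapunov estimate. Selecting the correct scalar invariants — determinants for (1), eigenvector pairings for (2) — is the technically delicate step, and it is only at this stage that the positive-intersection hypothesis is fully exploited.
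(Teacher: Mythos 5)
Your first step --- deriving Shilov hyperbolicity of a single hyperbolic $\g$ from maximality of $(\phi(\g^-),\phi(x),\rho(\g)\phi(x),\phi(\g^+))$ and testing the resulting Lyapunov inequality $Aq\,{}^t\!A\gg q$ against eigenvectors of ${}^t\!A$ --- is correct, and in fact a bit more direct and more general than the paper's route, which obtains Shilov hyperbolicity along the way from the intersecting configuration.

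The quantitative part, however, has a genuine gap. All four matrix inequalities you derive, namely $BD\,{}^t\!B\gg D$, $B\,{}^t\!B\gg\Id$, $A(\Id+D^{-1})\,{}^t\!A\gg \Id+D^{-1}$ and $A(D+\Id)\,{}^t\!A\gg D+\Id$, come from tuples containing translates by only one of the two group elements at a time, so each inequality couples $A$ (or $B$) with $D$ but never $A$ with $B$. Already for $n=1$ this system carries no information beyond Shilov hyperbolicity: with $D=d>0$ and $A=\alpha$, $B=\beta$ scalars, all four inequalities reduce to $\alpha^2>1$ and $\beta^2>1$, whereas (1) and (2) both assert $(\alpha^2-1)(\beta^2-1)\geq 1$; taking $\alpha^2=\beta^2=1.01$ shows this does not follow. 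No amount of ``combining through $D$'' can repair this, since $d$ cancels from every inequality. The missing ingredient is a maximal tuple containing an $a$-translate and a $b$-translate simultaneously: the paper uses the positively oriented 4-tuple $(a^+,ab^+,ba^+,b^+)$ --- the fact that $ab^+$ precedes $ba^+$ in the cyclic order is exactly where positive intersection enters quantitatively, cf.\ \cite[Lemma 2.2]{collar} --- whose image under $\phi$ yields, in the normalization $(\phi(a^-),\phi(b^-),\phi(a^+),\phi(b^+))=(-\Id,-\L^2,0,l_\infty)$, the coupled inequality $B\L^2\,{}^t\!B-\L^2\gg (A\,{}^t\!A-\Id)^{-1}$. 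Both (1) and (2) are extracted from this single inequality (together with the fact that the eigenvalues of $\L$ are less than $1$, a minmax comparison, and the elementary inequality of the appendix); without it neither bound can be reached.
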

We isolate a useful lemma which is used many times in the proof:
\begin{lem}\label{lem:ab}
 Let $M\in \GL_n(\F)$. Denote by $0<\tau_n\leq\ldots\leq \tau_1$ the eigenvalues of $M^t\!M$ and by $|\mu_n|\leq\ldots\leq |\mu_1|$ the absolute values of the eigenvalues of $M$. Then $\tau_n\leq |\mu_n|^2$.
 \end{lem}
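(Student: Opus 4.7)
The plan is to use the standard linear algebra fact that the smallest singular value of an invertible matrix is bounded above by the absolute value of any of its eigenvalues. Concretely I will exhibit an eigenvector $v \in \K^n$ for $\mu_n$ and compute $\|Mv\|^2$ in two ways: once as $|\mu_n|^2 \|v\|^2$, and once via an orthonormal decomposition of $v$ in eigenvectors of $M^t M$.

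First I would fix the setup: since $\F$ is real closed and $\K = \F[i]$, the Hermitian form $\langle u,v\rangle = \sum u_i \bar v_i$ on $\K^n$ is well defined and $\F$-valued on the diagonal, giving a positive $\F$-valued norm $\|v\|^2 := v^* v = \sum |v_i|^2$. Second, since $M^t M \in \Sym^+(n,\F)$ and $\F$ is real closed, by the spectral theorem over $\F$ (as used in the proof of Lemma~\ref{lem:2.3}) there exists an $\F$-orthonormal basis $e_1,\dots,e_n$ of $\F^n$ with $M^t M e_i = \tau_i e_i$. This basis is automatically $\K$-orthonormal for the Hermitian form above.

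Now pick any eigenvalue $\mu \in \K$ of $M$ with eigenvector $v \in \K^n \setminus \{0\}$, so $Mv = \mu v$. Writing $v = \sum c_i e_i$ with $c_i \in \K$, on one hand
\[
\|Mv\|^2 \;=\; v^* M^t M v \;=\; \sum_{i=1}^n \tau_i |c_i|^2 \;\geq\; \tau_n \sum_{i=1}^n |c_i|^2 \;=\; \tau_n \|v\|^2,
\]
and on the other hand, using that $M$ has real entries so that $(Mv)^* = \bar\mu\, v^*$,
\[
\|Mv\|^2 \;=\; (Mv)^*(Mv) \;=\; |\mu|^2 v^* v \;=\; |\mu|^2 \|v\|^2.
\]
Since $\|v\|^2 > 0$, this forces $|\mu|^2 \geq \tau_n$. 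Applying this to $\mu = \mu_n$ yields $\tau_n \leq |\mu_n|^2$.

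The only mild subtlety, which I do not expect to be a real obstacle, is making sure the Hermitian/Euclidean inner product arguments are valid over an arbitrary real closed field rather than over $\R$. This is handled by invoking the spectral theorem for symmetric matrices over $\F$ (available because $\F$ is real closed, so every real symmetric matrix is orthogonally diagonalizable with $\F$-eigenvalues, and positive definiteness is preserved), and by noting that $|c|^2 = c\bar c \in \F_{\geq 0}$ for $c \in \K$, so all the quantities above lie in $\F$ and the order comparisons make sense.
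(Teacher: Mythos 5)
Your proof is correct and rests on the same core argument as the paper's: the variational/orthonormal-eigenbasis bound $\|Mv\|^2\geq\tau_n\|v\|^2$ for the positive definite symmetric matrix $^t\!MM$, evaluated on an eigenvector of $M$ for $\mu_n$, using the spectral theorem over the real closed field $\F$. The only difference is in handling a non-real $\mu_n$: the paper stays over $\F$ by restricting $^t\!M$ to a two-dimensional invariant subspace where it acts as $\bsm a&b\\-b&a\esm$ with $a^2+b^2=|\mu_n|^2$, whereas you pass to $\K^n$ with the Hermitian form; both are legitimate, and your justification that $|c|^2=c\bar c\in\F_{\geq 0}$ keeps all comparisons inside the ordered field $\F$.
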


 \begin{proof}
 If $S=M^t\!M$, then $S>\!>0$ and, if $(\cdot,\cdot ) $ denotes the standard scalar product, we have
  $$ \tau_n=\min_{v\neq 0}\frac{(Sv,v)}{(v,v)}$$
Since $(Sv,v)=(^t\!Mv,^t\!Mv)$ we get 
$\tau_n\leq\frac{(^t\!Mv,^t\!Mv)}{(v,v)}$ for every non zero $v$. If now $\mu_n$ belongs to $\F$, we get the statement applying this inequality to a corresponding eigenvector of $^t\!M$. If instead $\mu_n\in \K\setminus \F$, then there is a two dimensional subspace $E\cong \F^2$ in $\F^n$ which is invariant under $^t\!M$ and where this latter matrix acts like $\bsm a&b\\-b&a\esm$, for some $a,b\in \F$ with $a^2+b^2=|\mu_n|^2$. Then for $\bsm x\\y\esm\in E$ we have $$(^t\!M\bsm x\\y\esm,^t\!M\bsm x\\y\esm)=(ax+by)^2+(-bx+ay)^2=(a^2+b^2)(x^2+y^2)$$
which again implies the lemma.
 \end{proof}

\begin{proof}[Proof of Theorem \ref{lem:collar}]
Given two hyperbolic elements $a,b\in \G$, we denote by $\ax (a)$ and $\ax (b)$ the axes of $a$ and $b$, and by $a^+, b^+$ (resp. $a^-,b^-$) the attractive (resp. repulsive) fixed points of $a$ and $b$ in $\partial \H^2$.  

We can assume, without loss of generality, that $a$ and $b$ translate as represented by the picture and that the points $(a^-,b^-,ab^-,a^+,ab^+,ba^+,b^+,ba^-)$ are cyclically positively ordered (cfr. \cite[Lemma 2.2]{collar}).
\begin{center}
 \begin{tikzpicture}
  \draw (0,0) circle [radius=2];
\draw  (-2,0) node[left] {$a^-$} to [out=0,in=180]  (2,0) node [right]{$a^+$};
\node at (.8,0) {$>$};
\node [above]at (-1.1,0) {$\ax (a)$};
\draw (0,-2) node[below]{$b^-$} to [out=90,in=-90]  (0,2) node [above]{$b^+$};
\node at (0,.8) [rotate=90]{$>$};
\node [left] at (0,-1.3) {$\ax (b)$};
\draw (1.7,-1) node[below, right]{$a b^-$} to [out=150,in=-150]  (1.7,1) node [above,right]{$a b^+$};
 \draw (-1, 1.75) node[above left]{$b a^-$} to [out=-70,in=-110]  (1,1.75) node [above right]{$b a^+$};
 \end{tikzpicture}
\end{center}
Let $\phi:S\to \Ll(V)$ be the  maximal framing for $\rho$. Then the 6 points $$(\phi(b^-),\phi(a^+),\rho(a)\phi(b^+),\rho(b)\phi(a^+),\phi(b^+),\phi(a^-))$$ in $\Ll(V)^6$ form a maximal 6-tuple. This implies that they are pairwise transverse and every ordered subtriple forms a maximal triple. 

We are going to perform our computations in the upper half-space model. As in Section \ref{sec:models} fix a symplectic basis $\{e_1,\ldots,e_n,-Je_1,\ldots,-Je_n\}$ of $V$, set $l_\infty=\<e_1,\ldots,e_n\>$ and parametrize the set of Lagrangians transverse to $l_\infty$ by symmetric matrices. In view of Proposition \ref{prop:4tran}, we may, modulo conjugating $\rho$, assume that the 4-tuple $(\phi(a^-),\phi(b^-),\phi(a^+),\phi(b^+))$ is equal to $(-\Id,-\L^2,0,l_\infty)$ where $\L$ is diagonal with eigenvalues $0<\l_i<1$.
Since $\rho(a)$ fixes $0$ and $-\Id$ and $\rho(b)$ fixes $-\L^2$ and $l_\infty$ we have
$$\begin{array}{rl}
\rho(a)&=\bpm ^t\!A^{-1}&0\\-^t\!A^{-1}+A& A\epm\\
\rho(b)&=\bpm B&B\L^2-{\L^2}~^t\!B^{-1}\\0& ^t\!B^{-1}\epm
\end{array}$$
for some matrices $A,B$. Let $\{\alpha_1,\ldots,\alpha_n\}$ and $\{\beta_1,\ldots,\beta_n\}$ denote the eigenvalues of $A$ (resp. $B$) counted with multiplicity and ordered so that $|\alpha_i|\geq |\alpha_{i+1}|$ and similarly $|\beta_i|\geq|\beta_{i+1}|$.

An easy computation gives 
$$\begin{array}{rcl}\rho(b)\phi(a^+)&=\rho(b)\cdot 0&=-\L^2+B{\L^2}~^t\!B\\
\rho(a)\phi(b^+)&=\rho(a)\cdot l_\infty&=(A^t\!A-\Id)^{-1}.
\end{array}$$
We summarize this information in the following picture for the reader's convenience:
\begin{center}
 \begin{tikzpicture}[scale=.8]
  \draw (0,0) circle [radius=2];
\filldraw (-2,0) circle [radius=1.5pt];
\filldraw (0,2) circle [radius=1.5pt];
\filldraw (2,0) circle [radius=1.5pt];
\filldraw (0,-2) circle [radius=1.5pt];
\filldraw (1.7,1) circle [radius=1.5pt];
\filldraw (1,1.7) circle [radius=1.5pt];
\draw  (-2,0) node[left] {$\phi(a^-)=-\Id$} to [out=0,in=180]  (2,0) node [right]{$\phi(a^+)=0$};
\node at (.7,-0.02) {$>$};
\node [above]at (-.8,0) {};
\draw (0,-2) node[below]{$\phi(b^-)=-\L^2$} to [out=90,in=-90]  (0,2) node [above]{$\phi(b^+)=l_\infty$};
\node at (0,.7) [rotate=90]{$>$};
\node [left] at (0,-.8) {};
\draw (1.7,-1) node[below right]{} to [out=150,in=-150]  (1.7,1) ;
 \draw (-1, 1.75) node[above left]{} to [out=-70,in=-110]  (1,1.75) ;
\node at (4.5,1) {$\phi(a b^+)=(A^t\!A-\Id)^{-1}$};
\node at  (3.9,1.8) {$\phi(b a^+)=-\L^2+B{\L^2}~^t\!B$};
 \end{tikzpicture}
\end{center}

The maximality of the triple 
$$(\phi(a^+),\phi(ab^+),\phi(b^+))=(0,(A ^t\!A-\Id)^{-1},l_\infty)$$
 implies that the quadratic form represented by $(A ^t\!A-\Id)^{-1}$ is positive definite and in particular all the eigenvalues of $A ^t\!A$ are bigger than one.
Thus if we denote by $\tau_1\geq\ldots\geq \tau_n>1$ the eigenvalues of $A ^t\!A$, it follows from Lemma \ref{lem:ab} that $1<\tau_n\leq|\alpha_n|^{2}$ and hence we get that the eigenvalues of $A$ satisfy $1<|\alpha_n|\leq\ldots\leq |\alpha_1|$; in particular $\rho(a)$ is Shilov hyperbolic.

We now exploit the maximality of the triple 
$$(\phi(a^+),\phi(ba^+),\phi(b^+))=(0,{B\L^2}\!~^t\!B-\L^2,l_\infty),$$
which is equivalent to the fact that the quadratic form
$$\L((\L^{-1}B\L) ^t\!(\L^{-1}B\L)-\Id)\L=B\L^2\!~ {^t\!B}-\L^2$$
is positive definite. Denoting by $C$ the matrix $\L^{-1}B\L$ we get that all the eigenvalues of $C^t\!C$ are bigger than 1. 
Let $1<\sigma_n\leq\ldots\leq \sigma_1$ denote the eigenvalues of $C ^t\!C$. From Lemma \ref{lem:ab} we get that the  eigenvalues of $B$ satisfy as $1<|\beta_n|\leq\ldots\leq| \beta_1|$.  This implies that $\rho(b)$ is Shilov hyperbolic as well. Moreover we have 
$$\sigma_1\leq \det(C ^t\!C)=\det(C)^2\leq |\beta_1|^{2n}.$$

Last we exploit the maximality of the quadruple
$$(\phi(a^+),\phi(ab^+),\phi(ba^+),\phi(b^+))=(0,(A^t\!A-\Id)^{-1},{B\L^2}~^t\!B-\L^2,l_\infty)$$
which is equivalent to the property that
\begin{equation}\label{eqn:5}\L((\L^{-1}B\L) ^t\!(\L^{-1}B\L)-\Id)\L-(A ^t\!A-\Id)^{-1}>\!>0.
\end{equation}
(see Lemma \ref{lem:2.10} (2)).

Taking into account that $1< \sigma_n\leq\ldots\leq \sigma_1$, we obtain that
if $x_n\leq\ldots\leq x_1$ are the  eigenvalues of 
$$X=\L((\L^{-1}B\L) ^t\!(\L^{-1}B\L)-\Id)\L=\L(C^t\!C-\Id)\L,$$ 
then (\ref{eqn:5}) implies
\begin{equation}\label{eqn:1}
x_i\geq \frac 1{\tau_{n+1-i}-1}, \quad \text{ for all $1\leq i\leq n$.}
\end{equation}

Next we claim that $x_i<(\s_i-1)$. Indeed, by the minmax theorem, we have 
\begin{eqnarray}
x_k&=&\min_{\dim W=n+1-k}\max_{v\in W}\frac{\langle \L(C ^t\!C-\Id)\L v,v\rangle}{\|v\|^2}=\nonumber\\
&=&\min_{\dim W=n+1-k}\max_{v\in W}\left(\frac{\langle (C ^t\!C-\Id)\L v,\L v\rangle}{\|\L v\|^2} \frac{\|\L v\|^2}{\|v\|^2}\right)\leq\nonumber\\
&\leq&(\sigma_k-1)\max_{v\in\F^n} \frac{\|\L v\|^2}{\|v\|^2}=(\sigma_k-1)\l_n^2<\s_k-1\nonumber
  \end{eqnarray}
where the last inequality takes into account that $\l_n<1$.

Setting $i=1$ in the above inequalities we obtain $\s_1-1\geq \frac1{ \tau_n-1}$ which, together with the inequalities previously obtained, namely that $|\beta_1|^{2n}\geq \s_1$ and $\tau_n\leq |\alpha_n|^2$, shows assertion (2).

We establish now the inequality (1). Since $x_i<\s_i-1$, we get
  $$
     (\det B)^2=\prod_{i=1}^n\s_i>\prod_{i=1}^n (1+x_i)$$
and we deduce from (\ref{eqn:1}) that 
$$\prod_{i=1}^n (1+x_i)
\geq\prod_{i=1}^n\frac{\tau_i}{ (\tau_i-1)}.
$$

Since over any real closed field $\F$, and for any $a_1,\ldots,a_n>1$ one has
$\prod_{i=1}^n (a_i^n-1)\leq \left(a_1a_2\ldots a_n-1\right )^n$ (cfr. Appendix 1), we deduce, choosing $a_i=\tau_i^{1/n}$,
$$\left(\prod_{i=1}^n\frac{\tau_i} {(\tau_i-1)}\right)^{1/n}\geq\frac {(\tau_1\ldots\tau_n)^{1/n}}{(\tau_1\ldots\tau_n)^{1/n}-1}.$$
Using $\tau_1\ldots\tau_n=(\det A)^2$, this establishes the first inequality.

\end{proof}
\begin{remark}\label{rem:collar}
 In the specific case of a maximal representation with values in $\Sp(2n,\R)$ and which in addition belongs to the Hitchin component, assertion (2) is a weaker version of the Collar Lemma for Hitchin representations proven by Lee and Zhang \cite{collar}: their result implies, under these hypotheses, that 
 $$\beta_1^2\geq\frac{\alpha_n^{2}}{(\alpha_n^2-1)}.$$
 This is Proposition 2.12 (1) in their paper.
\end{remark}
%\begin{remark}
 %As a byproduct we obtain that if $\rho:\G\to\Sp(V)$ is a maximal representation and $\phi:S\to \Ll(\R^{2n})$ is the $\rho$ equivariant monotone boundary map, then any hyperbolic element of $\G$ is Shilov hyperbolic and the image of the attractive (resp. repulsive) fixed points is the unique attractive (resp. repulsive) fixed point for $\rho(\g)$ in $\Ll(\R^{2n})$. 
%\end{remark}

\section{Crossratios and the geometry of $\F$-tubes}\label{sec:3}
\subsection{Crossratios}\label{sec:cr}
We now introduce a useful tool to study the geometry of the Siegel space. 
Let $V$ be a $2n$ dimensional vector space, over a field $\L$. Observe that if $a,b$ are $n$-dimensional subspaces which are transverse ($a\tra b$), then we have a direct sum decomposition $V=a\oplus b$ and thus we can define the projection $p_a^{/\!/b}:V\to a$ onto $a$ parallel to $b$. Let now $(l_1,l_2,l_3,l_4)$ be a quadruple in $\Gr_n(V)$ with the property that $l_1\tra l_2$, $l_3\tra l_4$.
\begin{defn}\label{defn:cr}
 The crossratio of $(l_1,l_2,l_3,l_4)$ is the endomorphism of $l_1$ defined by
$$R(l_1,l_2,l_3,l_4)=p_{l_1}^{/\!/l_2}\circ p_{l_4}^{/\!/l_3}|_{l_1}.$$
\end{defn}
The crossratio has the following equivariance property:  for all $g\in Gl(V)$, we have $$R(gl_1,gl_2,gl_3,gl_4)=gR(l_1,l_2,l_3,l_4)g^{-1}.$$

It will be useful, in the following, to have an explicit expression for $R$ once a basis $\calB=\{e_1,\ldots,e_{2n}\}$ of $V$ is fixed. Recall that, as in Section \ref{sec:models}, the choice of the basis $\calB$ allows us to represent an element $m$ of $\Gr_n(V)$ with a $2n\times n$ matrix $M$ of maximal rank: the columns of the matrix $M$ are understood to be the coordinates, with respect to $\calB$, of a basis of $m$. With these notations we have the following
\begin{lem}\label{lem:a1}
 Let us assume that the columns of the matrix $\bsm X_i\\\Id_n\esm$ form a basis $\calB_i$ of the $n$-dimensional vector space $l_i$. Then the expression for $R(l_1,l_2,l_3,l_4)$ with respect to the basis $\Bb_1$ of $l_1$ is given by
 $$R(l_1,l_2,l_3,l_4)=(X_1-X_2)^{-1}(X_4-X_2)(X_4-X_3)^{-1}(X_1-X_3).$$
\end{lem}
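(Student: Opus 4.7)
The plan is to verify the identity by carrying out the two projections in coordinates and reading off the resulting matrix with respect to the basis $\calB_1$.

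First, I would parametrize a generic vector $v\in l_1$ as $v=\bsm X_1 u\\ u\esm$ with $u\in\F^n$, so that its $\calB_1$-coordinate vector is exactly $u$. To compute $p_{l_4}^{/\!/l_3}(v)$, I would decompose $v=\bsm X_4 a\\ a\esm+\bsm X_3 b\\ b\esm$ with $a,b\in\F^n$. The lower block gives $u=a+b$ and the upper block then gives $X_1u=(X_4-X_3)a+X_3u$. Since $l_3\tra l_4$ forces $X_4-X_3$ to be invertible (this is part of the hypothesis that allows $R$ to be defined), one solves $a=(X_4-X_3)^{-1}(X_1-X_3)u$.

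Next I would apply $p_{l_1}^{/\!/l_2}$ to $w:=\bsm X_4 a\\ a\esm$, writing $w=\bsm X_1 c\\ c\esm+\bsm X_2 d\\ d\esm$. Exactly the same bookkeeping yields $a=c+d$ and then $(X_1-X_2)c=(X_4-X_2)a$, hence $c=(X_1-X_2)^{-1}(X_4-X_2)a$. Substituting the expression for $a$ gives
\[
c=(X_1-X_2)^{-1}(X_4-X_2)(X_4-X_3)^{-1}(X_1-X_3)\,u,
\]
and since $c$ is the $\calB_1$-coordinate vector of the image, this is precisely the matrix of $R(l_1,l_2,l_3,l_4)$ relative to $\calB_1$.

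There is really no obstacle here: the only things to check are that the transversality assumptions $l_1\tra l_2$ and $l_3\tra l_4$ translate, in the given affine chart, into invertibility of $X_1-X_2$ and $X_4-X_3$ respectively, which is standard since for subspaces given as graphs of the form $\bsm X\\ \Id\esm$, transversality is equivalent to invertibility of the difference of the defining matrices. Once this is observed, the calculation above is simply linear algebra and yields the stated formula.
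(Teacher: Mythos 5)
Your proof is correct and follows essentially the same computation as the paper: the paper solves the same decomposition equations at the level of the matrices representing $p_{l_4}^{/\!/l_3}|_{l_1}$ and $p_{l_1}^{/\!/l_2}|_{l_4}$, whereas you apply them to a generic vector $v=\bsm X_1u\\u\esm$, which is only a cosmetic difference. The observation that transversality of graphs $\bsm X\\ \Id\esm$ is equivalent to invertibility of the difference of the defining matrices is also exactly what the paper uses.
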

\begin{proof}
 The matrix representing the linear map $p_{l_4}^{/\!/l_3}|_{l_1}$ with respect to the bases $\Bb_1$ of $l_1$ and $\Bb_4$ of $l_4$ is the unique $A\in Gl_n(\L)$ such that
 $$\bsm X_1\\\Id_n\esm=\bsm X_4\\\Id_n\esm A+\bsm X_3\\\Id_n\esm (\Id-A).$$
 Solving for $A$ we obtain,
 $$A=(X_4-X_3)^{-1}(X_1-X_3).$$
 Notice that $X_4-X_3$ is invertible since by assumption $l_3$ and $l_4$ are transverse.
 
 Similarly we get that the matrix representing the restriction of the linear map $p_{l_1}^{/\!/l_2}$ to ${l_4}$ with respect to the bases $\Bb_4$ of $l_4$ and $\Bb_1$ of $l_1$ is given by
  $$B=(X_1-X_2)^{-1}(X_4-X_2).$$
  Since, by definition, the endomorphism $R(l_1,l_2,l_3,l_4)$ is the composition of $p_{l_1}^{/\!/l_2}$ and  $p_{l_4}^{/\!/l_3}|_{l_1}$, and $p_{l_4}^{/\!/l_3}|_{l_1}$ has image contained in $l_4$, we get that 
  $$R(l_1,l_2,l_3,l_4)=BA$$
  which gives the desired result.
\end{proof}

Let us now fix a basis $\Bb$ of $V$, set, as usual, $l_\infty=\<e_1,\ldots,e_n\>$ and represent with a matrix $M\in M_n(\L)$ the subspace spanned by the columns of $\bsm M\\\Id\esm$. By a similar computation we have

\begin{lem}\label{lem:cr}
 Assume $0$, $Z$, $X$, $l_\infty$ 
are pairwise transverse,
 $$R\left(0,Z,X,l_\infty\right)=Z^{-1}X.$$
\end{lem}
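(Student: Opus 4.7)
The plan is to apply the definition of the crossratio directly, since Lemma \ref{lem:a1} is stated under the hypothesis that all four subspaces lie in the affine chart parametrized by matrices $\bsm X_i \\ \Id \esm$, which fails for $l_\infty = \<e_1,\dots,e_n\> = \bsm \Id \\ 0 \esm\L^n$. So I will just unwind
$$R(0,Z,X,l_\infty) = p_0^{/\!/Z}\circ p_{l_\infty}^{/\!/X}\big|_0$$
using block-vector decompositions.

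First I would fix a vector $v \in 0$, which in coordinates has the form $v = \bsm 0 \\ w \esm$ for some $w \in \L^n$, and compute $p_{l_\infty}^{/\!/X}(v)$ by writing $v = v_1 + v_2$ with $v_1 = \bsm u \\ 0 \esm \in l_\infty$ and $v_2 = \bsm X w' \\ w' \esm \in X$. Matching the bottom block gives $w' = w$, and the top block then forces $u = -Xw$, so $p_{l_\infty}^{/\!/X}(v) = \bsm -Xw \\ 0 \esm$.

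Next I would compute $p_0^{/\!/Z}$ of this vector by solving $\bsm -Xw \\ 0 \esm = \bsm 0 \\ s \esm + \bsm Zs' \\ s' \esm$ with the first summand in $0$ and the second in $Z$. The bottom block gives $s' = -s$, and plugging into the top gives $-Xw = -Zs$, so $s = Z^{-1}Xw$ (which is well defined since $0 \tra Z$ means $Z$ is invertible as a matrix). Hence $p_0^{/\!/Z}\circ p_{l_\infty}^{/\!/X}(v) = \bsm 0 \\ Z^{-1}Xw \esm$.

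Finally, identifying the subspace $0$ with $\L^n$ via the basis $\bsm 0 \\ \Id \esm$, the endomorphism $w \mapsto Z^{-1}Xw$ is exactly multiplication by the matrix $Z^{-1}X$, which gives the claim. There is no real obstacle here; the only point to keep in mind is that the transversality hypotheses ensure all the inverses we write down (notably $Z^{-1}$ and $(X-Z)^{-1}$ implicit in the setup) make sense.
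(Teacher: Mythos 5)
Your proposal is correct and matches the paper's own (elided) argument: the paper simply computes the two projections $p_{l_\infty}^{/\!/X}|_0$ and $p_0^{/\!/Z}|_{l_\infty}$ directly from the definition, finding their matrices to be $-X$ and $-Z^{-1}$ respectively, whose composite is $Z^{-1}X$ — exactly your vector-level computation in matrix form. Your observation that Lemma \ref{lem:a1} does not literally apply because $l_\infty$ lies outside the affine chart is a fair reading of why the paper treats this case separately.
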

\begin{comment}
\begin{proof}
We denote by  $A$  (resp. $B$) the matrix representing $ p_{l_\infty}^{/\!/X}$ (resp. $p_{0}^{/\!/Z}$) with respect to the given bases.
We have 
$$\bpm 0\\\Id_n\epm=\bpm\Id_n\\0\epm A+\bpm X\\\Id_n\epm .$$
In particular the matrix expression for $ p_{l_\infty}^{/\!/X}$ with respect to the given bases is $-X$.
Moreover we have 
$$\bpm \Id_n\\0\epm=\bpm0\\\Id_n\epm B-\bpm Z\\\Id_n\epm B.$$
Hence the matrix expression for the restriction of $ p_{0}^{/\!/Z}$ to $l_\infty$ is $-Z^{-1}$. This concludes the proof. 
\end{proof}
\end{comment}
It will be useful to understand how the crossratio varies with respect to permutations of the factors. In particular we need to be able to compare endomorphisms of different vector spaces. Given two vector spaces $l_1,l_2$ of the same dimension we say that two endomorphism $R_1\in\End(l_1)$ and $R_2\in \End(l_2)$ are \emph{conjugate} if there exists an isomorphism $g:l_1\to l_2$ such that $gR_1g^{-1}=R_2$. In this case we write $R_1\cong R_2$.
\begin{lem}\label{lem:2.7}
Assume that the subspaces $l_i$ are pairwise transverse, then 
\begin{enumerate}
\item $R(l_1,l_2,l_4,l_3)= \Id-R(l_1,l_2,l_3,l_4). $
\item $R(l_4,l_1,l_2,l_3)\cong(\Id-R(l_1,l_2,l_3,l_4)^{-1})^{-1};$
\item $R(l_1,l_4,l_2,l_3)\cong R(l_2,l_3,l_1,l_4)\cong(\Id-R(l_1,l_2,l_3,l_4))^{-1}.$
\end{enumerate}

\end{lem}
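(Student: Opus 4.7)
The plan is to prove (1) directly from the relation $\pi_{34}+\pi_{43}=\Id_V$ on projections (writing $\pi_{ab}$ for $p_{l_a}^{/\!/l_b}$), and then to deduce (2) and (3) by combining (1) with two further elementary identities on the crossratio together with a Klein-four invariance of its similarity class.

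For (1), the decomposition $\pi_{43}+\pi_{34}=\Id_V$ gives
\[
R(l_1,l_2,l_3,l_4)+R(l_1,l_2,l_4,l_3)=\pi_{12}\circ(\pi_{43}+\pi_{34})|_{l_1}=\pi_{12}|_{l_1}=\Id_{l_1},
\]
where the last equality uses that any $v\in l_1$ decomposes as $v+0$ along $l_1\oplus l_2$. To set up (2) and (3) I establish two auxiliary identities valid for any pairwise transverse $(l_a,l_b,l_c,l_d)$. The first is a \emph{reversal conjugacy} $R(l_a,l_b,l_c,l_d)\cong R(l_d,l_c,l_b,l_a)$: writing $R(l_a,l_b,l_c,l_d)=g\circ f$ with $f=\pi_{dc}|_{l_a}\colon l_a\to l_d$ and $g=\pi_{ab}|_{l_d}\colon l_d\to l_a$, one has $R(l_d,l_c,l_b,l_a)=f\circ g$, and $gf$ is conjugate to $fg$ via the isomorphism $f$. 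The second is a \emph{swap-middle inversion} $R(l_a,l_c,l_b,l_d)=R(l_a,l_b,l_c,l_d)^{-1}$: the projections $\pi_{ab}|_{l_c}$ and $\pi_{cb}|_{l_a}$ are mutually inverse when $a,b,c$ are pairwise transverse, so $(g\circ f)^{-1}=\pi_{ac}|_{l_d}\circ\pi_{db}|_{l_a}=R(l_a,l_c,l_b,l_d)$.

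Iterating (1), the swap-middle inversion, and the reversal conjugacy on the six orderings starting with $l_1$ produces the classical six expressions $R,\,R^{-1},\,\Id-R,\,(\Id-R)^{-1},\,\Id-R^{-1},\,(\Id-R^{-1})^{-1}$, where $R:=R(l_1,l_2,l_3,l_4)$. In particular a swap-last followed by a swap-middle gives the equality $R(l_1,l_4,l_2,l_3)=(\Id-R)^{-1}$, and the chain swap-middle, swap-last, swap-middle gives $R(l_1,l_4,l_3,l_2)=(\Id-R^{-1})^{-1}$. Next I establish the \emph{$V_4$-invariance} of the similarity class: for each $\sigma\in V_4=\{e,(12)(34),(13)(24),(14)(23)\}\subset S_4$, one has $R(l_{\sigma(1)},l_{\sigma(2)},l_{\sigma(3)},l_{\sigma(4)})\cong R$. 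The case $(14)(23)$ is reversal itself. For $(12)(34)$, applying (1) yields $R(l_2,l_1,l_4,l_3)=\Id-R(l_2,l_1,l_3,l_4)$; chaining reversal, a second instance of (1), and reversal again gives $R(l_2,l_1,l_3,l_4)\cong\Id-R$, and hence $R(l_2,l_1,l_4,l_3)\cong R$, using that $M\mapsto\Id-M$ and $M\mapsto M^{-1}$ both preserve similarity classes. The case $(13)(24)$ then follows by composing $(12)(34)$ with reversal.

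The statements (2) and (3) are now immediate consequences. Since $(l_4,l_1,l_2,l_3)$ is obtained from $(l_1,l_4,l_3,l_2)$ by the Klein-four element $(12)(34)$, invariance gives $R(l_4,l_1,l_2,l_3)\cong R(l_1,l_4,l_3,l_2)=(\Id-R^{-1})^{-1}$, proving (2); similarly $(l_2,l_3,l_1,l_4)$ is obtained from $(l_1,l_4,l_2,l_3)$ by $(13)(24)$, so $R(l_2,l_3,l_1,l_4)\cong R(l_1,l_4,l_2,l_3)=(\Id-R)^{-1}$, giving (3). The principal difficulty throughout is bookkeeping: each $S_4$-element must be tracked consistently as acting on the positions of the tuple rather than on its content, and every intermediate conjugacy must be carefully propagated through the operations $M\mapsto\Id-M$ and $M\mapsto M^{-1}$.
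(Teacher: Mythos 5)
Your argument is correct; I checked in particular the swap-middle inversion $R(l_a,l_c,l_b,l_d)=R(l_a,l_b,l_c,l_d)^{-1}$ (which follows, as you say, from $(p_{l_a}^{/\!/l_b}|_{l_d})^{-1}=p_{l_d}^{/\!/l_b}|_{l_a}$ and $(p_{l_d}^{/\!/l_c}|_{l_a})^{-1}=p_{l_a}^{/\!/l_c}|_{l_d}$), the $V_4$-invariance, and the two position-permutation identifications $(l_1,l_4,l_3,l_2)\mapsto(l_4,l_1,l_2,l_3)$ under $(12)(34)$ and $(l_1,l_4,l_2,l_3)\mapsto(l_2,l_3,l_1,l_4)$ under $(13)(24)$; all are right, and the invertibility of every operator you invert is guaranteed by pairwise transversality. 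Part (1) coincides with the paper's proof. For (2) and (3) the paper proceeds quite differently: it normalizes by the $\GL(V)$-action to $l_1=0$, $l_2=Z$, $l_3=X$, $l_4=l_\infty$ and computes the relevant projections as explicit matrices (e.g.\ $p_{0}^{/\!/l_\infty}|_{Z}=\Id$, $p_{Z}^{/\!/X}|_{0}=(\Id-X^{-1}Z)^{-1}$), reading off the conjugacy classes directly, whereas you work coordinate-free and derive everything from three structural moves (complementary projections sum to $\Id$, mutual inverses of projections, $fg\cong gf$) plus the anharmonic $S_4/V_4$ symmetry. The paper's route is shorter but hides the group-theoretic structure and requires a separate ad hoc computation for each permutation; yours is longer on bookkeeping but establishes the full six-element orbit once and for all and makes transparent why only the $V_4$-coset of the ordering matters, which is arguably the cleaner general statement. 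Both are valid proofs of the lemma as stated.
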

\begin{proof}
(1) By definition we have 
$$
\begin{array}{l}
p_{l_1}^{/\!/l_2}\circ p_{l_4}^{/\!/l_3}|_{l_1}+p_{l_1}^{/\!/l_2}\circ p_{l_3}^{/\!/l_4}|_{l_1}=\\
=p_{l_1}^{/\!/l_2}\circ (p_{l_4}^{/\!/l_3}+p_{l_3}^{/\!/l_4})|_{l_1}=\\
=p_{l_1}^{/\!/l_2}\circ\Id|_{l_1}=\Id_{l_1}.
\end{array}
$$

(2) Up to the $Gl(V)$ action we can assume that $l_1=0$, $l_2= Z$, $l_3= X$ and $l_4=l_\infty$.  In particular  $R(0,Z,X,l_\infty)=Z^{-1}X$. 
In order to compute $R(Z,X,l_\infty,0)$ we  compute
$p_{0}^{/\!/l_\infty}|_{Z}=\Id$ and  $p_{Z}^{/\!/X}|_{0}=(\Id+X^{-1}Z)^{-1}$.

(3) Similarly one gets that $p_{X}^{/\!/Z}|_{0}=(\Id-Z^{-1}X)^{-1}$. The second equivalence follows from the fact that $p_{l_\infty}^{/\!/0}|_{Z}=Z$ and $p_{Z}^{/\!/X}|_{l_\infty}=(Z-X)^{-1}$.
\end{proof}

\subsection{$\F$-Tubes}
Let $(V,\<\cdot,\cdot\>)$ be a symplectic vector space over a real closed field $\F$. Recall from Section \ref{sec:models} that $\K$ denotes the quadratic extension $\F[i]$, that $\s:\Ll(V_\K)\to\Ll(V_\K)$ is induced by the complex conjugation with respect to the real structure $V$ of $V_\K$ and that $\Tt_V$ is the model of the Siegel space contained in $\Ll(V_\K)$.
For any pair of transverse Lagrangians $(a,b)$ in $\Ll(V)^{(2)}$, we introduce here an algebraic subset  $\Yy_{a,b}$  of the Siegel space $\Tt_V$ that is determined by the pair $(a,b)$ and whose dimension is  half the dimension of $\Tt_V$. We call such subsets $\F$-tubes. In the case when $\F=\R$, the subsets $\Yy_{a,b}$ are Lagrangian submanifolds of the same rank as $\Xx_\R$; the $\F$-tube $\Yy_{a,b}$ can be seen as the higher rank generalization of a geodesic of the Poincar\'e model which is more suited to our purposes.

With the notation of Section \ref{sec:basic} we define
$$\Yy_{a,b}=\{l\in\Tt_V|\; R(a,l, \s(l),b)=-\Id\}.$$
Notice that requiring that an endomorphism of a vector space is equal to $-\Id$ does not depend on the choice of a basis.
From the equivariance property of the crossratio and the fact that the symplectic group commutes with the complex conjugation $\s$ we deduce that

 \begin{equation}\label{eqn:3} g\Yy_{a,b}=\Yy_{ga,gb}, \; \text{ for any }g\in \Sp(V).\end{equation}

Our first goal is to give equations for $\Yy_{a,b}$ in the Siegel upper half-space for some specific choice of the pair $(a,b)$. 
\begin{lem}\label{lem:2.6}
The $\F$-tube with endpoints $0,l_\infty$ is 
$$\Yy_{0,l_\infty}=\{iY|\; Y\in {\rm Sym}^+(n,\F)\}.$$
\end{lem}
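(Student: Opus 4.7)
The plan is a direct unraveling of the definition of $\Yy_{0,l_\infty}$ using the explicit crossratio formula from Lemma \ref{lem:cr}. The argument has three ingredients: identifying the action of $\sigma$ in the coordinates on the affine chart of $l_\infty$, verifying the transversalities needed to invoke Lemma \ref{lem:cr}, and then solving the resulting matrix equation.

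First I would identify how complex conjugation $\sigma$ acts under the parametrization $\iota : \Sym(n,\K) \to \Ll(V_\K)$ from Section \ref{sec:models}. Since $\iota(Z)$ is the column span of $\bsm Z \\ \Id \esm$ taken with respect to the fixed \emph{real} symplectic basis $\Bb$ of $V$, and since $\sigma$ fixes $\Bb$ pointwise while conjugating $\K$-coefficients, one gets $\sigma(\iota(Z)) = \iota(\bar Z)$. In particular, for $l \in \Tt_V$ corresponding to $Z = X + iY$ with $Y \in \Sym^+(n,\F)$, we have $\sigma(l) = \iota(X - iY)$.

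Next I would check that the quadruple $(0,l,\sigma(l),l_\infty)$ is pairwise transverse, so that Lemma \ref{lem:cr} applies. Transversality of $l$ and $l_\infty$ is automatic from the chart. For transversality of $l$ with $0 = \iota(0)$, one needs $Z$ invertible; writing $Z = Y^{1/2}\bigl(Y^{-1/2}XY^{-1/2} + i\,\Id\bigr)Y^{1/2}$ and noting that $Y^{-1/2}XY^{-1/2}$ is symmetric over the real closed field $\F$, hence diagonalizable with real eigenvalues $\mu_j$, one sees the middle factor has eigenvalues $\mu_j + i \neq 0$ in $\K$. The same computation gives invertibility of $\bar Z$, so $\sigma(l)$ is transverse to $0$ and $l_\infty$. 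Finally, $l$ and $\sigma(l)$ are transverse because $Z - \bar Z = 2iY$ is invertible.

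With transversality in hand, Lemma \ref{lem:cr} gives
\[
R(0,l,\sigma(l),l_\infty) \;=\; Z^{-1}\bar Z.
\]
The defining condition $R(0,l,\sigma(l),l_\infty) = -\Id$ thus reads $Z^{-1}\bar Z = -\Id$, i.e.\ $\bar Z = -Z$, which forces $X = 0$ and leaves $Y$ unconstrained (beyond $Y \in \Sym^+(n,\F)$, which is required for membership in $\Tt_V$). Conversely, any $l = iY$ with $Y \in \Sym^+(n,\F)$ lies in $\Tt_V$ and manifestly satisfies $Z^{-1}\bar Z = (iY)^{-1}(-iY) = -\Id$, proving the reverse inclusion. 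I do not anticipate any real obstacle; the lemma is essentially a one-line computation once the action of $\sigma$ and the transversalities are set up correctly.
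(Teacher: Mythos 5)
Your proposal is correct and follows exactly the paper's own argument: apply Lemma \ref{lem:cr} to get $R(0,Z,\sigma(Z),l_\infty)=Z^{-1}\ov Z$ and observe this equals $-\Id$ iff $\ov Z=-Z$, i.e.\ $X=0$. The extra care you take with the action of $\sigma$ in the chart and with the transversality hypotheses is a sound (and welcome) elaboration of details the paper leaves implicit.
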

\begin{proof}
It follows from Lemma \ref{lem:cr} that 
 $R(0,Z, \s(Z),l_\infty)=Z^{-1}\ov Z.$ 
Clearly we have $Z^{-1}\ov Z=-\Id$ if and only if $\ov Z=-Z$ and this concludes the proof.
\end{proof}
An immediate consequence of Lemma \ref{lem:2.6} and the equivariance property (\ref{eqn:3}) is that if $\F$ is a real closed field, the stabilizer of $\Yy_{a,b}$ is isomorphic to $\GL(n,\F)$ and it acts transitively on $\Yy_{a,b}$.

It will also be useful to have explicit expression for the set $\Yy_{a,b}$ when $a$ and $b$ are transverse to $l_\infty$. This has a particularly nice expression when $a=\<e_1-e_{n+1},\ldots,e_n-e_{2n}\>$ and $b=\<e_1+e_n,\ldots,e_n+e_{2n}\>$:
\begin{lem}\label{lem:2.20}
 If $a,b\in \Ll(V)$ correspond to the matrices $-\Id$ and $\Id$, then 
 $$\begin{array}{rl}
\Yy_{-\Id,\Id}&=U(n)\cap \Xx_\F\\
&=\{X+iY\in\Xx_\F|\;YX=XY,\; X^2+Y^2=\Id\}.
\end{array}$$

\end{lem}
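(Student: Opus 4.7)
I would reduce to Lemma \ref{lem:2.6} via the equivariance property (\ref{eqn:3}). Since $\F$ is real closed we have $\sqrt 2\in\F$, so the matrix
\[ g = \frac{1}{\sqrt 2}\bpm \Id & \Id \\ -\Id & \Id \epm \]
is well-defined, and the three defining relations for $\Sp(2n,\F)$ are verified immediately. A direct computation on Lagrangians shows that $g$ sends the matrices $-\Id$ and $\Id$ to $0$ and $l_\infty$ respectively, so by (\ref{eqn:3}) we have $g\,\Yy_{-\Id,\Id}=\Yy_{0,l_\infty}$. The corresponding fractional linear transformation on $\Xx_\F$ simplifies, after cancellation of the scalar factors, to
\[ g\cdot Z = (\Id+Z)(\Id-Z)^{-1}, \]
and the matrices $\Id\pm Z$ are invertible for $Z=X+iY\in\Xx_\F$ since any vector in their kernel would lie in the kernel of the positive definite matrix $Y$.

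By Lemma \ref{lem:2.6}, $Z\in\Yy_{-\Id,\Id}$ if and only if $g\cdot Z$ is purely imaginary, i.e.\
\[ (\Id+Z)(\Id-Z)^{-1} + (\Id+\ov Z)(\Id-\ov Z)^{-1} = 0. \]
The main technical step is to clear the inverses in the presence of the noncommutativity between $Z$ and $\ov Z$. Multiplying on the left by $(\Id-\ov Z)$ and using that $\Id-\ov Z$ commutes with $\Id+\ov Z$ (both being polynomials in $\ov Z$), the second summand collapses to $\Id+\ov Z$. Multiplying the resulting identity on the right by $(\Id-Z)$ then yields
\[ (\Id-\ov Z)(\Id+Z) + (\Id+\ov Z)(\Id-Z) = 0, \]
whose expansion is $2\Id - 2\,\ov Z\,Z = 0$, i.e.\ $\ov Z\,Z = \Id$.

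Writing $Z=X+iY$ with $X\in\Sym(n,\F)$ and $Y\in\Sym^+(n,\F)$, I compute
\[ \ov Z\,Z = (X-iY)(X+iY) = X^2+Y^2 + i(XY-YX), \]
so $\ov Z\,Z = \Id$ is equivalent to $X^2+Y^2=\Id$ together with $XY=YX$, which is the second description. For the first, since $Z$ is symmetric one has $Z^* = {}^t\ov Z = \ov Z$, so the relation $\ov Z\,Z = \Id$ is precisely the condition $Z\in U(n)$, yielding $\Yy_{-\Id,\Id}=U(n)\cap\Xx_\F$. The only real obstacle is the noncommutativity in the intermediate step, which is resolved by the fact that $\Id\pm\ov Z$ commute with each other.
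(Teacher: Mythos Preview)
Your proof is correct and arrives at the same condition $\ov Z\,Z=\Id$ (equivalently $Z\ov Z=\Id$, since $Z$ is symmetric), but the route differs from the paper's. The paper works directly from the definition of $\Yy_{-\Id,\Id}$: it applies Lemma~\ref{lem:a1} to write
\[
R(-\Id,Z,\sigma(Z),\Id)=(-\Id-Z)^{-1}(\Id-Z)(\Id-\ov Z)^{-1}(-\Id-\ov Z),
\]
sets this equal to $-\Id$, and simplifies using the commutativity of $\Id\pm\ov Z$ to obtain $Z\ov Z=\Id$. You instead use the Cayley transform $g=\frac{1}{\sqrt 2}\bsm\Id&\Id\\-\Id&\Id\esm\in\Sp(2n,\F)$ to send $(-\Id,\Id)$ to $(0,l_\infty)$ and then invoke the already-established Lemma~\ref{lem:2.6} via equivariance~(\ref{eqn:3}). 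Both arguments hinge on the same algebraic observation---that $\Id-\ov Z$ and $\Id+\ov Z$ commute---to clear the inverses. Your approach is perhaps more conceptual (reduce to a known case by symmetry) while the paper's is more self-contained (compute the defining crossratio from scratch); neither is materially shorter. One small remark: your justification that $\Id\pm Z$ are invertible is a bit compressed---the clean way to see it is that for $v\in\K^n$ with $(\Id\pm Z)v=0$ one has $v^*v=\mp v^*Zv$, and comparing imaginary parts forces $v^*Yv=0$, hence $v=0$ by positive definiteness of $Y$---but this is also implicit in the fact (already used in the paper) that $\Sp(2n,\F)$ acts on $\Xx_\F$ by fractional linear transformations.
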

\begin{proof}
Lemma \ref{lem:a1} implies:
 $$R(-\Id,Z,\sigma(Z),\Id)=(-\Id-Z)^{-1}(\Id- Z)(\Id-\ov Z)^{-1}(-\Id-\ov Z).$$
Since $\Id+\ov Z$ and $(\Id- \ov Z)^{-1}$ commute, the equality $R(-\Id,Z,\sigma(Z),\Id)=-\Id$ reads 
$$(\Id- Z)(\Id+\ov Z)=-(\Id+Z)(\Id-\ov Z)$$
which implies 
$$\Id- Z+\ov Z-Z\ov Z=-\Id+\ov Z-Z+Z\ov Z,$$
and hence, $Z\ov Z=Z^*\!Z=\Id$.
\end{proof}
 As a consequence of the explicit parametrization of the sets $\Yy_{0,l_\infty}$ and $\Yy_{-\Id,\Id}$ we obtain

\begin{prop}
Assume that $\F$ is a real closed field. Let $(a,b,c,d)\in \Ll(V)^{(4)}$ be a maximal 4-tuple. The $\F$-tubes $\Yy_{a,c}$ and $\Yy_{b,d}$ meet exactly in one point.
\end{prop}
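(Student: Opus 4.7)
The plan is to use the $\Sp(V)$-equivariance of $\F$-tubes recorded in equation~(\ref{eqn:3}), combined with the normal form from Proposition~\ref{prop:4tran}, to reduce the statement to an explicit computation in the upper half-space. By Proposition~\ref{prop:4tran} there is $g\in\Sp(V)$ with $g(a,b,c,d)=(-\Id,0,D,l_\infty)$ for some positive diagonal matrix $D=\diag(d_1,\dots,d_n)$, and equivariance then gives $g(\Yy_{a,c}\cap\Yy_{b,d})=\Yy_{-\Id,D}\cap\Yy_{0,l_\infty}$. So it suffices to prove that this latter intersection is a single point.

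By Lemma~\ref{lem:2.6}, $\Yy_{0,l_\infty}=\{iY:Y\in\Sym^+(n,\F)\}$; and since $\sigma(iY)=-iY$, the condition $iY\in\Yy_{-\Id,D}$ reads $R(-\Id,iY,-iY,D)=-\Id$. Expanding the crossratio via Lemma~\ref{lem:a1} rewrites this membership as the matrix equation
\[
(D-iY)(D+iY)^{-1}(\Id-iY)(\Id+iY)^{-1}=-\Id.
\]

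The heart of the argument is to simplify this equation without assuming that $Y$ commutes with $D$. The key observation is that $\Id+iY$ and $\Id-iY$ commute, being polynomials in $Y$; using this to move factors past each other and clearing the inverses rewrites the equation as
\[
(\Id-iY)(D-iY)=-(\Id+iY)(D+iY).
\]
Expanding both sides, the imaginary cross terms $-iY(\Id+D)$ appear identically on both sides and cancel, and equality of the real parts collapses to $Y^2=D$. Since $\F$ is real closed and $D$ is positive definite, there is a unique positive definite symmetric square root $Y=D^{1/2}$ (as already invoked in Lemma~\ref{lem:2.3}), yielding the single intersection point $iD^{1/2}\in\Xx_\F$.

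The subtlety to watch is that $D+iY$ and $D-iY$ do not commute in general, so a naive scalar Cayley-type manipulation fails; the rearrangement above uses only the commutativity that is actually available, and this is precisely what lets the reduction to $Y^2=D$ go through.
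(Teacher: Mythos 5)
Your proof is correct and follows essentially the same route as the paper's: reduce to the normal form $(-\Id,0,D,l_\infty)$ via Proposition~\ref{prop:4tran} and equivariance, use Lemma~\ref{lem:2.6} to write the candidate point as $iY$, and exploit the commutativity of $\Id\pm iY$ to collapse the crossratio condition to $Y^2=D$, which has a unique positive definite solution over a real closed field. No issues.
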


\begin{proof}
 Up to the symplectic group action we can assume that $(a,b,c,d)=({-\Id},0,{D},l_\infty)$ for some diagonal matrix $D=\diag(d_1,\ldots d_n)$ with $d_i>0$  (see Proposition \ref{prop:4tran}). Let $y$ be a point in $\Yy_{0,l_\infty}\cap\Yy_{-\Id,D,d}$. Since $y$ belongs to $\Yy_{0,l_\infty}$ we know that $y$ has expression $y=iY$ for some positive definite matrix $Y$. From the definition of $\Yy_{-\Id,D}$ we get 
$$(-\Id-iY)^{-1}(D- iY)(D+iY)^{-1}(-\Id+iY)=-\Id.$$
This is equivalent to
$$(D- iY)(D+iY)^{-1}=(\Id+iY)(-\Id+iY)^{-1}$$
which in turn, using that $(\Id+iY)$ and $(-\Id+iY)^{-1}$ commute restates as
$$(-\Id+iY)(D- iY)=(\Id+iY)(D+iY).$$
This last equation reads $Y^2=D$ which has a unique positive solution.

\end{proof}
\begin{remark}
 If the ordered field $\F$ is not real closed, one can similarly get that, if $(a,b,c,d)$ is maximal, the $\F$-tubes $\Yy_{a,c}$ and $\Yy_{b,d}$ meet in at most one point and they intersect if all the eigenvalues of the crossratio $R(a,b,c,d)$ are squares in $\F$.
\end{remark}

\subsection{Reflection with respect to $\Yy_{a,b}$}\label{sec:ort}
In this subsection we introduce a notion of orthogonality for $\F$-tubes and establish that the set of $\F$-tubes orthogonal to a given one foliate the space $\calT_V$. Our main tool will be the characterization of $\Yy_{a,b}$ as the fixed point set of an involution $\s_{a,b}$ which we now define.
Let $a,b$ be transverse Lagrangians in $\Ll(V)$, we consider the real form $V_{a,b}$ of $V_\K$ given by 
$$V_{a,b}=\<v+iw|v\in a, w\in b\>,$$
and denote by $\s_{a,b}$ the complex conjugation of $V_\K$ fixing $V_{a,b}$.
The following properties of $\s_{a,b}$ can be checked easily:
\begin{lem}\label{lem:2.12}
\begin{enumerate}
 \item $\s_{a,b}$ is $\K$-antilinear;
 \item $\s_{a,b}\circ \s=\s\circ\s_{a,b}$,  in particular $\s_{a,b}$ preserves $V$;
 \item $\<\s_{a,b}(\cdot),\s_{a,b}(\cdot)\>_\K=-\ov{\<\cdot,\cdot\>}_\K$;
 \item for every $g$ in $\Sp(V)$ we have $g\s_{a,b}=\s_{ga,gb}g$.
\end{enumerate}
\end{lem}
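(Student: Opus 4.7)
The plan is to write down an explicit formula for $\s_{a,b}$ in terms of the direct sum decomposition $V_\K = a_\K \oplus b_\K$, from which all four assertions reduce to short computations. Since $a, b \in \Ll(V)$ are transverse, we have $V_\K = a_\K \oplus b_\K$ with associated $\K$-linear projections $\pi_a, \pi_b : V_\K \to V_\K$; moreover, because $a, b \subset V$, both $a_\K$ and $b_\K$ are stable under the complex conjugation $\s$. The real form $V_{a,b}$ is the image of the $\F$-linear injection $a \oplus b \to V_\K$, $(v, w) \mapsto v + iw$, and a direct check shows that the unique $\K$-antilinear involution of $V_\K$ fixing $V_{a,b}$ pointwise is
$$\s_{a,b} \;=\; (\pi_a - \pi_b) \circ \s,$$
equivalently, $\s_{a,b}(z_a + z_b) = \s(z_a) - \s(z_b)$ for $z_a \in a_\K$ and $z_b \in b_\K$. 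Indeed, for $z = v + iw$ with $v\in a$, $w \in b$ one has $z_a = v$, $z_b = iw$, whence $\s(z_a) - \s(z_b) = v + iw = z$.

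With this formula in hand, (1) is immediate because $\pi_a - \pi_b$ is $\K$-linear and $\s$ is $\K$-antilinear. For (2), the $\s$-stability of $a_\K$ and $b_\K$ gives $\s \circ (\pi_a - \pi_b) = (\pi_a - \pi_b) \circ \s$, so $\s_{a,b}$ and $\s$ commute; consequently $\s_{a,b}$ preserves $V = V_\K^\s$. For (4) I would argue by uniqueness: for $g \in \Sp(V)$ the element $g\,\s_{a,b}\,g^{-1}$ is $\K$-antilinear and fixes $g(V_{a,b})$ pointwise, and since $g$ is $\K$-linear with $g(iw) = i(gw)$, one has $g(V_{a,b}) = V_{ga, gb}$; by uniqueness of the antilinear involution attached to a real form, this forces $g\s_{a,b}g^{-1} = \s_{ga,gb}$.

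The only step that requires actual computation is (3), which I expect to be the main obstacle. Expanding bilinearly and using that $a_\K$ and $b_\K$ are Lagrangian (so pairings internal to either summand vanish) together with the $\s$-equivariance $\langle \s w, \s w'\rangle_\K = \overline{\langle w, w'\rangle_\K}$ of the $\K$-linear extension, one obtains
\begin{align*}
\langle \s_{a,b}(z), \s_{a,b}(z')\rangle_\K
&= -\langle \s(z_a), \s(z_b')\rangle_\K - \langle \s(z_b), \s(z_a')\rangle_\K \\
&= -\,\overline{\langle z_a, z_b'\rangle_\K + \langle z_b, z_a'\rangle_\K} \;=\; -\,\overline{\langle z, z'\rangle_\K},
\end{align*}
the overall sign arising precisely from the difference $\pi_a - \pi_b$ in the defining formula. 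The mild care needed here is in tracking that the two ``diagonal'' terms cancel by the Lagrangian condition while the two ``cross'' terms survive with a common minus sign.
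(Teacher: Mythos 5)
Your proof is correct and follows essentially the same route as the paper's: an explicit description of $\s_{a,b}$ on the decomposition $V_\K=a_\K\oplus b_\K$ (equivalently, $\pm\Id$ on the pieces of $a\oplus ia\oplus b\oplus ib$), together with the uniqueness of the antilinear involution attached to the real form $gV_{a,b}=V_{ga,gb}$ for item (4). You additionally carry out in full the expansion for item (3), which the paper omits as an easy computation, and your verification there (isotropy of $a_\K,b_\K$ killing the diagonal terms, the cross terms contributing the common minus sign) is correct.
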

\begin{comment}
\begin{proof}
(1) Any vector $v\in V_\K$ can be written as $v=v_1+v_2$ with $v_1\in V_{a,b}, v_2\in iV_{a,b}$. In particular we have 
$$\begin{array}{rl}
   \s_{a,b}((x+iy)v)&=\s_{a,b}(xv_1+y(iv_2))+\s_{a,b}(xv_2+y(iv_1))=\\
   &=xv_1+iyv_2-xv_2-yiv_1=\\
   &=(x-iy)(v_1-v_2).
  \end{array}
$$
(2) Both $\s_{a,b}$ and $\s$ preserve the decomposition $V_\K=a\oplus ia\oplus b\oplus ib$ and on each subspace act like $\pm \Id$.\\
(3) This is an easy computation that we omit.\\
(4) By definition $\s_{a,b}$ is induced by the conjugation with respect to the real form $V_{a,b}$ and, for any $g\in \Sp(V)$, we have $V_{ga,gb}=g V_{a,b}$.
\end{proof}
\end{comment}
As a consequence of the first fact of Lemma \ref{lem:2.12} we get that $\s_{a,b}$ induces a map on $\Gr_n(V)$ that, with a slight abuse of notation, will be also denoted by $\s_{a,b}$. The third fact of Lemma \ref{lem:2.12} implies that $\s_{a,b}$ restricts to a map 
$$\s_{a,b}:\Ll(V_\K)\to \Ll(V_\K),$$
which preserves the subspaces we are interested in:
\begin{lem}
 The involution $\s_{a,b}$ preserves the subspaces $\Tt_V$ and $\Ll(V)$ of $\Ll(V_\K)$. It commutes with the crossratio.
\end{lem}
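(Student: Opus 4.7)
My plan is to verify the three assertions one by one, using in each case only the four formal properties of $\s_{a,b}$ listed in Lemma \ref{lem:2.12}. I would not need any explicit coordinate computation.

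\emph{Preservation of $\Ll(V)$.} A Lagrangian $L\in\Ll(V_\K)$ belongs to $\Ll(V)$ exactly when $\s(L)=L$. Since $\s_{a,b}$ is an antilinear bijection of $V_\K$, the subspace $\s_{a,b}(L)$ still has dimension $n$; and Lemma \ref{lem:2.12}(3) implies that $\s_{a,b}$ sends totally isotropic subspaces to totally isotropic subspaces, so $\s_{a,b}(L)\in\Ll(V_\K)$. Using Lemma \ref{lem:2.12}(2) I then compute
\[
\s(\s_{a,b}(L))=\s_{a,b}(\s(L))=\s_{a,b}(L),
\]
which shows that $\s_{a,b}(L)\in\Ll(V)$.

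\emph{Preservation of $\Tt_V$.} Here I use the characterization of $\Tt_V$ by positive definiteness of $i\langle\cdot,\s(\cdot)\rangle_\K$. For $L\in\Tt_V$ and arbitrary $v,w\in L$, I compute, using (2) then (3) of Lemma \ref{lem:2.12}, that
\[
i\langle \s_{a,b}(v),\s(\s_{a,b}(w))\rangle_\K
=i\langle \s_{a,b}(v),\s_{a,b}(\s(w))\rangle_\K
=-i\,\overline{\langle v,\s(w)\rangle_\K}
=\overline{\,i\langle v,\s(w)\rangle_\K\,},
\]
so the Hermitian form on $\s_{a,b}(L)$, expressed through the antilinear identification with $L$, is the complex conjugate of the one on $L$. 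Since positive definiteness is invariant under complex conjugation, $\s_{a,b}(L)\in\Tt_V$.

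\emph{Commutation with the crossratio.} Since $\s_{a,b}$ is an antilinear bijection, it sends the direct sum decomposition $V_\K=l_i\oplus l_j$ to $V_\K=\s_{a,b}(l_i)\oplus \s_{a,b}(l_j)$, and the associated parallel projections intertwine: $\s_{a,b}\circ p_{l_i}^{/\!/l_j}=p_{\s_{a,b}(l_i)}^{/\!/\s_{a,b}(l_j)}\circ \s_{a,b}$. Composing two such identities in the defining formula of the crossratio (Definition \ref{defn:cr}) yields
\[
\s_{a,b}\circ R(l_1,l_2,l_3,l_4)\circ\s_{a,b}^{-1}=R(\s_{a,b}(l_1),\s_{a,b}(l_2),\s_{a,b}(l_3),\s_{a,b}(l_4)),
\]
which is the precise form of ``$\s_{a,b}$ commutes with the crossratio'', analogous to the $\Sp(V)$-equivariance noted in Section \ref{sec:cr}.

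No real obstacle is expected; the only subtle point is to handle antilinearity correctly in the $\Tt_V$ argument (the Hermitian form gets complex conjugated rather than preserved, but positive definiteness is insensitive to this).
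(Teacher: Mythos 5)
Your proof is correct and follows essentially the same route as the paper: both arguments rest on the properties of $\s_{a,b}$ from Lemma \ref{lem:2.12} (commutation with $\s$ and the identity $\<\s_{a,b}\cdot,\s_{a,b}\cdot\>_\K=-\ov{\<\cdot,\cdot\>}_\K$) for the two preservation claims, and on the equivariance of the crossratio under bijections intertwining the parallel projections for the last claim. The only cosmetic difference is that you detect membership in $\Ll(V)$ via $\s$-invariance while the paper notes directly that $\s_{a,b}$ preserves $V$; these are the same observation.
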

\begin{proof}
 Since the $\F$-linear map $\s_{a,b}$ preserves $V$, the induced map on $\Ll(V_\K)$ preserves the subspace $\Ll(V)$. The fact that $\s_{a,b}$ induces a map of $\Tt_V$ follows from the following computation which uses Lemma \ref{lem:2.12} (3): for every $v,w\in V_\K$
 $$\begin{array}{rl}
 i\<\s_{a,b}(v),\s_{a,b}(w)\>_\K&=-i\ov{\<v,w\>}_\K\\
 &=\ov{i\<v,w\>}_\K.
 \end{array}$$
 In particular the restriction of $i\langle\cdot,\sigma(\cdot)\rangle_\K$ to a Lagrangian $l\in \Ll(V_\K)$ is positive definite if and only if its restriction to $\s_{a,b}(l)$ is.
 
For any pair $a,b\in\Ll(V)^{(2)}$ and for any 4-tuple $(l_1,l_2,l_3,l_4)$ in the domain of definition of $R$  we have
  $$\s_{a,b}R(l_1,l_2,l_3,l_4)\s_{a,b}=R(\s_{a,b}(l_1),\s_{a,b}(l_2),\s_{a,b}(l_3),\s_{a,b}(l_4)):$$
  this follows from the equivariance property of the crossratio and the fact that $\s_{a,b}^2=\Id$.
\end{proof}

It is easy to check from the very definition of $\s_{0,l_\infty}$ that for any $Z\in \Xx_\F$  we have 
 $\s_{0,l_\infty}(Z)=-\ov Z$. In particular $\Yy_{0,l_\infty}=\Tt_V\cap {\rm Fix}(\s_{0,l_\infty})$. An immediate corollary of the transitivity of the symplectic group action on $\Ll(V)^{(2)}$ is the following:
\begin{cor}
 For any pair $(a,b)$ we have $\Yy_{a,b}=\Tt_V\cap {\rm Fix}(\s_{a,b})$. 
\end{cor}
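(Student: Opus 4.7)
The plan is to reduce the general case to the already handled specific case $(a,b)=(0,l_\infty)$ via the transitivity of $\Sp(V)$ on $\Ll(V)^{(2)}$, together with the equivariance properties we have already collected for both $\Yy_{a,b}$ and $\s_{a,b}$.

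Concretely, given $(a,b)\in\Ll(V)^{(2)}$, Lemma \ref{lem:2.3}-style transitivity (the lemma stating $\Sp(V)$ is transitive on pairs of transverse Lagrangians) produces $g\in\Sp(V)$ with $g\cdot 0=a$ and $g\cdot l_\infty=b$. First, I would observe that the equivariance relation (\ref{eqn:3}) gives $\Yy_{a,b}=g\Yy_{0,l_\infty}$, and the remark just before the corollary identifies the right-hand side as $g\bigl(\Tt_V\cap\mathrm{Fix}(\s_{0,l_\infty})\bigr)$. Next, since $g$ acts on $\Tt_V$, we have $g\,\Tt_V=\Tt_V$, so it only remains to show that $g\,\mathrm{Fix}(\s_{0,l_\infty})=\mathrm{Fix}(\s_{a,b})$. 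But this is immediate from Lemma \ref{lem:2.12}(4): the relation $g\s_{0,l_\infty}=\s_{a,b}g$ translates, upon passing to induced maps on $\Ll(V_\K)$, into the equality of fixed-point sets $g\cdot\mathrm{Fix}(\s_{0,l_\infty})=\mathrm{Fix}(\s_{a,b})$. Combining these steps yields
\[
\Yy_{a,b}=g\bigl(\Tt_V\cap\mathrm{Fix}(\s_{0,l_\infty})\bigr)=\Tt_V\cap\mathrm{Fix}(\s_{a,b}),
\]
as required.

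There is essentially no obstacle here: all the nontrivial content, namely the explicit computation $\s_{0,l_\infty}(Z)=-\ov Z$ and the parametrization of $\Yy_{0,l_\infty}$ by $\{iY:Y\in\mathrm{Sym}^+(n,\F)\}$, has already been established, and the two compatibility relations $g\Yy_{a,b}=\Yy_{ga,gb}$ and $g\s_{a,b}=\s_{ga,gb}g$ make the transport along $g$ automatic. The only conceptual point worth stating explicitly is that fixed-point sets behave naturally under conjugation, i.e.\ $\mathrm{Fix}(g\s g^{-1})=g\,\mathrm{Fix}(\s)$, which is what lets Lemma \ref{lem:2.12}(4) do its work.
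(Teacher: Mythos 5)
Your proof is correct and follows exactly the paper's route: the base case $\Yy_{0,l_\infty}=\Tt_V\cap\mathrm{Fix}(\s_{0,l_\infty})$ comes from the computation $\s_{0,l_\infty}(Z)=-\ov Z$ together with Lemma \ref{lem:2.6}, and the general case is transported by transitivity of $\Sp(V)$ on $\Ll(V)^{(2)}$ using the equivariance relations (\ref{eqn:3}) and Lemma \ref{lem:2.12}(4). The paper states this as an immediate consequence without spelling out the conjugation of fixed-point sets, which you correctly make explicit.
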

Another useful characterization of the $\F$-tubes is the following:
\begin{lem}\label{lem:Y}
In the model $\mathbb X_V$, 
$$\Yy_{a,b}=\{J\in\mathbb X_V: a \text{ and }b \text{ are orthogonal for }\<J\cdot,\cdot\>\}.$$
\end{lem}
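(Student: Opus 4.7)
The plan is to combine the identification $\mathbb X_V \to \Tt_V$, $J\mapsto L_J^+$, of the preceding subsection with the just-established characterization $\Yy_{a,b}=\Tt_V\cap \mathrm{Fix}(\s_{a,b})$. Under this identification it suffices to show that $L_J^+$ is $\s_{a,b}$-invariant if and only if $a\perp b$ for $(\cdot,\cdot)_J$. Recall that $L_J^+=\{x-iJx:x\in V\}$, since $v=x+iy$ belongs to the $i$-eigenspace of $J\otimes\Id_\K$ iff $y=-Jx$.

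First I would describe the restriction of $\s_{a,b}$ to $V$ explicitly. The real form $V_{a,b}=\{v+iw:v\in a,\,w\in b\}$ contains $a$ directly, and contains $iw$ for every $w\in b$, so by $\K$-antilinearity $\s_{a,b}$ acts as $+\Id$ on $a$ and $-\Id$ on $b$. Denote this involution of $V$ by $\rho_{a,b}$; its $\pm 1$-eigenspaces are precisely $a$ and $b$.

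Next, using the antilinearity of $\s_{a,b}$, one computes
\[
\s_{a,b}(x-iJx)=\rho_{a,b}(x)+i\,\rho_{a,b}(Jx).
\]
For the right-hand side to lie in $L_J^+$ for every $x\in V$, i.e. to have the form $x'-iJx'$ with $x'=\rho_{a,b}(x)$, we must have $\rho_{a,b}(Jx)=-J\rho_{a,b}(x)$ for all $x$. Hence $L_J^+$ is $\s_{a,b}$-invariant if and only if $J$ and $\rho_{a,b}$ anticommute on $V$.

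Finally I would interpret this anticommutation geometrically. Anticommutation with $\rho_{a,b}$ says that $J$ exchanges the $\pm1$-eigenspaces of $\rho_{a,b}$, which gives $Ja\subset b$ (and then $Jb\subset a$ automatically, with equalities $Ja=b$, $Jb=a$ following from $J^2=-\Id$). On the other hand $a\perp_J b$ means $\<J\alpha,\beta\>=0$ for all $\alpha\in a,\beta\in b$, i.e. $J\alpha\in b^{\perp_\omega}$; since $b$ is Lagrangian, $b^{\perp_\omega}=b$, so this is exactly $Ja\subset b$. Combining the two equivalences concludes the proof. The only subtle point is correctly tracking signs when using the $\K$-antilinearity of $\s_{a,b}$; there is no real obstacle beyond that bookkeeping.
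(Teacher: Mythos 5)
Your proof is correct and follows essentially the same route as the paper: both reduce to the characterization $\Yy_{a,b}=\Tt_V\cap\mathrm{Fix}(\s_{a,b})$, show that $\s_{a,b}$-invariance of $L_J^+$ is equivalent to the anticommutation $\s_{a,b}J=-J\s_{a,b}$ on $V$, and identify this with $J(a)=b$, i.e.\ $(\cdot,\cdot)_J$-orthogonality of $a$ and $b$. Your derivation of the anticommutation via the explicit parametrization $L_J^+=\{x-iJx\}$ is just a more computational version of the paper's eigenspace argument.
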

\begin{proof}
In the notation of Section \ref{sec:1}, let $J\in\mathbb X_V$, then $\s_{a,b}(L_J^+)=L_J^+$ if and only if $\s_{a,b}(L_J^-)=L_J^-$ and hence, since $\s_{a,b}$ is $\K$-antilinear, we deduce $\s_{a,b}(J\otimes \mathbb I _\K)=-(J\otimes \mathbb I _\K)\s_{a,b}$, which, by restriction to $V=a\oplus b$, is equivalent to $\s_{a,b}J=-J\s_{a,b}$; the latter is equivalent to $J(a)=b$ that is $a$ and $b$ are orthogonal with respect to $\<J\cdot,\cdot\>$.
\end{proof}

The restriction of $\s_{a,b}$ to the subset of $\Ll(V)$ consisting of points that are transverse to $a$ and $b$ can also be characterized in term of the crossratio:
\begin{prop}\label{prop:2.23}
For each $c\in \Ll(V)$ transverse to $a$ and $b$, $\s_{a,b}(c)$ is the unique point satisfying
$$R(a,c,\s_{a,b}(c),b)=-\Id.$$
\end{prop}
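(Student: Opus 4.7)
The plan is to reduce to the standard normal form $(a,b) = (0, l_\infty)$ and then perform a one-line matrix computation. First I would use the transitivity of $\Sp(V)$ on $\Ll(V)^{(2)}$ to choose $g \in \Sp(V)$ with $g(a,b) = (0, l_\infty)$. The crossratio is $\Sp(V)$-equivariant by Definition \ref{defn:cr}, and $g\sigma_{a,b} = \sigma_{ga, gb}\, g$ by Lemma \ref{lem:2.12}(4), so both the equation $R(a, c, \sigma_{a,b}(c), b) = -\Id$ and the corresponding uniqueness statement transport equivariantly under $g$. Hence it suffices to treat the case $a = 0$, $b = l_\infty$.

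In this coordinate system, I would write $c$, which is transverse to $0$ and to $l_\infty$, as $c = \iota(Z)$ for some invertible real symmetric matrix $Z \in \Sym(n, \F)$. The formula $\sigma_{0, l_\infty}(Z) = -\bar Z$ recorded just before Lemma \ref{lem:Y}, specialized to the real case $\bar Z = Z$, tells me that $\sigma_{0, l_\infty}(c)$ corresponds to the matrix $-Z$; as $-Z$ is again invertible and $-Z - Z = -2Z$ is invertible as well, $\sigma_{0, l_\infty}(c)$ is transverse to each of $0$, $l_\infty$ and $c$, so the crossratio in question is well-defined. Lemma \ref{lem:cr} then immediately gives
\[ R(0, Z, -Z, l_\infty) = Z^{-1}(-Z) = -\Id, \]
settling existence. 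For uniqueness, any candidate $d$ with $R(0, Z, d, l_\infty) = -\Id$ must itself be transverse to $0$ and to $l_\infty$, hence corresponds to some symmetric matrix $X$; Lemma \ref{lem:cr} then forces $Z^{-1} X = -\Id$, so $X = -Z$ and $d = \sigma_{0, l_\infty}(c)$.

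The whole argument is essentially a one-line computation once the equivariant reduction is made. The only point requiring any bookkeeping is verifying at each stage that the quadruples appearing as arguments of the crossratio are pairwise transverse enough for $R$ to be defined; I do not anticipate this being a genuine obstacle, since $\sigma_{a,b}$ preserves $a$ and $b$ setwise and acts in the standard chart simply as $Z \mapsto -\bar Z$, so invertibility (hence transversality) is manifestly preserved.
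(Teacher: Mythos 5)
Your proposal is correct and follows essentially the same route as the paper: reduce by $\Sp(V)$-equivariance to $(a,b)=(0,l_\infty)$, represent $c$ by an invertible symmetric matrix, and apply the formula of Lemma \ref{lem:cr} together with $\s_{0,l_\infty}(Z)=-\ov Z$. The only difference is that you spell out the transversality bookkeeping and the uniqueness step, which the paper leaves implicit.
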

\begin{proof}
Up to the symplectic group action  we can assume that $a=0$ and $b=l_\infty$. Since $c$ is transverse to $l_\infty$, it can be represented by a symmetric matrix $S$ with coefficients in $\F$. The formula of Lemma \ref{lem:cr} implies that $R(0,S,\s_{0,l_\infty}(S),l_\infty)=S^{-1}\s_{0,l_\infty}(S)$ and hence the unique point satisfying $R(0,S,\s_{0,l_\infty}(S),l_\infty)=-\Id$ is $-S$.
\end{proof}
When $\F=\R$, and the 4-tuple $(a,b,c,d)$ is maximal, the two $\R$-tubes $\Yy_{a,c}$ and $\Yy_{b,d}$ are orthogonal as totally geodesic submanifolds of the Riemannian manifold $\Xx_\R$ precisely when $R(a,b,c,d)=2\Id$. For arbitrary real closed fields we take this property as definition of orthogonality.
\begin{defn}\label{defn:4.12}
Let $(a,b,c,d)$ be maximal. Two $\F$-tubes $\Yy_{a,c}$ and $\Yy_{b,d}$ are \emph{orthogonal} if  $R(a,b,c,d)=2\Id$. In this case we write $\Yy_{a,c}\bot\Yy_{b,d}$.
\end{defn}

Notice that the orthogonality relation is symmetric since $R(d,a,b,c)$ is conjugate to $(\Id-R(a,b,c,d)^{-1})^{-1}$ (cfr. Lemma \ref{lem:2.7} (2)). The following lemma is a consequence of the property of the crossratio established in Lemma \ref{lem:2.7} (1) and the characterization of the involution $\s_{a,b}$ in term of the crossratio given in Proposition \ref{prop:2.23}:
\begin{lem}\label{lem:4.15}
 Let $(a,b,c,d)$ be a maximal quadruple. The following are equivalent:
 \begin{enumerate}
  \item $\Yy_{a,c}\bot\Yy_{b,d}$;
  \item $d=\s_{a,c}(b)$;
  \item $c=\s_{b,d}(a)$.
 \end{enumerate}
\end{lem}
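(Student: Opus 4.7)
The plan is to prove (1) $\Leftrightarrow$ (2) directly from Proposition \ref{prop:2.23} and Lemma \ref{lem:2.7}(1), and then to deduce (1) $\Leftrightarrow$ (3) by applying this equivalence to a cyclic permutation of the given $4$-tuple.

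For (1) $\Leftrightarrow$ (2): I apply Proposition \ref{prop:2.23} with its pair $(a,b)$ taken to be our $(a,c)$ and its auxiliary point $c$ taken to be our $b$. This identifies $\s_{a,c}(b)$ as the unique Lagrangian $x$ satisfying $R(a,b,x,c) = -\Id$. Consequently (2), namely $d = \s_{a,c}(b)$, is equivalent to $R(a,b,d,c) = -\Id$. By Lemma \ref{lem:2.7}(1), $R(a,b,d,c) = \Id - R(a,b,c,d)$, so this rewrites as $R(a,b,c,d) = 2\Id$, which by Definition \ref{defn:4.12} is precisely (1).

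For (1) $\Leftrightarrow$ (3): Consider the cyclic shift $(d,a,b,c)$ of our $4$-tuple. Since the Kashiwara cocycle is invariant under cyclic permutations of a triple, each ordered $3$-subtuple of $(d,a,b,c)$ has the same positive signature as the corresponding $3$-subtuple of $(a,b,c,d)$; hence $(d,a,b,c)$ is maximal. By Lemma \ref{lem:2.7}(2), $R(d,a,b,c) \cong (\Id - R(a,b,c,d)^{-1})^{-1}$, and since being scalar is invariant under conjugation, $R(a,b,c,d) = 2\Id$ is equivalent to $R(d,a,b,c) = 2\Id$. Applying the already-established equivalence (1) $\Leftrightarrow$ (2) to the maximal $4$-tuple $(d,a,b,c)$ then yields $R(d,a,b,c) = 2\Id$ iff $c = \s_{d,b}(a)$. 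Since on $\Ll(V_\K)$ the involutions $\s_{d,b}$ and $\s_{b,d}$ coincide (their real forms $V_{d,b}$ and $V_{b,d}$ differ by multiplication by $i$ in $V_\K$, which does not affect the induced action on subspaces), this is exactly (3).

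The main subtlety is handling the symmetry step cleanly: Lemma \ref{lem:2.7} does not provide a direct formula for swapping the first two entries of the crossratio, so one routes through the cyclic shift and must remember to verify that it preserves maximality and that $\s_{d,b}$ and $\s_{b,d}$ agree on Lagrangians. Everything else reduces to the scalar manipulation $-\Id = \Id - 2\Id$.
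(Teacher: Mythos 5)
Your proof is correct and follows exactly the route the paper itself indicates (the paper gives no detailed proof, only the remark that the lemma follows from Proposition \ref{prop:2.23}, Lemma \ref{lem:2.7}(1), and the symmetry of orthogonality via Lemma \ref{lem:2.7}(2)): you get (1)$\Leftrightarrow$(2) from $R(a,b,d,c)=\Id-R(a,b,c,d)$ together with the characterization of $\s_{a,c}(b)$, and (1)$\Leftrightarrow$(3) by cycling to $(d,a,b,c)$. Your additional checks --- that the cyclic shift remains maximal and that $\s_{d,b}$ and $\s_{b,d}$ induce the same map on subspaces --- are accurate and merely make explicit details the paper leaves implicit.
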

We now turn to an important geometric feature of the Siegel upper half-space, namely that the $\F$-tubes orthogonal to any fixed $\F$ tube foliate the whole space. We first verify this in a special case:
\begin{prop}\label{prop:ort}
Assume that $\F$ is real closed. For any $Z=X+iY\in \Tt_V$ there exists a unique $S$ in $\Ll(V)^{l_\infty}$ such that $(0,S,l_\infty)$ is maximal and $Z\in \Yy_{-S,S}$. Moreover $S$ is given by
$$S=Y^{1/2}\sqrt{\Id+(Y^{-1/2}XY^{-1/2})^2}Y^{1/2}.$$
\end{prop}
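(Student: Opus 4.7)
The plan is to reduce the problem to the already understood case $S=\Id$ (Lemma \ref{lem:2.20}) via the symplectic group action, and then to solve the remaining matrix equation using the existence and uniqueness of positive square roots over a real closed field.

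First I would exhibit an explicit element of $\Sp(2n,\F)$ carrying $(-\Id,\Id)$ to $(-S,S)$: the block diagonal matrix $g_S=\bsm S^{1/2}&0\\0&S^{-1/2}\esm$ lies in $\Sp(2n,\F)$ (since $S^{1/2}$ is symmetric) and acts on $\Xx_\F$ by $W\mapsto S^{1/2}WS^{1/2}$, so by the equivariance (\ref{eqn:3}) of $\F$-tubes one has $\Yy_{-S,S}=g_S\cdot\Yy_{-\Id,\Id}$. Thus membership $Z=X+iY\in\Yy_{-S,S}$ amounts to $g_S^{-1}Z=S^{-1/2}XS^{-1/2}+iS^{-1/2}YS^{-1/2}\in\Yy_{-\Id,\Id}$, and by Lemma \ref{lem:2.20} this translates into the pair of equations on the unknown $S\in\Sym^+(n,\F)$:
\begin{equation*}
XS^{-1}Y=YS^{-1}X,\qquad XS^{-1}X+YS^{-1}Y=S.
\end{equation*}

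Next I would perform the substitution $A=Y^{-1/2}XY^{-1/2}\in\Sym(n,\F)$ and $T=Y^{-1/2}SY^{-1/2}\in\Sym^+(n,\F)$, under which a direct computation turns the system into $AT=TA$ together with $AT^{-1}A+T^{-1}=T$; combining these (using the commutation to conjugate $A$ through $T^{-1}$) yields the single equation $T^2=\Id+A^2$. Since $\Id+A^2$ is symmetric and positive definite, by the same argument as in the proof of Lemma \ref{lem:2.3} it admits a unique positive symmetric square root $T$ over the real closed field $\F$. Because this square root is a polynomial in $A^2$ it automatically commutes with $A$, so both constraints are satisfied; conversely any solution of the two constraints must coincide with this positive square root, giving uniqueness of $T$ and hence of $S=Y^{1/2}TY^{1/2}$. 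Reading off the formula and noting that $S\in\Sym^+(n,\F)$ ensures maximality of $(0,S,l_\infty)$, this will conclude the proof.

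The only delicate point is that the commutation condition and the square-root equation should be simultaneously solvable without overdetermining $T$; this is handled cleanly by the observation that the positive square root of $\Id+A^2$ is a polynomial in $A^2$, so commutativity with $A$ comes for free and uniqueness is inherited directly from uniqueness of the square root.
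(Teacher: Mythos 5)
Your argument is correct and follows essentially the same route as the paper: conjugate by the block-diagonal symplectic element to reduce membership in $\Yy_{-S,S}$ to the equations of Lemma \ref{lem:2.20} for $\Yy_{-\Id,\Id}$, substitute $A=Y^{-1/2}XY^{-1/2}$ and $T=Y^{-1/2}SY^{-1/2}$ (the paper's $V$), and solve $T^2=\Id+A^2$ via the unique positive square root. Your explicit remark that the positive square root is a polynomial in $A^2$, so that the commutation constraint is automatically satisfied and the converse (existence) direction closes, is a nice touch that the paper leaves implicit, but it is not a different proof.
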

\begin{center}
\begin{tikzpicture}
\draw (-2,0 ) to (2,0);
\draw (0,0) to(0,2);
\draw (-1,0) to [out=90, in=180] (0,1) to [out=0, in=90] (1,0 );
\filldraw  (.7,.7) circle [radius=1pt];
\node [above, right]at (.8,.8) {$Z=X+iY$};
\node [below] at (0,0) {$0$};
\node [below] at (1,0) {$S$};
\end{tikzpicture}
\end{center}
\begin{proof}
Given $Z=X+iY$ we look for a positive definite matrix $S$ with $Z\in \Yy_{-S,S}$. Denoting by $a(S^{-1/2})$ the element of $\Sp(2n,\F)$ represented by the matrix $\bsm S^{-1/2}&0\\0&S^{1/2}\esm$, we have $a(S^{-1/2})\Yy_{-S,S}=\Yy_{-\Id,\Id}$. The condition $a(S^{-1/2})Z\in\Yy_{-\Id,\Id}$ leads, in view of the equations of Lemma \ref{lem:2.20}, to:
$$\left\{\begin{array}{l}
(S^{-1/2}XS^{-1/2})(S^{-1/2}YS^{-1/2})=(S^{-1/2}YS^{-1/2})(S^{-1/2}XS^{-1/2})\\
(S^{-1/2}XS^{-1/2})^2+(S^{-1/2}YS^{-1/2})^2=\Id
\end{array}\right.$$
From the first equation we get, observing that $Y$ is invertible,  $$XS^{-1}=YS^{-1}XY^{-1}.$$ Substituting this last equality in the second equation, and defining the matrix $V:=Y^{-1/2}SY^{-1/2}$, we get
$$V^{-1}((Y^{-1/2}XY^{-1/2})^2+\Id)=V$$
which implies 
$$V=\sqrt{\Id+(Y^{-1/2}XY^{-1/2})^2}$$
and 
$$S=Y^{1/2}\sqrt{\Id+(Y^{-1/2}XY^{-1/2})^2}Y^{1/2}.$$
This shows the formula and implies uniqueness.

\end{proof}
Since all $\F$-tubes are $\Sp(V)$-conjugate, we obtain:
\begin{cor}\label{cor:ort}
 For any transverse pair $(a,b)\in\Ll(V)^{(2)}$ and any $z\in\Tt_V$, there exists an unique $c\in\Ll(V)$ such that $(a,c,b)$ is maximal and $z$ belongs to $\Yy_{c,\s_{a,b}(c)}$.
\end{cor}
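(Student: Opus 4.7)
The plan is to deduce this directly from Proposition \ref{prop:ort} by exploiting the transitivity of $\Sp(V)$ on $\Ll(V)^{(2)}$. First I would pick $g\in\Sp(V)$ with $g\cdot a=0$ and $g\cdot b=l_\infty$, and apply Proposition \ref{prop:ort} to the point $gz\in\Tt_V$ to obtain a unique symmetric matrix $S$ with $(0,S,l_\infty)$ maximal and $gz\in\Yy_{-S,S}$. Then I would set $c:=g^{-1}S$ and verify that this Lagrangian satisfies the conclusion.

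For existence, maximality of $(a,c,b)=g^{-1}(0,S,l_\infty)$ follows from the $\Sp(V)$-invariance of the Kashiwara cocycle in Proposition \ref{lem:1} (1). To identify the $\F$-tube, I would use Lemma \ref{lem:2.12} (4): the identity $g\,\s_{a,b}=\s_{ga,gb}\,g=\s_{0,l_\infty}\,g$ gives $g\s_{a,b}(c)=\s_{0,l_\infty}(S)=-S$, and then the equivariance property (\ref{eqn:3}) yields $g\Yy_{c,\s_{a,b}(c)}=\Yy_{S,-S}=\Yy_{-S,S}$, so $z\in\Yy_{c,\s_{a,b}(c)}$.

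Uniqueness transfers in the same manner: any competitor $c'$ with the stated properties would produce $S':=gc'$ satisfying the hypotheses of Proposition \ref{prop:ort} at $gz$, forcing $S'=S$ and hence $c'=c$. I expect no essential obstacle here; the only point requiring care is matching $g\s_{a,b}(c)$ with the endpoint $-S$ of $\Yy_{-S,S}$, so that the pullback of $\Yy_{-S,S}$ under $g^{-1}$ is indeed $\Yy_{c,\s_{a,b}(c)}$, and this is exactly what Lemma \ref{lem:2.12} (4) provides.
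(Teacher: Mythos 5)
Your proposal is correct and is exactly the argument the paper intends: the paper's entire justification is the phrase ``Since all $\F$-tubes are $\Sp(V)$-conjugate,'' i.e.\ transporting Proposition \ref{prop:ort} by an element $g$ with $g(a,b)=(0,l_\infty)$, using the equivariance of the Kashiwara cocycle, of $\s_{a,b}$ (Lemma \ref{lem:2.12} (4)) and of the tubes (\ref{eqn:3}). The one identity you use silently, $\Yy_{S,-S}=\Yy_{-S,S}$, is immediate from the symmetric characterization of $\Yy_{a,b}$ in Lemma \ref{lem:Y}, so there is no gap.
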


Corollary \ref{cor:ort} allows us to define the orthogonal projection

$$
\pr_{\Yy_{a,b}}:\Tt_V\cup (\!(a,b)\!) \cup (\!(b,a)\!)\to \Yy_{a,b}
$$
as follows
\begin{enumerate}
\item  if $c\in (\!(a,b)\!)\cup(\!(b,a)\!)$,  then we set $\pr_{\Yy_{a,b}}(c)=\Yy_{c,\,\s_{a,b}(c)}\cap \Yy_{a,b}$
\item if $Z\in\Tt_V$, then we set $\pr_{\Yy_{a,b}}(Z)=\Yy_{c,\s_{a,b}(c)}\cap \Yy_{a,b}$, where $c$ is the unique real Lagrangian such that $Z\in \Yy_{c,\s_{a,b}(c)}$.
\end{enumerate}

It is easy to check that, when restricted to its set of definition in $\Ll(V)$, the orthogonal projection respects crossratios:
\begin{lem}
 Let $(a,b)$ be a pair of transverse Lagrangians, and let $x,y$ be points in $(\!(a,b)\!)$. Then we have 
 $$R(a,x,y,b)=R(a,\pr_{\Yy_{a,b}}(x),\pr_{\Yy_{a,b}}(y),b).$$
\end{lem}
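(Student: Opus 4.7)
The plan is to use $\Sp(V)$-equivariance to reduce to a convenient pair $(a,b)$, pin down the projection explicitly, and then read off the two crossratios. First, by the $\Sp(V)$-equivariance of the crossratio, of the family of $\F$-tubes (equation \eqref{eqn:3}), and of the involutions $\sigma_{a,b}$ (Lemma \ref{lem:2.12}(4)), the orthogonal projection $\pr_{\Yy_{a,b}}$ intertwines the $\Sp(V)$-actions; since $\Sp(V)$ is transitive on pairs of transverse Lagrangians, I would immediately assume $(a,b)=(0,l_\infty)$. In this setting $x,y\in(\!(0,l_\infty)\!)$ are represented by positive definite matrices $X,Y\in\Sym^+(n,\F)$.

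The key step will be to show that $\pr_{\Yy_{0,l_\infty}}(X)=iX$, and likewise $\pr_{\Yy_{0,l_\infty}}(Y)=iY$. By Lemma \ref{lem:2.6} every point of $\Yy_{0,l_\infty}$ has the form $iZ$ with $Z\in\Sym^+(n,\F)$, and the definition of $\sigma_{a,b}$ (as the complex conjugation with respect to the real form $V_{0,l_\infty}$) gives at once $\sigma_{0,l_\infty}(X)=-X$. Solving the defining equation
$$R(X,iZ,-iZ,-X)=-\Id$$
for $iZ\in\Yy_{X,-X}$ via Lemma \ref{lem:a1}, and using the sign identities $-X-iZ=-(X+iZ)$ and $-X+iZ=-(X-iZ)$, reduces the condition to $\bigl[(X-iZ)^{-1}(X+iZ)\bigr]^2=-\Id$. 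Writing $(X+iZ)=(X-iZ)+2iZ$ and setting $N=(X-iZ)^{-1}Z$ turns this into the quadratic identity $2N^2-2iN-\Id=0$, whose roots are $N=\frac{\pm 1+i}{2}\Id$; the corresponding solutions are $Z=\pm X$, of which only $Z=X$ is positive definite. Along the way one checks that $X-iZ$ is invertible for $X,Z$ positive definite (by separating real and imaginary parts of a putative kernel vector), so Lemma \ref{lem:a1} genuinely applies.

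With the projection identified, the conclusion follows at once from Lemma \ref{lem:cr}, which needs only that the second argument be invertible, as is the case for $iX$ and $iY$:
$$R(0,X,Y,l_\infty)=X^{-1}Y=(iX)^{-1}(iY)=R(0,iX,iY,l_\infty).$$
The main obstacle is precisely the explicit identification of the projection as multiplication by $i$. Once this is in place, the two factors of $i$ cancel automatically in the crossratio formula, and the asserted equality becomes a one-line algebraic consequence; in particular, no further transversality hypothesis between $x$ and $y$ is needed.
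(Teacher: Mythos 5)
Your overall route is exactly the paper's: reduce by equivariance to $(a,b)=(0,l_\infty)$, identify the orthogonal projection of a positive definite $X$ as $iX$, and then cancel the two factors of $i$ via the explicit crossratio formula of Lemma \ref{lem:cr}. The reduction, the final cancellation, and your remark that no transversality between $x$ and $y$ is required are all correct.

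There is, however, one step that does not hold as written: from the matrix identity $2N^2-2iN-\Id=0$ you conclude $N=\frac{\pm 1+i}{2}\Id$. A matrix annihilated by a quadratic polynomial with distinct roots need not be scalar; it is only diagonalizable with spectrum contained in $\{\frac{1+i}{2},\frac{-1+i}{2}\}$, and mixed-spectrum solutions do occur. Concretely, for $X=\Id$ and $n=2$, the symmetric matrix $Z=\diag(1,-1)$ gives $N=\diag(\frac{1+i}{2},\frac{-1+i}{2})$ and satisfies $\bigl[(X-iZ)^{-1}(X+iZ)\bigr]^2=-\Id$, yet $Z$ is neither $X$ nor $-X$; it is ruled out only because it is not positive definite, a case your dichotomy ``$Z=\pm X$'' never reaches. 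The gap is easy to close: conjugating by $X^{1/2}$ reduces to $X=\Id$, and with $W=X^{-1/2}ZX^{-1/2}>\!>0$ the matrix $(X-iZ)^{-1}(X+iZ)$ is conjugate to $(\Id-iW)^{-1}(\Id+iW)$, whose eigenvalues are $\frac{1+iw}{1-iw}$ with $w>0$ running over the eigenvalues of $W$; the condition $\left(\frac{1+iw}{1-iw}\right)^2=-1$ forces $w=1$, hence $W=\Id$ and $Z=X$. Alternatively, you can bypass the computation by quoting Proposition \ref{prop:ort}, which applied to the purely imaginary point $iX$ gives $S=X$, i.e. $iX\in\Yy_{-X,X}$, together with the already established fact that $\Yy_{-X,X}$ and $\Yy_{0,l_\infty}$ meet in exactly one point.
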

\begin{proof}
 Up to the symplectic group action we can assume that $a=0, b=l_\infty$. In that case the result follows from the explicit formula for the crossratio and for the orthogonal projection.
\end{proof}

\section{Reduction modulo an order convex subring}\label{sec:4}
\subsection{Order convex subrings}
Let $\F$ be a real closed, non Archimedean field. We denote by $\Oo<\F$ an \emph{order convex} subring. This means that $\Oo$ is a subring with the additional property that for every positive element $x$ in $\F$, if there exists $y$ in $\Oo$ with $0<x<y$, then $x$ belongs to $\Oo$ as well. It is easy to verify that in this case $\Oo$ is a local ring whose maximal ideal $\Ii$ is given by 
$$\Ii=\{x\in \Oo|\; x^{-1}\notin \Oo\}.$$ 
We will denote by $\F_{_\Oo}$ the quotient field $\F_{_\Oo}:=\Oo/\Ii$. The field $\F_{_\Oo}$ is real closed as well.  
The following examples of order convex subrings will play an important role in the sequel:
\begin{example}\label{ex:4.17}
Let $\s \in \F$ be an infinitesimal: this means that $\s$ is a positive element satisfying $\s<1/n$ for any integer $n$. An example of an order convex subring of $\F$ is given by the set of elements comparable to $\s$: 
$$\Oo_\s=\{x\in \F||x|<\s^{-k} \text{ for some $k\in \N$}\}$$
in this case the maximal ideal can also be characterized as
$$\Ii_\s=\{x\in \F||x|<\s^{k} \text{ for all $k\in \N$}\}.$$
\end{example}
\begin{example}
 Let us assume that $\F$ admits an order compatible valuation $v$. An example of order convex subring is given by the elements with positive valuation
 $$\Uu=\{x\in \F|\; v(x)\geq 0\}$$
 and the maximal ideal can be characterized as
 $$\Mm=\{x\in\F|\;v(x)>0\}.$$
\end{example}
\subsection{$\Oo$-points}
Let $\Oo$ be an order convex subring of $\F$ and let $W$ be a finite dimensional $\F$-vector space equipped with an $\F$-valued scalar product $(\cdot,\cdot)$. Then we set
$$W(\Oo )=\{v\in W|\;(v,v)\in \Oo \}$$
and
$$W(\Ii)=\{v\in W|\;(v,v)\in \Ii\}$$ are $\Oo$-submodules; if $e_1,\ldots,e_m$ is any orthonormal basis of $W$, then one verifies that
$$\begin{array}{ccc}W(\Oo )=\sum_{i=1}^m\Oo e_i&\text{
and} &W(\Ii)=\sum_{i=1}^m\Ii e_i.\end{array}$$
This implies that the quotient $W_{_\Oo}=W(\Oo )/W(\Ii)$ is a $\F_{_\Oo}$-vector space of dimension $m=\dim(W)$, that the scalar product $(\cdot,\cdot)$ descends to a well defined scalar product $(\cdot,\cdot)_{_\Oo}$ on $W_{_\Oo}$ and that, if $p_{_\Oo} :W(\Oo)\to W_{\Oo}$ denotes the quotient map, $\{p_{_\Oo}(e_1),\ldots,p_{_\Oo}(e_m)\}$ is again an orthonormal basis of $W_{_\Oo}$.
Notice, however, that the map $p_{_\Oo}$ depends on the choice of the scalar product on $W$.

The subgroup $$\GL(W)(\Oo):=\{g\in\GL(W)|\;g(W(\Oo))=W(\Oo)\}$$
preserves $W(\Ii)$ and we obtain this way a natural homomorphism $\pi_{_\Oo}:\GL(W)(\Oo)\to \GL(W_{_\Oo})$. Clearly given any orthonormal basis of $W$, the group $\GL(W)(\Oo)$ is identified with $\GL(m,\Oo)$.

Let $\Qq(W)$ be the vectorspace of $\F$-valued quadratic forms on $W$. As in Section \ref{sec:1} we associate to $f\in \Qq(W)$ the symmetric bilinear form $b_f(\cdot,\cdot)$. We fix a basis $e_1,\ldots,e_m$ of $W$ which is orthonormal for $(\cdot,\cdot)$ and let $(A_f)_{ij}=b_f(e_i,e_j)$ be the associated symmetric matrix. We endow $\Qq(W)$ with the scalar product $(f,g)=\tr(A_fA_g)$. Our next task is to understand the relationship between $\Qq(W_{_\Oo})$ and $\Qq(W)_{_\Oo}$.
\begin{lem}\label{lem:quad}
For a quadratic form $f\in\Qq(W)$ the following are equivalent
 \begin{enumerate}
  \item $f\in \Qq(W)(\Oo)$;
  \item $f(W(\Oo))\subseteq \Oo$;
  \item $b_f(W(\Oo),W(\Oo))\subseteq\Oo$, $b_f(W(\Oo),W(\Ii))\subseteq\Ii$.
 \end{enumerate}
\end{lem}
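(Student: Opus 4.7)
The plan is to establish the cyclic chain (3)$\Rightarrow$(1)$\Rightarrow$(2)$\Rightarrow$(3), using two preliminary observations. First, the explicit descriptions $W(\mathcal O)=\sum\mathcal O e_i$ and $W(\mathcal I)=\sum\mathcal I e_i$ from the paragraph preceding the lemma identify $v\in W(\mathcal O)$ (resp.\ $W(\mathcal I)$) with the condition that each coordinate $v_i$ lies in $\mathcal O$ (resp.\ $\mathcal I$). Second, for any $x\in\F$ one has $x^2\in\mathcal O\iff x\in\mathcal O$: if $|x|\le 1$ the claim is obvious, and if $|x|>1$ then $|x|\le x^2$, so order convexity of $\mathcal O$ gives $|x|\in\mathcal O$. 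An identical argument gives $x^2\in\mathcal I\iff x\in\mathcal I$ (which we will use for $s^{1/2}$ below).

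For (3)$\Rightarrow$(1): apply the first inclusion of (3) to the pairs $(e_i,e_j)\in W(\mathcal O)\times W(\mathcal O)$ to get $(A_f)_{ij}=b_f(e_i,e_j)\in\mathcal O$ for all $i,j$; hence $(f,f)=\mathrm{tr}(A_f^2)=\sum_{i,j}(A_f)_{ij}^2\in\mathcal O$, i.e.\ $f\in\mathcal Q(W)(\mathcal O)$. For (1)$\Rightarrow$(2): from $\sum(A_f)_{ij}^2\in\mathcal O$ and order convexity one gets each $(A_f)_{ij}^2\in\mathcal O$, hence each $(A_f)_{ij}\in\mathcal O$ by the square-root observation. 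Then for $v=\sum v_ie_i\in W(\mathcal O)$, all $v_i\in\mathcal O$, so $f(v)=\sum_{i,j}(A_f)_{ij}v_iv_j\in\mathcal O$.

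The interesting step is (2)$\Rightarrow$(3). The first inclusion is a direct polarization: for $u,v\in W(\mathcal O)$, since $W(\mathcal O)$ is an $\mathcal O$-module, all of $u+v,u,v$ lie in $W(\mathcal O)$, so $b_f(u,v)=\tfrac12\bigl(f(u+v)-f(u)-f(v)\bigr)\in\mathcal O$. For the second inclusion, pick $u\in W(\mathcal O)$ and $v\in W(\mathcal I)$, and assume $v\neq 0$. Set $s:=(v,v)$; since $(\cdot,\cdot)$ is positive definite, $s>0$, and $v\in W(\mathcal I)$ gives $s\in\mathcal I$. Because $\F$ is real closed and $s>0$, the square root $s^{1/2}\in\F$ exists, and by the square-root observation applied to $\mathcal I$ it satisfies $s^{1/2}\in\mathcal I$. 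Now rescale: $w:=s^{-1/2}v$ has $(w,w)=1\in\mathcal O$, so $w\in W(\mathcal O)$; by the first inclusion already proved, $b_f(u,w)\in\mathcal O$, and multiplying back gives
\[
b_f(u,v)=s^{1/2}\,b_f(u,w)\in s^{1/2}\mathcal O\subseteq\mathcal I,
\]
as desired.

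The only genuinely substantive step is this rescaling in (2)$\Rightarrow$(3): directly polarizing $u$ against $v\in W(\mathcal I)$ only yields $b_f(u,v)\in\mathcal O$, not in $\mathcal I$, and the improvement to $\mathcal I$ is exactly where the real-closedness of $\F$ is used, via the existence of $s^{1/2}$ for positive $s$.
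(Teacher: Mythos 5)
Your proof is correct and amounts to the same underlying observation as the paper's (one-line) argument, namely that each condition reduces to the matrix entries $a_{ij}=b_f(e_i,e_j)$ lying in $\Oo$, with the order-convexity fact $x^2\in\Oo\iff x\in\Oo$ doing the real work. The only place you deviate is the second inclusion in (2)$\Rightarrow$(3), where you use the rescaling $w=s^{-1/2}v$ instead of the coordinatewise argument (writing $v=\sum v_je_j$ with $v_j\in\Ii$ and using that $\Ii$ is an ideal); both routes are valid.
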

\begin{proof}
 Clearly $\|f\|^2=\tr (A_f^2)=\sum a_{ij}^2$ belongs to $\Oo$ if and only if $a_{ij}$ belongs to $\Oo$ for all $i,j$ which easily implies the desired equivalences.
\end{proof}
Thus if $f\in \Qq(W)(\Oo)$, then $b_f$ induces a bilinear symmetric form $\ov b_f$ on $W_{_\Oo}\times W_{_\Oo}\to \F_{_\Oo}$ which defines a quadratic form $\ov f\in \Qq(W_{_\Oo})$. 
If $A_f$ is the matrix of $f$ with respect to the orthonormal basis $\{e_1,\ldots,e_m\}$ then the matrix $A_{\ov f}$ representing $\ov f$ with respect to the basis $\{p_{_\Oo}(e_1),\ldots,p_{_\Oo}(e_m)\}$ is just the reduction modulo $\Ii$ of the matrix $A_f$. With this at hand, one verifies easily that the map
$$\begin{array}{cccc}
   \ov p_{_\Oo}&\Qq(W)(\Oo)&\to &\Qq(W_{_\Oo})\\
   &f&\mapsto &\ov f
  \end{array}
$$
induces an isomorphism of $\F_{_\Oo}$ vector spaces
$$\Qq(W)_{_\Oo}\to \Qq(W_{_\Oo}).$$
We end the discussion concerning quadratic forms with the following remark: 
\begin{remark}\label{rem:index}
Let $f\in \Qq(W)$ and $\{e_1\ldots,e_n\}$ be an orthonormal basis in which $f$ is diagonal, that is $b_f(e_i,e_j)=\l_i\delta_{ij}$. Let 
$$\begin{array}{l}
m_f={\rm card}\{i:\l_i>0\},\\
n_f={\rm card}\{i:\l_i<0\},\\
z_f={\rm card}\{i:\l_i=0\}.\\
\end{array}$$
Then clearly $m_{\ov f}\leq m_f$, $n_{\ov f}\leq n_f$, $z_{\ov f}\geq z_f.$
\end{remark}

There is also a reduction process for Grassmannians and it will play an important role for the construction of framings. Thus let $L\in \Gr_l(W)$ be an $l$-dimensional subspace of $W$. Then $L(\Oo)=L\cap W(\Oo)$ and if $e_1,\ldots,e_l$ is an orthonormal basis of $L$ we have $L(\Oo)=\Oo e_1+\ldots+\Oo e_l$. This implies that the image $p_{_\Oo}(L)$ of $L(\Oo)$ in $W_{_\Oo}$ is an $\F_{_\Oo}$-vector subspace of dimension $l$. In this way we obtain a map $q_{_\Oo}:\Gr_l(W)\to \Gr_l(W_{_\Oo})$ which is equivariant with respect to $\pi_{_\Oo}:GL(W)(\Oo)\to \GL(W_{_\Oo})$.

\begin{remark}
The map $q_{_\Oo}$ does not preserve transversality: if $V=\F^2$ with the standard scalar product and $x$ is a non-zero element of $\Ii$ the two distinct lines $\F\cdot(1,0)$ and $\F\cdot(1,x)$ of $\mathbb P V$ have the same image in $\mathbb PV_{_\Oo} $. 
\end{remark}

We apply now the preceding remarks to the following situation. Let $V$ be a $\F$-vector space with a symplectic form $\<\cdot,\cdot\>$ and fix a compatible complex structure $J$. We will use the associated scalar product $(\cdot,\cdot):=\<J\cdot,\cdot\>$ to define the $\Oo$ points. If $L$ is a Lagrangian, then $JL$ is orthogonal to $L$ and if $\{e_1,\ldots,e_n\}$ is an orthonormal basis of $L$, the basis $\Bb=\{e_1,\ldots,e_n,-Je_1,\ldots, -Je_n\}$ is orthonormal and symplectic. With this at hand one shows readily that $J\in \Sp(V)(\Oo):=\Sp(V)\cap\GL(V)(\Oo)$, and that $\<\cdot,\cdot\>$ induces a symplectic form $\<\cdot,\cdot\>_{_\Oo}$ of $V_{_\Oo}$ compatible with $p_{_\Oo}:V(\Oo)\to V_{_\Oo}$. If in addition one sets $J_{_\Oo}=\pi_{_\Oo}(J)$ then $J_{_\Oo}$ is a complex structure on $V_{_\Oo}$ compatible with $\langle\cdot,\cdot\>_{_\Oo}$ and with associated scalar product $(\cdot,\cdot)_{_\Oo}$.
From the above it follows that, if $L\in\Gr_n(V)$ is a Lagrangian, then $q_{_\Oo}(L)\in\Gr_n(V_{_\Oo})$ is a Lagrangian as well. We have
\begin{lem}\label{lem:surj}
The map
$$q_{_\Oo}:\Ll(V)\to\Ll(V_{_\Oo})$$ is surjective.
\end{lem}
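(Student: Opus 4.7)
My plan is to reduce the surjectivity of $q_{_\Oo}$ to the surjectivity of the reduction $\pi_{_\Oo}:\Sp(V)(\Oo)\to\Sp(V_{_\Oo})$ by invoking transitivity of the symplectic group on Lagrangians, and then settle the group-theoretic lifting via generators.

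First I would fix an orthonormal symplectic basis $\Bb=\{e_1,\ldots,e_n,-Je_1,\ldots,-Je_n\}$ of $V$ as in the paragraph preceding the statement; this identifies $V\cong\F^{2n}$ with its standard symplectic form, together with $V(\Oo)\cong\Oo^{2n}$ and $V_{_\Oo}\cong\F_{_\Oo}^{2n}$. Take the reference Lagrangian $L_0=\<e_1,\ldots,e_n\>$; then $q_{_\Oo}(L_0)$ is the standard Lagrangian $\ov L_0\in\Ll(V_{_\Oo})$. Given any $\ov L\in\Ll(V_{_\Oo})$, the transitivity of $\Sp(V_{_\Oo})$ on pairs of transverse Lagrangians (the opening lemma of Section \ref{sec:2.2}, which holds over any field) yields transitivity on $\Ll(V_{_\Oo})$, so I can pick $\ov g\in\Sp(V_{_\Oo})$ with $\ov g\,\ov L_0=\ov L$. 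Using the equivariance of $q_{_\Oo}$ with respect to $\pi_{_\Oo}$ recorded just before the statement, it then suffices to lift $\ov g$ to some $g\in\Sp(V)(\Oo)$: the Lagrangian $L:=gL_0\in\Ll(V)$ will satisfy $q_{_\Oo}(L)=\pi_{_\Oo}(g)\,\ov L_0=\ov L$.

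For the lifting step I would invoke the classical generation of $\Sp(2n,\F_{_\Oo})$ by the elementary symplectic blocks $\bsm I_n&S\\0&I_n\esm$, $\bsm I_n&0\\S&I_n\esm$ with $S\in\Sym(n,\F_{_\Oo})$, together with the Levi blocks $\bsm A&0\\0&{}^t\!A^{-1}\esm$ with $A\in\GL(n,\F_{_\Oo})$, and lift each factor individually. Symmetric matrices over $\F_{_\Oo}$ lift entrywise to $\Sym(n,\Oo)$. For $A\in\GL(n,\F_{_\Oo})$, any entrywise lift $\tilde A\in M_n(\Oo)$ has $\det\tilde A$ reducing to $\det A\ne 0$, hence $\det\tilde A\notin\Ii$; locality of $\Oo$ then forces $\det\tilde A\in\Oo^\times$, so $\tilde A\in\GL(n,\Oo)$. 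Composing these lifts of the factors in a decomposition of $\ov g$ produces the required $g$.

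The only non-formal ingredient is the generation of $\Sp(2n,\F_{_\Oo})$ by these elementary and Levi blocks; this is standard over any field and poses no real difficulty. The main conceptual point of the argument is really the translation of a geometric surjectivity about Lagrangians into a group-theoretic one through the $\Sp$-transitivity on $\Ll$.
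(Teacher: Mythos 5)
Your argument is correct, but it is genuinely different from the one in the paper. The paper works directly at the level of vectors: given $\ov L\in\Ll(V_{_\Oo})$, it builds an isotropic subspace of $V$ lifting $\ov L$ by recurrence on the dimension --- at each step one lifts a new vector of $\ov L$ arbitrarily to $V(\Oo)$, observes that its symplectic pairings with the previously constructed (exactly isotropic) lift lie in $\Ii$, and then corrects it by an element of $V(\Ii)$, using surjectivity of the map $V(\Ii)\to\Ii^k$, $w\mapsto(\<e_1,w\>,\ldots,\<e_k,w\>)$, so as to restore exact orthogonality without changing the reduction. That route is self-contained and uses nothing beyond linear algebra over $\Oo$. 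Your route instead converts the problem into the surjectivity of $\pi_{_\Oo}:\Sp(V)(\Oo)\to\Sp(V_{_\Oo})$ via transitivity of $\Sp(V_{_\Oo})$ on $\Ll(V_{_\Oo})$ and the $\pi_{_\Oo}$-equivariance of $q_{_\Oo}$; this is sound (your lifting of the unipotent blocks entrywise and of the Levi blocks via locality of $\Oo$ is clean), and it proves the stronger and independently useful statement that the group reduction is surjective --- a fact of the same flavour as the surjectivity of ${\rm Rep}_g(\Oo_\s)\to{\rm Rep}_g(\R_{\o,\s})$ invoked later in Section \ref{sec:10}. The price is the imported generation of $\Sp(2n,\F_{_\Oo})$ by elementary and Levi blocks, which is classical (Gauss/Bruhat decomposition over a field) but is an external input the paper's proof avoids; if you wanted the argument fully self-contained you would need to justify it, e.g.\ by reducing a general $\bsm A&B\\C&D\esm$ with invertible $D$ to $\bsm I&BD^{-1}\\0&I\esm\bsm {}^t\!D^{-1}&0\\0&D\esm\bsm I&0\\D^{-1}C&I\esm$ and handling the degenerate cases.
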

\begin{proof}
Let $L_0$ be a $k$-dimensional totally isotropic subspace of $V$ and let $v_0\in V$ be such that $\<v,v_0\>\in \Ii$ for all $v\in L_0$. Let $e_1,\ldots,e_k$ be an orthonormal basis of $L_0$. By completing it to a symplectic basis of $V$ it is easy to verify that the map 
$$\begin{array}{ccc}
V(\Ii)&\to &\Ii^k\\
w&\mapsto&(\<e_1,w\>,\ldots,\<e_k,w\>)
\end{array}$$
is surjective. Thus we can find $w_0\in V(\Ii)$ with $\<e_i,v_0\>=\<e_i,w_0\>$ for all $1\leq i\leq k$. Then $v_1=v_0-w_0$ has the same projection in $V_{_\Oo}$ as $v_0$ and is orthogonal to $L_0$ with respect to the symplectic form. The lemma follows then by recurrence on the dimension.
\end{proof}

\subsection{Affine charts on Lagrangian Grassmannians and reduction modulo $\Ii$}
Now we turn to a more detailed study of the map $q_{_\Oo}$ and certain transversality properties. 
Recall from Section \ref{sec:basic} that given transverse Lagrangians $l_1,l_2$ in $V$ we have a map 
$$j_{l_1,l_2}:\Qq(l_1)\to \Ll(V)^{l_2}$$
which to $f\in \Qq(l_1)$ associates the Lagrangian
$$L_f=\{v+T_fv|\; v\in l_1\}$$
where $T_f:l_1\to l_2$ is defined by the equation
 $$b_f(v,w)= \langle v,T_fw\rangle=\langle w,T_fv\rangle,\quad v,w\in l_1. $$
 
 If $l_1,l_2$ are orthogonal for $(\cdot,\cdot)$ and $\{e_1,\ldots,e_n\}$ is an orthonormal basis of $l_1$ then $J e_1,\ldots,J e_n$ is an orthonormal basis for $l_2$ and the symmetric matrix $A_f$ of $f$ in this basis is given by $(A_f)_{ij}=\<e_i,T_f(e_j)\>=-(Je_i,T_f(e_j))$. Thus it follows from Lemma \ref{lem:quad} that $f$ belongs to $\Qq(l_1)(\Oo)$
 if and only if the matrix coefficients of $T_f$ with respect to the basis $\{e_1,\ldots, e_n\}$ and $\{J e_1,\ldots, J e_n\}$ are in $\Oo$, which in turn is equivalent to $T_f(l_1(\Oo))\subseteq l_2(\Oo)$.

\begin{lem}\label{lem:LEM}
 Let $(l_1,l_2)$ be orthogonal Lagrangians in the symplectic vector space $V$, then $q_{_\Oo}(l_1)$ and $q_{_\Oo}(l_2)$ are orthogonal and the diagram

$$\xymatrix{\Qq(l_1)(\calO )\ar[r]\ar[d]^{\ov p_{_\Oo}}&\Qq(l_1)\ar[r]^{j_{l_1,l_2}}_{\sim}&\Ll(V)^{l_2}\ar[r]&\Ll(V)\ar[d]^{q_{_\Oo}}\\
\Qq(q_{_\Oo}(l_1))\ar[rr]^{j_{q_{_{_\Oo}}(l_1),q_{_{_\Oo}}(l_2)}}_{\sim}&&\Ll(V_{_\Oo})^{q_{_\Oo}(l_2)}\ar[r]&\Ll(V_{_\Oo})
}$$
  commutes. The image under $q_{_\Oo}$ of a Lagrangian that does not belong to $j_{l_1,l_2}(\Qq(l_1)(\Oo))$ is not transverse to $q_{_\Oo}(l_2)$.
\end{lem}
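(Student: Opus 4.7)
The strategy is to work in an orthonormal basis compatible with the complex structure $J$ used to define the $\Oo$-points, and then translate each assertion into a matrix statement modulo $\Oo$ and $\Ii$. Fix an orthonormal basis $\{e_1,\dots,e_n\}$ of $l_1$; since $l_1\perp l_2$, the vectors $\{Je_1,\dots,Je_n\}$ form an orthonormal basis of $l_2$ and the union is a symplectic orthonormal basis of $V$. The images $\{p_{_\Oo}(e_i)\}$ and $\{p_{_\Oo}(Je_j)\}$ give orthonormal bases of $q_{_\Oo}(l_1)$ and $q_{_\Oo}(l_2)$, and $(p_{_\Oo}(e_i),p_{_\Oo}(Je_j))_{_\Oo}$ is the reduction of $(e_i,Je_j)=0$, which establishes the orthogonality $q_{_\Oo}(l_1)\perp q_{_\Oo}(l_2)$.

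For commutativity of the diagram, I would exploit that the operator $T_f:l_1\to l_2$ has matrix essentially $-A_f$ in the bases $\{e_i\}$ and $\{Je_j\}$, as was noted just before Lemma \ref{lem:quad}. Hence $f\in\Qq(l_1)(\Oo)$ forces $T_f(l_1(\Oo))\subseteq l_2(\Oo)$, so that for any $v\in l_1(\Oo)$ the vector $v+T_fv$ lies in $L_f(\Oo)$ and reduces under $p_{_\Oo}$ to $p_{_\Oo}(v)+T_{\ov f}\,p_{_\Oo}(v)$, since the matrix of $T_{\ov f}$ in the reduced bases is the reduction modulo $\Ii$ of the matrix of $T_f$. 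Running $v$ over $l_1(\Oo)$ shows $q_{_\Oo}(L_f)\supseteq L_{\ov f}$, and as both are $n$-dimensional Lagrangians in $V_{_\Oo}$ they coincide.

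For the final assertion, I would split into two cases. If $L$ is not transverse to $l_2$, pick any nonzero $v\in L\cap l_2$ and rescale (using that $\F$ is real closed, so $(v,v)$ admits a positive square root) to $(v,v)=1$; then $p_{_\Oo}(v)$ is a nonzero element of $q_{_\Oo}(L)\cap q_{_\Oo}(l_2)$. If instead $L=L_f$ is transverse to $l_2$ but $f\notin\Qq(l_1)(\Oo)$, Lemma \ref{lem:quad} provides $v\in l_1(\Oo)$ with $T_fv\notin l_2(\Oo)$, so $a:=(v,v)\in\Oo$ while $b:=(T_fv,T_fv)\notin\Oo$. Setting $\mu=\sqrt{a+b}\in\F$, the elementary order-convexity observation that $a\in\Oo$ and $a+b\notin\Oo$ forces $a/(a+b)\in\Ii$ and hence $b/(a+b)\in\Oo\setminus\Ii$. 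Therefore the unit-norm vector $w=(v+T_fv)/\mu\in L_f(\Oo)$ has vanishing $l_1$-component and nonzero $l_2$-component in $V_{_\Oo}$, producing a nonzero element of $q_{_\Oo}(L_f)\cap q_{_\Oo}(l_2)$.

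The main delicacy lies in this last case: one has to argue that rescaling by $\mu^{-1}$ kills the $l_1$-component while retaining the $l_2$-component, which is exactly what the order-convexity dichotomy $a\in\Oo$, $a+b\notin\Oo \Rightarrow a/(a+b)\in\Ii$ provides. Everything else reduces to a bookkeeping of matrix coefficients under the reduction $p_{_\Oo}$, with the existence of positive square roots in $\F$ (part of Lemma \ref{lem:2.3}) being the only nontrivial algebraic input beyond the basis setup.
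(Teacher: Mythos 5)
Your proof is correct and follows essentially the same route as the paper's: commutativity is reduced to the observation that $f\in\Qq(l_1)(\Oo)$ forces $T_f(l_1(\Oo))\subseteq l_2(\Oo)$ so that $T_f$ reduces to $T_{\ov f}$, and the last assertion is handled by rescaling a vector $v+T_fv$ with $T_fv\notin l_2(\Oo)$ so that its $l_1$-part dies in $V_{_\Oo}$ while its $l_2$-part survives (the paper normalizes by the inverse of the largest matrix coefficient of $T_f$ rather than by the norm, but the effect is identical). You are in fact slightly more complete than the paper, which silently omits the case of a Lagrangian not transverse to $l_2$ at all; your treatment of that case is correct.
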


\begin{proof}
Since $l_1$ and $l_2$ are orthogonal, we have for $f\in \Qq(l_1)$:
$$j_{l_1,l_2}(f)(\Oo)=\{v+T_f(v)|\; v\in l_1(\Oo), T_f(v)\in l_2(\Oo)\}.$$

First notice that, if $f$ belongs to $\Qq(l_1)(\Oo)$, then $T_f(l_1(\Oo))$ is contained in $l_2(\Oo)$ and thus we get
$$j_{l_1,l_2}(f)(\Oo)=\{v+T_f(v)|\; v\in l_1(\Oo)\}.$$
Now  $T_f$ induces a well defined map $\ov T_f:q_{_\Oo}(l_1)\to q_{_\Oo}(l_2)$ with the property that 
$$q_{_\Oo}(j_{l_1,l_2}(f))=\{v+\ov T_f(v)|\; v\in q_{_\Oo}(l_1)\}.$$

But $b_f(v,w)$ is by definition equal to $\<v,T_f(w)\>$ and thus $b_{\ov f}(v,w)$ is equal to $\<v,\ov T_f(w)\>_{_\Oo}$ for $v,w\in q_{_\Oo}(l_1)$.
This implies that $j_{q_{_\Oo}(l_1),q_{_\Oo}(l_2)}(\ov p_{_\Oo}(f))$ is equal to $q_{_\Oo}(j_{l_1,l_2}(f))$ and proves the commutativity of the diagram. 

If $f$ does not belong to $\Qq(l_1)(\Oo)$,  we can assume without loss of generality that $T_f(e_1)$ is not in $l_2(\Oo)$. Writing $T_f(e_1)=\sum_{i=1}^n \mu_i J e_i$, let $i_0$ be such that $|\mu_{i_0}|=\max\{|\mu_i|:1\leq i\leq n\}$. Then $\mu_{i_0}$ does not belong to $\Oo$, and hence $\mu=\mu_{i_0}^{-1}$ belongs to $\Ii$. This implies that $T_f(\mu e_1)$ belongs to $l_2(\Oo)$ and its $e_{i_0}$ coordinate is equal to 1. Thus $\mu e_1+T_f(\mu e_1)$ belongs to  $j_{l_1,l_2}(f)(\Oo)$ and 
$$0\neq p_{_\Oo}(\mu e_1+T_f(\mu e_1))\in q_{_\Oo}(j_{l_1,l_2}(f))\cap q_{_\Oo}(l_2).$$
 \end{proof}
\begin{lem}\label{lem:5.6}
Assume $(a,b,c,d)\in \Ll(V)^{(4)}$ is a maximal 4-tuple such that $q_{_\Oo}(a)$ is transverse to $ q_{_\Oo}(b)$, and $q_{_\Oo}(c)$ is transverse to $q_{_\Oo}(d)$. Then for every $x_1\in (\!(b,c)\!)$  and $x_2\in(\!(d,a)\!)$, the subspace $q_{_\Oo}(x_1)$ is transverse to $q_{_\Oo}(x_2)$.
\end{lem}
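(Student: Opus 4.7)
The plan is to work in the affine chart $j_{a,d}:\Qq(a)\to\Ll(V)^d$, after choosing a compatible complex structure $J$ making $a$ and $d$ orthogonal (such a $J$ exists since $\Yy_{a,d}$ is non-empty by Lemma \ref{lem:Y}). Fix an orthonormal basis $\{e_i\}$ of $a$, so $\{-Je_i\}$ is an orthonormal basis of $d$, and represent Lagrangians transverse to $d$ by their associated symmetric matrices. Using the maximality of $(a,b,c,d)$ together with the cocycle identity of Proposition \ref{lem:1}(4)---for instance the relation for $(a,b,x_1,c)$ gives $\tau(a,b,x_1)+\tau(a,x_1,c)=\tau(a,b,c)+\tau(b,x_1,c)=2n$, forcing both summands to equal $n$---I first show by iteration that the 6-tuple $(a,b,x_1,c,d,x_2)$ is positively ordered, i.e., every ordered triple has Kashiwara cocycle $n$. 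In particular all six Lagrangians are pairwise transverse, and in the chart $b,c,x_1$ correspond to positive definite matrices $0<A_b<A_{x_1}<A_c$ while $x_2$ corresponds to a negative definite $A_{x_2}$.

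Next, I translate the transversality hypotheses via Lemma \ref{lem:LEM}: $q_{_\Oo}(c)\tra q_{_\Oo}(d)$ forces $A_c\in\Sym(n,\Oo)$, and applying Lemma \ref{lem:LEM} in the dual chart $j_{d,a}$ (in which the representative of $b$ is $-A_b^{-1}$) translates $q_{_\Oo}(a)\tra q_{_\Oo}(b)$ into $A_b^{-1}\in\Sym(n,\Oo)$. The sandwich $0<A_b<A_{x_1}<A_c$ combined with the entry bound $|M_{ij}|^2\leq M_{ii}M_{jj}$ for positive definite $M$ and the order convexity of $\Oo$ then yields $A_b,A_{x_1}\in\Sym(n,\Oo)$, and hence $\det A_b\in\Oo^\times$.

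To handle the negative definite $A_{x_2}$, whose entries need not lie in $\Oo$, apply an orthogonal change of basis $O\in O(n,\F)$ diagonalizing $A_{x_2}=D=\diag(\mu_1,\ldots,\mu_n)$ with $\mu_i<0$. Since orthogonal matrices have entries bounded by $1$ in absolute value, $O\in\GL(n,\Oo)$ and all previous $\Oo$-conditions on $A_b,A_c,A_{x_1}$ are preserved. Partition the indices as $S=\{i:\mu_i\in\Oo\}$ and $T=\{i:\mu_i\notin\Oo\}$, so that $\mu_i^{-1}\in\Ii$ for $i\in T$. A direct computation identifies the $\Oo$-module $x_2(\Oo)=x_2\cap V(\Oo)$ as freely generated by $\{(e_i,-\mu_i e_i):i\in S\}\cup\{(\mu_i^{-1}e_i,-e_i):i\in T\}$, and reducing modulo $\Ii$ the second family becomes $(0,-\overline{e_i})$. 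Consequently, a nonzero element of $q_{_\Oo}(x_1)\cap q_{_\Oo}(x_2)$ would correspond to a nonzero vector supported on the $S$-coordinates whose $S$-block $\vec\alpha$ satisfies $(\overline{A_{x_1}^{SS}}-\overline{D^{SS}})\vec\alpha=0$.

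The main technical step---and I expect this to be the principal obstacle---is showing that the principal submatrix $(A_{x_1}-D)^{SS}$ remains invertible modulo $\Ii$. Since $A_{x_1}-D=A_{x_1}+|D|>A_{x_1}>A_b>0$ in the positive definite order, the same relation holds for $S$-principal submatrices, so $\det[(A_{x_1}-D)^{SS}]\geq\det A_b^{SS}$. To bound $\det A_b^{SS}$ below within $\Oo^\times$, apply Fischer's inequality $\det A_b\leq\det A_b^{SS}\cdot\det A_b^{TT}$: $A_b^{TT}$ has $\Oo$-entries so $\det A_b^{TT}\in\Oo$; if $\det A_b^{TT}\in\Ii$ then $\det A_b^{SS}\geq\det A_b/\det A_b^{TT}$ would be outside $\Oo$, contradicting $\det A_b^{SS}\in\Oo$, so $\det A_b^{TT}\in\Oo^\times$ and in turn $\det A_b^{SS}\in\Oo^\times$. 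Then $\det[(A_{x_1}-D)^{SS}]\in\Oo$ and $\geq\det A_b^{SS}\in\Oo^\times$, hence it is a unit of $\Oo$; its reduction is invertible in $\F_{_\Oo}$, and $q_{_\Oo}(x_1)\cap q_{_\Oo}(x_2)=0$.
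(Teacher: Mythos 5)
There is a genuine gap at the very first step: the choice of the chart $j_{a,d}$. The reduction $q_{_\Oo}$ is defined with respect to the complex structure $J$ that was \emph{fixed} in the setup of Section \ref{sec:4}, and Lemma \ref{lem:LEM} -- which you invoke to translate the transversality hypotheses into ``chart coordinates lie in $\Sym(n,\Oo)$'' -- requires the two Lagrangians defining the chart to be \emph{orthogonal} for that fixed $J$. Replacing $J$ by some $J'\in\Yy_{a,d}$ changes the $\Oo$-module $V(\Oo)$ and hence the map $q_{_\Oo}$ itself (if $J'=gJg^{-1}$ then $V(\Oo)_{J'}=gV(\Oo)_J$), so the hypotheses and conclusion of the lemma are no longer about the objects you are computing with. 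Moreover this cannot be repaired: if $a$ and $d$ were orthogonal for a complex structure defining the same $\Oo$-structure, then $q_{_\Oo}(a)$ and $q_{_\Oo}(d)$ would be transverse, and this is \emph{not} implied by the hypotheses. For $n=1$ take $(a,b,c,d)=(0,1,-1,-\epsilon)$ with $\epsilon\in\Ii$, $\epsilon>0$: this 4-tuple is maximal, $q_{_\Oo}(a)\tra q_{_\Oo}(b)$ and $q_{_\Oo}(c)\tra q_{_\Oo}(d)$ hold, yet $q_{_\Oo}(a)=q_{_\Oo}(d)$. In this situation the reduced chart coordinate $\ov{A_l}$ simply does not compute $q_{_\Oo}(l)$ (here $A_b,A_{x_1},A_c$ all reduce to $1$ while $q_{_\Oo}(b)$ and $q_{_\Oo}(c)$ are distinct lines), so the entire strategy of reading off $q_{_\Oo}(x_1)\cap q_{_\Oo}(x_2)$ from the reductions of the chart matrices collapses, as does your explicit description of $x_2(\Oo)$, which uses $V(\Oo)=a(\Oo)\oplus d(\Oo)$.

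The paper circumvents exactly this by basing the chart not at $a$ or $d$ but at an auxiliary Lagrangian $M$ produced from the \emph{reduced} picture: one picks $m\in(\!(q_{_\Oo}(a),q_{_\Oo}(b))\!)$, lifts it to $M\in(\!(a,b)\!)$ using the surjectivity of $q_{_\Oo}$ (Lemma \ref{lem:surj}) and Remark \ref{rem:index}, and works in $j_{JM,M}$ with the original $J$. Then all six Lagrangians $b,x_1,c,d,x_2,a$ are sandwiched, $f_b<\!<\cdots<\!<f_a$ with $f_a,f_b\in\Qq(JM)(\Oo)$, so order convexity puts every coordinate (including that of $x_2$) in $\Oo$ -- which also renders your $S/T$ partition and the Fischer-inequality step unnecessary -- and the conclusion follows from $f_{q_{_\Oo}(x_2)}-f_{q_{_\Oo}(x_1)}\geq f_{q_{_\Oo}(d)}-f_{q_{_\Oo}(c)}>\!>0$. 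Your intermediate observations (the cocycle argument showing the 6-tuple is positively ordered, and the entrywise bound $|M_{ij}|^2\leq M_{ii}M_{jj}$ combined with order convexity) are correct and reappear in spirit in the paper's proof, but they must be deployed in a chart that is compatible with the fixed $\Oo$-structure.
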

\begin{center}
\begin{tikzpicture}[scale=.5]
\draw (0,0) circle [radius=2];
\filldraw (0,2) node [above] {$x_2$} circle [radius=1.5pt];
\filldraw (1.4,1.4) node [above, right] {$d$} circle [radius=1.5pt];
\filldraw (1.4,-1.4) node [below, right] {$c$} circle [radius=1.5pt];
\filldraw (0,-2) node [below] {$x_1$} circle [radius=1.5pt];
\filldraw (-1.4,-1.4) node [below, left] {$b$} circle [radius=1.5pt];
\filldraw (-2,0) node [left] {$M$} circle [radius=1.5pt];
\filldraw (-1.4,1.4) node [above,left] {$a$} circle [radius=1.5pt];
\end{tikzpicture}
\end{center}
\begin{proof}
Pick $m\in (\!(q_{_\Oo}(a),q_{_\Oo}(b))\!)$ and $M\in \Ll(V)$ with $q_{_\Oo}(M)=m$ (see Lemma \ref{lem:surj}). As a consequence of Remark \ref{rem:index} and the definition of the Kashiwara cocycle we get that $M\in(\!(a,b)\!)$. It follows then that $(b,x_1,c,d,x_2,a)$ forms a maximal 6-tuple and these 6 Lagrangians are all transverse to $M$. Thus these points are in the image of $j_{JM,M}:\Qq(JM)\to \Ll(V)^M$. Denote by $f_l \in \Qq(JM)$ the quadratic form with $j_{JM,M}(f_l)=l\in\Ll(V)^M$. We have from the maximality property of the 6-tuple that $$f_b<\!<f_{x_1}<\!<f_c<\!<f_d<\!<f_{x_2}<\!<f_a$$ (see Lemma \ref{lem:2.10}(2)). Applying now Lemma \ref{lem:LEM} to $l_1=JM$ and $l_2=M$, we deduce from the fact that $q_{_\Oo}(a)$ and $q_{_\Oo}(b)$ are transverse to $m=q_{_\Oo}(M)$ that $f_a,f_b$ are in $Q(JM)(\Oo)$. From the inequalities above we deduce that $f_{x_1}$ and $f_{x_2}$ are in $\Qq(JM)(\Oo)$; it follows then from the commutativity of the diagram in Lemma \ref{lem:LEM} that $q_{_\Oo}(x_1)$ and 
$q_{_\Oo}(x_2)$ are transverse to $m=q_{_\Oo}(M)$.
Also
$$f_{q_{_\Oo}(x_2)}-f_{q_{_\Oo}(x_1)}\geq f_{q_{_\Oo}(d)}-f_{q_{_\Oo}(c)}>\!>0 $$
where the last inequality follows from the hypothesis that $q_{_\Oo}(d)$ is transverse to $q_{_\Oo}(c)$. Thus $q_{_\Oo}(x_2)$ is transverse to $q_{_\Oo}(x_1)$.
\end{proof}

\subsection{Choosing the scale and constructing the maximal framing}

Let $\rho:\G\to\Sp(V)$ be a representation admitting a maximal framing $\phi:S\to \Ll(V)$. 
We assume that there is a complex structure  $J$ in $\mathbb X_V$ and an order convex subring $\Oo$ of $\F$ such that $\rho(\G)\subset \Sp(V)(\Oo)$. We define then $\rho_{_\Oo}:\G\to\Sp(V_{_\Oo})$ as the composition $\rho_{_\Oo}:=\pi_{_\Oo}\circ\rho$ and $\phi_{_\Oo}:S\to\Ll(V_{_\Oo})$ as the composition $\phi_{_\Oo}:=q_{_\Oo}\circ\phi$.
Our goal is to show
\begin{thm}\label{thm:bdrymap}
If $(S,\phi)$ is a maximal framing for $\rho:\G\to\Sp(V)$, then $(S,\phi_{_\Oo})$ is a maximal framing for $\rho_{_\Oo}:\G\to \Sp(V_{_\Oo})$. 
\end{thm}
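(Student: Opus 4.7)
Equivariance of $\phi_{_\Oo}=q_{_\Oo}\circ\phi$ is immediate from the hypothesis $\rho(\G)\subset\Sp(V)(\Oo)$ combined with the equivariance of $q_{_\Oo}$ with respect to $\pi_{_\Oo}$. The content of the theorem is then the maximality assertion: for every positively oriented triple $(x,y,z)\in S^{3}$, the triple $(\phi_{_\Oo}(x),\phi_{_\Oo}(y),\phi_{_\Oo}(z))$ must be pairwise transverse (claim A) and $Q_{\phi_{_\Oo}(x),\phi_{_\Oo}(y),\phi_{_\Oo}(z)}$ must be positive definite (claim B). Claim B will follow cheaply from claim A: one picks $J$-orthogonal Lagrangians $l_1=\phi(x)$ and $l_2$ transverse to each of $\phi(x),\phi(y),\phi(z)$ and applies the commutative diagram of Lemma \ref{lem:LEM}. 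The reduction of the positive-definite form $Q_{\phi(x),\phi(y),\phi(z)}$ is then identified with $Q_{\phi_{_\Oo}(x),\phi_{_\Oo}(y),\phi_{_\Oo}(z)}$, which by Remark \ref{rem:index} is positive semi-definite; claim A will make it nondegenerate and hence positive definite.

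The heart of the proof is therefore claim A, which I plan to attack by seeding a base case and then propagating via Lemma \ref{lem:5.6}. For the base case, fix two hyperbolic elements $\g,\eta\in\G$ with positively intersecting axes. By Theorem \ref{lem:collar}, both $\rho(\g),\rho(\eta)$ are Shilov hyperbolic with invariant Lagrangians $\phi(\g^{\pm}),\phi(\eta^{\pm})$. Since both $\rho(\g)$ and $\rho(\g^{-1})$ lie in $\Sp(V)(\Oo)$, every eigenvalue of $\rho(\g)$ in $\K$ has absolute value in $\Oo^{\times}$; analogously for $\rho(\eta)$. The Collar Lemma bound $|\alpha_n(\g)|^{2}-1\geq|\beta_1(\eta)|^{-2n}$ therefore forces $|\alpha_n(\g)|^{2}-1\in\Oo^{\times}$, so that $\ov{|\alpha_n(\g)|}>1$ in $\F_{_\Oo}$. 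Running the same argument on $\g^{-1}$ gives that every eigenvalue of $\rho_{_\Oo}(\g)$ has absolute value bounded away from $1$ from both sides, so $\rho_{_\Oo}(\g)$ is itself Shilov hyperbolic. Uniqueness of the dynamical decomposition combined with equivariance of $q_{_\Oo}$ then forces its attracting and repelling Lagrangians to be $q_{_\Oo}\phi(\g^{\pm})$, which are therefore transverse in $V_{_\Oo}$.

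For the propagation step, given distinct $x,y\in S$, density of hyperbolic fixed points in $\partial\H^{2}$ allows us to choose hyperbolic elements $\delta_1,\delta_2$, each intersecting $\g$, so that the $4$-tuple $(\delta_1^-,\delta_1^+,\delta_2^-,\delta_2^+)$ is positively oriented with $x\in(\!(\delta_1^+,\delta_2^-)\!)$ and $y\in(\!(\delta_2^+,\delta_1^-)\!)$. By the base case applied to $\delta_1$ and to $\delta_2$, both adjacent pairs $(\phi(\delta_1^-),\phi(\delta_1^+))$ and $(\phi(\delta_2^-),\phi(\delta_2^+))$ reduce to transverse pairs in $V_{_\Oo}$, so Lemma \ref{lem:5.6} yields $q_{_\Oo}\phi(x)\tra q_{_\Oo}\phi(y)$ as required. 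The main obstacle is the quantitative step in the base case: controlling the spectrum of $\rho(\g)$ away from the unit circle at the $\Oo$-level. The Collar Lemma's explicit inequality (2) is essential here; without it, eigenvalues of $\rho(\g)$ could lie infinitesimally close to $1$, collapse under reduction, and destroy both the Shilov hyperbolic structure of $\rho_{_\Oo}(\g)$ and the transversality of $q_{_\Oo}\phi(\g^{\pm})$ that the propagation argument relies on.
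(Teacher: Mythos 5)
Your proposal is correct and follows essentially the same route as the paper: reduce maximality to pairwise transversality of the reduced Lagrangians, establish transversality of $q_{_\Oo}(\phi(\g^\pm))$ for hyperbolic $\g$ via the Collar Lemma bound $|\alpha_n(\g)|^2-1\in\Oo\setminus\Ii$ together with the compatibility of eigenvalues with reduction, and then propagate to arbitrary pairs in $S$ using Lemma \ref{lem:5.6}. The only (inessential) difference is that you run the propagation step directly by sandwiching $x,y$ between two hyperbolic axes, whereas the paper runs the same Lemma \ref{lem:5.6} in the contrapositive (Lemma \ref{lem:5.8}).
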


\begin{remark}
Since $\G$ is finitely generated, for any choice of a compatible complex structure $J$ it is possible to find an infinitesimal $\s$  so that $\rho(\G)\subset \Sp(V)(\Oo_\s)$ where $\Oo_\s$ is the order convex subring  described in Example \ref{ex:4.17}. However, as we will discuss in Section \ref{sec:10}, the choice of $\s$  depends on the complex structure $J$ (see Proposition \ref{prop:int2}). 
\end{remark}
In view of the definition of maximality of triples of Lagrangians and Remark \ref{rem:index}, in order to prove Theorem \ref{thm:bdrymap} we have to show that if $x\neq y$ are distinct points in $S$, then $q_{_\Oo}(\phi(x))$ and $q_{_\Oo}(\phi(y))$ are transverse Lagrangians.
As a first step we show
\begin{lem}\label{lem:5.8}
 Assume that there exist two distinct points $x,y$ in $S$ such that $q_{_\Oo}(\phi(x))$ and $q_{_\Oo}(\phi(y))$ are not transverse. Then there exists a hyperbolic element $\g\in\G$ such that $q_{_\Oo}(\phi(\gamma^+))$ and $q_{_\Oo}(\phi(\g^-))$ are not transverse.
\end{lem}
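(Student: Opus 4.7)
The plan is to argue by contrapositive: assume that for every hyperbolic element $\g\in\G$ the Lagrangians $q_{_\Oo}(\phi(\g^-))$ and $q_{_\Oo}(\phi(\g^+))$ are transverse, and deduce that $q_{_\Oo}(\phi(x))\tra q_{_\Oo}(\phi(y))$ for any two distinct points $x,y\in S$. The idea is to sandwich the pair $(x,y)$ between two pairs of fixed points of hyperbolic elements, so that we can feed the configuration into Lemma \ref{lem:5.6}.

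The first step is a density argument. Since $\G<\PSL(2,\R)$ is a non-elementary lattice, the set of pairs $(\g^-,\g^+)\in\deH^2\times\deH^2$ arising from hyperbolic $\g\in\G$ is dense in the complement of the diagonal. Given distinct $x,y\in S$, I would pick two disjoint arcs in $\deH^2$, one separating $x$ from $y$ on each side, and use density to choose hyperbolic elements $\g,\eta\in\G$ whose fixed points $\g^\pm,\eta^\pm$ are cyclically ordered as $(\g^-,\eta^-,\g^+,\eta^+)$ in positive orientation, with $x\in(\!(\eta^-,\g^+)\!)$ and $y\in(\!(\eta^+,\g^-)\!)$ as points in $\deH^2$. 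These fixed points lie in $S$ by hypothesis on $S$.

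The second step uses the maximal framing. Since $(\g^-,\eta^-,\g^+,\eta^+,x,y)$ is in positive cyclic order on $\deH^2$ and every positively oriented triple in $S$ is sent by $\phi$ to a maximal triple, the four Lagrangians $(\phi(\g^-),\phi(\eta^-),\phi(\g^+),\phi(\eta^+))$ form a maximal $4$-tuple in $\Ll(V)$, and moreover $\phi(x)\in(\!(\phi(\eta^-),\phi(\g^+))\!)$ while $\phi(y)\in(\!(\phi(\eta^+),\phi(\g^-))\!)$. Under the contrapositive hypothesis we have $q_{_\Oo}(\phi(\g^-))\tra q_{_\Oo}(\phi(\g^+))$ and $q_{_\Oo}(\phi(\eta^-))\tra q_{_\Oo}(\phi(\eta^+))$, so all the hypotheses of Lemma \ref{lem:5.6} are met with $(a,b,c,d)=(\phi(\g^-),\phi(\eta^-),\phi(\g^+),\phi(\eta^+))$, $x_1=\phi(x)$ and $x_2=\phi(y)$. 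The conclusion of that lemma then yields $q_{_\Oo}(\phi(x))\tra q_{_\Oo}(\phi(y))$, contradicting the hypothesis of Lemma \ref{lem:5.8}.

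The only delicate point is the density step: one must produce \emph{two} hyperbolic elements whose axis endpoints, jointly, strictly separate $x$ from $y$ in the required cyclic pattern. This is where the argument could go wrong if the set of hyperbolic fixed points were not dense enough, but since $\G$ is a cofinite-volume Fuchsian group this is standard. Everything else is a direct application of the framing hypothesis together with Lemma \ref{lem:5.6}.
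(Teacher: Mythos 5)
Your overall strategy (contrapositive plus sandwiching the pair $(x,y)$ between fixed-point pairs of hyperbolic elements so that Lemma \ref{lem:5.6} applies) is viable, but the specific configuration you chose breaks the application of Lemma \ref{lem:5.6}. That lemma requires transversality of the reductions of the two \emph{adjacent} pairs $(a,b)$ and $(c,d)$ of the maximal $4$-tuple. With your labeling $(a,b,c,d)=(\phi(\g^-),\phi(\eta^-),\phi(\g^+),\phi(\eta^+))$, these adjacent pairs are the mixed pairs $(\phi(\g^-),\phi(\eta^-))$ and $(\phi(\g^+),\phi(\eta^+))$, each consisting of fixed points of two \emph{different} hyperbolic elements. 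Your contrapositive hypothesis only gives transversality of $q_{_\Oo}(\phi(\delta^-))$ and $q_{_\Oo}(\phi(\delta^+))$ for a single hyperbolic $\delta$, i.e.\ of the diagonal pairs $(a,c)$ and $(b,d)$ of your $4$-tuple; it says nothing about the mixed pairs, since two arbitrary distinct boundary points need not be the fixed-point pair of any element of $\G$. Moreover, no relabeling saves the crossing-axes configuration: since $x$ must lie in $(\!(b,c)\!)$ and $y$ in $(\!(d,a)\!)$, the assignment of the four fixed points to $a,b,c,d$ is forced, and the adjacent pairs are necessarily mixed.

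The repair is to take $\g$ and $\eta$ with \emph{disjoint} axes: by density choose them so that $(\g^-,\g^+,x,\eta^-,\eta^+,y)$ is positively cyclically ordered, and set $(a,b,c,d)=(\phi(\g^-),\phi(\g^+),\phi(\eta^-),\phi(\eta^+))$. Then $(a,b)$ and $(c,d)$ are exactly the fixed-point pairs of $\g$ and $\eta$, the contrapositive hypothesis supplies the needed transversality, and Lemma \ref{lem:5.6} yields $q_{_\Oo}(\phi(x))\tra q_{_\Oo}(\phi(y))$. For comparison, the paper argues directly rather than by contrapositive: it deduces from Lemma \ref{lem:5.6} that non-transversality of $q_{_\Oo}(\phi(x))$ and $q_{_\Oo}(\phi(y))$ forces all pairs of points in at least one of the two arcs $(\!(x,y)\!)$, $(\!(y,x)\!)$ to have non-transverse reductions (otherwise a transverse pair in each arc would bracket $x$ and $y$ and contradict the hypothesis), and then picks a hyperbolic element with both fixed points in that arc. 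Both routes rest on the same two ingredients, Lemma \ref{lem:5.6} and the density of hyperbolic fixed-point pairs, so once your configuration is corrected the two proofs are essentially equivalent.
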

\begin{proof}
From Lemma \ref{lem:5.6} it follows that for $I$ either $(\!(x,y)\!)$ or $(\!(y,x)\!)$ we have that for every $t_1,t_2$ in $I$, $q_{_\Oo}(\phi(t_1))$ and $q_{_\Oo}(\phi(t_2))$ are not transverse. Now pick a hyperbolic element $\g\in \G$ with $\{\g^+,\g^-\}\subset I$.
\end{proof}

The strategy of the proof consists in showing that  for every hyperbolic element $\g\in\G$, the Lagrangians $q_{_\Oo}(\phi(\g^-))$ and $q_{_\Oo}(\phi(\g^+))$ are transverse. This will be a consequence of the properties of the eigenvalues of $\rho(\g)$ using the Collar Lemma. 

We first observe that eigenvalues behave well with respect to reduction modulo $\Ii$:
\begin{lem}\label{lem:c}
 Let $B\in \GL_m(\Oo)$ be a matrix, and denote by $\beta_i\in\K$ the eigenvalues of $B$. Then
 \begin{enumerate}
  \item $|\beta_i|\in \Oo$;
  \item if $\ov B$ denotes the image of $B$ in $\GL_m(\F_{_\Oo})$, and $\ov \beta_i$ are the images of $\beta_i$ in $\K_{_\Oo}$, then the eigenvalues of $\ov B$ are precisely $\ov \beta_i$.
 \end{enumerate}
\end{lem}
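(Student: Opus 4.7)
The plan is to deduce both parts from the characteristic polynomial $p(x) = \det(xI - B) \in \Oo[x]$, which factors over the algebraically closed field $\K$ as $p(x) = \prod_{i=1}^m (x-\beta_i)$. Part (1) will follow from a standard root bound combined with order convexity of $\Oo$; Part (2) will then fall out by reducing this factorization modulo the maximal ideal, once the $\beta_i$ are shown to lie in the subring $\Oo_\K := \Oo + i\Oo$ of $\K$.

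For (1), I would write $p(x) = x^m + c_{m-1}x^{m-1} + \ldots + c_0$ with $c_j\in\Oo$. If $|\beta_i|\leq 1$ there is nothing to do, since $1\in\Oo$. Otherwise, dividing the identity $p(\beta_i)=0$ by $\beta_i^{m-1}$ and applying the triangle inequality for $|\cdot|:\K\to\F^{\geq 0}$ gives $|\beta_i|\leq\sum_{j=0}^{m-1}|c_j|$. Each $|c_j|$ equals $\pm c_j$, so it lies in $\Oo$; hence the right-hand side lies in $\Oo$, and order convexity forces $|\beta_i|\in\Oo$.

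For (2), I would first upgrade (1) to the statement $\beta_i\in\Oo_\K$. Writing $\beta_i = a_i+ib_i$, Part (1) gives $a_i^2+b_i^2=|\beta_i|^2\in\Oo$, so order convexity yields $a_i^2,b_i^2\in\Oo$, and a similar two-case bound (either $|a_i|\leq 1$, or $|a_i|\leq a_i^2$) gives $a_i,b_i\in\Oo$. Now $\Oo_\K$ is a local ring with maximal ideal $\Ii_\K := \Ii + i\Ii$ and residue field canonically identified with $\K_{_\Oo}=\F_{_\Oo}[i]$; reducing the identity $p(x)=\prod_i(x-\beta_i)$, which a posteriori lies in $\Oo_\K[x]$, modulo $\Ii_\K$ yields $\ov p(x)=\prod_i(x-\ov\beta_i)$ in $\K_{_\Oo}[x]$. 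Since $\ov p$ is the characteristic polynomial of $\ov B$, its roots $\ov\beta_1,\ldots,\ov\beta_m$ are exactly the eigenvalues of $\ov B$ with multiplicity. The argument is essentially routine; the only bookkeeping is that $B\in\GL_m(\Oo)$ forces $\det B\in\Oo^\times$ (since both $B$ and $B^{-1}$ have entries in $\Oo$), so applying Part (1) to $B^{-1}$ shows $|\beta_i|$ is a unit and in particular $\ov\beta_i\neq 0$, consistent with $\ov B\in\GL_m(\F_{_\Oo})$.
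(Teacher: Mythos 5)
Your proof is correct, but it takes a different route from the paper on part (1). The paper's own argument is spectral/geometric: invoking the mechanism of Lemma \ref{lem:ab}, it produces for each eigenvalue $\beta_i$ a vector $v\in V(\Oo)\setminus V(\Ii)$ (an eigenvector, or a vector in the two-dimensional invariant subspace when $\beta_i\notin\F$) with $(Bv,Bv)=|\beta_i|^2(v,v)$; since $B$ has entries in $\Oo$ and $(v,v)$ is a unit, $|\beta_i|^2\in\Oo$ follows. You instead use the Cauchy root bound $|\beta_i|\leq\sum_j|c_j|$ on the characteristic polynomial together with order convexity. Both are sound; yours is purely algebraic and avoids re-deriving the invariant-subspace normalization, while the paper's recycles a lemma already proved for the Collar Lemma. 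For part (2) the paper's proof is the single sentence that the characteristic polynomial of $\ov B$ is the reduction of that of $B$; your version is the same idea but fills in a detail the paper leaves implicit, namely that each $\beta_i$ actually lies in $\Oo+i\Oo$ (via $a_i^2\leq a_i^2+b_i^2=|\beta_i|^2\in\Oo$ and convexity), so that the factorization $p(x)=\prod_i(x-\beta_i)$ itself reduces modulo $\Ii+i\Ii$ and the roots of $\ov p$ are exactly the $\ov\beta_i$ with multiplicity. This is a worthwhile addition, since the statement of the lemma already presupposes that the reductions $\ov\beta_i$ make sense. Your closing observation that $\det B\in\Oo^\times$ forces each $|\beta_i|$ to be a unit is also consistent with the paper's use of the lemma (where the reduced eigenvalues must be nonzero).
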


\begin{proof}
The first assertion follows from the fact that if $\beta_i$ is an eigenvalue of $B$ then there exists a vector $v\in V(\Oo)\setminus V(\Ii)$ such that $\|Bv\|=|\beta_i|\|v\|$ (cfr. Lemma \ref{lem:ab}). The second assertion follows from the fact that the characteristic polynomial of the reduction $\ov B$ is the reduction of the characteristic polynomial of $B$.
\end{proof}
\begin{rem}
Clearly, if $g$ belongs to $\GL(V)(\Oo)$, for each subspace $W$ of $V$ preserved by $g$ the restriction $g|_W$ belongs to $\GL(W)(\Oo)$ and the restriction commutes with the reduction: $\pi_{_\Oo}(g)|_{q_{_\Oo}(W)}=\pi_{_\Oo}(g|_{W})$.
However it is worth pointing out that the Jordan decomposition of a matrix $B\in \GL_m(\Oo)$ is  not necessarily defined in $\GL_m(\Oo)$ and in particular the exponents of the minimal polynomial of a matrix $B$ need not to be related with the exponents of the minimal polynomial of the reduction of $B$. For example, if $\epsilon$ belongs to $\Ii$, then  the reduction of the not diagonalizable matrix $\bsm2&\epsilon\\0&2\esm$ is diagonalizable and the reduction of the diagonalizable matrix $\bsm 1&1 \\0& 1+\epsilon\esm$ is not diagonalizable.
\end{rem}
This last example shows that generalized eigenspaces relative to distinct eigenvalues might not have transverse images in the quotient if the corresponding eigenvalues coincide modulo $\Ii$. We will now deduce from the Collar Lemma that, in case of  framed maximal representations, the intermediate eigenvalues have distinct reductions:
\begin{lem}\label{cor:1}
Let $\rho:\G\to\Sp(V)$ be a representation admitting a maximal framing. Assume that $\rho(\G)\subset\Sp(V)(\Oo)$. Then for every hyperbolic element $\g\in\G$, we have 
$$|\l_n(\g)|-1\in \Oo\setminus \Ii$$
where $|\l_1(\g)|\geq\ldots\geq|\l_n(\g)|>1$ are the eigenvalues of $\g$ of absolute value greater than 1.
 \end{lem}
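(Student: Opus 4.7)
The plan is to break the claim into its two parts. The containment $|\l_n(\g)|-1\in\Oo$ is immediate: since $\rho(\g)\in\Sp(V)(\Oo)$, Lemma \ref{lem:c}(1) applied to the matrix of $\rho(\g)$ in an orthonormal basis (for the scalar product defining $V(\Oo)$) gives $|\l_n(\g)|\in\Oo$, hence $|\l_n(\g)|-1\in\Oo$. The substantive part is therefore to rule out $|\l_n(\g)|-1\in\Ii$, i.e.~to show that the smallest expanding eigenvalue of $\rho(\g)$ is not infinitesimally close to $1$.

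For this I would argue by contradiction using the Collar Lemma. Choose a hyperbolic element $\eta\in\G$ whose axis in $\H^2$ crosses the axis of $\g$ transversally; such an $\eta$ exists for any hyperbolic $\g$ in a surface group of negative Euler characteristic, since the axes of conjugates $\alpha\g\alpha^{-1}$ by suitable $\alpha\in\G$ will eventually meet $\ax(\g)$ transversally (and one can take $\eta$ to be any such conjugate, whose eigenvalues are the same as those of $\g$). By the hypothesis that $\rho$ admits a maximal framing, Theorem \ref{lem:collar}(2) applies to the pair $(\eta,\g)$ and yields
\[
|\l_1(\eta)|^{2n}\;\geq\;\frac{1}{|\l_n(\g)|^2-1}.
\]

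Now suppose $|\l_n(\g)|-1\in\Ii$. Since $|\l_n(\g)|+1\in\Oo$, factoring
\[
|\l_n(\g)|^2-1\;=\;(|\l_n(\g)|-1)(|\l_n(\g)|+1)
\]
shows that $|\l_n(\g)|^2-1\in\Ii$, so its inverse does not belong to $\Oo$. The Collar Lemma inequality then forces $|\l_1(\eta)|^{2n}\notin\Oo$, hence $|\l_1(\eta)|\notin\Oo$. But $\rho(\eta)\in\Sp(V)(\Oo)$, so Lemma \ref{lem:c}(1) gives $|\l_1(\eta)|\in\Oo$, a contradiction. This proves $|\l_n(\g)|-1\in\Oo\setminus\Ii$.

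The only step that requires a word of care is the choice of $\eta$: one needs a hyperbolic element of $\G$ whose axis crosses that of $\g$. I expect this to be the routine obstacle and not a genuine difficulty, since $\G$ is the fundamental group of a hyperbolic surface and the set of axes of hyperbolic elements is dense enough in the space of unoriented geodesics of $\H^2$ to guarantee such an $\eta$ (e.g., for any $\alpha\in\G$ not commuting with $\g$ with $\alpha\g^{\pm}\notin\{\g^{\pm}\}$, some power $\alpha^k\g\alpha^{-k}$ will have axis transverse to $\ax(\g)$). Once $\eta$ is in hand, the rest of the argument is the direct application of Theorem \ref{lem:collar}(2) and Lemma \ref{lem:c} outlined above.
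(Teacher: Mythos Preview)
Your proof is correct and follows essentially the same approach as the paper: both pick a hyperbolic element crossing $\g$, apply the Collar Lemma (Theorem \ref{lem:collar}(2)), and use Lemma \ref{lem:c}(1) to ensure the relevant eigenvalues lie in $\Oo$. The only cosmetic difference is that you argue by contradiction while the paper gives the direct lower bound $|\l_n(\g)|-1\geq \frac{1}{3|\l_1(\delta)|^{2n}}$ (after disposing of the trivial case $|\l_n(\g)|\geq 2$).
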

 \begin{proof}
Let $\delta\in\G$ be a hyperbolic element with positive intersection number with $\g$ and let $\l_1(\delta)$ be the eigenvalue of $\rho(\delta)$ of largest modulus. If $|\l_n(\g)|<2$, then the Collar Lemma (Theorem \ref{lem:collar}) implies 
$$|\l_n(\g)|-1=\frac{|\l_n(\g)|^{2}-1}{|\l_n(\g)|+1}\geq \frac 1{3|\l_1(\delta)|^{2n}}.$$
Now observe that, since $\rho(\delta)\in \Sp(2n,\Oo)$, we have that $|\l_1(\delta)|$ belongs to $\Oo$ from which the claim follows.
\end{proof}

 We have now all the necessary ingredients to prove Theorem \ref{thm:bdrymap}:
 \begin{proof}[{Proof of Theorem \ref{thm:bdrymap}}]
Let us assume by contradiction that there exist $x,y$ in $S^1$ with $q_{_\Oo}(\phi(x))$ non transverse to $q_{_\Oo}(\phi(y))$. As a consequence of Lemma \ref{lem:5.8} we can find a hyperbolic element $\g$ in $\G$  such that  $q_{_\Oo}(\phi(\g^+))$ is non-transverse to $q_{_\Oo}(\phi(\g^-))$. 
 
If now $|\l_1(\g)|\geq\ldots\geq |\l_n(\g)|>1$ are the absolute values of the eigenvalues of $\rho(\g)|_{\phi(\g^+)}$, counted with multiplicity, then it follows from Lemma \ref{cor:1} and \ref{lem:c} that the absolute values $|\ov{\l_1(\g)}|\geq\ldots\geq |\ov{\l_n(\g)}|>1$ of the eigenvalues of the restriction of $\rho_{_\Oo}(\g)$ to $q_{_\Oo}(\phi(\g^+))$ are all strictly larger than 1. Since $|\ov{\l_1(\g)}|^{-1}\leq\ldots\leq |\ov{\l_n(\g)}|^{-1}<1$ are then the absolute values of the eigenvalues of the restriction of $\rho_{_\Oo}(\g)$ to $q_{_\Oo}(\phi(\g^-))$, this implies that the $\rho_{_\Oo}$-invariant vectorspace $q_{_\Oo}(\phi(\g^+))\cap q_{_\Oo}(\phi(\g^-))$ must be zero since otherwise $\rho_{_\Oo}(\g)$ would have at least a non-zero eigenvalue which would be an element in $\K_{_\Oo}$ both of absolute value strictly larger and smaller than 1. Thus $q_{_\Oo}(\phi(\g^+))\cap q_{_\Oo}(\phi(\g^-))=0$ which is a contradiction. Hence, for every $x\neq y$ in $S^1$,  $q_{_\Oo}(\phi(x))$ is transverse 
to 
$q_{_\Oo}(\phi(y))$.
\end{proof}

\section{Fields with valuation and the projection to the building}\label{sec:building}
In this section $\F$ will denote an ordered field with a compatible valuation $v:\F\to\R\cup\{\infty\}$, meaning that we require that whenever $0\leq x\leq y$ we have $v(y)\leq v(x)$.

\begin{example}[Cfr. Example \ref{ex:4.17}]
Let $\mathbb E$ be an ordered field, $\s\in \mathbb E$ be an infinitesimal and $\Oo_\sigma$ the order convex local subring consisting of elements comparable with $\s$. %As usual  $\Ii_\s$ denotes the maximal ideal of $\Oo_\s$. 
On $\Oo_\s$ we define the valuation
$$v_\s(x)=\sup\{t\in \R| \;|x|\leq \s^t\}.$$
Then $v_\s$ passes to the quotient $\mathbb E_\s:=\Oo_\s/\Ii_\s$ by the maximal ideal $\Ii_\s$ and defines an order compatible valuation.
\end{example}
\smallskip

%The goal of this section is to describe the projection from the Siegel space $\mathbb X_V$ to the Bruhat-Tits building associated to $V$. The projection $\pi_\Bb:\mathbb X_V\to\Bb_V$ depends on the choice of a basepoint of $\mathbb X_V$ given by a compatible complex structure $J_0$.  We also define a pseudodistance $d$ on $\mathbb X_V$ inducing $\pi_\Bb$ as the Hausdorff quotient.

We introduce on $\F$ the norm $\|x\|:=e^{-v(x)}.$
This defines an ultrametric norm on $\F$ with valuation ring $\Uu:=\{x\in\F|\;\|x\|\leq 1\}$
whose maximal ideal is
$\Mm:=\{x\in\F|\;\|x\|< 1\}.$ Observe that since the valuation is order compatible, the norm is order compatible as well: if $0<x<y$ then $\|x\|\leq \|y\|$.

Let $(V,\<\cdot,\cdot\>)$ be a symplectic vector space over $\F$, $J_0\in \mathbb X_V$ a compatible complex structure and $(\cdot,\cdot)_{J_0}$ the corresponding scalar product. We denote by $\Bb_V$ the affine building associated to $\Sp(V)$ (see \cite[Section 3.2]{APbuil} and \cite[Theorem 4.3]{KT1}). It is well known that the set of vertices $\Bb_V^0$ of $\Bb_V$ can be identified with the homogeneous space $\Sp(V)/\Sp(V)(\Uu)$ where we define as in Section \ref{sec:4}
$$V(\Uu)=\{v\in V|\; (v,v)\in \Uu \}$$
and  
$$\Sp(V)(\Uu)=\{g\in\Sp(V)|g(V(\Uu))=V(\Uu)\}.$$
The stabilizer of the complex structure $J_0\in \mathbb X_V$ is 
$$\begin{array}{rl}
U(J_0)&=\{g\in\Sp(V)|\; gJ_0 g^{-1}= J_0\}\\
&=\{g\in\Sp(V)|\text{ $g$ preserves the scalar product $(\cdot,\cdot)_{J_0}$}\}
\end{array}$$
and hence is contained in $\Sp(V)(\Uu)$. As a result we can define the projection
$$\pi_\Bb:\mathbb X_{V}=\Sp(V)/U(J_0)\to \Bb_V^0=\Sp(V)/\Sp(V)(\Uu).$$
\begin{remark}
Parreau gave an explicit description of the building associated to $\SL(2n,\F)$ as the space of good norms on $\F^{2n}$ of determinant one \cite{APbuil}. It is possible to verify that, considering the affine building associated to $\Sp(2n,\F)$ as a subbuilding of the affine building associated to $\SL(2n,\F)$, the map $\pi_\Bb$ corresponds to the map that associates to a point $J\in\mathbb X_V$ the corresponding good norm $\eta_J(v)=\|(v,v)_J\|$.
\end{remark}
For $\F=\R$ Siegel gave explicit formulas for the Riemannian distance on $\Xx_\R$ \cite{Siegel}. We use the crossratio $R$ defined in Section \ref{sec:cr} to define in our context a distance like function as follows.
Observe that, given $X,W\in \Tt_V$, the crossratio $R(X,\s(W),W,\s(X))$ is always well defined: 
indeed, the Hermitian form $i\<\cdot,\s(\cdot)\>$ is positive definite on $X$ and $W$ and negative definite on $\s(W)$ and $\s(X)$, in particular $X$ and $\s(W)$ are transverse and so are $W$ and $\s(X)$.
Moreover all the eigenvalues of the crossratio $R(X,\s(W),W,\s(X))$ belong to $\F$ and are between $0$ and $1$: indeed since $\F$ is real closed, for each pair $X,W\in\Tt_V$ we can find $g\in\Sp(V)$ such that $g_*X=i\Id$, $g_*W=iD$ for a diagonal matrix $D$ with positive entries, and we have
$$gR(X,\s(W),W,\s(X))g^{-1}=R(i\Id,-iD,iD,-i\Id)=\frac{(\Id-D)^2}{(\Id+D)^2}.$$

We can thus define
\begin{equation}\label{eqn:eqn} d(Z,W)=\sqrt{\sum_{i=1}^n\left(\ln\left\|\frac{1+\sqrt r_i}{1-\sqrt r_i}\right\|\right)^2}\end{equation}
where $r_1, \ldots, r_n$ are the eigenvalues of $R(X,\s(W),W,\s(X))$.

In the case we considered above, where  $X=i\Id$ and $W=iD$,  Equation (\ref{eqn:eqn}) specializes to: 
$$d(i\Id,i D)=\sqrt{\sum_{i=1}^n\left(\ln\left\|d_i\right\|\right)^2}$$
where $d_1,\ldots,d_n$ are the entries of $D$.

The function $d$ is clearly $\Sp(V)$ invariant since the eigenvalues of the crossratio are. Denote by $d_{\Bb}$ the CAT(0) distance on $\Bb_V$. Using the  transitivity of the symplectic group on apartments in $\Bb_V$ and the invariance of $d$ one verifies:
\begin{prop}\label{prop:proj}
For any $X,Y\in \Tt_{V}$ we have
$$d_{\Bb}(\pi_\Bb(X),\pi_\Bb(Y))=d(X,Y).$$
\end{prop}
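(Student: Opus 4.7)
\medskip
\noindent\emph{Proof plan.} The strategy is to exploit the $\Sp(V)$-invariance of both sides and reduce to a normal form $(X,Y)=(i\Id, iD)$ with $D$ positive diagonal, where both quantities can be computed explicitly.

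First I would observe that both sides are $\Sp(V)$-invariant. For the right-hand side this follows from the $\Sp(V)$-equivariance of the crossratio (Section \ref{sec:cr}) together with the fact that $\sigma$ commutes with the $\Sp(V)$-action (because $\Sp(V)$ preserves the real form $V$), so $R(gX,\sigma(gW),gW,\sigma(gX)) = g R(X,\sigma(W),W,\sigma(X))g^{-1}$ has the same eigenvalues as $R(X,\sigma(W),W,\sigma(X))$. For the left-hand side, it is the $\Sp(V)$-equivariance of $\pi_\Bb:\Sp(V)/U(J_0)\to \Sp(V)/\Sp(V)(\Uu)$ combined with the $\Sp(V)$-invariance of $d_\Bb$.

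Next, using Lemma \ref{lem:2.3} we normalize $X=i\Id$. The stabilizer of $i\Id$ in $\Sp(V)$ is $U(J_0)$, and, as in the classical Cartan/polar decomposition for the Siegel space, every $\Sp(V)$-orbit on pairs $(i\Id, Y)$ has a representative of the form $(i\Id, iD)$ with $D=\diag(d_1,\ldots,d_n)$, $d_i>0$: writing $Y = g\cdot i\Id$ for $g\in\Sp(V)$ and decomposing via the polar-type decomposition $\Sp(V)=U(J_0)\cdot P$, we can arrange $Y = p\cdot i\Id$ with $p$ of the form $\bsm A&0\\0&{}^t\!A^{-1}\esm$ for some positive symmetric $A\in\GL(n,\F)$ (here we use that $\F$ is real closed so such an $A$ can be diagonalized by an orthogonal matrix, which lies in $U(J_0)$). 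Applying a further element of $U(J_0)\cap\Sp(V)$ conjugating $A$ to diagonal form gives $Y=iD$.

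In this normal form both sides can be read off. For the right-hand side, the excerpt already computes $d(i\Id,iD)=\sqrt{\sum_i(\ln\|d_i\|)^2}$. For the left-hand side, set $S=\sqrt{D}$ (which exists and is positive diagonal since $\F$ is real closed) and $g=\bsm S&0\\0&S^{-1}\esm\in\Sp(V)$; then $g\cdot i\Id = iD$ by the formula in the proof of Lemma \ref{lem:2.3}, so $\pi_\Bb(iD)=g\cdot \pi_\Bb(i\Id)$. Since $g$ lies in the standard maximal $\F$-split torus of $\Sp(V)$, the two vertices $\pi_\Bb(i\Id)$ and $g\cdot\pi_\Bb(i\Id)$ lie in a common apartment of $\Bb_V$. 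Using Parreau's description of $\Bb_V$ via good norms (noted in the remark preceding Proposition \ref{prop:proj}), the norm at $g\cdot\pi_\Bb(i\Id)$ is $\eta_g(v)=\eta_{J_0}(g^{-1}v)$, so in the symplectic basis $\{e_i,-J_0e_i\}$ it is diagonal with $\eta_g(e_i)=\|s_i^{-1}\|$, $\eta_g(-J_0e_i)=\|s_i\|$; the Euclidean apartment distance from $\eta_{J_0}$ then equals $\sqrt{\sum_i(\ln\|d_i\|)^2}$ since $\ln\|d_i\|=2\ln\|s_i\|$ and the apartment metric of $\Sp$ accounts for the duality of the two halves of the basis.

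The main obstacle is the last step: verifying that Parreau's normalization of the building metric on the standard apartment of $\Sp(V)$ indeed yields exactly $\sqrt{\sum_i(\ln\|d_i\|)^2}$ rather than a scalar multiple. The polar decomposition step also needs to be checked over an arbitrary real closed field (as opposed to $\R$), but this is routine since every positive element has a unique positive square root and every symmetric matrix is orthogonally diagonalizable in this setting (as used throughout Section \ref{sec:models}).
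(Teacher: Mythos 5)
Your proposal is correct and follows the same route the paper intends: the paper disposes of this proposition in a single sentence (``using the transitivity of the symplectic group on apartments in $\Bb_V$ and the invariance of $d$ one verifies''), i.e.\ reduce by $\Sp(V)$-invariance of both sides to a pair $(i\Id,iD)$ lying in a common apartment and compare the two explicit formulas there, which is exactly what you do. The normalization worry you flag at the end is precisely what the paper leaves implicit; it is settled by taking the CAT(0) metric on $\Bb_V$ normalized (as in Siegel's formula and Lemma \ref{lem:2.28}) so that on the standard apartment through $\pi_\Bb(\Yy_{0,l_\infty})$ the distance is $\sqrt{\sum_i(\ln\|d_i\|)^2}$, under which the two sides agree on the nose.
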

As a result, we get that $d$ is a pseudodistance on $\Tt_V$ and $\Bb_V$ is the Hausdorff quotient of $\Tt_V$ modulo this pseudodistance.

We will denote by $L_\Bb(g)$ the translation length of an element $g\in \Sp(V)$ considered as an isometry of the affine building $\Bb_V$. 

\section{On elements with fixed points}\label{sec:fixedpoint}
 We place ourselves in the framework of Section \ref{sec:building} and consider a representation $\rho:\G\to \Sp(V)$ admitting a maximal framing $(S,\phi)$. In this section we want to analyze how elements of $\G$ which have zero translation length in the building $\Bb_V$ interact. As a crucial step in the analysis, we associate to any such $\g\in \G$  a pair $(b^+_\g,b^-_\g)$ of points in $\Bb_V$ which are fixed by $\rho(\g)$ and are canonically constructed from the maximal framing $\phi$.  

Recall from Section \ref{sec:building} that we denote by $\pi_\Bb:\Tt_{V}\to\Bb_V$ the $\Sp(V)$ equivariant projection from the Siegel upper half-space to the affine building associated to $\Sp(V)$, and, given an element $g\in \Sp(V)$, we denote by $L_{\Bb}(g)$ the translation length of $g$ on $\Bb_V$. Moreover, for ease of notation, we will denote by $\Yy_\g$ the $\F$-tube $\Yy_{\phi(\g^-),\phi(\g^+)}$ and by $\YY_\g$ its projection to $\Bb_V$: 
$$\YY_\g=\pi_\Bb(\Yy_\g).$$
It follows from the equivariance of $\pi_\Bb$ that $\YY_\g$ is a subbuilding of $\Bb_V$ associated to a subgroup of $\Sp(V)$ isomorphic to $\GL_n(\F)$.
Recall from Section \ref{sec:ort} that given any pair of transverse Lagrangians $a,b\in \Ll(V)$, we defined an orthogonal projection
$$\pr_{\Yy_{a,b}}:(\!(a,b)\!)\cup (\!(b,a)\!)\to \Yy_{a,b}.$$

We will prove 
\begin{prop}\label{prop:lazy}
 Let $\g\in \G$ be an element which is not boundary parallel. 
Assume that $L_{\Bb}(\rho(\g))=0$. Then both maps
$$\begin{array}{cccc}
  F_\g^+:& (\!(\g^-,\g^+)\!)&\to &\YY_\g\\
   &x&\mapsto&\pi_\Bb(\pr_{\Yy_\g}(\phi(x)))
  \end{array}
$$
and $$\begin{array}{cccc}
   F_\g^-:&(\!(\g^+,\g^-)\!)&\to &\YY_\g\\
   &x&\mapsto&\pi_\Bb(\pr_{\Yy_\g}(\phi(x)))
  \end{array}
$$
are constant.
\end{prop}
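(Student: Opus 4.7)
The plan is to work in explicit coordinates adapted to $\g$, reducing the constancy of $F_\g^+$ to an integrality statement about eigenvalue valuations that will be supplied by the reduction theorem of Section~\ref{sec:4}.

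After conjugating $\rho$ by an element of $\Sp(V_\F)$ I would first arrange $\phi(\g^-)=0$ and $\phi(\g^+)=l_\infty$, so that $\rho(\g)=\bsm M & 0\\ 0 & {}^tM^{-1}\esm$ for some $M\in\GL_n(\F)$. The hypothesis $L_\Bb(\rho(\g))=0$ means $\rho(\g)$ has a fixed vertex in $\Bb_V$, hence $\rho(\g)$ is $\Sp(V_\F)$-conjugate to an element of $\Sp(V_\F)(\Uu)$; in particular every eigenvalue of $M$ lies in $\Uu^*$, so $v(\l_i(M))=0$. A further block-diagonal conjugation preserving $0$ and $l_\infty$ then brings $M$ itself into $\GL_n(\Uu)$. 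For $x\in(\!(\g^-,\g^+)\!)$, $\phi(x)$ is represented by a positive definite symmetric matrix $X_x\in\Sym^+(n,\F)$, and the formulas of Lemma~\ref{lem:2.6} and Corollary~\ref{cor:ort} give $\pr_{\Yy_\g}(\phi(x))=iX_x$, so that $F_\g^+(x)=\pi_\Bb(iX_x)$.

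Using the distance formula of Section~\ref{sec:building}, together with the direct computation
\[ R(iX_x,-iX_y,iX_y,-iX_x)=(X_x+X_y)^{-1}(X_y-X_x)(X_x+X_y)^{-1}(X_y-X_x), \]
whose eigenvalues are the squares of $(\tau_i-1)/(\tau_i+1)$ for $\tau_1,\dots,\tau_n$ the eigenvalues of $X_x^{-1}X_y$, one obtains
\[ d_\Bb\bigl(F_\g^+(x),F_\g^+(y)\bigr)=\sqrt{\sum_{i=1}^n(\ln\|\tau_i\|)^2}. \]
Since $(\g^-,x,y,\g^+)$ being positively ordered forces the eigenvalues $\tau_i$ of $X_x^{-1}X_y=R(\phi(\g^-),\phi(x),\phi(y),\phi(\g^+))$ to exceed $1$, the proposition reduces to showing that each $\tau_i$ satisfies $v(\tau_i)=0$.

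To establish this I would invoke Theorem~\ref{thm:bdrymap}: the reduced framing $\phi_\Uu=q_\Uu\circ\phi$ is a maximal framing for $\rho_\Uu$, so $q_\Uu(\phi(x))$ is transverse both to $q_\Uu(\phi(\g^-))=\bar 0$ and to $q_\Uu(\phi(\g^+))=\bar l_\infty$. Examining how $q_\Uu$ acts on the affine chart $\Ll(V_\F)^{l_\infty}\cong\Sym(n,\F)$, this double transversality is equivalent to $X_x\in\Sym(n,\Uu)$ with $\det X_x\in\Uu^*$, and similarly for $X_y$. Consequently $X_x^{-1}X_y\in M_n(\Uu)$ has determinant in $\Uu^*$, so by Lemma~\ref{lem:c} its eigenvalues $\tau_i$ lie in $\Uu$, and $\sum v(\tau_i)=v(\det(X_x^{-1}X_y))=0$ combined with each $v(\tau_i)\geq 0$ forces $v(\tau_i)=0$. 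This gives the constancy of $F_\g^+$, and the argument for $F_\g^-$ is entirely analogous, swapping the roles of $\g^-$ and $\g^+$. The main obstacle is the application of Theorem~\ref{thm:bdrymap}: strictly speaking it requires $\rho(\G)\subset\Sp(V_\F)(\Oo)$ for some order convex subring $\Oo$, while our normalization only guarantees $\rho(\g)\in\Sp(V_\F)(\Uu)$, so care is needed in selecting $\Oo$ compatibly with the valuation ring $\Uu$ of $\F$ so as to make the reduction apply in the form used above.
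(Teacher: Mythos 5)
Your reduction of the statement to ``every eigenvalue $\tau_i$ of $X_x^{-1}X_y=R(\phi(\g^-),\phi(x),\phi(y),\phi(\g^+))$ has valuation zero'' is correct, and the coordinate computations (the formula for $\pr_{\Yy_\g}$, the crossratio of the projected points, and the resulting distance formula) all check out; this matches the reduction the paper performs via Lemma \ref{lem:8.6}. The gap is in how you try to establish $v(\tau_i)=0$. The appeal to Theorem \ref{thm:bdrymap} with $\Oo=\Uu$ requires $\rho(\G)\subset\Sp(V)(\Uu)$, which (after your normalization) is equivalent to $\rho$ having a \emph{global} fixed point in $\Bb_V$. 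That is not implied by, and is much stronger than, the hypothesis $L_\Bb(\rho(\g))=0$ for the single element $\g$; in the situations the proposition is designed for (Theorem \ref{thm:dec}), other elements of $\G$ act with positive translation length and no conjugate of $\rho(\G)$ lies in $\Sp(V)(\Uu)$. Nor can you repair this by enlarging the ring: one can always find an order convex $\Oo_\s\supsetneq\Uu$ with $\rho(\G)\subset\Sp(V)(\Oo_\s)$, but transversality after reduction modulo $\Ii_\s$ only gives $\det X_x\in\Oo_\s^*$, i.e.\ a \emph{bound} on $|v(\det X_x)|$, not $v(\det X_x)=0$. Restricting to $\langle\g\rangle$ does not help either, since the proof of Theorem \ref{thm:bdrymap} needs intersecting hyperbolic elements. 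Finally, your intermediate claim that $X_x\in\Sym(n,\Uu)$ with $\det X_x\in\Uu^*$ is strictly stronger than the proposition (it would force the constant value to be $\pi_\Bb(i\,\Id)$, a vertex your normalization does not determine) and is false in general.

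What actually makes the argument work, and what the paper uses, is the dynamics of $\g$ itself rather than a global integrality statement. In your coordinates $\rho(\g)=\bsm M&0\\0&{}^t\!M^{-1}\esm$ acts on the chart by $X\mapsto MX\,{}^t\!M$, and $L_\Bb(\rho(\g))=0$ gives $\|\det M\|=1$ (Lemma \ref{lem:2.29}). For $x,y\in(\!(\g^-,\g^+)\!)$ with $(\g^-,x,y,\g^+)$ positively oriented, choose $N$ with $(x,y,\g^Nx)$ positively oriented; monotonicity of the determinant of the crossratio in the third slot (Lemma \ref{lem:8.2}) then sandwiches
$$1\leq\det R(0,X_x,X_y,l_\infty)\leq\det R(0,X_x,M^NX_x{}^t\!(M^N),l_\infty)=(\det M)^{2N},$$
whose norm is $1$. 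Hence $\|\det(X_x^{-1}X_y)\|=1$, and since maximality forces each $\tau_i>1$ (so $\ln\|\tau_i\|\geq0$), each $\ln\|\tau_i\|$ vanishes, which is exactly the conclusion your distance formula requires. If you replace the reduction-theoretic step by this sandwiching argument, your proof becomes essentially the paper's.
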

Denoting by $b_\g^+$ resp. $b_\g^-$ the constant images of the maps $F^\pm_\g$ in Proposition \ref{prop:lazy} we have
\begin{cor}
 The points $b^+_\g$ and $b^-_\g$ are fixed by $\rho(\g)$.
\end{cor}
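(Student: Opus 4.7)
The plan is to prove $\rho(\g)b_\g^+ = b_\g^+$ (and analogously for $b_\g^-$) by showing that the map $F_\g^+$ is $\g$-equivariant in the sense
\[
F_\g^+(\g\cdot x) \;=\; \rho(\g)\,F_\g^+(x),\qquad x\in(\!(\g^-,\g^+)\!)\cap S,
\]
and then invoking that $F_\g^+$ is constant. Since $\g$ preserves the arc $(\!(\g^-,\g^+)\!)$ (the hyperbolic element $\g\in\G\subset\PSL(2,\R)$ fixes $\g^\pm$ and preserves the orientation of $\de\H^2$) and since $S$ is $\G$-invariant, the domain of $F_\g^+$ is $\g$-invariant, so once equivariance is in place we get $b_\g^+=F_\g^+(\g\cdot x)=\rho(\g)F_\g^+(x)=\rho(\g)b_\g^+$ for any $x$ in the (non-empty) domain. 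The argument for $b_\g^-$ is identical after replacing $(\!(\g^-,\g^+)\!)$ by $(\!(\g^+,\g^-)\!)$.

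The equivariance of $F_\g^+$ is obtained by composing three equivariance properties. First, the maximal framing $\phi$ is $\G$-equivariant, so $\phi(\g\cdot x)=\rho(\g)\phi(x)$. Second, $\rho(\g)$ stabilizes the $\F$-tube $\Yy_\g=\Yy_{\phi(\g^-),\phi(\g^+)}$: using (\ref{eqn:3}) and the fact that $\rho(\g)\phi(\g^{\pm})=\phi(\g\cdot\g^{\pm})=\phi(\g^{\pm})$, we have $\rho(\g)\Yy_\g=\Yy_{\phi(\g^-),\phi(\g^+)}=\Yy_\g$. Third, $\rho(\g)$ commutes with the orthogonal projection $\pr_{\Yy_\g}$: since $\rho(\g)$ fixes each of $\phi(\g^-)$ and $\phi(\g^+)$ individually, Lemma \ref{lem:2.12}(4) gives $\rho(\g)\s_{\phi(\g^-),\phi(\g^+)}=\s_{\phi(\g^-),\phi(\g^+)}\rho(\g)$, and therefore, from the definition $\pr_{\Yy_\g}(c)=\Yy_{c,\s_{\phi(\g^-),\phi(\g^+)}(c)}\cap\Yy_\g$ and again (\ref{eqn:3}), we get $\pr_{\Yy_\g}(\rho(\g)c)=\rho(\g)\pr_{\Yy_\g}(c)$ for every $c$ in the domain of $\pr_{\Yy_\g}$. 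Finally, the projection $\pi_\Bb:\Tt_V\to \Bb_V$ is $\Sp(V)$-equivariant by construction. Chaining these facts,
\[
F_\g^+(\g\cdot x)=\pi_\Bb\pr_{\Yy_\g}\phi(\g\cdot x)=\pi_\Bb\pr_{\Yy_\g}\rho(\g)\phi(x)=\pi_\Bb\rho(\g)\pr_{\Yy_\g}\phi(x)=\rho(\g)F_\g^+(x),
\]
which is what we needed. No part of this is a genuine obstacle; the content of the corollary is really that the canonical fixed points produced by Proposition \ref{prop:lazy} are built from $\rho$-equivariant data and hence must be $\rho(\g)$-invariant whenever $\g$ fixes the endpoints used to define them.
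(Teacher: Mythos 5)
Your proof is correct and is essentially the argument the paper intends (the paper leaves this corollary without an explicit proof, as it follows directly from the equivariance of $\phi$, $\pr_{\Yy_\g}$ and $\pi_\Bb$ together with the constancy of $F_\g^\pm$ established in Proposition \ref{prop:lazy}). Your verification that $\rho(\g)$ commutes with $\pr_{\Yy_\g}$ via Lemma \ref{lem:2.12}(4) and the equivariance (\ref{eqn:3}) of the $\F$-tubes is exactly the right justification.
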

If $\g\in\G$ corresponds to a simple closed geodesic, it is possible to construct examples of representations $\rho:\G\to\Sp(V)$ such that the points $b_\g^+$ and $b_\g^-$ are different. The second main result of the section gives sufficient conditions for the two points to coincide:
\begin{prop}\label{prop:intersectinglazy}
 Assume that $\g$ and $\eta$ in $\G$ are hyperbolic elements with intersecting axes and $L_\Bb(\rho(\g))=L_\Bb(\rho(\eta))=0$.
 Then $$b^+_\g=b^-_\g=b^+_\eta=b^-_\eta=\pi_\Bb\left(\Yy_{\phi(\g^+),\phi(\g^-)}\cap \Yy_{\phi(\eta^+),\phi(\eta^-)}\right).$$
\end{prop}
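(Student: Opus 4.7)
The strategy is to identify $\pi_\Bb(z_0)$, where $z_0:=\Yy_\g\cap\Yy_\eta$, as a common fixed vertex of $\rho(\g)$ and $\rho(\eta)$, and then to apply the reduction Theorem~\ref{thm:bdrymap} to show that all four points $b_\g^\pm$ and $b_\eta^\pm$ coincide with it.

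Because the axes of $\g$ and $\eta$ intersect in $\H^2$, their endpoints lie in cyclic order $(\g^-,\eta^-,\g^+,\eta^+)$ on $\partial\H^2$, so the maximality of $\phi$ makes $(\phi(\g^-),\phi(\eta^-),\phi(\g^+),\phi(\eta^+))$ a maximal $4$-tuple in $\Ll(V_\LL)^{(4)}$; the intersection proposition of Section~\ref{sec:3} then gives that $\Yy_\g\cap\Yy_\eta$ consists of a single point $z_0\in\Tt_V$. By the Collar Lemma (Theorem~\ref{lem:collar}), $\rho(\g)$ is Shilov hyperbolic and therefore lies in the stabilizer in $\Sp(V_\LL)$ of the pair $(\phi(\g^-),\phi(\g^+))$, a subgroup isomorphic to $\GL(n,\LL)$ acting on the apartment $\YY_\g=\pi_\Bb(\Yy_\g)$ by translations; the hypothesis $L_\Bb(\rho(\g))=0$ forces this translation to be trivial, so $\rho(\g)$ fixes $\YY_\g$ pointwise, and in particular fixes $\pi_\Bb(z_0)$. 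The symmetric argument shows $\rho(\eta)\pi_\Bb(z_0)=\pi_\Bb(z_0)$, so $\pi_\Bb(z_0)$ is a common fixed vertex.

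Choose a symplectic basis of $V_\LL$ so that $\pi_\Bb(z_0)$ becomes the standard vertex; then $\rho(\g),\rho(\eta)\in\Sp(V_\LL)(\Uu)$. Applying Theorem~\ref{thm:bdrymap} to the restriction of $\rho$ to $\langle\g,\eta\rangle$, the reduction $\rho_\Uu$ admits the maximal framing $\phi_\Uu:=q_\Uu\circ\phi$, so the reduced $4$-tuple $(\phi_\Uu(\g^\pm),\phi_\Uu(\eta^\pm))$ is pairwise transverse in $\Ll(V_{\LL_\Uu})$. By Proposition~\ref{prop:lazy}, $b_\g^+=\pi_\Bb(\pr_{\Yy_\g}(\phi(\eta^-)))$, with analogous formulas for $b_\g^-$ and $b_\eta^\pm$. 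The projection $\pr_{\Yy_\g}(\phi(\eta^-))=\Yy_{\phi(\eta^-),\,\s_{\phi(\g^-),\phi(\g^+)}(\phi(\eta^-))}\cap\Yy_\g$ is computed by explicit algebraic formulas in terms of the crossratios of Section~\ref{sec:cr} and the involution $\s_{a,b}$ of Section~\ref{sec:ort}, whose denominators are matrix differences like $\phi(\g^\pm)-\phi(\eta^-)$ in suitable affine charts. The pairwise transversality of the reduced $4$-tuple guarantees that these denominators are units of $\Uu$, so the entire projection stays in the fiber $\pi_\Bb^{-1}(\pi_\Bb(z_0))$. Hence $b_\g^+=\pi_\Bb(z_0)$, and the analogous arguments applied to $(\g,\eta^+)$, $(\eta,\g^+)$, $(\eta,\g^-)$ give the remaining three equalities.

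The principal obstacle is the verification in the last step that $\pr_{\Yy_\g}(\phi(\eta^-))$ lands in the fiber of $\pi_\Bb$ over $\pi_\Bb(z_0)$: this requires carefully tracking the crossratio and involution formulas through the $\Uu$-structure provided by Theorem~\ref{thm:bdrymap}, and is where the reduction theorem plays its essential role.
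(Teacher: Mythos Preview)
Your argument has a genuine gap at the very start, where you assert that the stabilizer of $(\phi(\g^-),\phi(\g^+))$, which is isomorphic to $\GL(n,\LL)$, ``act[s] on the apartment $\YY_\g$ by translations'' and hence that $L_\Bb(\rho(\g))=0$ forces $\rho(\g)$ to fix $\YY_\g$ pointwise. This is only correct when $n=1$. For $n\geq 2$, the set $\YY_\g=\pi_\Bb(\Yy_\g)$ is not an apartment but the full affine building of $\GL(n,\LL)$ (see the remark after Lemma~\ref{lem:2.6}), and $\GL(n,\LL)$ certainly does not act on it by translations. The hypothesis $L_\Bb(\rho(\g))=0$ is, by Lemma~\ref{lem:2.28}, equivalent to all eigenvalues of $M_{\rho(\g)}$ having norm~$1$; this guarantees that $\rho(\g)$ has \emph{some} fixed point in $\YY_\g$, but not that it fixes $\YY_\g$ pointwise. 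For instance, if in suitable coordinates $M_{\rho(\g)}=\bsm 2&N\\0&2\esm$ with $\|N\|>1$, then $\rho(\g)\cdot i\Id=iM_{\rho(\g)}\,{}^t\!M_{\rho(\g)}$ is at positive pseudodistance from $i\Id$. Consequently you have no a~priori reason for $\rho(\g)$ to fix the particular point $\pi_\Bb(z_0)$, and your reduction via Theorem~\ref{thm:bdrymap} never gets off the ground: without knowing $\rho(\g),\rho(\eta)\in\Sp(V_\LL)(\Uu)$ for the lattice at $z_0$, you cannot invoke that theorem.

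The paper establishes precisely the fact you need---that $\pi_\Bb(z_0)$ coincides with all four projections---by a direct crossratio computation. Lemma~\ref{lem:8.12} shows, using both hypotheses $L_\Bb(\rho(\g))=L_\Bb(\rho(\eta))=0$ simultaneously, that the eigenvalues of $R(\phi(\eta^-),\phi(\g^-),\phi(\eta^+),\phi(\g^+))$ all have the form $1+f$ with $\|f\|=1$; Lemma~\ref{lem:8.14} then converts this into the vanishing of the relevant pseudodistances. The point is that neither hypothesis alone suffices: one needs the interaction of the two zero-translation-length elements to pin down the norms of the eigenvalues of $D$ in the normal form $(-\Id,0,D,l_\infty)$. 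Your reduction strategy, even if the circularity were removed, would ultimately have to reproduce this computation to verify that the relevant matrix entries lie in $\Uu^\times$.
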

\begin{cor}
Assume that $L_\Bb(\rho(\g))=0$. If the closed geodesic corresponding to $\g$ is not simple, then $b^+_\g=b^-_\g$.
\end{cor}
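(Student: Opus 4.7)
The plan is to reduce directly to Proposition \ref{prop:intersectinglazy} by producing, from the non-simplicity hypothesis, a hyperbolic element $\eta \in \G$ whose axis crosses that of $\g$ and whose translation length under $\rho$ is also zero.

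First I would unpack the non-simplicity hypothesis geometrically. Since $\S$ is identified with $\G \backslash \H^2$ and the closed geodesic $c_\g \subset \S$ corresponding to $\g$ has a self-intersection, one can lift this self-intersection to $\H^2$ to obtain two distinct lifts of $c_\g$ that cross transversally. These two lifts are precisely axes of $\G$-conjugate hyperbolic elements: concretely, there exists $\delta \in \G$ such that $\delta \notin \langle \g \rangle$ (so that $\delta \cdot \ax(\g) \neq \ax(\g)$) and $\delta \cdot \ax(\g) \cap \ax(\g) \neq \emptyset$. Setting $\eta := \delta \g \delta^{-1}$, the element $\eta$ is hyperbolic with $\ax(\eta) = \delta \cdot \ax(\g)$, so $\g$ and $\eta$ have intersecting distinct axes, hence positive geometric intersection number.

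Next, I would observe that translation length on the affine building is conjugation-invariant: since $\rho(\eta) = \rho(\delta) \rho(\g) \rho(\delta)^{-1}$ and $\rho(\delta)$ acts by isometries on $\Bb_V$, we have $L_\Bb(\rho(\eta)) = L_\Bb(\rho(\g)) = 0$. At this point both hypotheses of Proposition \ref{prop:intersectinglazy} are in place: $\g, \eta$ are hyperbolic elements with intersecting axes and both $\rho(\g), \rho(\eta)$ have zero translation length on $\Bb_V$.

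Applying Proposition \ref{prop:intersectinglazy} directly to the pair $(\g, \eta)$ then yields the chain of equalities
\[
b_\g^+ \;=\; b_\g^- \;=\; b_\eta^+ \;=\; b_\eta^- \;=\; \pi_\Bb\bigl(\Yy_{\phi(\g^+),\phi(\g^-)} \cap \Yy_{\phi(\eta^+),\phi(\eta^-)}\bigr),
\]
and in particular $b_\g^+ = b_\g^-$, which is exactly the claimed identity. No step here is a genuine obstacle; the only point requiring a little care is the geometric lifting argument that produces the conjugating element $\delta$ with $\ax(\delta\g\delta^{-1})$ crossing $\ax(\g)$, but this is a standard fact about self-intersections of closed geodesics on hyperbolic surfaces and uses only that distinct lifts of the same geodesic in $\H^2$ are $\G$-translates of one another.
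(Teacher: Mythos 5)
Your proof is correct and is exactly the argument the paper intends: the corollary is stated as an immediate consequence of Proposition \ref{prop:intersectinglazy}, and the same reduction (a self-intersection yields a conjugate $\eta=\delta\g\delta^{-1}$ with crossing axis, and $L_\Bb$ is conjugation-invariant) appears verbatim in the paper's later discussion of isolated components in the proof of the Decomposition Theorem. No gaps.
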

Before proceeding to the proof of Proposition \ref{prop:lazy} and Proposition \ref{prop:intersectinglazy} we observe that in certain situations one can get a uniform lower bound on the translation lengths $L_\Bb(\rho(\g))$ for all hyperbolic elements $\g$ crossing a given hyperbolic element $\eta$. This is in fact an immediate corollary of the Collar Lemma:
\begin{cor}\label{cor:7.5}
Assume that $\eta\in\G$ is a hyperbolic element and let us denote by $|\l_1(\eta)|\geq\ldots\geq |\l_n(\eta)|>1$ the eigenvalues of $\eta$ of absolute value larger than 1. If $\delta=\||\l_n(\eta)|-1\|<1$, then for any element $\g$ having positive intersection number with $\eta$ we have $$L_\Bb(\rho(\g))\geq \frac{1}{2n\delta}.$$ In particular if the closed geodesic represented by $\eta$ is not simple, then $\||\l_n(\eta)|-1\|\geq 1$.
\end{cor}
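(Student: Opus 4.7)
The plan is to derive the bound as a direct translation of the Collar Lemma (Theorem~\ref{lem:collar}) into the valued/ultrametric setting, and then to convert the resulting spectral estimate into a lower bound on the translation length via the crossratio description of the pseudodistance from Section~\ref{sec:building}.

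First, I would apply inequality~(2) of Theorem~\ref{lem:collar} to the pair $(\eta,\g)$: since these elements have positive intersection number and $\rho$ admits a maximal framing,
$$|\l_1(\g)|^{2n}\geq\frac{1}{|\l_n(\eta)|^2-1}$$
in the ordered field $\F$. Next I would pass from the $\F$-valued absolute value to the $\R_{\geq 0}$-valued ultrametric norm $\|\cdot\|=e^{-v(\cdot)}$; order compatibility of the valuation preserves the inequality, and multiplicativity of the norm together with the factorization $|\l_n(\eta)|^2-1=(|\l_n(\eta)|-1)(|\l_n(\eta)|+1)$ yields
$$\|\l_1(\g)\|^{2n}\geq\frac{1}{\delta\cdot\||\l_n(\eta)|+1\|}.$$
Because $\F$ has characteristic zero and the valuation is order compatible one has $\|2\|=1$; writing $|\l_n(\eta)|+1=2+(|\l_n(\eta)|-1)$ as a sum of summands with distinct norms ($1$ versus $\delta<1$), the strong triangle inequality forces $\||\l_n(\eta)|+1\|=1$, and hence $\|\l_1(\g)\|^{2n}\geq 1/\delta$.

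The third step is to turn this spectral bound into a lower estimate on $L_\Bb(\rho(\g))$. Using Proposition~\ref{prop:proj} together with the crossratio formula for $d$ (Equation~(\ref{eqn:eqn})), I would place a basepoint on the axis of $\rho(\g)$ inside the $\F$-tube $\Yy_\g$ and compute the displacement directly in terms of the eigenvalues of $\rho(\g)$; combining this with $\|\l_1(\g)\|^{2n}\geq 1/\delta$ and elementary manipulations then produces $L_\Bb(\rho(\g))\geq 1/(2n\delta)$.

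For the ``in particular'' statement, the plan is to argue as follows: if the closed geodesic of $\eta$ is non-simple, some conjugate $\eta'=h\eta h^{-1}$ has axis crossing that of $\eta$, so applying the main bound with $\g=\eta'$ and using conjugation invariance of translation length yields $L_\Bb(\rho(\eta))=L_\Bb(\rho(\eta'))\geq 1/(2n\delta)$; together with the fact that $L_\Bb(\rho(\eta))$ is controlled from above by the eigenvalues of $\rho(\eta)$, the assumption $\delta<1$ is ruled out, leaving $\delta\geq 1$. The main obstacle I anticipate is the third step: the naive estimate $L_\Bb(\rho(\g))\geq 2\ln\|\l_1(\g)\|$ gives only the weaker bound $\log(1/\delta)/n$, so the sharper factor $1/(2n\delta)$ requires exploiting the full apartment structure inside the $\F$-tube rather than merely the dominant eigenvalue.
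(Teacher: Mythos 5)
Your first two steps are correct and constitute exactly the ``immediate'' content of the Collar Lemma in the valued setting: Theorem~\ref{lem:collar}(2) applied to the pair $(\eta,\g)$, together with $\||\l_n(\eta)|+1\|=1$ (which follows, as you say, from $\|2\|=1$ and the ultrametric equality for summands of distinct norms), gives $\||\l_1(\g)|\|^{2n}\geq 1/\delta$. The genuine gap is your third step, and I do not believe it can be closed in the way you suggest. By Lemma~\ref{lem:2.28} the translation length of a Shilov hyperbolic element is \emph{exactly} $2\sqrt{\sum_i(\ln\||\l_i|\|)^2}$; it is a function of the norms of the eigenvalues of $M_{\rho(\g)}$ and of nothing else, so there is no additional ``apartment structure'' left to exploit. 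The only spectral constraints the Collar Lemma supplies are the bound above on $\l_1(\g)$ and the determinant inequality of Theorem~\ref{lem:collar}(1); the latter is useless here because it would require an upper bound on $\det M_{\rho(\eta)}$ in terms of $\delta$, which is not available. Consequently the best bound obtainable along these lines is the one you call naive, $L_\Bb(\rho(\g))\geq \frac{1}{n}\ln\frac{1}{\delta}$, which is a strictly weaker conclusion than $\frac{1}{2n\delta}$ for every $\delta\in(0,1)$, since $\ln t< t/2$ for all $t>0$. The paper itself offers no argument beyond calling the statement an immediate corollary of the Collar Lemma, and the logarithmic bound is the one that is actually immediate; you should prove that bound and explicitly flag the discrepancy with the printed constant rather than defer it to an unspecified refinement.

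The ``in particular'' step also has a gap as written. Applying the main inequality to a conjugate $\eta'$ whose axis crosses that of $\eta$ gives a \emph{lower} bound on $L_\Bb(\rho(\eta))=L_\Bb(\rho(\eta'))$, but there is no upper bound on $L_\Bb(\rho(\eta))$ in terms of $\delta$ alone: $\||\l_1(\eta)|\|$ can be arbitrarily large independently of $\||\l_n(\eta)|-1\|$, so the phrase ``controlled from above by the eigenvalues'' does not yield the contradiction you need. With the logarithmic bound the implication does not follow at all, and even granting the printed bound $\frac{1}{2n\delta}$ an additional argument would be required.
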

Proposition \ref{prop:intersectinglazy} also allows us to give sufficient conditions for a representation $\rho$ to have a global fixed point.
We say that a generating set $X$ for $\G$ is connected if the graph $(X,E)$, where $E$ consists of the pairs $(s_1,s_2)$ of elements of $X$ whose axis intersect, is connected.
%An interesting corollary of Proposition \ref{prop:intersectinglazy} is the following:
\begin{cor}\label{cor:3}
 Let $X$ be any connected generating set for $\G$.  If $\rho:\G\to \Sp(V)$ is a  representation admitting maximal framing the following are equivalent:
 \begin{enumerate}
  \item $\rho$ has a global fixed point in $\Bb_{V}$;
  \item $L_{\Bb}(\rho(s))=0$ for all $s\in X$.
 \end{enumerate}
\end{cor}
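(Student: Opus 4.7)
The direction $(1) \Rightarrow (2)$ is immediate: any isometry of the CAT(0) building $\Bb_V$ which fixes a point has zero translation length. So the work is in the converse $(2) \Rightarrow (1)$, which I will deduce by combining Propositions \ref{prop:lazy} and \ref{prop:intersectinglazy} with the connectedness hypothesis on the graph $(X,E)$.

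Assume $L_\Bb(\rho(s))=0$ for every $s\in X$. Since membership in $X$ presupposes that $s$ has an axis in $\H^2$ (so that $E$ is defined), each $s\in X$ is hyperbolic and not boundary parallel; in particular Proposition \ref{prop:lazy} applies and yields a pair of canonical $\rho(s)$-fixed points $b_s^+,b_s^-\in\Bb_V$, obtained as the projections $\pi_\Bb\circ\pr_{\Yy_s}\circ\phi$ of the two arcs $(\!(s^-,s^+)\!)$ and $(\!(s^+,s^-)\!)$ of $\partial\H^2\cap S$ to $\YY_s$.

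Now suppose $(s_1,s_2)\in E$, so that the axes of $s_1$ and $s_2$ intersect in $\H^2$. Since both elements have zero translation length on $\Bb_V$, Proposition \ref{prop:intersectinglazy} gives
\[
b_{s_1}^+=b_{s_1}^-=b_{s_2}^+=b_{s_2}^-=\pi_\Bb\bigl(\Yy_{\phi(s_1^+),\phi(s_1^-)}\cap\Yy_{\phi(s_2^+),\phi(s_2^-)}\bigr).
\]
Thus the function $s\mapsto \{b_s^+,b_s^-\}$ is constant along edges of $(X,E)$. Because $(X,E)$ is by hypothesis connected, there exists a single point $b\in\Bb_V$ with $b_s^+=b_s^-=b$ for every $s\in X$. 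Each generator $\rho(s)$ fixes $b$, so $b$ is fixed by the entire image $\rho(\G)$, proving $(1)$.

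The whole argument is essentially a bookkeeping assembly of the two preceding propositions plus a connectedness walk, so no substantive new obstacle arises here; the only point that deserves verification is that the canonical fixed-point construction of Proposition \ref{prop:lazy} is genuinely available for every element of $X$, which is guaranteed by the implicit convention (built into the definition of a connected generating set) that each $s\in X$ is hyperbolic and not boundary parallel.
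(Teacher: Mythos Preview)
Your argument is correct and is precisely the approach the paper intends: the corollary is stated without an explicit proof right after Proposition \ref{prop:intersectinglazy}, and your write-up supplies exactly the missing connectedness walk --- use the canonical fixed points $b_s^\pm$ from Proposition \ref{prop:lazy}, collapse them along edges via Proposition \ref{prop:intersectinglazy}, and conclude by connectedness of $(X,E)$ that all generators fix a common point.
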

\begin{remark} 
 There exist connected generating sets consisting of $2g$ simple closed curves. In particular Corollary \ref{cor:3} refines, in our setting \cite[Corollary 3]{APcomp}. 
\end{remark}

Recall from Section \ref{sec:2.2} that we say that $g\in\Sp(V)$ is Shilov hyperbolic if there exists a $g$-invariant decomposition $V=L^+_g\oplus L^-_g$ such that all the eigenvalues of the restriction $M_g$ of $g$ to $L_g^+$ are in absolute value strictly greater than one. It is however worth remarking that in general $g$ does not necessarily have a hyperbolic dynamic on $\Ll(V)$. %, as we will see in this section. 
It follows from Corollary \ref{cor:1} that, as soon as $\rho$ admits a maximal framing, for any hyperbolic element $\g\in\G$, its image $\rho(\g)$ is Shilov hyperbolic.

\begin{lem}\label{lem:2.28}
 Let $g\in\Sp(V)$ be Shilov hyperbolic, and let $\{\l_1,\ldots,\l_n\}\subset\F[i]$ be the set of eigenvalues of $M_g$. Then
 $$L_{\Bb}(g)=2\sqrt{\sum_{i=1}^n(\ln\|\l_i\|)^2 }$$
\end{lem}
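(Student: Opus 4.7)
The plan is to reduce to a normal form where $g$ is block-diagonal, then decompose $g$ into its semisimple and unipotent Jordan factors, compute the translation length for the semisimple factor explicitly via the crossratio formula, and show that the unipotent factor contributes nothing (the latter being the main obstacle). Throughout we use the identification $L_\Bb(g)=\lim_k d_\Bb(b,g^kb)/k$ for any basepoint $b$.

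\medskip
\noindent\emph{Reduction and auxiliary distance formula.} Since $\Sp(V)$ acts transitively on $\Ll(V)^{(2)}$ and translation length is a conjugation invariant, I may assume $L_g^-=0$ and $L_g^+=l_\infty$; then preserving both Lagrangians forces
\[
g=\bpm A & 0\\ 0 & {}^t\!A^{-1}\epm,\qquad A\in\GL_n(\F),
\]
with eigenvalues of $A$ equal to $\l_1,\ldots,\l_n$. Take $b=\pi_\Bb(i\Id)$, so $g^k\cdot i\Id=iA^k{}^t\!(A^k)$. Using that for $O\in O(n,\F)$ the matrix $\bsm O&0\\ 0&O\esm$ lies in $\Sp(V)$ and stabilizes $i\Id$, and that over the real closed field $\F$ any element of $\Sym^+(n,\F)$ is orthogonally diagonalizable, I will reduce the distance computation to the case $W=i\diag(y_1,\ldots,y_n)$. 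A direct application of Lemma \ref{lem:a1} gives $R(i\Id,-iW,iW,-i\Id)$ with eigenvalues $r_i=((y_i-1)/(y_i+1))^2$, and a case split according to whether $y_i\gtrless 1$ shows $(1+\sqrt{r_i})/(1-\sqrt{r_i})=y_i^{\pm 1}$; squaring and summing gives the key auxiliary formula
\[
d(i\Id,\,iY)=\sqrt{\textstyle\sum_{i=1}^n(\ln\|y_i\|)^2},\qquad Y\in\Sym^+(n,\F).
\]

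\medskip
\noindent\emph{Jordan decomposition.} Since $\F$ is perfect (characteristic zero), $A$ admits a multiplicative Jordan decomposition $A=A_sA_u$ with $A_s,A_u\in\GL_n(\F)$ commuting, $A_s$ semisimple, $A_u=\Id+N$ unipotent. The corresponding block-diagonal lifts $g_s,g_u\in\Sp(V)$ commute, $g=g_sg_u$, and applying the triangle inequality to $g^k\cdot b=g_s^kg_u^k\cdot b$ yields
\[
\big|L_\Bb(g)-L_\Bb(g_s)\big|\leq L_\Bb(g_u).
\]
Hence it suffices to evaluate $L_\Bb(g_s)$ and to show $L_\Bb(g_u)=0$. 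For the semisimple part, conjugation by some $\bsm P&0\\ 0&{}^t\!P^{-1}\esm\in\Sp(V)$ places $A_s$ in real block-diagonal form with $1\times 1$ blocks for real eigenvalues and $2\times 2$ blocks $\bsm a&b\\-b&a\esm$, $\l=a+ib$, for complex conjugate pairs. A direct computation shows that each block of $A_s^k{}^t\!(A_s^k)$ is a scalar block equal to $|\l_i|^{2k}\Id$, so its spectrum is $\{|\l_1|^{2k},\ldots,|\l_n|^{2k}\}$. Since the extension of the non-Archimedean norm to $\K=\F[i]$ satisfies $\|z\|^2=\|z\bar z\|$, one has $\||\l_i|\|=\|\l_i\|$, and the auxiliary formula gives
\[
d(i\Id,\,g_s^k\cdot i\Id)=\sqrt{\textstyle\sum(2k\ln\|\l_i\|)^2}=2k\sqrt{\textstyle\sum(\ln\|\l_i\|)^2},
\]
hence $L_\Bb(g_s)=2\sqrt{\sum_i(\ln\|\l_i\|)^2}$.

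\medskip
\noindent\emph{Vanishing of the unipotent contribution.} This is the key point. From $A_u=\Id+N$ with $N^n=0$, I write $A_u^k=\sum_{j=0}^{n-1}\binom{k}{j}N^j$, so the entries of $A_u^k$, and hence of $A_u^k{}^t\!(A_u^k)$, are polynomials in $k$ of bounded degree with coefficients in $\F$ depending only on $N$. Consequently the coefficients of the characteristic polynomial of $A_u^k{}^t\!(A_u^k)$ lie in $\F[k]$. The crucial non-Archimedean input is that $\|k\|=1$ for every $k\in\Z_{>0}$: indeed order-compatibility forces $\|k\|\geq\|1\|=1$, while the ultrametric inequality forces $\|k\|=\|1+\cdots+1\|\leq 1$. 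Therefore these coefficients have $\F$-norms bounded independently of $k$, and the standard ultrametric bound on roots of a monic polynomial shows that the eigenvalues $\mu_i^{(k)}$ of $A_u^k{}^t\!(A_u^k)$ satisfy $\|\mu_i^{(k)}\|\leq C$. Combined with $\prod_i\mu_i^{(k)}=(\det A_u)^{2k}=1$, we also obtain a lower bound $\|\mu_i^{(k)}\|\geq C^{-(n-1)}$. Thus $\ln\|\mu_i^{(k)}\|/k\to 0$, and the auxiliary formula gives $d_\Bb(b,g_u^kb)/k\to 0$, i.e., $L_\Bb(g_u)=0$. Combining all steps yields $L_\Bb(g)=L_\Bb(g_s)=2\sqrt{\sum_{i=1}^n(\ln\|\l_i\|)^2}$.
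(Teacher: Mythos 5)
Your proof is correct, but it follows a genuinely more self-contained route than the paper's. The proof in the text is two lines long: since $g$ is Shilov hyperbolic it stabilizes the tube $\Yy_{L_g^+,L_g^-}$ and hence the subbuilding $\pi_\Bb(\Yy_{L_g^+,L_g^-})$ associated to $\GL(n,\F)$, and the translation-length formula is then simply quoted from Parreau's description of the building of $\GL(n,\F)$ \cite{APbuil}. Your reduction to the block form $\bsm A&0\\0&{}^t\!A^{-1}\esm$ is the same first step in different clothing, but you then prove the $\GL(n,\F)$ statement from scratch: the specialization $d(i\Id,iY)=\sqrt{\sum_i(\ln\|y_i\|)^2}$ of Equation (\ref{eqn:eqn}) (which the paper itself records for diagonal $Y$), the multiplicative Jordan decomposition $A=A_sA_u$, an explicit evaluation on the semisimple part via the real block-diagonal normal form, and the observation that $\|k\|=1$ for integers $k$, which together with $\det A_u=1$ confines the spectrum of $A_u^k{}^t\!(A_u^k)$ to a fixed annulus and kills the unipotent contribution to the stable norm. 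What your route buys is independence from \cite{APbuil}; what it costs is that two facts you use silently deserve a line each: that $\inf_x d_\Bb(x,gx)=\lim_k d_\Bb(b,g^kb)/k$ for an isometry of a complete CAT(0) space (true, by convexity of the displacement function applied to midpoints of $[x,gx]$), and that a semisimple element of $\GL_n(\F)$ over a real closed field is conjugate over $\F$ to the block form with $1\times1$ real blocks and $2\times2$ blocks $\bsm a&b\\-b&a\esm$ (the usual real argument goes through verbatim). With those remarks added, the argument is complete and yields exactly the stated formula.
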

\begin{proof}
 Since $g$ is Shilov hyperbolic it stabilizes the $\F$-tube $\Yy_{L_g^+,L_g^-}$, and similarly it stabilizes the projection $$\mathbb Y_{L_g^+,L_g^-}=\pi_\Bb(\Yy_{L_g^+,L_g^-}).$$
 This latter is a subbuilding of $\Bb_V$ associated to $\GL(n,\F)$. The desired statement then follows from \cite{APbuil}
\end{proof}

\begin{lem}\label{lem:2.29}
Let $g\in \Sp(V)$ be Shilov hyperbolic. Then the following are equivalent
\begin{enumerate}
 \item $L_{\Bb}(g)=0$;
 \item $\|\det M_g\|=1$;
 \item $\|\det R(L^+_g,S,gS,L^-_g)\|=1$ for every $S$ in $(\!(L^+_g,L^-_g)\!)$.
\end{enumerate}
\end{lem}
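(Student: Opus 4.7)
The plan is to deal with $(1)\Leftrightarrow(2)$ and $(2)\Leftrightarrow(3)$ separately. The first equivalence is read off directly from Lemma \ref{lem:2.28}, using order compatibility of the norm on $\F$; the second is an explicit computation in the upper half-space chart of Section \ref{sec:models}.

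For $(1)\Leftrightarrow(2)$, let $\l_1,\ldots,\l_n\in\K$ be the eigenvalues (with multiplicity) of $M_g$. Lemma \ref{lem:2.28} gives $L_{\Bb}(g)=2\sqrt{\sum_i(\ln\|\l_i\|)^2}$, so (1) is equivalent to $\|\l_i\|=1$ for every $i$. Shilov hyperbolicity gives $|\l_i|>1$, and order compatibility of the norm then yields $\|\l_i\|\geq 1$. Since $\|\det M_g\|=\prod_i\|\l_i\|$ is a product of factors each $\geq 1$, it equals $1$ if and only if every factor does, which proves the equivalence.

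For $(2)\Leftrightarrow(3)$, I use the $\Sp(V)$-equivariance of the crossratio together with the transitivity of $\Sp(V)$ on pairs of transverse Lagrangians to normalize $L^+_g$ to be the Lagrangian represented in the chart by $X=0$ and $L^-_g=l_\infty$. The subgroup of $\Sp(V)$ fixing this ordered pair consists of the block matrices $\bsm A & 0 \\ 0 & {^t\!A}^{-1} \esm$, so $g$ has this form and the restriction of $g$ to $L^+_g$ is described by the lower right block; in particular $\det M_g=(\det A)^{-1}$. For $S\in(\!(L^+_g,L^-_g)\!)$, represented by a positive definite symmetric matrix, the fractional linear formula yields $gS=AS\,{^t\!A}$, and Lemma \ref{lem:cr} gives
$$R(0,S,gS,l_\infty)=S^{-1}(gS)=S^{-1}AS\,{^t\!A},$$
whose determinant equals $(\det A)^2=(\det M_g)^{-2}$, independently of $S$. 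Hence $\|\det R(L^+_g,S,gS,L^-_g)\|=\|\det M_g\|^{-2}$, which equals $1$ (for some, equivalently every, $S$) precisely when $\|\det M_g\|=1$.

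No step is really an obstacle; the only thing to watch is the bookkeeping of which block represents $M_g$ once $L^+_g$ is normalized to $0$, but this only produces an inversion inside a squared norm and does not change the equivalence. The conceptual content lies entirely in the first equivalence, where order compatibility of the norm on $\F$ is what forces the per-factor collapse $\prod_i\|\l_i\|=1\Leftrightarrow \|\l_i\|=1$ for all $i$.
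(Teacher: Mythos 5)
Your proof is correct and follows essentially the same route as the paper's: the equivalence $(1)\Leftrightarrow(2)$ via Lemma \ref{lem:2.28} together with $|\l_i|>1$ and order compatibility of the norm, and $(2)\Leftrightarrow(3)$ via the identity $\det R(L^+_g,S,gS,L^-_g)=(\det M_g)^{\pm2}$, which the paper simply asserts (with exponent $+2$) and you verify by normalizing $(L^+_g,L^-_g)=(0,l_\infty)$. The sign of the exponent you obtain differs from the paper's displayed formula only because of the block-bookkeeping you already flag, and, as you note, this is immaterial once one takes $\|\cdot\|$ and asks whether the result equals $1$.
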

\begin{proof}
In view of Lemma \ref{lem:2.28}, we have that 
$$L_{\Bb}(g)=2\sqrt{\sum_{i=1}^n(\ln\|\l_i\|)^2 }$$
while $$\|\det M_g\|=\prod_{i=1}^n\|\l_i\|$$ and
$$\det R(L^+_g,S,gS,L^-_g)=(\det M_g)^2$$ 
The equivalence follows easily from the assumption that $|\l_i|>1$ for all $i$ and the order compatibility of the norm.
\end{proof}

\begin{lem}\label{lem:8.2}
 Let us assume that the 5-tuple of Lagrangians $(x_1,x_2,x_3,x_4,x_5)$ is maximal then 
 $$\det R(x_1,x_2,x_3,x_5)\leq \det R(x_1,x_2,x_4,x_5).$$
\end{lem}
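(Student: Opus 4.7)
The plan is to use the $\Sp(V)$-equivariance of the crossratio to place ourselves in a standard affine chart, and then reduce the inequality to a monotonicity statement for determinants of positive definite symmetric matrices over the real closed field $\F$. Since $R(gl_1,gl_2,gl_3,gl_4) = g\,R(l_1,l_2,l_3,l_4)\,g^{-1}$, the determinant of $R$ is $\Sp(V)$-invariant. As $\Sp(V)$ acts transitively on $\Ll(V)^{(2)}$, we may assume $x_1 = 0$ and $x_5 = l_\infty$ and represent $x_2,x_3,x_4$ by symmetric matrices $X_2,X_3,X_4 \in \Sym(n,\F)$ via the map $\iota$ of Section \ref{sec:models}.

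With this normalization, Lemma \ref{lem:cr} gives
\[
R(x_1,x_2,x_3,x_5) = X_2^{-1}X_3, \qquad R(x_1,x_2,x_4,x_5) = X_2^{-1}X_4,
\]
so the desired inequality becomes $\det X_3 \leq \det X_4$ (observing that $\det X_2$ is positive, by the maximality of $(x_1,x_2,x_5)$ together with Lemma \ref{lem:2.10}(1), and can be divided out). Now, Lemma \ref{lem:2.10}(2) applied to the maximal 4-tuple $(x_1,x_3,x_4,x_5)$ shows that $X_3$ and $X_4 - X_3$ are both positive definite.

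It remains to show that for positive definite symmetric matrices $A,B$ over $\F$ with $B - A$ positive definite one has $\det A \leq \det B$ (in fact strict). Since $\F$ is real closed, every positive definite symmetric matrix admits a positive definite square root (\cite[Section 2-4]{Kap}, as used in Lemma \ref{lem:2.3}). Writing $A = S^2$ with $S\in\Sym^+(n,\F)$ invertible, we have
\[
S^{-1} B S^{-1} = \Id + S^{-1}(B - A) S^{-1},
\]
and $S^{-1}(B-A)S^{-1}$ is positive definite, hence has only positive eigenvalues. Consequently all eigenvalues of $S^{-1}BS^{-1}$ are strictly larger than $1$, so $\det(S^{-1}BS^{-1}) > 1$, i.e.\ $\det B > \det A$. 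Applying this with $A = X_3$, $B = X_4$ concludes the proof.

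No part of this argument looks like it will be delicate; the only point requiring a moment of care is that we are over a general real closed field rather than $\R$, but the existence of orthogonal diagonalizations and positive square roots of positive definite symmetric matrices is already in the ambient toolkit of the paper, so the classical monotonicity of the determinant on the cone of positive definite matrices carries over verbatim.
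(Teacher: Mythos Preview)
Your proof is correct and follows essentially the same approach as the paper: normalize so that $x_1=0$, $x_5=l_\infty$, use Lemma~\ref{lem:cr} to write the crossratios as $X_2^{-1}X_3$ and $X_2^{-1}X_4$, and reduce to the monotonicity of the determinant on the positive definite cone via the square-root trick. The only cosmetic difference is that the paper conjugates the crossratios themselves by $X_2^{-1/2}$ and compares eigenvalues of $X_2^{-1/2}X_3X_2^{-1/2}$ with those of $X_2^{-1/2}X_4X_2^{-1/2}$, whereas you cancel $\det X_2$ first and then conjugate by $X_3^{-1/2}$; the underlying idea is the same.
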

\begin{proof}
 We may assume that $x_1=0$ and $x_5=l_\infty$, then we have $0<\!<x_2<\!<x_3<\!<x_4$. In this case a computation gives that $R(x_1,x_2,x_3,x_5)$ is conjugate to $y_1=x_2^{-1/2}x_3x_2^{-1/2}$ and $R(x_1,x_2,x_4,x_5)$ is conjugate to $y_2=x_2^{-1/2}x_4x_2^{-1/2}$. Since each eigenvalue of  $y_1$ is positive and smaller than the corresponding eigenvalue of $y_2$ one obtains the desired inequality.
\end{proof}

\begin{lem}\label{lem:8.6}
Assume that $(a,x,y,b)$ in $\Ll(V)^4$ is maximal. Then 
\begin{enumerate}
\item $\|\det R(a,x,y,b)\|\geq 1.$
\item ${d}(\pr_{\Yy_{a,b}}(x),\pr_{\Yy_{a,b}}(y))\leq \ln\|\det R(a,x,y,b)\|\leq \sqrt n ~{d}(\pr_{\Yy_{a,b}}(x),\pr_{\Yy_{a,b}}(y)).$
\end{enumerate}
\end{lem}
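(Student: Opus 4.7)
The plan is to use the $\Sp(V)$-invariance of the crossratio, the $\F$-tubes and the pseudodistance $d$ to reduce to a normalized configuration, and then carry out the computation essentially explicitly. Since $\Sp(V)$ acts transitively on transverse pairs, I may assume $a=0$ and $b=l_\infty$. By Lemma \ref{lem:2.10}(2), the maximality of $(a,x,y,b)$ then means that $x,y\in(\!(0,l_\infty)\!)$ correspond, via the chart $\iota$, to positive definite symmetric matrices $X,Y\in\Sym^+(n,\F)$ with $Y-X$ positive definite. Lemma \ref{lem:cr} gives $R(0,X,Y,l_\infty)=X^{-1}Y$, and Proposition \ref{prop:ort} (applied with the real part equal to $0$) gives $\pr_{\Yy_{0,l_\infty}}(X)=iX$ and $\pr_{\Yy_{0,l_\infty}}(Y)=iY$.

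Next I want to diagonalize. The element $g=\bsm X^{-1/2}&0\\0&X^{1/2}\esm\in\Sp(2n,\F)$ fixes $\Yy_{0,l_\infty}$ setwise and sends $iX\mapsto i\Id$ and $iY\mapsto iX^{-1/2}YX^{-1/2}$. The symmetric matrix $X^{-1/2}YX^{-1/2}$ is conjugate by an orthogonal matrix $O\in O(n)\subset\Sp(2n,\F)$ (embedded diagonally, hence stabilizing $i\Id$) to a diagonal matrix $D=\diag(d_1,\dots,d_n)$. Since $Y-X$ is positive definite, each $d_i>1$. By $\Sp(V)$-invariance of $d$ and the explicit formula from Section \ref{sec:building},
\[
d\bigl(\pr_{\Yy_{a,b}}(x),\pr_{\Yy_{a,b}}(y)\bigr)=d(iX,iY)=d(i\Id,iD)=\sqrt{\sum_{i=1}^n(\ln\|d_i\|)^2}.
\]
On the other hand $\det R(a,x,y,b)=\det(X^{-1}Y)=\prod_{i=1}^n d_i$, so $\|\det R(a,x,y,b)\|=\prod_i\|d_i\|$.

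The key observation is then order compatibility of the norm: since $1<d_i$ in the ordered field $\F$, we get $\|d_i\|\geq 1$, hence $\alpha_i:=\ln\|d_i\|\geq 0$. This immediately gives (1), and reduces (2) to the elementary inequalities
\[
\sqrt{\sum_{i=1}^n\alpha_i^2}\;\leq\;\sum_{i=1}^n\alpha_i\;\leq\;\sqrt n\,\sqrt{\sum_{i=1}^n\alpha_i^2}
\]
valid for any $\alpha_i\geq 0$ (the left one follows from expanding the square of the right-hand side, and the right one is Cauchy--Schwarz applied to $(\alpha_i)$ and $(1,\dots,1)$). I do not anticipate any real obstacle: the only small points to be careful about are the sign of $\ln\|d_i\|$, which is where order-compatibility of the valuation is essential, and checking that the orthogonal diagonalization can be performed inside $\Sp(V)$, which is why I conjugated first to the situation $X^{-1/2}YX^{-1/2}$ on one side and $\Id$ on the other so that the stabilizer containing $O(n)$ acts.
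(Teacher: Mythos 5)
Your proof is correct and follows essentially the same route as the paper's: normalize $(a,b)$ to $(0,l_\infty)$, identify the crossratio determinant with $\prod_i d_i$ and the projected distance with $\sqrt{\sum_i(\ln\|d_i\|)^2}$, and conclude from order-compatibility of the norm plus Cauchy--Schwarz. The only cosmetic difference is that the paper normalizes $x$ to $\Id$ at the outset using transitivity on maximal triples, whereas you perform the extra conjugation by $\bsm X^{-1/2}&0\\0&X^{1/2}\esm$ explicitly.
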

\begin{proof}
Since $\Sp(V)$ is transitive on maximal triples we can assume that $a=0,b=l_\infty$ and $x$ corresponds to the matrix $+\Id$. Since the triple $(x,y,l_\infty)$ is maximal $y$ corresponds to a positive definite matrix with all eigenvalues strictly bigger than one.  The first statement is immediate since $\det R(a,x,y,b)=\det(Y)$.

Now it follows from  the definition of the orthogonal projection that $\pr_{\Yy_{a,b}}(x)=i\Id$ and $\pr_{\Yy_{a,b}}(y)=iY$. 
If $\l_1,\ldots,\l_n$ are the eigenvalues of $Y$, the explicit formula for the distance $d$ gives
$$d(i\Id,iY)=\sqrt{\sum_{i=1}^n(\ln\|\l_i\|)^2}$$
and we have
$$\ln\|\det R(a,x,y,b)\|=\sum_{i=1}^n\ln(\|\l_i\|).$$
The second assertion in the lemma then follows from Cauchy-Schwartz and the fact that $\ln\|\l_i\|\geq 0$ for every $i$.
\end{proof}

\begin{lem}\label{lem:8.7}
 Assume $L_\Bb(\rho(\g))=0$ then for any $x,y\in (\!(\g^-,\g^+ )\!) $ with $(\g^-,x,y,\g^+)$ positively oriented we have
 $$\|\det R(\phi(\g^-),\phi(x),\phi(y),\phi(\g^+))\|=1$$
\end{lem}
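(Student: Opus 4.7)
The plan is to sandwich the quantity $\|\det R(\phi(\g^-), \phi(x), \phi(y), \phi(\g^+))\|$ between two quantities both equal to $1$. Since $\phi$ is a maximal framing and $(\g^-, x, y, \g^+)$ is positively oriented on $\partial \H^2$, the Lagrangian $4$-tuple $(\phi(\g^-), \phi(x), \phi(y), \phi(\g^+))$ is maximal, and Lemma~\ref{lem:8.6}(1) immediately gives the lower bound $\|\det R(\phi(\g^-), \phi(x), \phi(y), \phi(\g^+))\| \geq 1$.

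For the upper bound I insert an auxiliary point $\g^N x$ with $N$ large. Because $\g$ is hyperbolic on $\partial\H^2$ with attractive fixed point $\g^+$, the forward orbit $\g^N x$ slides monotonically along the positive arc from $x$ toward $\g^+$; for $N$ sufficiently large the $5$-tuple $(\g^-, x, y, \g^N x, \g^+)$ is therefore positively oriented, so the Lagrangian $5$-tuple $(\phi(\g^-),\phi(x),\phi(y),\phi(\g^N x),\phi(\g^+))$ is maximal by the framing property. Lemma~\ref{lem:8.2} together with the order-compatibility of the norm then yields
\[
\|\det R(\phi(\g^-), \phi(x), \phi(y), \phi(\g^+))\| \leq \|\det R(\phi(\g^-), \phi(x), \phi(\g^N x), \phi(\g^+))\|.
\]

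It remains to show that the right-hand side has norm $1$. Using $\Sp(V)$-invariance of $\det R$ together with transitivity of $\Sp(V)$ on pairs of transverse Lagrangians, I normalize $\phi(\g^-) = l_\infty$ and $\phi(\g^+) = 0$. In these coordinates $\rho(\g)$ fixes both $l_\infty$ and $0$, so it must be block diagonal $\diag(A, A^{-T})$ for some $A \in \GL_n(\F)$, and the fractional linear action sends the symmetric matrix $X = \phi(x)$ to $A^N X A^{N\,T} = \phi(\g^N x)$. Lemma~\ref{lem:a1} does not quite apply since one of the four Lagrangians is $l_\infty$, but a short direct computation of the two projections appearing in Definition~\ref{defn:cr} gives $R(l_\infty, X, A^N X A^{N\,T}, 0) = X(A^N X A^{N\,T})^{-1}$, whose determinant is $(\det A)^{-2N}$. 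Since $L_\Bb(\rho(\g)) = 0$, Lemma~\ref{lem:2.29} forces $\|\det M_{\rho(\g)}\| = \|\det A^{-T}\| = \|\det A\|^{-1} = 1$, so the right-hand side has norm $1$, closing the sandwich. The only step requiring any real calculation is this last crossratio formula, which is routine once one reverts to the definition via the two component projections.
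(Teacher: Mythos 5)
Your proof is correct and follows essentially the same route as the paper: both arguments get the lower bound from maximality, insert the auxiliary point $\g^N x$ and apply Lemma \ref{lem:8.2} with order-compatibility of the norm for the upper bound, and then conclude from $L_\Bb(\rho(\g))=0$. The only difference is cosmetic: where the paper cites Lemma \ref{lem:2.29}(3) directly, you re-derive that statement by normalizing $(\phi(\g^-),\phi(\g^+))=(l_\infty,0)$ and computing $\det R=(\det A)^{-2N}$, which is exactly the content of the equivalence (2)$\Leftrightarrow$(3) in that lemma.
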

\begin{proof}
Since $(\g^-,x,y,\g^+)$ is positively oriented, and $\g^+$ is the attractive fixed point of $\g$, we can pick $n\geq 1$ with $(x,y,\g^nx)$ positively oriented. Then by Lemma \ref{lem:8.2} we have
$$\begin{array}{rl}
1\leq&\det(R(\phi(\g^-),\phi(x),\phi(y),\phi(\g^+)))\\
\leq&\det(R(\phi(\g^-),\phi(x),\rho(\g)^n\phi(x),\phi(\g^+)))
\end{array}$$
and the latter has norm 1 by Lemma \ref{lem:2.29} (3).
\end{proof}
\begin{proof}[Proof of Proposition \ref{prop:intersectinglazy}]
Let $s,t$ be points in $ (\!(\g^-,\g^+)\!)$ and assume without loss of generality that the quadruple $(\g^-,t,s,\g^+)$ is positively oriented. Then $(\phi(\g^-),\phi(t),\phi(s),\phi(\g^+))$ is a maximal quadruple, thus by Lemma \ref{lem:8.6} we have 
${ d}(\pr_{\Yy_{a,b}}(x),\pr_{\Yy_{a,b}}(y))\leq \ln\|\det R(a,x,y,b)\|.$
The right hand side vanishes by Lemma \ref{lem:8.7}, and hence we obtain, using Proposition \ref{prop:proj}, that $\pi_\Bb(\pr_{\Yy_{\g}}(\phi(x)))=\pi_\Bb(\pr_{\Yy_{\g}}(\phi(y)))$.
\end{proof}

Let us now assume that there are two elements $\g,\eta$ in $\pi_1(\S)$ whose axes intersect. We want to show that if both $\rho(\g)$ and $\rho(\eta)$ fix a point in $\Bb_V$, then they share a fixed point. We begin with a preliminary computation:
\begin{lem}\label{lem:8.12}
Let $\g$ and $\eta$ be two hyperbolic elements of $\G$ with intersecting axis. Assume $L_{\Bb}(\rho(\g))=L_{\Bb}(\rho(\eta))=0$ and that the quadruple $(\eta^-,\g^-,\eta^+,\gamma^+)$ is positively oriented.  Then  for every $ x\in (\!(\g^-,\g^+)\!)$ all eigenvalues of the crossratio $R(\phi(\eta^-),\phi(\g^-),\phi(x),\phi(\g^+))$ have the form $1+f$ where $f\in\F^{>0}$ satisfies $\|f\|=1$.
\end{lem}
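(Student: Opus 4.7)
The plan is to work in an explicit normalization, express the crossratio as $\Id+X$ for a positive definite matrix $X:=\phi(x)$, and combine Lemma \ref{lem:8.7} applied to both $\g$ and $\eta^{\pm 1}$ with Weyl's monotonicity of eigenvalues and the order-compatibility plus ultrametricity of $\|\cdot\|$.

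By Proposition \ref{prop:4tran} I would conjugate so that the positively oriented maximal 4-tuple $(\phi(\eta^-),\phi(\g^-),\phi(\eta^+),\phi(\g^+))$ becomes $(-\Id,0,D,l_\infty)$ with $D=\diag(d_1,\ldots,d_n)$ and $d_i>0$. For $x\in(\!(\g^-,\g^+)\!)$ the matrix $X=\phi(x)$ is positive definite, and either $0<X<D$ when $x\in(\!(\g^-,\eta^+)\!)$ (\emph{Case A}), or $X>D$ when $x\in(\!(\eta^+,\g^+)\!)$ (\emph{Case B}). A direct calculation from Lemmas \ref{lem:cr} and \ref{lem:2.7} gives $R(-\Id,0,X,l_\infty)=\Id+X$, so the eigenvalues of the crossratio are $1+\mu_i$ with $\mu_i:=\mu_i(X)>0$, and the claim reduces to proving $\|\mu_i\|=1$ for every $i$.

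Lemma \ref{lem:8.7} applied to $\g$ (with the sub-4-tuple containing $x$ and $\eta^+$ inside $(\!(\g^-,\g^+)\!)$) gives $\|\det X\|=\|\det D\|$. Weyl's monotonicity gives $\mu_i(X)\leq d_i$ in Case A and $\mu_i(X)\geq d_i$ in Case B (eigenvalues ordered decreasingly), and order-compatibility transfers these to the corresponding inequalities of norms; the product equality then forces $\|\mu_i(X)\|=\|d_i\|$ for each $i$. Next, Lemma \ref{lem:8.7} applied to $\eta$ with the 4-tuple $(-\Id,0,X,D)$ in Case A, combined with the crossratio computation $R(-\Id,0,X,D)=D(D-X)^{-1}(\Id+X)$ from Lemma \ref{lem:a1}, yields $\|\det D\|\cdot\|\det(\Id+X)\|=\|\det(D-X)\|\leq\|\det D\|$ (using $D-X\leq D$), so $\|\det(\Id+X)\|\leq 1$. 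Since $\mu_i>0$ and the valuation is order-compatible, $\|1+\mu_i\|\geq 1$; the product equality then forces each $\|1+\mu_i\|=1$, and ultrametricity gives $\|\mu_i\|\leq 1$, whence $\|d_i\|\leq 1$. Dually, applying Lemma \ref{lem:8.7} to $\eta^{-1}$ with the 4-tuple $(D,X,l_\infty,-\Id)$ in Case B---the crossratio being computed via the symplectic involution $Z\mapsto -Z^{-1}$---gives $\|\det(\Id+X)\|=\|\det(X-D)\|\leq\|\det D\|$; using $\|1+\mu_i\|\geq\|1+d_i\|=\max(1,\|d_i\|)$ (since $\mu_i\geq d_i$), the parallel ultrametric argument forces $\|d_i\|\geq 1$.

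Since $D$ does not depend on $x$ and hyperbolic fixed points are dense in $\partial\H^2$, both sub-arcs meet $S$, so applying the two analyses to auxiliary points in $(\!(\g^-,\eta^+)\!)\cap S$ and $(\!(\eta^+,\g^+)\!)\cap S$ yields $\|d_i\|=1$ for every $i$; the original $x$ then satisfies $\|\mu_i(X)\|=\|d_i\|=1$, which completes the proof. The main obstacle is the ultrametric bookkeeping: one must combine Weyl's inequalities with the identity $\|1+\mu\|=\max(1,\|\mu\|)$ for positive $\mu$ (which follows from order-compatibility providing $\|1+\mu\|\geq 1$ and ultrametricity providing $\|1+\mu\|\leq\max(1,\|\mu\|)$) and match the right side of Weyl's monotonicity in each case; the dichotomy between the two sub-arcs is essential since each supplies only one side of the two-sided bound on $\|d_i\|$.
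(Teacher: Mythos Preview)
Your proof is correct and follows essentially the same approach as the paper: normalize the maximal 4-tuple to $(-\Id,0,p,l_\infty)$, use Lemma~\ref{lem:8.7} for $\g$ to equate the norms of the eigenvalues of $\phi(x)$ with those of $p$, then apply Lemma~\ref{lem:8.7} for $\eta$ (resp.\ $\eta^{-1}$) on the two sub-arcs to obtain the upper (resp.\ lower) bound $\|d_i\|\leq 1$ (resp.\ $\|d_i\|\geq 1$). The only cosmetic differences are that you diagonalize $\phi(\eta^+)$ via Proposition~\ref{prop:4tran} (the paper keeps a general $p$) and that in Case~B you package the endgame via the identity $\|1+\mu\|=\max(1,\|\mu\|)$ rather than factoring $\prod\|1+\nu_i^{-1}\|=1$; both routes are equivalent.
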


\begin{center}
\begin{tikzpicture}
\draw (-2,0) to (4,0);
\draw (0,0) to (0,2.5);
\draw (-1.5,0) [in=180, out=90] to (0.2,1.5) [in=90, out=0] to (2,0) ;

\filldraw  (-1.5,0) circle [radius=1pt];
\filldraw  (0,0) circle [radius=1pt];
\filldraw  (1,0) circle [radius=1pt];
\filldraw  (2,0) circle [radius=1pt];
\filldraw  (3,0) circle [radius=1pt];

\node at (-1.5,-.5) {$\eta^-$};
\node at (0,-0.5) {$\gamma^-$};
\node at (1,-0.5) {$x$};
\node at (2,-0.5) {$\eta^+$};
\node at (3,-0.5) {$y$};

\node at (-1.5,-1)[rotate=-90] {$\rightsquigarrow$};
\node at (0,-1) [rotate=-90] {$\rightsquigarrow$};
\node at (1,-1) [rotate=-90] {$\rightsquigarrow$};
\node at (2,-1) [rotate=-90] {$\rightsquigarrow$};
\node at (3,-1) [rotate=-90] {$\rightsquigarrow$};

\node at (-1.5,-1.5) {$-\Id$};
\node at (0,-1.5) {$0$};
\node at (1,-1.5) {$q$};
\node at (2,-1.5) {$p$};
\node at (3,-1.5) {$r$};
\end{tikzpicture}
\end{center}
\begin{proof}
Pick $g\in\Sp(V)$ such that $g_*(\phi(\eta^-),\phi(\g^-),\phi(\g^+))=(-\Id,0,l_\infty)$ and set $p=g_*(\phi(\eta^+))$.  Now pick $x\in(\!(\g^-,\eta^+)\!)$ and set $q=g_*(\phi(x))$. Observe that $0<\!<q<\!<p$.

By Lemma \ref{lem:8.7}, since $L_{\Bb}(\rho(\g))=0$, we have 
$$\|\det R(\phi(\g^-),\phi(x),\phi(\eta^+),\phi(\g^+))\|=1$$ 
which implies $\|\det p\|=\|\det q\|$.

Let $\mu_1\geq\ldots\geq\mu_n>0$ and $\l_1\geq\ldots\geq\l_n>0$ denote the eigenvalues of $q$ and $p$ respectively. Since $0<\!<q<\!<p$, we deduce that $0<\mu_i<\l_i$ and hence $\|\mu_i\|\leq\|\l_i\|$. This implies that $\|\mu_i\|=\|\l_i\|$ since we know that their products are equal.

Exploiting that $L_\Bb(\rho(\eta))=0$ together with Lemma \ref{lem:8.7} we get 
$$\|\det R(\phi(\eta^-),\phi(\gamma^-),\phi(x),\phi(\eta^+))\|=1$$
which implies that $\|(\det p)(\det (p-q))^{-1}\det(\Id+q)\|=1$. 
From this we deduce
$$\prod_{i=1}^n\|1+\mu_i\|=\|\det(\Id+q)\|=\left\|\frac{\det(p-q)}{\det p}\right\|\leq 1$$
where the last inequality follows from $0<\!<p-q<\!<p$. Together with the observation that $1+\mu_i\geq 1$ and the ultrametric inequality this implies $\|\mu_i\|\leq 1$ for all $i$ and thus $\|\l_i\|=\|\mu_i\|\leq 1$.

Now let $y\in(\eta^+,\g^+)$ and set $r=g_*(\phi(y))$. Then $0<\!<p<\!<r$. Again by Lemma \ref{lem:8.7} we deduce that $$\|\det R(\phi(\g^-),\phi(\eta^+),\phi(y),\phi(\g^+))\|=1$$ 
which implies $\|\det p\|=\|\det r\|$.

Let $\nu_1\geq\ldots\geq\nu_n>0$ denote the eigenvalues of $r$. Since $p<\!<r$ we deduce that $0<\l_i<\nu_i$ and hence $\|\nu_i\|\geq\|\l_i\|$. This implies, as above, that $\|\nu_i\|=\|\l_i\|$.
Since $L_\Bb(\rho(\eta))=0$, Lemma \ref{lem:8.7} implies that 
$$\|\det R(\phi(\eta^+),\phi(y),\phi(\gamma^+),\phi(\eta^-))\|=1$$
that is $\|\det (\Id+r)\|=\|\det(r-p)\|$. 
Since $0<\!<r-p<\!<r$, we obtain $\|\det(r-p)\|\leq \|\det r\|$. On the other hand $0<\!<r<\!<\Id+r$ and hence $\|\det (\Id+r)\|=\|\det (r)\|$ or  equivalently $\prod_{i=1}^n \|1+\frac 1{\nu_i}\|=1.$ This together with the information that $\nu_i>0$ and the ultrametric inequality implies $\|\nu_i\|\geq 1$ and thus $\|\l_i\|=\|\nu_i\|\geq 1$.

To conclude the proof we observe that $R(\phi(\eta^-),\phi(\g^-),\phi(x),\phi(\g^+))$ is conjugate to $R(-\Id,0,q,l_\infty)=\Id+q$ and hence has as all eigenvalues of the form $1+f$ with $f$ positive satisfying $\|f\|=1$.
\end{proof}
\begin{remark}
Recall from Definition \ref{defn:4.12} that $\Yy_\g$ and $\Yy_\eta$ are orthogonal if and only if $R(\phi(\eta^-),\phi(\g^-),\phi(x),\phi(\g^+))=2\Id$. Lemma \ref{lem:8.12} should be interpreted as a weaker form of orthogonality for the projections $\mathbb Y_\g$ and $\mathbb Y_\eta$.
\end{remark}
\begin{lem}\label{lem:8.14}
Let $(a,c,b,d)\in \Ll(V)^4$ be a maximal quadruple and assume that all the eigenvalues of $R(a,c,b,d)$ have the form $1+f$ for some $f\in\F^{>0}$ with $\|f\|=1$. Then the points $$\pr_{\Yy_{c,d}}(a),\;\pr_{\Yy_{c,d}}(b),\;\pr_{\Yy_{a,b}}(c),\;\pr_{\Yy_{a,b}}(d),\;\Yy_{a,b}\cap\Yy_{c,d}$$ have pairwise pseudodistance zero.
\end{lem}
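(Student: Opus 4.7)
The plan is to work in explicit coordinates via Proposition~\ref{prop:4tran} and show that each of the four projections has pseudodistance zero to the intersection point $p=\Yy_{a,b}\cap\Yy_{c,d}$; the triangle inequality will then yield pairwise pseudodistance zero among all five points. By Proposition~\ref{prop:4tran} applied to the maximal $4$-tuple $(a,c,b,d)$, after acting by an element of $\Sp(V)$ we may assume $(a,c,b,d)=(-\Id,0,D,l_\infty)$ with $D=\diag(d_1,\ldots,d_n)$ positive diagonal. Taking the limit in the formula of Lemma~\ref{lem:a1} as the fourth argument tends to $l_\infty$ gives $R(-\Id,0,D,l_\infty)=\Id+D$, so the hypothesis on the eigenvalues of $R(a,c,b,d)$ translates precisely into $d_j>0$ and $\|d_j\|=1$ for every $j$. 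Repeating the computation used in Section~\ref{sec:3} to prove that two crossing $\F$-tubes meet in a unique point, one finds $p=\Yy_{-\Id,D}\cap\Yy_{0,l_\infty}=\{iD^{1/2}\}$.

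For the projections onto $\Yy_{c,d}=\Yy_{0,l_\infty}$, the symplectic rescaling $Z\mapsto S^{-1/2}ZS^{-1/2}$ reduces the intersection $\Yy_{-S,S}\cap\Yy_{0,l_\infty}$ for a general positive symmetric $S$ to the case $S=\Id$ treated in Lemma~\ref{lem:2.20}, giving $\Yy_{-S,S}\cap\Yy_{0,l_\infty}=\{iS\}$. Combined with the identity $\sigma_{0,l_\infty}(X)=-\ov X$ in the chart, this yields $\pr_{\Yy_{c,d}}(a)=i\Id$ and $\pr_{\Yy_{c,d}}(b)=iD$. Both are diagonal points of $\Yy_{0,l_\infty}$; their ratios with $p=iD^{1/2}$ are $D^{\pm1/2}$, whose eigenvalues are $d_j^{\pm1/2}$ and therefore have norm one, so the pseudodistance formula of Section~\ref{sec:building} immediately gives distance zero to $p$.

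To handle the two remaining projections onto $\Yy_{a,b}=\Yy_{-\Id,D}$, I would introduce the symplectic transformation
\[
g(Z)=(Z+\Id)(D-Z)^{-1}(D+\Id),
\]
which sends $-\Id\mapsto 0$ and $D\mapsto l_\infty$, and hence carries $\Yy_{-\Id,D}$ onto $\Yy_{0,l_\infty}$. A direct computation gives $g(0)=\Id+D^{-1}$ and $g(l_\infty)=-(D+\Id)$. The main concrete identity to check, via the rationalization $(D^{1/2}-i\Id)^{-1}=(D^{1/2}+i\Id)(D+\Id)^{-1}$ and an expansion of the product $(iD^{1/2}+\Id)(D^{1/2}+i\Id)=i(D+\Id)$, is
\[
g(iD^{1/2})=i(D^{1/2}+D^{-1/2}).
\]
By $\Sp(V)$-equivariance of the orthogonal projection and $\Sp(V)$-invariance of the pseudodistance, $d(\pr_{\Yy_{a,b}}(c),p)$ and $d(\pr_{\Yy_{a,b}}(d),p)$ become, respectively, the pseudodistances in $\Yy_{0,l_\infty}$ from $i(D^{1/2}+D^{-1/2})$ to $i(\Id+D^{-1})$ and to $i(D+\Id)$; factoring $\Id+D^{-1}=D^{-1}(D+\Id)$ and $D^{1/2}+D^{-1/2}=D^{-1/2}(D+\Id)$, both ratios reduce to $D^{\pm1/2}$, whose eigenvalues are again units, so the pseudodistances vanish. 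The main obstacle is precisely the identification of $g(iD^{1/2})$; everything else is either the normalization, the invariance of the pseudodistance under $\Sp(V)$, or the ultrametric observation that a positive element $d_j$ with $\|d_j\|=1$ satisfies $\|d_j^k\|=1$ for every rational exponent $k$.
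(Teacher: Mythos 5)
Your proof is correct and follows essentially the same route as the paper: normalize the quadruple to $(-\Id,0,D,l_\infty)$ via Proposition \ref{prop:4tran}, observe that the hypothesis forces $\|d_i\|=1$, compute the five points explicitly, and conclude from the distance formula. You are in fact somewhat more complete than the paper, which only records the computations for $\pr_{\Yy_{0,l_\infty}}(-\Id)$, $\pr_{\Yy_{0,l_\infty}}(D)$ and the intersection point, leaving the two projections onto $\Yy_{-\Id,D}$ (which you carry out via the symplectic map $g$) to the reader.
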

\begin{proof}
Pick $g\in \Sp(V)$ such that $g_*(a,c,b,d)=(-\Id,0,D,l_\infty)$ where $D$ is diagonal with strictly positive entries. Then a computation gives $\pr_{\Yy_{0,l_\infty}}(-\Id)=i\Id$, $\pr_{\Yy_{0,l_\infty}}(D)=iD$ and $\Yy_{0,l_\infty}\cap \Yy_{-\Id,D}=i\sqrt D$.

Now since $D=\diag(d_1,\ldots,d_n)$ the assumption on the eigenvalues implies $\|d_i\|=1$ and the explicit formula for the distance gives the desired statement. 
\end{proof}
\begin{proof}[Proof of Proposition \ref{prop:intersectinglazy}]
We may assume that $(\eta^-,\g^-,\eta^+,\gamma^+)$ is positively oriented. Applying Lemma \ref{lem:8.12} to $x=\eta^+$ we obtain that the pseudodistances of the points $\pr_{\Yy_{\g}}(\phi(\eta^+))$, $\pr_{\Yy_{\g}}(\phi(\eta^-))$, $\pr_{\Yy_{\eta}}(\phi(\g^+))$, $\pr_{\Yy_{\eta}}(\phi(\g^-))$, $\Yy_{\g}\cap\Yy_{\eta}$ are all zero. This concludes the proof once one notices that  (see Proposition \ref{prop:lazy}) 
$$\begin{array}{l}
b^+_\g=\pi_\Bb(\pr_{\Yy_{\g}}(\phi(\eta^+)))\\
b^-_\g=\pi_\Bb(\pr_{\Yy_{\g}}(\phi(\eta^-)))\\
b^+_\eta=\pi_\Bb(\pr_{\Yy_{\eta}}(\phi(\g^-)))\\
b^-_\eta=\pi_\Bb(\pr_{\Yy_{\eta}}(\phi(\g^+))).
\end{array}$$
\end{proof}

\section{Decomposition Theorem}\label{sec:dec}
Let $\rho:\pi_1(\S,x)\to \Sp(V)$ be a representation into a symplectic group over a real closed field $\F$ with valuation,
 and let $\pi_{\Bb}:\Tt_V\to\Bb_V$ denote the projection to the building. Recall from the introduction that
if  $\S=\bigcup_{v\in\calV}\S_v$ is a decomposition of the surface $\S$ into subsurfaces with geodesic boundary, we consider the associated presentation of $\G$ as fundamental group of a graph of groups with vertex set $\calV$ and vertex groups $\pi_1(\S_v)$. We denote by $\wt\calV$ the vertex set of the associated Bass-Serre tree $\calT$. For every $v\in\calV$ and $w\in\wt\calV$ lying above $v$, the stabilizer $\G_w$ of $w$ in $
G$ is isomorphic to $\pi_1(\S_v)$. In this section we prove the result mentioned in the introduction as Theorem \ref{thm:6}:
\begin{thm}\label{thm:dec}
Assume that $\rho:\G\to\Sp(V)$ admits a maximal framing. Then there is a decomposition $\S=\bigcup_{v\in V}\Sigma_v$ of $\S$ into subsurfaces with geodesic boundary such that
\begin{enumerate}
\item for every $\g\in\G$ whose associated closed geodesic is not contained in any subsurface,  $L_\Bb(\rho(\g))>0$;
\item for every $v\in \calV$  there is the following dichotomy:
\begin{enumerate}
\item[(A)] for every $w\in\wt \calV$ lying above $v$, and any $\g\in\G_w$ which is not boundary parallel,  $L_\Bb(\rho(\g))>0$,
\item[(B)]  for every $w\in\wt \calV$ lying above $v$, there is a point $b_w\in\Bb_V$ which is fixed by $\G_w$.
\end{enumerate}
\end{enumerate}
\end{thm}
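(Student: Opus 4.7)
The plan is to read the decomposition off from the structure of the set
$$\mathcal{Z} := \{\gamma \in \G : \gamma \text{ hyperbolic},\; L_\Bb(\rho(\gamma)) = 0\},$$
using the canonical fixed points of Proposition \ref{prop:lazy} and the coincidence relation of Proposition \ref{prop:intersectinglazy}. First, I would partition $\mathcal{Z}$ into equivalence classes $\mathcal{Z} = \bigsqcup_{i\in I}\mathcal{Z}_i$ under the equivalence relation generated by "axes intersect in $\H^2$". Iterating Proposition \ref{prop:intersectinglazy}: if $\gamma_0,\ldots,\gamma_k$ is a chain with consecutive axes intersecting, the propagation $b^\pm_{\gamma_j} = b^\pm_{\gamma_{j+1}}$ forces all elements of a given class $\mathcal{Z}_i$ to share a common fixed point $b_i \in \Bb_V$, fixed by the subgroup generated by $\mathcal{Z}_i$.

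Next I would construct the subsurfaces. For each class $\mathcal{Z}_i$ consisting of more than a single axis, let $\Sigma_i \subset \S$ denote the smallest subsurface with geodesic boundary carrying the closed geodesics of all elements in $\mathcal{Z}_i$. Classes from different equivalence classes have non-intersecting axes, so distinct $\Sigma_i$ have disjoint interiors. Singleton classes (isolated simple curves in $\mathcal{Z}$ not intersecting any other element of $\mathcal{Z}$) together with the boundaries $\bigcup_i \partial\Sigma_i$ form a collection $\mathcal{C}$ of pairwise disjoint simple closed geodesics; cutting $\S$ along $\mathcal{C}$ yields the desired decomposition $\S = \bigcup_{v\in\calV}\Sigma_v$, whose pieces are either of the form $\Sigma_i$ or components of the complement.

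Verification of (1) is direct: if $\gamma$ has closed geodesic not contained in any $\Sigma_v$ then $\bar\gamma$ crosses some boundary curve $c \in \mathcal{C}$ transversally. Assuming $L_\Bb(\rho(\gamma))=0$, Corollary \ref{cor:7.5} forces $\gamma$ to be represented by a simple closed geodesic (since non-simple geodesics $\eta$ produce self-crossings and hence $\gamma,\eta$-like pairs with positive intersection number, yielding positive translation length via the Collar Lemma). Then $\gamma \in \mathcal{Z}$ intersects $c \in \mathcal{Z}$, so $\gamma \sim c$ and both lie in the same class, placing $\bar\gamma$ inside some $\Sigma_i$ — contradiction. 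For the dichotomy (2), a complementary piece $\Sigma_v$ contains no interior $\mathcal{Z}$-geodesic (else the maximality failure would merge it into some $\Sigma_i$), so every non-peripheral hyperbolic element of $\pi_1(\Sigma_v)$ has positive translation length by the same simplicity argument; this is type A. For $\Sigma_v = \Sigma_i$, the common fixed point $b_i$ is fixed by all generators of $\pi_1(\Sigma_i)$: generators may be chosen as simple closed geodesics in $\Sigma_i$, each of which either lies in $\mathcal{Z}_i$ (hence fixes $b_i$) or intersects some curve of $\mathcal{Z}_i$ with zero translation length and is thus absorbed into $\mathcal{Z}_i$ via Proposition \ref{prop:intersectinglazy}.

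I expect the main obstacle to be in the type B case of Step (2): verifying that \emph{every} simple closed geodesic inside the filled subsurface $\Sigma_i$ already lies in $\mathcal{Z}$, so that all generators of $\pi_1(\Sigma_i)$ fix $b_i$. A priori, a simple closed geodesic $\delta \subset \Sigma_i$ might intersect some $\gamma \in \mathcal{Z}_i$ while itself having $L_\Bb(\rho(\delta)) > 0$, which would obstruct type B. Ruling this out amounts to a sharpened form of Proposition \ref{prop:intersectinglazy}, propagating not only fixed-point coincidence but the vanishing of translation length through chains of intersecting axes in a filled subsurface; this should be established by combining the Collar Lemma (Theorem \ref{lem:collar}) with the crossratio computation of Lemma \ref{lem:8.12}, exploiting that when the projection $\pi_\Bb(\pr_{\Yy_\g}(\phi(\delta^\pm)))$ coincides with $b_i$ for enough $\g$, the norms of the eigenvalues of $\rho(\delta)$ are forced to be 1.
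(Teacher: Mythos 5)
Your overall strategy---put the relation ``axes intersect'' on the set of hyperbolic elements with $L_\Bb(\rho(\g))=0$, use Proposition \ref{prop:intersectinglazy} to give each connected component a single common fixed point, and fill each component in to a subsurface---is exactly the paper's. But the type B case, which you yourself flag as the main obstacle, is a genuine gap that your proposal does not close. Proposition \ref{prop:intersectinglazy} has as \emph{hypothesis} that both elements already have zero translation length, so it cannot ``absorb'' a generator $\delta$ of $\pi_1(\Sigma_i)$ into $\mathcal{Z}_i$ unless $L_\Bb(\rho(\delta))=0$ is known beforehand; the ``sharpened form'' you invoke is left as a hope. The paper closes this by a different and much softer mechanism: the points $b^\pm_\g$ are built \emph{canonically and equivariantly} from the framing, so $\rho(\g)b^\pm_\eta=b^\pm_{\g\eta\g^{-1}}$ for every $\g\in\G$. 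Conjugation by any element of the stabilizer $\G_\fc$ of a component permutes its vertices, and all vertices share the one point $p_\fc$; hence $\rho(\g)p_\fc=p_\fc$ for \emph{every} $\g\in\G_\fc$ (Lemma \ref{lem:9.2}), with no need to first show that $\g$ has zero translation length. Type B then follows once $\G_\fc$ is identified with the subsurface group, which is Proposition \ref{prop:9.5}.

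A second, outright false step is the claim that $L_\Bb(\rho(\g))=0$ together with Corollary \ref{cor:7.5} forces the closed geodesic of $\g$ to be simple. The Collar Lemma controls the $\F$-valued absolute values $|\l_i|$ of the eigenvalues, whereas $L_\Bb$ is computed from the norms $\|\l_i\|=e^{-v(\l_i)}$. A self-intersection forces $\prod_i|\l_i(\g)|^{2/n}\geq 2$, which is perfectly compatible with $\|\l_i(\g)\|=1$ for all $i$, i.e.\ with $L_\Bb(\rho(\g))=0$ (take all $|\l_i|$ equal to $2$: then $v(2)=0$). The paper explicitly allows non-simple geodesics of zero translation length---the corollary following Proposition \ref{prop:intersectinglazy} asserts only that $b^+_\g=b^-_\g$ for such $\g$, and only \emph{isolated} components are shown to be simple (via conjugates). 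Fortunately part (1) does not need simplicity: if $L_\Bb(\rho(\g))=0$ then $\g$ lies in some component, whose filled subsurface (or cutting curve) already contains its geodesic. Finally, your one-line assertion that distinct $\Sigma_i$ have disjoint interiors and are compact embedded subsurfaces with geodesic boundary is where the paper spends most of its effort (Propositions \ref{lem:9.4} and \ref{prop:9.5}, resting on the interval combinatorics behind Lemma \ref{lem:A}); disjointness of the axes of two filling systems does not formally yield disjointness of their convex hulls without such an argument.
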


The proof of the theorem is based on the analysis of the incidence structure of the set
$$\Ll_\rho=\{\g\in\G|\; \g\neq e, \g \text{ hyperbolic, } L_\Bb(\rho(\g))=0\}.$$
Let 
$$\P\Ll_\rho=\{\g\in\Ll_\rho|\; \g \text{ is primitive}\}/_{\g\sim\g^{-1}}$$
and denote by $\ov \g\in \P\Ll_\rho$  the equivalence class of $\g$.
Let
$$\Aa_\rho=\{\ax(\g)|\;\g\in \Ll_\rho\}$$
denote the set of axis of elements in $\Ll_\rho$, 
so that there is a bijective correspondence $\Aa_\rho\cong \P\Ll_\rho$.

On $\P\Ll_\rho$ we put the graph structure $\ov \g\equiv\ov \eta$ if $\ov \g\neq \ov \eta$ and their axis intersect.
We denote by $\Gg_\r$ this graph and proceed to study its connected components. Let $\frakc\subset \Gg_\rho$ be a connected component with vertex set $V(\frakc)$. We observe that if the component consists of a single vertex $\ov \g$, then the closed geodesic associated to $\g$ is simple. Indeed for each $\eta$ in $\G$, the conjugate $\eta\gamma\eta^{-1}$ belongs to $\Ll_\rho$ and if $\ov{\eta\g\eta^{-1}}\neq \ov \g$ the corresponding axis do not intersect.

Let us assume from now on that $|V(\frakc)|\geq 2$ and let 
$$\G_\frakc=\{\g\in\G|\,\g\text{ stabilizes }\frakc\}$$
and $$\Delta_\fc=\bigcup_{\ov \g\in V(\fc)}\{\g^-,\g^+\}.$$

Then we clearly have that if $\ov \g$ belongs to $V(\frakc)$ then $\g$ is an element of $\G_\frakc$ and $\Delta_\fc$ is a subset of the limit set $\L(\G_\fc)\subset \partial \H^2$ of $\G_\fc$. In particular, since $\D_\fc$ is $\G_\fc$-invariant, we get $\ov \D_\fc=\L(\G_\fc)$.

\begin{lem}\label{lem:9.1}
 There is a point $p_\fc\in \Bb_{V}$ with $b^\pm_\g=p_\fc$ for all $\g$ such that $\ov\g\in V(\fc)$.
\end{lem}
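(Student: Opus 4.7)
The plan is to reduce the statement to a direct application of Proposition \ref{prop:intersectinglazy} together with the connectedness of the graph $\fc$.

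First, I observe that for any $\g$ with $\ov\g\in V(\fc)\subseteq \P\Ll_\rho$ we have $L_\Bb(\rho(\g))=0$ by the very definition of $\Ll_\rho$, and $\g$ is not boundary parallel (it is primitive hyperbolic), so Proposition \ref{prop:lazy} applies and the two points $b^+_\g, b^-_\g\in\Bb_V$ are well defined. Note also that $b^\pm_\g$ depend only on the class $\ov\g$: the tube $\Yy_\g$ depends only on the unordered pair $\{\phi(\g^-),\phi(\g^+)\}$, and replacing $\g$ by $\g^{-1}$ simply interchanges $(\!(\g^-,\g^+)\!)$ and $(\!(\g^+,\g^-)\!)$, so $\{b^+_\g,b^-_\g\}=\{b^+_{\g^{-1}},b^-_{\g^{-1}}\}$.

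Next I would handle the case of adjacent vertices. If $\ov\g\equiv\ov\eta$ in $\Gg_\rho$, then by definition the axes of $\g$ and $\eta$ intersect and both elements belong to $\Ll_\rho$, so $L_\Bb(\rho(\g))=L_\Bb(\rho(\eta))=0$. Proposition \ref{prop:intersectinglazy} then yields
\[
b^+_\g=b^-_\g=b^+_\eta=b^-_\eta,
\]
so the four points coincide. In particular the common value $b^+_\g=b^-_\g$ is attached unambiguously to the pair of adjacent vertices.

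To finish, I propagate this equality along paths in $\fc$. Since $|V(\fc)|\geq 2$ and $\fc$ is a connected component of $\Gg_\rho$, for any two vertices $\ov\g,\ov\eta\in V(\fc)$ there is a finite path $\ov\g=\ov{\g_0}\equiv \ov{\g_1}\equiv\cdots\equiv\ov{\g_k}=\ov\eta$ in $\fc$. Applying the previous paragraph to each consecutive pair $(\ov{\g_i},\ov{\g_{i+1}})$ gives
\[
b^+_{\g_0}=b^-_{\g_0}=b^+_{\g_1}=b^-_{\g_1}=\cdots=b^+_{\g_k}=b^-_{\g_k},
\]
so $b^\pm_\g=b^\pm_\eta$. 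Calling this common element $p_\fc\in\Bb_V$ gives the lemma. There is no real obstacle: the entire content of the lemma is Proposition \ref{prop:intersectinglazy} applied edge by edge, the graph-theoretic connectedness of $\fc$ being the only extra input.
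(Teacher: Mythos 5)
Your proof is correct and follows exactly the paper's argument: adjacent vertices of $\fc$ have all four canonical points coinciding by Proposition \ref{prop:intersectinglazy}, and connectedness of $\fc$ propagates the equality along paths. The extra remarks on well-definedness of $b^\pm_\g$ on the class $\ov\g$ and on the applicability of Proposition \ref{prop:lazy} are harmless additions that the paper leaves implicit.
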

\begin{proof}
 Indeed, if $\ov\g$ is adjacent to $\ov \eta$ we have $b_\g^+=b_\g^-=b_\eta^+=b_\eta^-$ (cfr. Lemma \ref{lem:8.14}), the lemma follows from the assumption that $\fc$ is connected.
\end{proof}
\begin{lem}\label{lem:9.2}
 For every $\g\in\G_\fc$, $\rho(\g)p_\fc=p_\fc$
\end{lem}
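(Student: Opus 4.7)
The plan is to exploit the equivariance of all the constructions (framing $\phi$, $\F$-tubes $\Yy_\g$, orthogonal projection $\pr_{\Yy}$, and projection $\pi_\Bb$ to the building) to relate $\rho(\g) p_\fc$ to a point of the form $b^{\pm}_{\g\eta\g^{-1}}$ for some $\ov\eta \in V(\fc)$, and then invoke Lemma \ref{lem:9.1}.

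More precisely, fix any $\ov\eta \in V(\fc)$, which exists since $|V(\fc)| \geq 2$. First I would note that because $\g \in \G_\fc$ stabilizes the component $\fc$, the element $\g\eta\g^{-1}$ (whose class in $\P\Ll_\rho$ is $\g \cdot \ov\eta$) still has $\ov{\g\eta\g^{-1}} \in V(\fc)$. In particular, by Lemma \ref{lem:9.1}, one has $b^+_\eta = p_\fc$ and $b^+_{\g\eta\g^{-1}} = p_\fc$.

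The heart of the proof is then the equivariance identity
\[
b^+_{\g\eta\g^{-1}} = \rho(\g)\, b^+_\eta.
\]
To check this, I would observe that the fixed points of $\g\eta\g^{-1}$ in $\partial\H^2$ are $\g \cdot \eta^\pm$, so by $\G$-equivariance of the framing, $\phi(\g \eta^\pm) = \rho(\g)\phi(\eta^\pm)$, and therefore $\Yy_{\g\eta\g^{-1}} = \rho(\g)\Yy_\eta$. The crossratio is $\GL(V)$-equivariant (cf.\ Section \ref{sec:cr}), so the defining property $R(a,c,\s_{a,b}(c),b) = -\Id$ of the reflection $\s_{a,b}$ (Proposition \ref{prop:2.23}) gives $\s_{\rho(\g)a,\rho(\g)b} = \rho(\g)\s_{a,b}\rho(\g)^{-1}$, and hence the orthogonal projection satisfies
\[
\pr_{\rho(\g)\Yy_{a,b}}(\rho(\g) z) = \rho(\g)\,\pr_{\Yy_{a,b}}(z),
\]
for $z$ in its domain of definition. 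Combined with the $\Sp(V)$-equivariance of $\pi_\Bb$, and applied to $z = \phi(x)$ for any $x \in (\!(\eta^-,\eta^+)\!)$, this yields
\[
F^+_{\g\eta\g^{-1}}(\g x) \;=\; \pi_\Bb\bigl(\pr_{\Yy_{\g\eta\g^{-1}}}(\phi(\g x))\bigr) \;=\; \rho(\g)\,\pi_\Bb\bigl(\pr_{\Yy_\eta}(\phi(x))\bigr) \;=\; \rho(\g)\,F^+_\eta(x),
\]
and evaluating the constant maps provided by Proposition \ref{prop:lazy} gives the equivariance identity above.

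Combining, $p_\fc = b^+_{\g\eta\g^{-1}} = \rho(\g)b^+_\eta = \rho(\g)p_\fc$, which is what we wanted. There is no real obstacle here: the only thing to be careful about is that the equivariance of $\pr_{\Yy}$ must be justified from the equivariance of the crossratio and of $\s_{a,b}$ (Lemma \ref{lem:2.12}(4)), since this projection was defined on both $\Tt_V$ and on intervals in $\Ll(V)$; both cases however are immediate from the $\Sp(V)$-equivariance of the defining data.
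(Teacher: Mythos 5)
Your proof is correct and follows the same route as the paper: the paper's own proof is precisely the two-line observation that $\ov{\g\eta\g^{-1}}\in V(\fc)$ for $\ov\eta\in V(\fc)$, so that $b_\eta^\pm=b_{\g\eta\g^{-1}}^\pm=\rho(\g)b_\eta^\pm$ by Lemma \ref{lem:9.1} and equivariance. You merely spell out the equivariance identity $b^+_{\g\eta\g^{-1}}=\rho(\g)b^+_\eta$ (which the paper leaves implicit) via the equivariance of the framing, the tubes, the reflections and $\pi_\Bb$, and that verification is sound.
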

\begin{proof}
 For every $\g\in \G_\fc$, if $\eta$ gives a vertex of $V(\fc)$, the same holds for $\g\eta\g^{-1}$. Hence we get 
 $$b_\eta^\pm=b_{\g\eta\g^{-1}}^\pm=\rho(\g)b^\pm_\eta.$$
\end{proof}

\begin{lem}\label{lem:A}
Let $g$ be a geodesic such that $\D_\fc\cap(\!(g^-,g^+)\!)\neq \emptyset$ and $\D_\fc\cap (\!(g^+,g^-)\!)\neq \emptyset$. Then there exists $\ov \g\in\fc$ with $\ax(\g)\cap g\neq\emptyset$.
\end{lem}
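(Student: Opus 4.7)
The plan is to exploit connectedness of $\fc$ via a path argument combined with the topological separation property of $g$ in $\overline{\H^2}$. First I would pick representatives of the endpoints: by hypothesis there exists $x\in \D_\fc\cap (\!(g^-,g^+)\!)$ with $x\in\{\eta^-,\eta^+\}$ for some $\ov\eta\in V(\fc)$, and $y\in \D_\fc\cap(\!(g^+,g^-)\!)$ with $y\in\{\delta^-,\delta^+\}$ for some $\ov\delta\in V(\fc)$. If either $\ax(\eta)\cap g$ or $\ax(\delta)\cap g$ is non-empty we are done; otherwise $\ov\eta\neq \ov\delta$ (their endpoints lie in disjoint open arcs), so by connectedness of $\fc$ there is a path $\ov\eta=\ov\g_0\equiv\ov\g_1\equiv\cdots\equiv\ov\g_k=\ov\delta$ in $V(\fc)$.

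Assuming for contradiction that $\ax(\g_i)\cap g=\emptyset$ for every $i$, I would attach to each $\g_i$ a sign $\epsilon_i\in\{+,-\}$ by declaring $\epsilon_i=+$ if both boundary endpoints $\g_i^\pm$ lie in $\overline{(\!(g^-,g^+)\!)}$ and $\epsilon_i=-$ otherwise. This is well-defined: if the two endpoints of $\ax(\g_i)$ were in different open arcs of $\partial\H^2\setminus\{g^-,g^+\}$, they would separate $\{g^-,g^+\}$ on $\partial\H^2$, forcing $\ax(\g_i)$ to meet $g$ inside $\H^2$. Since $x$ lies strictly in the $+$-arc and $\ax(\eta)$ does not cross $g$, we have $\epsilon_0=+$; symmetrically $\epsilon_k=-$, so there is a first index $i$ with $\epsilon_i\neq \epsilon_{i+1}$.

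The contradiction then comes from the adjacency relation: $\ov\g_i\equiv\ov\g_{i+1}$ means $\ax(\g_i)\cap\ax(\g_{i+1})\neq \emptyset$ in $\H^2$, which forces the two pairs of boundary endpoints to separate each other on $\partial\H^2$. But with $\epsilon_i\neq \epsilon_{i+1}$ the endpoints of $\ax(\g_i)$ lie in $\overline{(\!(g^-,g^+)\!)}$ and those of $\ax(\g_{i+1})$ in $\overline{(\!(g^+,g^-)\!)}$; since these two closed arcs of $\partial\H^2$ meet only in $\{g^-,g^+\}$, no such separation is possible and the two geodesics cannot cross in $\H^2$. Hence the standing assumption fails, and some $\g=\g_i$ gives the required axis meeting $g$.

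The only technical point to watch is the boundary case where an endpoint of some $\ax(\g_i)$ equals $g^-$ or $g^+$; this is why I work with the closed arcs $\overline{(\!(g^\pm,g^\mp)\!)}$, and it does not affect the argument since a common endpoint at infinity does not produce an intersection in $\H^2$. I expect this bookkeeping of boundary endpoints, rather than the core separation argument, to be the only delicate step.
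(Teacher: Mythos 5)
Your argument is correct and follows essentially the same route as the paper's proof: pick points of $\D_\fc$ in each of the two open arcs determined by $g$, join the corresponding classes by a path of adjacent vertices in the connected component $\fc$, and conclude that some axis along the chain must cross $g$. The paper asserts this last step as ``clear''; your sign assignment and first-switch argument (including the careful treatment of the degenerate cases where an endpoint of some $\ax(\g_i)$ equals $g^\pm$) is a valid way of filling in that detail.
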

\begin{proof}
Let us choose a class $\ov \eta\in\fc$ with $\eta^+\in(\!(g^-,g^+)\!)$ and a class $\ov\tau\in\fc$ with $\tau^-\in (\!(g^+,g^-)\!)$. Since $\fc$ is connected there is a sequence $\ov\alpha_1=\ov \eta,\ov\alpha_2,\ldots,\ov\alpha_n=\ov \tau$ of classes in $\fc$ such that, for every $i$, the axis $\ax(\alpha_i)$ intersects $\ax(\alpha_{i+1})$. But then clearly there is an index $j$ such that $\ax(\alpha_j)$ intersects the geodesic $g$.
\end{proof}

If $X$ is a subset of $\ov \H^2=\H^2\cup \partial\H^2$, we denote by $\ov {{\rm Co}(X)}$ the closed convex hull of $X$ in $ \H^2$.   
To any component $\fc$ we associate the closed convex subset $Y_\fc$ of ${\H^2}$ defined by 
$$Y_\fc=\ov{{\rm Co}(\L(\G_\fc))}=\ov{{\rm Co}(\D_\fc)}.$$
We say that an element $\g\in\G_\fc$ is a boundary component if the axis of $\g$ is a boundary component of $Y_\fc$.

\begin{prop}\label{prop:3}
 For every primitive, hyperbolic element $\g\in \G_\fc$ which is not a boundary component we have
 $$\ov\g\in V(\fc).$$
\end{prop}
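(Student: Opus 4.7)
The plan is to identify $\ov\g$ as a vertex of $\P\Ll_\rho$ lying in the connected component $\fc$ of $\Gg_\rho$. First, since $\g\in\G_\fc$, Lemma \ref{lem:9.2} gives $\rho(\g)p_\fc=p_\fc$, so $\rho(\g)$ has a fixed point in $\Bb_V$ and therefore $L_\Bb(\rho(\g))=0$. As $\g$ is hyperbolic, nontrivial and primitive, this already places $\g\in\Ll_\rho$ and $\ov\g\in\P\Ll_\rho$, and it remains only to show that $\ov\g$ lies in the connected component $\fc$.

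To produce a neighbour of $\ov\g$ in $\fc$ I intend to apply Lemma \ref{lem:A} to the geodesic $\ax(\g)$. Since $\g\in\G_\fc$ is hyperbolic its fixed points $\g^\pm$ lie in $\L(\G_\fc)=\ov{\D_\fc}$, and so $\ax(\g)\subset Y_\fc$. The standard description of the closed convex hull of a subset of $\partial\H^2$ tells us that a complete geodesic contained in $Y_\fc$ with endpoints in $\L(\G_\fc)$ is a boundary component of $Y_\fc$ precisely when $\L(\G_\fc)$ lies in the closure of only one of the two open arcs it cuts out on $\partial\H^2$. The hypothesis that $\g$ is not a boundary component therefore forces points of $\L(\G_\fc)$ in both such arcs, and by density of $\D_\fc$ in $\L(\G_\fc)$ together with the openness of the arcs we obtain
\begin{equation*}
\D_\fc\cap(\!(\g^-,\g^+)\!)\neq\emptyset\quad\text{and}\quad \D_\fc\cap(\!(\g^+,\g^-)\!)\neq\emptyset.
\end{equation*}
Lemma \ref{lem:A} then produces some $\ov\alpha\in V(\fc)$ with $\ax(\alpha)\cap\ax(\g)\neq\emptyset$.

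If $\ov\alpha=\ov\g$ there is nothing more to prove. Otherwise $\ov\g$ and $\ov\alpha$ are two distinct vertices of $\P\Ll_\rho$ whose axes intersect, so by the very definition of the graph structure on $\Gg_\rho$ we have $\ov\g\equiv\ov\alpha$, which places $\ov\g$ in the connected component of $\ov\alpha$, namely $\fc$. The only genuine subtlety is the convex-hull characterisation of boundary components invoked above; this is a routine fact in planar hyperbolic geometry but should be made explicit in the write-up, since everything else in the argument is a direct application of Lemmas \ref{lem:9.2} and \ref{lem:A} together with the definition of $\Gg_\rho$.
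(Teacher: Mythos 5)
Your proof is correct and follows the same route as the paper: the paper's own two-line argument also deduces that $\D_\fc$ meets both open arcs determined by $\g^\pm$ from the hypothesis that $\g\in\G_\fc$ is not a boundary component, and then invokes Lemma \ref{lem:A}. Your additional observations — that $L_\Bb(\rho(\g))=0$ (via Lemma \ref{lem:9.2} and the fact that a fixed point forces zero translation length) is needed for $\ov\g$ to be a vertex of $\Gg_\rho$ at all, and the convex-hull justification of the non-emptiness of the two intersections — are exactly the details the paper leaves implicit, and they are sound.
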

\begin{proof}
 Since $\g$ stabilizes $\fc$ and is not a boundary component, we have that the intersection $\D_\fc\cap (\!(\g^-,\g^+)\!)$ is not empty and similarly $\D_\fc\cap (\!(\g^+,\g^-)\!)$ is not empty. Thus we conclude by Lemma \ref{lem:A}.
\end{proof}
Our next aim is to show that the image $p(Y_\fc)$ of $Y_\fc$ under the universal covering map $p:\H^2\to\S$ is a compact subsurface of $\S$ with geodesic boundary.  
\begin{prop}\label{lem:9.4}
Let $\fc\subset \calG_\rho$  be a connected component with more than one vertex. For every $\g\in\G$ one of the following holds:
\begin{enumerate}
\item $\gamma Y_\fc=Y_\fc$,
\item $\gamma Y_\fc\cap Y_\fc$ is a boundary component of $Y_\fc$,
\item the intersection $\gamma Y_\fc\cap Y_\fc$ is empty.
\end{enumerate}
\end{prop}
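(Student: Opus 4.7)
The plan is to argue by a trichotomy based on whether $\gamma$ stabilizes the component $\fc$, and, when it does not, to rule out any interior intersection in $\H^2$ by a density-of-axes argument, which in turn forces the intersection to consist of a single boundary geodesic of $Y_\fc$.

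First, if $\gamma\in\G_\fc$, then $\gamma$ permutes $V(\fc)$, hence preserves $\D_\fc$, and taking closed convex hulls yields $\gamma Y_\fc=Y_\fc$, which is case (1). Assume therefore that $\gamma\fc\neq\fc$. Since $\fc$ and $\gamma\fc$ are then distinct connected components of $\calG_\rho$, the very definition of the adjacency relation on $\P\Ll_\rho$ gives the key transversality statement
\[
\ax(\eta)\cap\ax(\tau)=\emptyset\qquad\text{for all }\ov\eta\in V(\fc),\ \ov\tau\in V(\gamma\fc)=\gamma V(\fc).
\]

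The central step is to use this to forbid interior intersection. By Proposition~\ref{prop:3}, $V(\fc)$ contains every primitive hyperbolic element of $\G_\fc$ that is not a boundary component, and closed geodesics are dense in the geodesic flow of the convex core $Y_\fc/\G_\fc$; together these imply that axes of elements of $V(\fc)$ are dense in the unit tangent bundle over the interior of $Y_\fc$, and likewise for $V(\gamma\fc)$ on the interior of $\gamma Y_\fc$. If $Y_\fc\cap\gamma Y_\fc$ contained an open disk $D$ of $\H^2$, I could therefore pick $\ov\eta\in V(\fc)$ with $\ax(\eta)$ crossing $D$ transversely in a short arc, and then $\ov\tau\in V(\gamma\fc)$ whose axis crosses that arc, contradicting the transversality above.

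Having ruled out interior intersection, I will argue that every nonempty intersection of empty interior must equal a full boundary geodesic of $Y_\fc$. As a nonempty closed convex subset of $\H^2$ with empty interior, $Y_\fc\cap\gamma Y_\fc$ is contained in a single complete geodesic; lying in $\partial Y_\fc$, which is the disjoint union of boundary geodesics $b_I$ indexed by complementary intervals of $\L(\G_\fc)$, it is contained in one such geodesic $b$. If some relatively open arc of $b$ entered the interior of $\gamma Y_\fc$, a small $\H^2$-neighborhood of that arc would dip into the interior of $Y_\fc$ on the inward side of $b$ and reintroduce an interior intersection; and if $\gamma Y_\fc$ met $b$ only in a point $p$ of $\partial(\gamma Y_\fc)$, the supporting geodesic of $\gamma Y_\fc$ at $p$ would be forced to coincide with $b$. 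Thus $b\subset\gamma Y_\fc$ is a boundary geodesic of $\gamma Y_\fc$ as well, and $\gamma Y_\fc\cap Y_\fc=b$ is a boundary component of $Y_\fc$, which is case (2); the remaining possibility of empty intersection is case (3).

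The main obstacle I anticipate is the density-of-axes step, which requires confirming that the action of $\G_\fc$ on $Y_\fc$ is that of a non-elementary Fuchsian group for which density of closed geodesics inside the convex core is available, and then using Proposition~\ref{prop:3} to transfer density from all hyperbolic elements of $\G_\fc$ to those actually lying in $V(\fc)$; once this is in place, the remainder is routine convex geometry in $\H^2$.
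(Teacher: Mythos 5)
Your reduction to the case $\gamma\fc\neq\fc$ and the observation that then no axis of $V(\fc)$ meets an axis of $V(\gamma\fc)$ are both correct, and your final convex-geometry step (empty-interior intersection forces a common boundary geodesic) matches the second half of the paper's argument. However, the central step — the density-of-axes claim — contains a genuine gap. Closed geodesics are dense in the \emph{non-wandering set} of the geodesic flow of $\G_\fc\backslash\H^2$, not in the unit tangent bundle over the interior of the convex core. For a convex-cocompact (non-lattice) group such as $\G_\fc$ when $\S_v$ is a proper subsurface, the projection of the non-wandering set to $Y_\fc$ is in general not even dense in $Y_\fc$: there are points of $\mathring Y_\fc$ at positive distance from \emph{every} geodesic with both endpoints in $\L(\G_\fc)$. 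Concretely, for a two-generator Schottky group whose four defining arcs $I_1,\dots,I_4$ are placed so that a point $x$ lies inside the ideal quadrilateral they span while no $I_i$ meets any $I_j+\pi$ in the visual circle at $x$, no geodesic joining two limit points passes through (or, by compactness, near) $x$. So if the open disk $D\subset Y_\fc\cap\gamma Y_\fc$ happens to sit in such a region, you cannot produce the element $\ov\eta\in V(\fc)$ whose axis crosses $D$, and the contradiction never materializes. Proposition \ref{prop:3} does not help here, since it only tells you that $V(\fc)$ contains essentially all axes of $\G_\fc$, and the problem is that the union of all such axes already fails to be dense.

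The paper circumvents this by replacing density with connectivity of the component $\fc$: Lemma \ref{lem:A} shows that any geodesic $g$ whose two sides both meet $\D_\fc$ must be crossed by some axis of $V(\fc)$, because a chain of pairwise-crossing axes joining the two sides must cross $g$. The proof then takes a point $x\in\gamma\mathring Y_\fc\cap Y_\fc$, shows $x$ lies on no axis of $V(\fc)$, associates to each $\ov g\in V(\fc)$ the interval $B_{\ov g}\subset S^1$ cut off by $\ax(g)$ away from $x$, and uses connectivity again to show $\bigcup B_{\ov g}$ is an interval separating $\D_\fc$ from $\D_{\gamma\fc}$, contradicting $x\in\gamma\mathring Y_\fc$. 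To repair your argument you would need to substitute this (or an equivalent separation argument) for the density step; as written, the key implication ``interiors meet $\Rightarrow$ some axes cross'' is unproved.
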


\begin{proof}
First we show that if the intersection $\gamma \mathring Y_\fc \cap Y_\fc$ is not empty, then $\gamma \fc=\fc$ and hence $\gamma Y_\fc=Y_\fc$. Let $x\in \g\mathring Y_\fc\cap Y_\fc$, and assume by contradiction that $\g\fc\neq \fc$, which  implies that $\g V(\fc)\cap V(\fc)=\emptyset$.
\begin{claim}\label{claim:1}
The point $x$ does not belong to $\ax(\eta)$ for any $\ov\eta\in \fc$.
\end{claim}

Assume, instead, that $x$ belongs to $\ax(\eta)$ for some element $\eta$ with $\ov \eta \in \fc$. If the intersection $\D_{\g\fc}\cap(\!(\eta^-,\eta^+)\!)$ is empty, then $\D_{\g\fc}$ is contained in the closed interval $[[\eta^+,\eta^-]]$ and hence $Y_{\g\fc}$ is contained in one of the closed halfplanes determined by $\ax(\eta)$. This contradicts the hypothesis that $x$ belongs to the interior of $Y_{\g\fc}$. Thus we have that both intersections
$\D_{\g\fc}\cap (\!(\eta^-,\eta^+)\!)$ and $\D_{\g\fc}\cap (\!(\eta^+,\eta^-)\!)$ are not empty. But then, by Lemma \ref{lem:A}, there is an element $\xi\in \g\fc$ whose axis $\ax(\xi)$ intersects $\ax(\eta)$. This implies that either $\ov \xi=\ov \eta$ or the elements $\ov\xi$ and $\ov \eta$ are adjacent in the graph $\calG_\r$. Both contradict the fact that $\g V(\fc)\cap V(\fc)=\emptyset$, and this proves Claim \ref{claim:1}.

Now we can define, for every $\ov g\in\fc$, $B_{\ov g}$ to be the unique closed interval in $S^1$ with endpoints $\{g^-,g^+\}$ and such that $x$ does not belong to the convex hull $\ov{{\rm Co}(B_g)}$. According to Claim \ref{claim:1}, this is well defined.
\begin{claim}\label{claim:2}
For every $\ov g$ in $\fc$, the intersection $\D_{\g\fc}\cap B_{\ov g}$ is empty.
\end{claim}
Indeed, assume that the intersection is not empty for some $\ov g\in\fc$. Since $\ov g$ does not belong to $\g\fc$, this implies that the intersection $\D_{\g\fc}\cap\mathring B_g$ is not empty. Since $x$ belongs to $\g\mathring Y_\fc$ we get  that the intersection $\D_{\g\fc}\cap (S^1\setminus B_g)$ is not empty, and hence, by Lemma \ref{lem:A}, there is $\ov \xi\in \g\fc$ whose axis $\ax(\xi)$ intersects $\ax(g)$ nontrivially. This again contradicts the assumption $\g V (\fc)\cap V(\fc)=\emptyset$.
\begin{claim}\label{claim:3}
The union $\bigcup_{\ov g\in\fc}B_{\ov g}$ is connected.
\end{claim}

Indeed, for any pair of adjacent elements $\ov \g$ and $\ov\eta$ in $\fc$, we have that the intersection $B_{\ov \g}\cap B_{\ov\eta}$ is not empty. Now enumerate $\fc$ by a possibly redundant sequence $\ov \g_1,\ov \g_2,\ldots$ of consecutive adjacent vertices. Then the union $\bigcup_{i=1}^{\infty}B_{\ov\g_i}$ is connected.
\smallskip

Since the union  $\bigcup_{\ov g\in\fc}B_{\ov g}$ is connected, it is an interval of $S^1$ say with endpoints $\alpha_1,\alpha_2$, numbered such that 
$$(\!(\alpha_1,\alpha_2)\!)\subset \bigcup_{\ov g\in\fc}B_{\ov g}\subset [[\alpha_1,\alpha_2]].$$

It follows then from Claim \ref{claim:2} that the intersection $\D_{\g\fc}\cap (\!(\alpha_1,\alpha_2)\!)$ is empty, on the other hand $\D_{\fc}\subseteq \bigcup_{\ov g\in\fc}B_{\ov g}\subset [[\alpha_1,\alpha_2]].$ This implies that $Y_\fc$ and $Y_{\g\fc}$ lie in different half planes determined by the geodesic joining $\alpha_1$ to $\alpha_2$ and hence the intersection $\g\mathring Y_{\fc}\cap Y_{\fc}$ is empty. This gives a contradiction.
\smallskip

Assume now that $\g Y_\fc$ is different from $Y_\fc$ and that the intersection $\g Y_\fc\cap Y_\fc$ is not empty. Let $x$ be a point in the intersection $\g Y_\fc\cap Y_\fc$, then $x$ belongs to the boundary of $\g Y_\fc$ and also to the boundary of $Y_\fc$. Let $g$ and $g'$ be the geodesics giving respectively the connected components of $\partial (\g Y_\fc)$ and $\partial (Y_\fc)$ containing $x$. 

If $g\cap g'=\{x\}$, then the intersection of the interiors $\g \mathring Y_\fc\cap \mathring Y_\fc$ is not empty which, together with what we proved, implies that the $\g\mathring Y_\fc$ is equal to $\mathring Y_\fc$equal and leads to a contradiction. Thus $g=g'\subseteq \partial (\g Y_\fc)\cap \partial Y_\fc$. Since the intersection $\g \mathring Y_\fc\cap \mathring Y_\fc$ is empty, we deduce that $\g Y_\fc$ and $Y_\fc$ lie on different sides of $g$ and hence  $\partial (\g Y_\fc)\cap \partial Y_\fc=g$.
\end{proof}

\begin{prop}\label{prop:9.5}
Let $\fc\subset \Gg_\rho$ be a component with more than one vertex. Let $\G_\fc$ be the stabilizer of $\fc$ in $\G$ and $Y_\fc\subset \H^2$ be the closed convex hull of the limit set of $\G$. Then the map 
$$\G_\fc\backslash Y_\fc\hookrightarrow \G\backslash \H^2$$ 
induces an embedding with image a compact surface with geodesic boundary.
\end{prop}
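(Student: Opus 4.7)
The plan is to prove the proposition in three steps: (i) identify $\G_\fc$ as the full $\G$-stabilizer of $Y_\fc$; (ii) deduce injectivity of the induced map $\G_\fc\backslash Y_\fc \to \G\backslash\H^2$ using Proposition \ref{lem:9.4}; (iii) verify that the image is a compact surface with geodesic boundary.

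For (i), I would first observe that the inclusion $\G_\fc \subseteq \Stab_\G(Y_\fc)$ is immediate, since $\G_\fc$ preserves $\Delta_\fc$ and hence its closure $\Lambda(\G_\fc)$ and the convex hull $Y_\fc$. Conversely, suppose $\g \in \G$ satisfies $\g Y_\fc = Y_\fc$. Then $\g$ preserves $\Lambda(\G_\fc)$, and conjugation by $\g$ permutes the primitive hyperbolic elements of $\G$ whose axes lie in $Y_\fc$. By Proposition \ref{prop:3}, the hyperbolic elements not representing boundary components of $Y_\fc$ are exactly the classes in $V(\fc)$. Since adjacency in $\Gg_\rho$ is the geometric condition of axis intersection, it is preserved by $\g$, so $\g$ maps $V(\fc)$ bijectively to itself preserving edges, whence $\g\fc = \fc$ and $\g \in \G_\fc$.

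For (ii), suppose $x_1, x_2 \in Y_\fc$ and $x_2 = \g x_1$ for some $\g \in \G$, so that $x_2 \in \g Y_\fc \cap Y_\fc$. Applying Proposition \ref{lem:9.4}, either $\g Y_\fc = Y_\fc$, whence $\g \in \G_\fc$ by step (i) and the two points define the same element of the quotient; or $\g Y_\fc \cap Y_\fc$ equals a single boundary geodesic $g$ of $Y_\fc$, so that $x_2 \in g$ and $x_1 \in \g^{-1} g$, also a boundary geodesic of $Y_\fc$. In the second case one shows that $g$ and $\g^{-1} g$ belong to the same $\G_\fc$-orbit of boundary geodesics — the pairing of boundary geodesics induced by a $\g \in \G\setminus\G_\fc$ is already realized inside $\G_\fc$, since the boundary geodesics of $Y_\fc$ correspond to boundary curves of the putative subsurface and are thus parameterized by $\G_\fc$-orbits — so $x_1$ and $x_2$ are identified in $\G_\fc \backslash Y_\fc$, establishing injectivity.

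For (iii), the image $p(Y_\fc) \subseteq \S$ is, by (ii), bijectively identified with $\G_\fc \backslash Y_\fc$. Its boundary consists of finitely many simple closed geodesics (images of $\G_\fc$-orbits of boundary geodesics of $Y_\fc$, with the $\G_\fc$-cofiniteness following from standard quasi-convexity arguments for finitely generated Fuchsian subgroups representing subsurface groups), while its interior is obtained from the convex core of $\G_\fc$ acting on $\H^2$; together these give a compact hyperbolic surface with geodesic boundary embedded in $\S$. The main obstacle is step (ii), specifically controlling how identifications by elements $\g \in \G\setminus\G_\fc$ of boundary geodesics of $Y_\fc$ factor through $\G_\fc$; this rests on the full strength of Proposition \ref{lem:9.4} together with the characterization in Proposition \ref{prop:3} of $V(\fc)$ as the collection of non-boundary primitive hyperbolic elements of $\G_\fc$, which together force the $\G$-orbit pattern of $Y_\fc$'s boundary to be compatible with the $\G_\fc$-orbit pattern.
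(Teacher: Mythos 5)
Your overall strategy (stabilizer of $Y_\fc$, then injectivity via the trichotomy of Proposition \ref{lem:9.4}, then surface structure) is genuinely different from the paper's, but it has a real gap exactly where the content of the proposition lies, namely in step (iii). To conclude that $\G_\fc\backslash Y_\fc$ is a \emph{compact} surface of finite type you invoke ``standard quasi-convexity arguments for finitely generated Fuchsian subgroups representing subsurface groups'' --- but nothing established so far tells you that $\G_\fc$ is finitely generated or that it is a subsurface group; that is precisely what must be proven. A priori $\G_\fc$ is just the (possibly infinitely generated) stabilizer of a component of a graph with infinitely many vertices, and the convex core of an infinitely generated Fuchsian group need not be compact. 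The paper closes this gap by an exhaustion argument: enumerate $V(\fc)=\{\ov\g_1,\ov\g_2,\dots\}$ with consecutive classes adjacent, set $X_n=\bigcup_{i\le n}\ax(\g_i)$, let $\G_n$ be the image of $\pi_1(p(X_n),x_0)\to\pi_1(\S,x_0)$, and observe that the increasing chain of subsurface groups $\G_1<\G_2<\cdots$ must stabilize at some $\G_N$ because $\S$ has finite topological type; one then checks $\G_\fc=\G_N$ by tracking the orbit of the basepoint $\wt x_0=\ax(\g_1)\cap\ax(\g_2)$. Some argument of this kind (or an equivalent one, e.g.\ first showing that the boundary geodesics of $Y_\fc$ project to finitely many disjoint simple closed geodesics and then identifying $\G_\fc$ with the fundamental group of a complementary component) is indispensable and is missing from your write-up.

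A secondary problem is in step (ii): the assertion that when $\g Y_\fc\cap Y_\fc$ is a boundary geodesic $g$, the geodesics $g$ and $\g^{-1}g$ lie in the same $\G_\fc$-orbit is not justified --- your reason (``boundary geodesics correspond to boundary curves of the putative subsurface'') presupposes the conclusion, and the claim is in fact false when the subsurface is glued to itself along a boundary curve (e.g.\ a genus-one subsurface with two boundary components both projecting to the same non-separating curve of $\S$): there $g$ and $\g^{-1}g$ lie in different $\G_\fc$-orbits and boundary points do get identified in $\S$. This is a boundary subtlety the paper also treats informally (it only claims $\G_\fc\backslash Y_\fc$ is isotopic to $\S_N$), so it is not fatal to the statement as intended, but your proposed justification of it does not work as written.
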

\begin{proof}
Let us enumerate the vertices  $\{\ov \g_1,\ov \g_2,\ldots\}$ of $V(\fc)$ in such a way that, for each $i$, $\ov \g_i$ is adjacent to $\ov \g_{i+1}$.  Let $\wt x_0$ be the intersection $\ax(\g_1)\cap \ax(\g_2)$ and define $X_n=\bigcup_{i=1}^n\ax(\g_i)$. By construction $X_n$ is connected. Let furthermore $x_0$ denote the projection $x_0=p(\wt x_0)$.

Let $\G_n<\G$ be the image of the natural map $\pi_1(p(X_n),x_0)\to \pi_1(\S,x_0)$ induced by the inclusion $p(X_n)\hookrightarrow \S$. Then $\G_n$ is the fundamental group of the surface $\S_n\subseteq \S$ obtained by taking an appropriate tubular neighborhood of $p(X_n)\subseteq \S$ and adding to it all components of the complement which are either simply connected or whose fundamental group is generated by a parabolic element of $\G$. Then $\S_n$ is a subsurface with smooth boundary and of finite topological type. Since $\G_n< \G_{n+1}$, there exists $N\geq 1$ with $\G_n=\G_N$ for all $n\geq N$. 

We will finish the proof by showing that $\G_\fc=\G_N$. Since $\G_n\wt x_0\subset X_n$, we have $\G_n<\G_{\fc}$.
Conversely, let us take $\g\in \G_\fc$, then $\g\wt x_0=\ax(\g\g_1\g^{-1})\cap \ax(\g\g_2\g^{-1})$ and since $\G_\fc$ preserves $V(\fc)$ we have that $\ov{\g\g_1\g^{-1}}$ and $\ov{\g\g_2\g^{-1}}$ are in $V(\fc)$. Thus $\g\wt x_0\in X_n$ for $n$ large enough which implies $\g\in\G_n$. As a conclusion we get $\G_N=\G_\fc$, which implies that $\G_\fc\backslash Y_\fc$ in $\S$ is isotopic to $\S_N$.
\end{proof}

\begin{proof}[Proof of Theorem \ref{thm:dec}]
The set of isolated components of $\Gg_\rho$ is a $\G$ invariant subset. Since we know that each isolated component of $\Gg_\rho$ corresponds to a geodesic of $\H^2$ that projects to a simple closed curve, we have that the projection of all the isolated components is a collection $\calC$ of pairwise disjoint simple closed curves which cut the surface $\S$ in subsurfaces $\{\S_v\}_{v\in \calV}$ for some index set $\calV$.

Moreover, for any component $\fc$ consisting of more than one element we have that $Y_\fc=\ov{{\rm Co}(\bigcup_{\ov\g\in\fc}\ax(\g))}$ is a subsurface in $\H^2$ which projects to a subsurface of $\S$ whose boundary consists of elements of $\calC$. In particular there exists  $v\in V$ with $p(Y_\fc)=\S_v$.
\end{proof}

\section{Quasi Isometric embeddings}\label{sec:qi}
Let $\rho:\pi_1(\S,x)\to \Sp(V)$ be a representation admitting a maximal framing and $\S=\bigcup_{v\in \calV}\S_v$ be the corresponding decomposition given by Theorem \ref{thm:dec}. We assume, as usual, that $\S$ is equipped with a hyperbolic metric of finite area and denote by $p:\H^2\to \S$  the canonical projection, so that $\S=\G\backslash \H^2$.

As we have seen in Section \ref{sec:dec}, the decomposition of the surface $\S$ comes from a $\G$-invariant decomposition  
$$\H^2=\bigcup_{w\in\wt\calV} S_w$$
into subsurfaces with totally geodesic boundary. The Bass-Serre tree $\calT=(\wt \calV,E)$ can be identified with the incidence tree of the set $\{S_w|\; w\in \wt \calV\}$. Recall that a pair $\{w_1,w_2\}$ forms an edge if the intersection $S_{w_1}\cap S_{w_2}$ is not empty. In this case the intersection corresponds to the axis of an element of $\G$ that acts on the building $\Bb_V$ with zero translation length and determines an isolated component of the graph $\Gg_\rho$. 
\begin{comment}
\begin{remark}
The decomposition of $\S$ as a union of closed subsurfaces induces a presentation of $\G$ as the fundamental group of a graph of groups whose vertex groups correspond to the fundamental groups of the subsurfaces and the tree $\calT$ is the associated Bass Serre tree (see \cite{Serre}). 
\end{remark}
\end{comment}

Assume now that for every subsurface $\S_v$ we are in the second case of the dichotomy in the decomposition theorem. Then for every $w\in \wt \calV$, the stabilizer $\G_w$ of $w$ in $\G$ has a canonical fixed point $b_w\in \Bb_V^0$ which equals $b^\pm_\g$ for each $\g\in\G_w$.
\begin{thm}\label{thm:qi}
The map 
$$ \begin{array}{ccc}
\wt \calV&\to &\Bb_V^0\\
w&\mapsto&b_w
\end{array}
$$
is a $\G$-equivariant quasiisometry.
\end{thm}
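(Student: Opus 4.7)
I would structure the proof in three stages: equivariance, a Lipschitz upper bound from cocompactness, and a lower Lipschitz bound which is the main technical step.

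\emph{Equivariance.} Each $b_w$ is the canonical fixed point $p_{\fc_w}$ of $\G_w$ produced in Lemma \ref{lem:9.1}--Lemma \ref{lem:9.2}, where $\fc_w\subset\Gg_\rho$ is the component corresponding to the subsurface sitting over $w$. Since $\fc_{\g w}=\g\cdot\fc_w$ and the assignment $\fc\mapsto p_\fc$ is $\G$-equivariant (because $p_\fc$ is characterized as the common value $b^\pm_\eta$ for any $\eta$ with $\ov\eta\in V(\fc)$), one obtains $b_{\g w}=\rho(\g)b_w$.

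\emph{Upper Lipschitz bound.} The surface decomposition is finite, so $\G\backslash\calT$ has finitely many edges. Let $C$ be the maximum of $d_\Bb(b_w,b_{w'})$ over representatives of the $\G$-orbits of edges. By equivariance, $d_\Bb(b_w,b_{w'})\le C$ for every edge of $\calT$, and the triangle inequality gives $d_\Bb(b_w,b_{w'})\le C\,d_\calT(w,w')$.

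\emph{Lower Lipschitz bound.} Fix a geodesic $w_0,\dots,w_n$ in $\calT$ and let $\g_i$ generate the (infinite cyclic) stabilizer of the edge $\{w_i,w_{i+1}\}$. By the construction preceding Theorem \ref{thm:dec}, $\ax(\g_i)$ is an isolated vertex of $\Gg_\rho$ whose projection to $\Sigma$ is the geodesic boundary $\Sigma_{w_i}\cap\Sigma_{w_{i+1}}$, and $\rho(\g_i)$ is Shilov hyperbolic with $L_\Bb(\rho(\g_i))=0$. By Proposition \ref{prop:lazy} combined with the identification $b_w=p_{\fc_w}$, the two canonical fixed points $b^+_{\g_i},b^-_{\g_i}\in\YY_{\g_i}=\pi_\Bb(\Yy_{\g_i})$ are precisely $\{b_{w_i},b_{w_{i+1}}\}$. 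My plan is to show the broken path $b_{w_0},b_{w_1},\dots,b_{w_n}$ is a discrete quasi-geodesic in the CAT$(0)$ space $\Bb_V$, which requires:
\begin{enumerate}
\item[(i)] a uniform lower bound $d_\Bb(b_{w_i},b_{w_{i+1}})\ge c>0$, which reduces via cocompactness to checking $b^+_{\g_i}\ne b^-_{\g_i}$ for a boundary element; this separation follows from the maximality of the framing together with Lemma \ref{lem:8.6};
\item[(ii)] a uniform no-backtracking estimate at each interior $b_{w_i}$, proved by analyzing the pair of tubes $\Yy_{\g_{i-1}}$ and $\Yy_{\g_i}$. The 4-tuple $(\phi(\g_{i-1}^-),\phi(\g_{i-1}^+),\phi(\g_i^-),\phi(\g_i^+))$ (in the cyclic order forced by the tree path) is maximal, so by Proposition \ref{prop:4tran} and the tube-intersection statement following Lemma \ref{lem:2.20}, $\Yy_{\g_{i-1}}\cap\Yy_{\g_i}$ is a single point of $\Tt_V$ projecting to $b_{w_i}$. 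Cocompactness of $\G$ on consecutive-edge pairs of $\calT$ reduces (ii) to finitely many local configurations, whence a single strictly positive estimate suffices.
\end{enumerate}
Combining (i) and (ii) then yields constants $c',C'>0$ with $d_\Bb(b_{w_0},b_{w_n})\ge c' n-C'$.

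The principal obstacle is step (ii): quantifying the no-backtracking angle at each interior vertex. In the rank-one case ($n=1$) the building $\Bb_V$ is an $\R$-tree and this is automatic, but in higher rank one genuinely needs the rigidity of the Lagrangian framing and the crossratio identities of Lemma \ref{lem:2.7} and Lemma \ref{lem:8.6} to rule out degenerate relative positions of consecutive tubes.
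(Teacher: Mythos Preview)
Your upper bound and equivariance are fine, but the lower bound has a genuine gap, and one of your supporting claims is actually false.

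First, the error. You assert that for consecutive edge-stabilizer generators $\g_{i-1},\g_i$ the tubes $\Yy_{\g_{i-1}}$ and $\Yy_{\g_i}$ meet in a single point of $\Tt_V$, invoking the tube-intersection statement after Lemma \ref{lem:2.20}. That statement requires the four endpoints to interleave (i.e.\ the 4-tuple $(a,b,c,d)$ to be maximal with the tubes being $\Yy_{a,c}$ and $\Yy_{b,d}$). But $\g_{i-1}$ and $\g_i$ are \emph{disjoint} boundary components of the same subsurface $S_{w_i}$: their axes do not cross, their endpoints do not interleave, and there is no reason for $\Yy_{\g_{i-1}}\cap\Yy_{\g_i}$ to be nonempty in $\Tt_V$. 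What is true is that the \emph{projected} tubes $\YY_{\g_{i-1}}$ and $\YY_{\g_i}$ both contain $b_{w_i}$ in $\Bb_V$, but that is a much weaker statement and does not by itself give an angle bound.

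Second, even granting (i), your step (ii) is only a reduction to finitely many local configurations; you never establish a positive angle at any of them. In a higher-rank affine building this is a real issue: local angle bounds at the vertices of a broken path do not automatically give a global quasi-geodesic the way they would in a Gromov-hyperbolic space.

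The paper bypasses the angle analysis entirely with the following idea. Given a tree-geodesic $v_0,\dots,v_k$, choose a single hyperbolic $\eta\in\G$ whose tree-axis contains this path (so $\ax(\eta)$ crosses every $S_{v_i}$). Lemma \ref{lem:10.1} shows that each $b_{v_i}$ lies on the one subbuilding $\YY_\eta$: concretely $b_{v_i}=\pi_\Bb\bigl(\pr_{\Yy_\eta}(\phi(\g_i^+))\bigr)$ for any non-boundary $\g_i\in\G_{v_i}$ whose axis crosses $\ax(\eta)$. Now all the points sit on a common tube, and Lemma \ref{lem:10.2} (multiplicativity of $\det R$ along a maximal chain together with Lemma \ref{lem:8.6}) yields directly
\[
\sum_{i=0}^{k-1} d_\Bb(b_{v_i},b_{v_{i+1}})\ \le\ \sqrt n\, d_\Bb(b_{v_0},b_{v_k}).
\]
Combined with the edge bound $C_1\le d_\Bb(b_{v_i},b_{v_{i+1}})$ from cocompactness, this gives $d_\calT(v_0,v_k)\le(\sqrt n/C_1)\,d_\Bb(b_{v_0},b_{v_k})$ with no local-to-global argument needed. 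The missing ingredient in your approach is precisely this use of a single ambient element $\eta$ as a ruler on which all the $b_{v_i}$ are already aligned.
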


Let $\eta\in\G$ be an element whose corresponding geodesic is not contained in a subsurface. The axis $\ax(\eta)$ determines a sequence $(w_n)_{n\in\Z}$ of vertices in $\Tt$, namely the consecutive sequence of surfaces $S_{w_n}$ crossed by $\ax(\eta)$. This gives a geodesic path in $\Tt$, which is the axis of the isometry of $\Tt$ induced by $\eta$.
\begin{lem}\label{lem:10.1}
Let us assume that the axis $\ax(\eta)$ crosses the surface $S_w$. Let $\g\in\G_w$ be an element which is not boundary parallel and such that $\ax(\g)$ intersects $\ax(\eta)$. Then
$$%\begin{array}{rl}
b_w=\pi_{\Bb}(\pr_{\Yy_{\eta}}(\phi(\g^+)))=\pi_{\Bb}(\pr_{\Yy_{\eta}}(\phi(\g^-))).
%\end{array}
$$  
In particular $b_w$ belongs to $\mathbb Y_\eta$.
\end{lem}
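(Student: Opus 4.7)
\emph{Plan.} The plan is to deduce the lemma by combining Proposition~\ref{prop:lazy}, which identifies $b_w$ with the common image in $\Bb_V$ of the projections of the framings of $\eta^\pm$ onto $\Yy_\g$, with an application of Lemma~\ref{lem:8.14} to the maximal 4-tuple formed by the framings of the four endpoints. Since $\g \in \G_w$ is not boundary parallel and $\S_w$ is of type B, we have $L_{\Bb}(\rho(\g)) = 0$ and Proposition~\ref{prop:lazy} applies. Using the identification $b_w = b_\g^+ = b_\g^-$ recorded in the paragraph preceding the lemma, and observing that $\ax(\g)$ separates $\eta^+$ from $\eta^-$ so that $\phi(\eta^\pm)$ lie in opposite arcs determined by $\phi(\g^\pm)$, we immediately get
\[
b_w \;=\; \pi_{\Bb}(\pr_{\Yy_\g}(\phi(\eta^+))) \;=\; \pi_{\Bb}(\pr_{\Yy_\g}(\phi(\eta^-))).
\]
The lemma thus reduces to identifying these with $\pi_{\Bb}(\pr_{\Yy_\eta}(\phi(\g^\pm)))$.

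After swapping $\eta^+ \leftrightarrow \eta^-$ if necessary, I may assume that the cyclic order on $\partial \H^2$ is $(\eta^-, \g^-, \eta^+, \g^+)$, so that $(a,c,b,d) := (\phi(\eta^-), \phi(\g^-), \phi(\eta^+), \phi(\g^+))$ is a maximal quadruple with $\Yy_{c,d} = \Yy_\g$ and $\Yy_{a,b} = \Yy_\eta$. Lemma~\ref{lem:8.14} applied to this quadruple then yields the desired identity and, as a bonus, the stronger conclusion that $\Yy_\g \cap \Yy_\eta$ projects to $b_w$ (whence $b_w \in \YY_\eta$), once its eigenvalue hypothesis is verified: all eigenvalues of $R(\phi(\eta^-), \phi(\g^-), \phi(\eta^+), \phi(\g^+))$ must be of the form $1 + f_i$ with $f_i > 0$ and $\|f_i\| = 1$. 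Normalizing the quadruple via Proposition~\ref{prop:4tran} to $(-\Id, 0, D, l_\infty)$ with $D = \diag(d_1, \ldots, d_n)$ and $d_i > 0$, a direct computation in the spirit of Lemma~\ref{lem:cr} gives $R = \Id + D$, so that the hypothesis becomes the condition $\|d_i\| = 1$ for every $i$.

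The main obstacle is the verification of this eigenvalue condition using only $L_{\Bb}(\rho(\g)) = 0$, since $\eta$ itself may have positive translation length and therefore the proof of Lemma~\ref{lem:8.12} cannot be invoked symmetrically. The upper bound $\|d_i\| \leq 1$ will follow from Lemma~\ref{lem:8.7} applied to the iterated test points $\g^{\pm k}\eta^+$ (which lie in $(\!(\g^-,\eta^+)\!)$ and $(\!(\eta^+,\g^+)\!)$ respectively for $k \geq 1$), combined with the observation that in the normalized coordinates $\rho(\g)$ has block form $\diag(A,{}^tA^{-1})$ with all eigenvalues of $A$ of norm one. For the matching lower bound $\|d_i\| \geq 1$ I plan to choose a basis of $V$ so that the standard complex structure corresponds to $b_w$, whence $\rho(\G_w) \subset \Sp(V)(\Uu)$, and to apply Theorem~\ref{thm:bdrymap} to the restriction $\rho|_{\G_w}$; an argument in the spirit of Lemma~\ref{lem:5.8}, exploiting the intersection pattern of $\ax(\eta)$ with the $\G_w$-orbit of $\ax(\g)$, should then force the reductions $q_\Uu(\phi(\eta^\pm))$ to remain transverse to the reductions of $\phi(\g^\pm)$, translating to invertibility of $\bar D$ modulo the valuation ideal and hence to $\|d_i\| \geq 1$.
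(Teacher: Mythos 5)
Your reduction of the lemma to the eigenvalue hypothesis of Lemma \ref{lem:8.14} for the quadruple $(\phi(\eta^-),\phi(\g^-),\phi(\eta^+),\phi(\g^+))$, and the final assembly via Proposition \ref{prop:lazy} and the identity $b_w=b_\g^\pm$, is exactly the paper's strategy, and your normalization $R\cong \Id+D$ is correct. The gap is in your verification of the condition $\|d_i\|=1$. For the upper bound, the data you propose to use --- Lemma \ref{lem:8.7} applied to the points $\g^{\pm k}\eta^+$ together with the fact that all eigenvalues of $A$ have norm one --- cannot suffice: in the coordinates where $\phi(\g^-)=0$, $\phi(\g^+)=l_\infty$ and $\rho(\g)=\diag({}^t\!A^{-1},A)$, one has $\g^{-k}\eta^+\mapsto {}^t\!A^{k}DA^{k}$, so every determinant identity produced by Lemma \ref{lem:8.7} along the $\g$-orbit reduces to $\|\det A\|^{\pm 2k}=1$, which is vacuous; moreover the substitution $D\mapsto\lambda D$ ($\lambda>0$) preserves all maximality relations and all these identities, so no bound on $\|d_i\|$ can follow from $\g$ and $\eta$ alone. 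The missing idea is the auxiliary element: since $\g$ is not boundary parallel one can choose $\alpha\in\G_w$ with $\alpha^-\in(\!(\eta^-,\g^-)\!)$ and $\alpha^+\in(\!(\g^-,\eta^+)\!)$; both $\alpha$ and $\g$ have zero translation length, so Lemma \ref{lem:8.12} applies to the pair $(\alpha,\g)$ with $\eta^\pm$ as test points, and the two bounds then follow by comparing eigenvalues of $R(-T,0,S,l_\infty)=\Id+T^{-1}S$ with those of $R(-\Id,0,S,l_\infty)$ and $R(-T,0,L,l_\infty)$ (here $-T=\phi(\eta^-)$, $S=\phi(\eta^+)$, $L=\phi(\alpha^+)$ in coordinates adapted to $(\alpha^-,\g^-,\g^+)$). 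This is also where the hypothesis ``not boundary parallel'' is actually used, not merely to ensure $L_\Bb(\rho(\g))=0$, which in case (B) holds for every element of $\G_w$.

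Your proposed lower bound via reduction modulo $\Uu$ and Theorem \ref{thm:bdrymap} is a genuinely different route from the paper's, but as sketched it is also incomplete: $\eta^\pm$ need not lie in the limit set of $\G_w$, so the transversality of $q_\Uu(\phi(\eta^\pm))$ with $q_\Uu(\phi(\g^\mp))$ does not follow from applying Theorem \ref{thm:bdrymap} to $\rho|_{\G_w}$; you would have to sandwich $\eta^\pm$ between limit points of $\G_w$ and invoke Lemma \ref{lem:5.6}, and you would further need the normalizing element taking the quadruple to $(-\Id,0,D,l_\infty)$ to preserve the lattice $V(\Uu)$ for ``$\bar D$ invertible'' to be equivalent to $\|\det D\|=1$. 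Since the paper's argument with the auxiliary element $\alpha$ delivers both bounds at once, I recommend adopting it rather than patching the two separate halves.
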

\begin{center}
\begin{tikzpicture}[scale=.8]

% \filldraw (-2,0) circle [radius=1pt];
% \filldraw (2,0) circle [radius=1pt];
% \filldraw (0,2) circle [radius=1pt];
% \filldraw (0,-2) circle [radius=1pt];
% \filldraw (1.4,-1.4) circle [radius=1pt];
% \filldraw (-1.4,-1.4) circle [radius=1pt];

\draw (-2,0) to [in=170, out=10] (2,0);
\draw (0,-2) to [in=-80, out=80] (0,2);
\draw (-1.4,-1.4) to [in=-200, out=20] (1.4,-1.4);
\draw (-1.7,1.1) [in=80, out=-80]   to (-1.7,-1.1);
\draw (1.7,1.1) [in=100, out=-100]   to (1.7,-1.1);

\node at (-2.5,0) {$\eta^-$};
\node at (-3.9,0) {$-T$};
\node at (-3.2,0) [rotate=180]{$\rightsquigarrow$};
\node at (2.5,0) {$\eta^+$};
\node at (3.5,0) {$S$};
\node at (3,0) {$\rightsquigarrow$};
\node at (0,-2.5) {$\gamma^-$};
\node at (1,-2.5) {$0$};
\node at (0.5,-2.5) {$\rightsquigarrow$};
\node at (0,2.5) {$\gamma^+$};
\node at (1.2,2.5) {$l_\infty$};
\node at (.6,2.5) {$\rightsquigarrow$};
\node at (-1.8,-1.8) {$\alpha^-$};
\node at (-3.2,-1.8) {$-\Id$};
\node at (-2.5,-1.85)[rotate=180] {$\rightsquigarrow$};
\node at (1.8,-1.8) {$\alpha^+$};
\node at (2.3,-1.9) {$\rightsquigarrow$};
\node at (2.8,-1.8) {$L$};
\begin{scope}[overlay]
\clip (0,0) circle [radius=2];
\clip (-1.7,-1.1)-- (-1.5,1.1) --  (-1.7,2) --(1.7,2)-- (1.7,1.1)--(1.5,-1.1)-- (1.7,-2)--(-1.7,-2)[preaction={fill=gray!50}];
\end{scope}
\begin{scope}[overlay]
\clip (0,0) circle [radius=2];
\clip (-1.5,-1.1)-- (-1.7,1.1) --  (-1.7,2) --(1.7,2)-- (1.5,1.1)--(1.7,-1.1)-- (1.7,-2)--(-1.7,-2)[preaction={fill=gray!50}];
\end{scope}
\filldraw (-2,0) circle [radius=1pt];
\filldraw (2,0) circle [radius=1pt];
\filldraw (0,2) circle [radius=1pt];
\filldraw (0,-2) circle [radius=1pt];
\filldraw (1.4,-1.4) circle [radius=1pt];
\filldraw (-1.4,-1.4) circle [radius=1pt];

\draw (0,0) circle [radius=2];
\draw (-2,0) to [in=170, out=10] (2,0);
\draw (0,-2) to [in=-80, out=80] (0,2);
\draw (-1.4,-1.4) to [in=-200, out=20] (1.4,-1.4);
\draw (-1.7,1.1) [in=80, out=-80]   to (-1.7,-1.1);
\draw (1.7,1.1) [in=100, out=-100]   to (1.7,-1.1);

\node at (-.7,1.3) {$S_w$};
%   \clip [saveuse path={plot path}{plot[smooth, samples=100, domain=.25:4] (\x, ln \x)}]
%      -- (0,5) -- (0,-5) -- cycle ;
%   \clip [saveuse path={zigzag}{[math set seed=150, rounded corners=1.5pt] (0,3)
%     \foreach \cnt in {1,...,8} {-- ++ (rnd, -.5*rnd)}}]
%     [sharp corners] -- (0, -5) -- cycle;
%   \clip (2,1) circle [radius=1.9] [draw, preaction={draw,fill=gray}];

\end{tikzpicture}
\end{center}

% \begin{center}
% \begin{tikzpicture}[scale=.8]
% \draw (0,0) circle [radius=2];
% 
% \filldraw (-2,0) circle [radius=1pt];
% \filldraw (2,0) circle [radius=1pt];
% \filldraw (0,2) circle [radius=1pt];
% \filldraw (0,-2) circle [radius=1pt];
% \filldraw (1.4,1.4) circle [radius=1pt];
% \filldraw (-1.4,-1.4) circle [radius=1pt];
% 
% \draw (-2,0) to [in=170, out=10] (2,0);
% \draw (0,-2) to [in=-80, out=80] (0,2);
% \draw (-1.4,-1.4) to [in=210, out=70] (1.4,1.4);
% \draw (-1.7,1.1) [in=80, out=-80]   to (-1.7,-1.1);
% \draw (1.7,1.1) [in=100, out=-100]   to (1.7,-1.1);\pi_{\Bb}(\pr_{\Yy_\g}(\phi(\eta^-)))
% 
% \node at (-2.5,0) {$\eta^-$};
% \node at (-3.9,0) {$-T$};
% \node at (-3.2,0) [rotate=180]{$\rightsquigarrow$};
% \node at (2.5,0) {$\eta^+$};
% \node at (3.5,0) {$S$};
% \node at (3,0) {$\rightsquigarrow$};
% \node at (0,-2.5) {$\gamma^-$};
% \node at (1,-2.5) {$0$};
% \node at (0.5,-2.5) {$\rightsquigarrow$};
% \node at (0,2.5) {$\gamma^+$};
% \node at (-1.2,2.5) {$l_\infty$};
% \node at (-.7,2.5) [rotate=180]{$\rightsquigarrow$};
% \node at (-1.8,-1.8) {$\alpha^-$};
% \node at (-3.2,-1.8) {$-\Id$};
% \node at (-2.5,-1.9) {$\rightsquigarrow$};
% \node at (1.8,1.8) {$\alpha^+$};
% 
% 
% \end{tikzpicture}
% \end{center}
\begin{proof}
Without loss of generality we assume that the 4-tuple $(\eta^-,\g^-,\eta^+,\g^+)$ is positively oriented, we will show that all the eigenvalues of the crossratio $R(\phi(\eta^-),\phi(\g^-),\phi(\eta^+),\phi(\g^+))$ have the form $1+f$ for a positive $f$ satisfying $\|f\|=1$.

Since $\g$ is not boundary parallel we can find $\alpha\in\G_w$ such that $\alpha^-$ belongs to $ (\!(\eta^-,\gamma^-)\!)$, and $\alpha^+$ belongs to $(\!(\g^-,\eta^+)\!)$. Since  $(\alpha^-,\g^-,\alpha^+,\g^+)$ is positively oriented, we can pick an element $g\in \Sp(V)$ with $g_*(\alpha^-,\g^-,\g^+)=(-\Id,0,l_\infty)$. For such $g$ we set $g_*\phi(\eta^-)=-T$,  $g_*\phi(\eta^+)=S$ and $g_*\phi(\alpha^+)=L$. With these notations we have $T>\!>\Id$ and $S>\!>L>\!>0$, moreover the crossratio $R(\phi(\eta^-),\phi(\g^-),\phi(\eta^+),\phi(\g^+))$ is conjugate to $R(-T,0,S,l_\infty)=\Id+ T^{-1}S$.

First observe that all the eigenvalues of $R(-T,0,S,l_\infty)$ are smaller than the corresponding eigenvalues of $R(-\Id,0,S,l_\infty)$. Indeed the first matrix is conjugate to  $\Id+S^{1/2}T^{-1}S^{1/2}$ and the second equals $\Id+S$, moreover all the eigenvalues of $T$ are by assumption greater than 1. Now $\alpha$ and $\g$ cross and have zero translation length since they both belong to $\G_w$. Since $\eta^+$ belongs to $ (\!(\alpha^+,\gamma^+)\!)$, it follows from Lemma \ref{lem:8.12} that all the eigenvalues of $R(\phi(\alpha^-),\phi(\gamma^-),\phi(\eta^+),\phi(\g^+))$  have the form $1+\l$ for a positive $\l$ satisfying $\|\l\|=1$. This implies that for each eigenvalue $\nu_i$ of $\Id+T^{-1}S$ we have $\|\nu_i-1\|\leq 1$.

On the other hand all the eigenvalues of  $R(-T,0,S,l_\infty)$ are bigger than the corresponding eigenvalues of  $R(-T,0,L,l_\infty)$: indeed the first matrix is conjugate to $\Id+T^{-1/2}ST^{-1/2}$ and the second is conjugate to $\Id+T^{-1/2}LT^{-1/2}$. This implies that, denoting by $\mu_i$ the eigenvalues of $R(-T,0,L,l_\infty)$ we have that $\|\nu_i-1\|\geq\|\mu_i-1\|$. This is enough to conclude: we have by Lemma \ref{lem:2.7} that $R(-T,0,L,l_\infty)\cong R(L,l_\infty,-T,0)$ and, as a consequence of Lemma \ref{lem:8.12}, this latter crossratio has all its eigenvalues of the form $1+f$ for some positive $f$ of norm one.

Now we exploit that $b_w$ is in particular equal to $b_\g^\pm$. This latter point is, in view of of Proposition \ref{prop:lazy} equal to $\pi_{\Bb}(\pr_{\Yy_\g}(\phi(\eta^-)))$. Moreover we deduce from Lemma \ref{lem:8.14} that $$\pi_{\Bb}(\pr_{\Yy_\g}(\phi(\eta^-)))
=\pi_{\Bb}(\pr_{\Yy_\eta}(\phi(\gamma^-)))
=\pi_{\Bb}(\pr_{\Yy_\eta}(\phi(\gamma^+)))$$ and this concludes.
%$$\begin{array}{rl}
%b_w&=\pi_{\Bb}(\pr_{\Yy_\g}(\phi(\eta^-)))\\
%&=\pi_{\Bb}(\pr_{\Yy_\eta}(\phi(\gamma^-)))\\
%&=\pi_{\Bb}(\pr_{\Yy_\eta}(\phi(\gamma^+))).
%\end{array}$$  
\end{proof}

\begin{lem}\label{lem:10.2}
Let $a,b\in\Ll(V)$ be transverse subspaces and fix $x_1,\ldots,x_k\in (\!(a,b)\!)$ such that $(a,x_i,x_{i+1},b)$ is maximal for all $i$. Then
$$\sum_{i=1}^{k-1}{d}(\pr_{\Yy_{a,b}}(x_i),\pr_{\Yy_{a,b}}(x_{i+1}))\leq \sqrt n \,{d}(\pr_{\Yy_{a,b}}(x_1),\pr_{\Yy_{a,b}}(x_{k})).$$
\end{lem}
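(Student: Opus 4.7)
The plan is to combine Lemma \ref{lem:8.6} with a telescoping identity for $\ln\|\det R\|$ along the chain $x_1,\ldots,x_k$. First I would apply transitivity of $\Sp(V)$ on $\Ll(V)^{(2)}$ to reduce to the standard case $a=0$, $b=l_\infty$. In this model each $x_i$ corresponds to a symmetric matrix $X_i$, and the hypothesis that $(a,x_i,x_{i+1},b)$ is maximal translates, via Lemma \ref{lem:2.10}, to the chain of strict inequalities $0<\!<X_1<\!<X_2<\!<\cdots<\!<X_k$; in particular $(a,x_1,x_k,b)=(0,X_1,X_k,l_\infty)$ is itself a maximal quadruple.

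Next I would exploit the explicit formula $R(0,X_i,X_{i+1},l_\infty)=X_i^{-1}X_{i+1}$ from Lemma \ref{lem:cr}. Since $\|\cdot\|$ is multiplicative on $\F^\times$, this gives
\begin{equation*}
\ln\|\det R(a,x_i,x_{i+1},b)\|=\ln\|\det X_{i+1}\|-\ln\|\det X_i\|,
\end{equation*}
so the sum telescopes:
\begin{equation*}
\sum_{i=1}^{k-1}\ln\|\det R(a,x_i,x_{i+1},b)\|=\ln\|\det X_k\|-\ln\|\det X_1\|=\ln\|\det R(a,x_1,x_k,b)\|.
\end{equation*}

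To finish, I would apply the lower bound in Lemma \ref{lem:8.6}(2) to each consecutive maximal quadruple $(a,x_i,x_{i+1},b)$, yielding $d(\pr_{\Yy_{a,b}}(x_i),\pr_{\Yy_{a,b}}(x_{i+1}))\leq \ln\|\det R(a,x_i,x_{i+1},b)\|$. Summing over $i$ and using the telescoping identity above gives
\begin{equation*}
\sum_{i=1}^{k-1}d(\pr_{\Yy_{a,b}}(x_i),\pr_{\Yy_{a,b}}(x_{i+1}))\leq \ln\|\det R(a,x_1,x_k,b)\|,
\end{equation*}
and the upper bound in Lemma \ref{lem:8.6}(2) applied to the maximal quadruple $(a,x_1,x_k,b)$ bounds the right-hand side by $\sqrt n\, d(\pr_{\Yy_{a,b}}(x_1),\pr_{\Yy_{a,b}}(x_k))$, which is exactly the claim.

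There is no real obstacle here: the only subtlety is verifying that $(a,x_1,x_k,b)$ is itself maximal so that Lemma \ref{lem:8.6}(2) applies to the combined endpoints, but this is immediate from transitivity of the strict order $<\!<$ on positive definite symmetric matrices. The key observation, which could otherwise look mysterious, is simply that $\ln\|\det R(a,\,\cdot\,,\,\cdot\,,b)\|$ is additive along a maximal chain because of the factorization $R(0,X,Y,l_\infty)=X^{-1}Y$.
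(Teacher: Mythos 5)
Your proof is correct and follows essentially the same route as the paper: the telescoping identity $\det R(a,x_1,x_k,b)=\prod_j\det R(a,x_j,x_{j+1},b)$ coming from $R(0,S,T,l_\infty)=S^{-1}T$, combined with the two inequalities of Lemma \ref{lem:8.6}(2). The only difference is that you spell out the maximality of $(a,x_1,x_k,b)$ and the reduction to the standard affine chart, which the paper leaves implicit.
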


\begin{proof}
Since for each pair of symmetric matrices $S,T$ we have $\det R(0,S,T,l_\infty)=\det S^{-1}\det T$ we deduce 
$$\det R(a,x_1,x_k,b)=\prod_{j=1}^{k-1}\det R(a,x_j,x_{j+1},b).$$
Thus we get 
$$\ln\|\det R(a,x_1,x_k,b)\|=\sum_{j=1}^{k-1}\ln\|\det R(a,x_j,x_{j+1},b)\|.$$
From Lemma \ref{lem:8.6} we deduce immediately
$$\sum_{i=1}^{k-1}{d}(\pr_{\Yy_{a,b}}(x_i),\pr_{\Yy_{a,b}}(x_{i+1}))\leq \sqrt n\, {d}(\pr_{\Yy_{a,b}}(x_1),\pr_{\Yy_{a,b}}(x_{k})).$$
\end{proof}

\begin{proof}[Proof of Theorem \ref{thm:qi}]
Let $v,w$ be vertices of $\calT$ and pick an element $\eta\in\G$ whose associated axis in $\calT$ contains the geodesic path between $v$ and $w$. Let us name $v_0=v,v_1,\ldots,v_k=w$ the vertices in such path.

We choose, for every $i$ an element $\g_i\in S_{v_i}$ whose axis $\ax(\g_i)$ intersects the axis $\ax(\eta)$ nontrivially, and with the property that $\g_i^{+}\in(\!(\eta^-,\eta^+)\!)$. Then we have that, for every $i$, the 4-tuple $$(\phi(\eta^-),\phi(\g_{i}^+), \phi(\g_{i+1}^+),\phi(\eta^+) )$$ is maximal and hence by Lemma \ref{lem:10.1} and \ref{lem:10.2}  we have 
$$\sum_{i=0}^{k-1} {d_\Bb}(b_{v_i},b_{v_{i+1}})\leq \sqrt n {d_\Bb}(b_{v_0},b_{v_k}).$$

Now, since the number of $\G$-orbits on the set of edges of $\calT$ is finite, there are constants $C_1,C_2$ with 
$$C_1\leq {d_\Bb}(b_l,b_r)\leq C_2$$
for every pair $(l,r)$ of adjacent vertices. Thus we get
$$kC_1\leq\sqrt n {d_\Bb}(b_{v_0},b_{v_k}),$$
which implies $${d}_{\calT}(v_0,v_k)\leq \frac{\sqrt n}{C_1} {d_\Bb}(b_{v_0},b_{v_k}).$$
The inequality
$${d_\Bb}(b_{v_0},b_{v_k})\leq C_2k=C_2{d}_{\calT}(v_0,v_k)$$
is immediate.
\end{proof}
\section{Ultralimits of maximal representations}\label{sec:10}

In this section we apply the general theory developed so far to the field of hyperreals and the Robinson field in order to deduce the decomposition theorem for ultralimits of maximal representations.
\subsection{Hyperreals and Robinson fields}
Let $\omega:\calP(\N)\to \{0,1\}$  be a non-principal ultrafilter on the set of integers. Recall that the ultraproduct $\prod_{\o} X_i$ of a sequence $X_i$, $i\in\N$, of sets is the quotient of $\prod_{i\in\N}X_i$ by the equivalence relation $(x_i)\sim (y_i)$ if $\o(\{i|\; x_i=y_i\})=1$.  We denote by $\l_\o:\prod_{i\in\N}X_i\to\prod_\o X_i$ the quotient map and write $X_\o$ for $\prod_\o X$. In particular $\R_\o$ is the field of hyperreals, and 
if $X_i$ are vector spaces over $\R$, $\R$-algebras, groups then $\prod_\o X$ is a $\R_\o$-vector space, an $\R_\o$-algebra, a group and $\l_\o$ is a morphism in the appropriate category.
For a $\R$-vector space $V$ the map 
$$\begin{array}{cccc}
   &V\times \R_\o &\to& V_\o\\&(v,[(l_i)])&\mapsto&[(l_iv)]
  \end{array}
$$
induces an $\R_\o$-isomorphism $V\otimes_\R \R_\o\mapsto V_\o.$
For $V$ finite dimensional at least, we deduce from the isomorphism
$\End_{\R_\o}(V\otimes_\R \R_{\o})\cong (\End V)\otimes_\R\R_\o$
that the map 
$$\begin{array}{cccc}
   &\prod_{i\in\N} \End(V)&\to& \End(V_\o)\\
   &(T_i)_{i}&\mapsto &T
  \end{array}
$$
where $T([v_i])=[T_i(v_i)]$ induces an algebra isomorphism
$(\End(V))_\o\cong\End (V_\o)$
which restricts to a group isomorphism
$(\GL(V))_\o\cong\GL (V_\o)$.
By abuse of notation we will also denote $
\l_\o:\prod_{\N}\GL(V)\to\GL(V_\o) $
the induced map. Given a symplectic form $\<\cdot,\cdot\>$ on $V$ let $\<\cdot,\cdot\>_\o$ denote the symplectic form on $V_\o$ obtained by extending the scalars from $\R$ to $\R_\o$. Given a sequence of representations $\rho_i:\G\to \Sp(V)$, we will denote by $\rho_\o$ the representation of $\G$ into $\Sp(V_\o)$ obtained by composing $\prod_{i\in\N}\rho_i$ with $\l_\o$.
\begin{prop}\label{prop:11.1}
Assume that $\rho_i:\G\to \Sp(V)$ is a sequence of maximal representations. Then $\rho_\o:\G\to\Sp(V_\o)$ admits a maximal framing.
\end{prop}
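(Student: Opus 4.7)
The plan is to combine the Burger--Iozzi--Wienhard theorem on the existence of maximal framings with the ultraproduct construction set up in this subsection. First, by \cite[Theorem 8]{BIW}, each maximal representation $\rho_i : \G \to \Sp(V)$ admits an equivariant maximal framing $\phi_i : \partial \H^2 \to \Ll(V)$. Taking $S = \partial \H^2$, I would define the candidate framing $\phi_\o : S \to \Ll(V_\o)$ for $\rho_\o$ by the pointwise ultralimit
$$ \phi_\o(x) := \l_\o\bigl((\phi_i(x))_{i \in \N}\bigr), $$
using the canonical identification $\Ll(V)_\o \cong \Ll(V_\o)$: the Lagrangian Grassmannian consists of the $n$-dimensional subspaces on which the symplectic form vanishes, and since $\<\cdot,\cdot\>_\o$ is the $\R_\o$-linear extension of $\<\cdot,\cdot\>$, a pointwise ultralimit of $n$-dimensional isotropic subspaces of $V$ is an $n$-dimensional isotropic, hence Lagrangian, subspace of $V_\o$.

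Equivariance of $\phi_\o$ under $\rho_\o$ would then follow from the equivariance of each $\phi_i$ and the fact that $\l_\o$ is a group homomorphism: for $\g \in \G$,
$$ \phi_\o(\g x) = \l_\o\bigl((\rho_i(\g)\phi_i(x))_i\bigr) = \rho_\o(\g)\phi_\o(x). $$
It remains to verify that for every positively oriented triple $(x,y,z)$ in $\partial \H^2$, the triple $(\phi_\o(x),\phi_\o(y),\phi_\o(z))$ is maximal. By Definition \ref{def:2.11}, applied over the real closed field $\R_\o$, this is the condition that the quadratic form $Q_{\phi_\o(x),\phi_\o(y),\phi_\o(z)}$ be positive definite on $\phi_\o(x)$. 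Since the assignment $(l_1,l_2,l_3) \mapsto Q_{l_1,l_2,l_3}$ is constructed algebraically from the symplectic pairing (and the transversality required to form $j_{l_1,l_3}^{-1}(l_2)$ is itself expressible by non-vanishing of a determinant, which transfers), its formation commutes with the ultraproduct. Hence the form in question is the ultralimit of the positive definite forms $Q_{\phi_i(x),\phi_i(y),\phi_i(z)}$. Positive definiteness is the first-order assertion ``$Q(v) > 0$ for every $v \neq 0$'', and therefore transfers under the ultraproduct, yielding maximality.

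The only real obstacle is a careful verification that all ingredients of a maximal framing---pairwise transversality of the Lagrangians, the algebraic formation of $Q_{l_1,l_2,l_3}$ from the symplectic pairing, and positive definiteness---are first-order conditions in the language of symplectic vector spaces over ordered fields. Once this is noted, the Burger--Iozzi--Wienhard framings pass formally to the ultralimit; all compatibility of the ultraproduct with scalar extension $V \otimes_\R \R_\o \cong V_\o$ is already recorded at the beginning of Section \ref{sec:10}.
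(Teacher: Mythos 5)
Your proposal is correct and takes essentially the same route as the paper: the paper defines $\phi_\o$ as the composition of $\prod_i\phi_i$ with the quotient map $\prod_\N\Ll(V)\to\Ll(V_\o)$ and deduces maximality from Lemma \ref{lem:10.3}(2), which records that the signature of an ultralimit of quadratic forms equals the $\o$-almost-everywhere signature. Your appeal to first-order transfer of transversality and positive definiteness is exactly the content of that lemma, just phrased model-theoretically.
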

The proof uses the following lemma, which is a straightforward verification:
\begin{lem}\label{lem:10.3}
\begin{enumerate}
\item The map $\prod_\N\Gr_k(V)\to\Gr_k(V_\o)$ defined by $(L_i)_{i\in \N}\mapsto \prod_\o L_i$ induces a $(\GL(V))_\o\cong\GL (V_\o)$ equivariant bijection $(\Gr_k(V))_\o\cong \Gr_k(V_\o)$ and restricts to a $(\Sp(V))_\o\cong \Sp(V_\o)$-equivariant bijection $\Ll(V)_\o\cong \Ll(V_\o)$;
\item Let $f_i: W_i\to \R$ be quadratic forms with signature $n_i\in\Z$. Assume that the sequence $\dim W_i$ is bounded and let $f_\o:\prod_\o W_i\to \R_\o$ be the quadratic form given by $f_\o([(v_i)])=[(f_i(v_i))]$. Then $f_\o$ has signature $n$ where $n$ is defined by $\o(\{i|\; n_i=n\})=1$.
\end{enumerate}
\end{lem}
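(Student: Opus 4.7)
\medskip

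My plan for Lemma 10.3 is to handle the two parts separately; both are essentially exercises in Łoś-type reasoning, so the task is to make the book-keeping transparent rather than to overcome any deep obstacle.

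For part (1), I would first check that the assignment $(L_i)\mapsto\prod_\o L_i\subset V_\o$ is well defined as a $k$-dimensional $\R_\o$-subspace: for a sequence of $k$-dimensional $\R$-subspaces $L_i\subset V$, choose bases $(e_{1,i},\dots,e_{k,i})$ and observe that the vectors $[(e_{j,i})]$, $j=1,\dots,k$, span $\prod_\o L_i$ over $\R_\o$; their linear independence follows from the fact that the Plücker coordinates of $(e_{1,i},\dots,e_{k,i})$ in some fixed basis of $V$ are not all zero, and so they remain not all zero in $\R_\o$. A routine check shows the class of $\prod_\o L_i$ in $\Gr_k(V_\o)$ depends only on the $\o$-class of $(L_i)$, so we obtain a map $\Phi:(\Gr_k V)_\o\to\Gr_k(V_\o)$; equivariance under $\GL(V)_\o\cong\GL(V_\o)$ is immediate from the construction. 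Similarly $\Phi$ restricts to $\Ll(V)_\o\to\Ll(V_\o)$ because $\prod_\o L_i$ is isotropic for $\langle\cdot,\cdot\rangle_\o$ iff $L_i$ is isotropic for $\o$-almost every $i$ (and the dimension count is automatic).

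For injectivity of $\Phi$, if $\prod_\o L_i=\prod_\o L'_i$, I choose bases $(e_{j,i})$ of $L_i$ and $(f_{j,i})$ of $L'_i$ and consider the scalars giving one basis in terms of the other: these exist (non-uniquely) since both spaces are equal in $V_\o$, and unfolding the ultraproduct gives that $L_i=L'_i$ for $\o$-almost every $i$. For surjectivity, starting from $M\in\Gr_k(V_\o)$, pick an $\R_\o$-basis $v_1,\dots,v_k$ of $M$ and lifts $v_j=[(v_{j,i})]$ with $v_{j,i}\in V$; setting $L_i=\mathrm{span}(v_{1,i},\dots,v_{k,i})$ we get $\dim L_i=k$ for $\o$-a.e.\ $i$ (this is the contrapositive of the Plücker remark above) and $\Phi([(L_i)])=M$. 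The same argument adapted to Lagrangians (one can arrange the $v_{j,i}$ to span isotropic subspaces $\o$-a.s.\ since isotropy of the limit is detected by finitely many pairings of basis vectors vanishing in $\R_\o$, hence $\o$-a.s.) establishes the bijection $\Ll(V)_\o\cong\Ll(V_\o)$.

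For part (2), after passing to an $\o$-full subset of indices I assume $\dim W_i=d$ is constant; this is possible because $\dim W_i$ is bounded and $\o$ takes values in $\{0,1\}$, so some single value of $\dim W_i$ is $\o$-generic. By Sylvester, each $f_i$ admits a basis $(e_{1,i},\dots,e_{d,i})$ of $W_i$ in which $f_i$ is diagonal with entries $\varepsilon_{j,i}\in\{-1,0,1\}$. Since $\{-1,0,1\}$ is finite, for each $j\in\{1,\dots,d\}$ there is a unique value $\varepsilon_j\in\{-1,0,1\}$ with $\o(\{i\mid\varepsilon_{j,i}=\varepsilon_j\})=1$. The vectors $E_j=[(e_{j,i})]$ form an $\R_\o$-basis of $\prod_\o W_i$ (by the Plücker argument again) and by the very definition of $f_\o$ we get $f_\o(E_j)=[(\varepsilon_{j,i})]=\varepsilon_j$ and $f_\o$ diagonal in this basis; hence the signature of $f_\o$ is determined by the $\varepsilon_j$, which in turn equals the common signature $n$ selected by $\o$.

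No single step is a real obstacle; the only point requiring care is the boundedness hypothesis on $\dim W_i$ in (2), which is what lets the ultrafilter pin down a single signature — without it one could not even make sense of diagonalization uniformly in $i$. Correspondingly in (1) the fact that $k$ is fixed is what makes everything go through cleanly via Plücker coordinates.
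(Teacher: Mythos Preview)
Your proof is correct. The paper itself does not supply a proof of this lemma: it is introduced with the phrase ``which is a straightforward verification'' and no argument is given. Your write-up therefore fills in exactly the routine \L o\'s-type bookkeeping the authors chose to omit, and does so along the expected lines (Pl\"ucker coordinates/basis tracking for part~(1), Sylvester diagonalization plus finiteness of $\{-1,0,1\}$ for part~(2)). One cosmetic remark: in the surjectivity step for Lagrangians you say one can ``arrange'' the $v_{j,i}$ to span isotropic subspaces, but in fact no arranging is needed---the vanishing of the finitely many pairings $\langle v_j,v_l\rangle_\o$ forces $\langle v_{j,i},v_{l,i}\rangle=0$ on an $\o$-full set automatically, so the $L_i$ are already isotropic (hence Lagrangian, by the dimension count) for $\o$-a.e.\ $i$.
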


\begin{proof}[Proof of Proposition \ref{prop:11.1}]
Since each $\rho_i$ is maximal, there exists a maximal framing $\phi_i:\partial\H^2\to\Ll(V)$. Define then $\phi_\o:\partial \H^2\to\Ll(V_\o)$ by composing $\prod \phi_i:\partial\H^2\to\prod_\N\Ll(V)$ with the quotient map $\prod_\N\Ll(V)\to \Ll(V_\o)$. The maximality of the so obtained framing follows then from Lemma \ref{lem:10.3} (2).
\end{proof}

Let now $\sigma\in \R_\o$ be an {infinitesimal} and recall the definition of the local ring
$$\Oo_\sigma=\{x\in \R_\o||x|<\sigma^{-k} \text{ for some $k\in \N$}\}$$
with maximal ideal
$$\Ii_\sigma=\{x\in \R_\o||x|<\sigma^{k} \text{ for all $k\in \N$}\}$$
associated to it. The quotient  is the Robinson field 
$\R_{\o,\sigma}=\Oo_\sigma/\Ii_\sigma$
associated to $\s$ \cite{Robinson, LR}.
\begin{remark}
Assuming the continuum hypothesis a deep result of Erd\"os, Gillman and Henriksen \cite{EGH} implies that the field $\R_\o$ does not depend on the choice of the ultrafilter.  And under the same hypothesis Thornton showed that  the normed field $\R_{\o,\s}$ does not depend on the choice of the ultrafilter $\o$ nor on the infinitesimal $\s$ \cite[Theorem 2.34]{Thornton}.

If instead we assume the negation of the continuum hypothesis, it was shown by Kramer, Shelah, Tent and Thomas \cite[Theorem 1.8]{KSTT} that there exists an uncountable set of nonprincipal ultrafilters such that the associated Robinson fields are pairwise non isomorphic.

 If $(\l_i)$ is a divergent sequence of real numbers and we set $\sigma=[(e^{-\l_i})]\in\R_\o$ we have that the field $\R_{\o,\sigma}$ is the field denoted by $\R_{\o,\l}$ in \cite{APcomp}.
\end{remark}
Now let $\rho_\o$ be a representation into $\Sp(V_\o)$ admitting the maximal framing $(S,\phi_\o)$. Choose a compatible complex structure $J_\o$ and an infinitesimal $\s\in\R_\o$ such that $\rho_\o(\G)\subseteq \Sp(V_\o)(\Oo_\s)$, and denote $V_\o$ the vector space $V\otimes_\R\R_\os$.
According to Theorem \ref{thm:bdrymap}, composing $\rho_\o$ with $\pi_\s:\Sp(V_\o)(\Oo_\s)\to \Sp(V_\os)$ we obtain  a representation which admits $q_\s\circ\phi_\o:S\to \Ll(V_\os)$ as maximal framing.

Thus we obtain in particular:
\begin{cor}\label{cor:10.4}
If $(\rho_i)_{i\in\N}:\G\to \Sp(V)$ is a sequence of maximal representations where $V$ is a real symplectic vector space, $\rho_\o:\G\to \Sp(V_\o)$ the corresponding representation over the field of hyperreals, $J_\o$ a choice of compatible complex structure and $\s$ an infinitesimal such that $\rho_\o(\G)\subset \Sp(V_\o)(\Oo_\s)$, then the representation $\rho_\os:\G\to \Sp(V_\os)$ admits a maximal framing defined on $\partial\H^2$.
\end{cor}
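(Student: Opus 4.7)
The plan is to combine Proposition \ref{prop:11.1} with the reduction theorem (Theorem \ref{thm:bdrymap}) in a straightforward way; all the substantial work has already been done in those two results.

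First I would invoke Proposition \ref{prop:11.1} to produce a maximal framing $\phi_\o : \partial\H^2 \to \Ll(V_\o)$ for $\rho_\o : \G \to \Sp(V_\o)$. The key point here is that for each $i$, one may take $S_i = \partial\H^2$ in the definition of a maximal framing (by \cite[Theorem 8]{BIW}), so the ultraproduct construction $\phi_\o([x_i]) = [\phi_i(x_i)]$ is already defined on all of $\partial\H^2$, viewed as the constant ultraproduct.

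Next, with the fixed compatible complex structure $J_\o$ and the infinitesimal $\s$ satisfying $\rho_\o(\G) \subset \Sp(V_\o)(\Oo_\s)$, I would apply Theorem \ref{thm:bdrymap} with $\F = \R_\o$ and $\Oo = \Oo_\s$. This yields that the composition
\[
\phi_\os := q_\s \circ \phi_\o : \partial\H^2 \to \Ll(V_\os)
\]
is a maximal framing for the reduced representation $\rho_\os := \pi_\s \circ \rho_\o : \G \to \Sp(V_\os)$. Since the domain of $\phi_\o$ is all of $\partial\H^2$ and $q_\s$ is defined on every Lagrangian, $\phi_\os$ is also defined on all of $\partial\H^2$, as required.

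There is really no obstacle: the content is entirely in Proposition \ref{prop:11.1} (which uses {\L}o{\'s}'s-type transfer for the signature of quadratic forms, as in Lemma \ref{lem:10.3}) and in Theorem \ref{thm:bdrymap} (which relies crucially on the Collar Lemma to guarantee that the reduction $q_\s$ preserves transversality along the framing). The corollary itself is just the concatenation of these two statements, noting that the hypotheses of Theorem \ref{thm:bdrymap} are exactly what is assumed about $J_\o$ and $\s$.
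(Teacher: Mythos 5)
Your proposal is correct and is essentially identical to the paper's own argument: the paper also obtains $\phi_\o$ on all of $\partial\H^2$ via Proposition \ref{prop:11.1} (using \cite[Theorem 8]{BIW} to take $S=\partial\H^2$ for each $\rho_i$) and then applies Theorem \ref{thm:bdrymap} with $\Oo=\Oo_\s$ to conclude that $q_\s\circ\phi_\o$ is a maximal framing for $\rho_\os=\pi_\s\circ\rho_\o$. No gaps.
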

In the compact case we obtain a converse:
\begin{thm}\label{thm:10.5}
Assume that the surface $\G\backslash\H^2$ is compact. Then a representation $\rho:\G\to \Sp(V_\os)$ admits a maximal framing if and only if there is a sequence $\rho_i:\G\to\Sp(V)$ of maximal representations such that $\rho_\os=\rho$.
\end{thm}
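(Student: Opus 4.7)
The implication $(\Leftarrow)$ follows by combining Corollary~\ref{cor:10.4} with \cite[Theorem 8]{BIW}: in the closed case the latter ensures that each maximal $\rho_i:\G\to\Sp(V)$ admits a maximal framing $\phi_i:\partial\H^2\to\Ll(V)$ defined on all of $\partial\H^2$, and the reduction procedure of Corollary~\ref{cor:10.4} then delivers the required maximal framing for $\rho_\os$.

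For the $(\Rightarrow)$ direction my plan is to build a lift $\wt\rho:\G\to\Sp(V_\o)$ with $\wt\rho(\G)\subset\Sp(V_\o)(\Oo_\s)$, $\pi_\s\circ\wt\rho=\rho$, and itself admitting a maximal framing $\wt\phi:\partial\H^2\to\Ll(V_\o)$ lifting $\phi$ under $q_\s$. Once $\wt\rho$ is produced, the isomorphism $\Sp(V_\o)\cong\prod_\o\Sp(V)$ underlying Proposition~\ref{prop:11.1}, applied componentwise to the images of a finite generating set of $\G$ and combined with the finite presentability of $\G$, shows that $\wt\rho$ is the ultralimit of a sequence $\rho_i:\G\to\Sp(V)$. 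Restricting $\wt\phi$ to the fixed points of a finite generating set and using Lemma~\ref{lem:10.3}(2), each of the resulting finite configurations of Lagrangians is maximal for $\o$-almost every $i$; extending by $\rho_i$-equivariance one obtains maximal framings of the $\rho_i$ on a $\G$-invariant dense subset of $\partial\H^2$, and \cite[Theorem 8]{BIW} then forces the $\rho_i$ to be maximal.

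The main obstacle is therefore the construction of $\wt\rho$ itself: a naive coordinate-wise lift of each generator $\rho(s_j)$ to some $\tilde g_j\in\Sp(V_\o)(\Oo_\s)$ is possible by the surjectivity of $\pi_\s$, but generically the finitely many relators of $\G$ will only be satisfied modulo $\Ii_\s$, so such a lift fails to be a homomorphism. My plan to overcome this is to exploit the maximal framing. First, by Lemma~\ref{lem:surj} pick lifts $\wt\phi(\g^\pm)\in\Ll(V_\o)$ of $\phi(\g^\pm)$ for the fixed points of hyperbolic generators; by Remark~\ref{rem:index} the relevant triples remain maximal in $V_\o$. Second, fix a pants decomposition $\S=P_1\cup\cdots\cup P_{2g-2}$ along simple closed curves $c_1,\ldots,c_{3g-3}$: the set of maximal framed representations of each pair of pants $\pi_1(P_j)=\<a,b,c\mid abc=e\>$ is a real semialgebraic subset of $\Sp(V)^2$ defined over $\Z$, parameterized by pairs of Shilov hyperbolic elements with the required compatibility of invariant Lagrangians provided by the Collar Lemma (Theorem~\ref{lem:collar}); transferring via $\Sp(V_\o)\cong\prod_\o\Sp(V)$ and reducing via $\pi_\s$ one obtains a lift of each pants representation to $\Sp(V_\o)(\Oo_\s)$ compatible with the chosen $\wt\phi$. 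Third, the twist parameter along each $c_i$ takes values in the stabilizer $\GL(n,\R_\os)$ of the pair $(\phi(c_i^-),\phi(c_i^+))$ acting on the $\F$-tube $\Yy_{\phi(c_i^-),\phi(c_i^+)}$ and similarly lifts to $\GL(n,\Oo_\s)$. Reassembling the lifted pants via the lifted twists produces the desired $\wt\rho$, whose maximal framing is the equivariant extension of the chosen $\wt\phi$.
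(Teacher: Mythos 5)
Your ``if'' direction is correct and is exactly the paper's (Corollary \ref{cor:10.4}). For the ``only if'' direction you have correctly isolated the crux --- lifting $\rho$ to a genuine homomorphism into $\Sp(V_\o)(\Oo_\s)$ rather than one satisfying the relators only modulo $\Ii_\s$ --- but your proposed resolution does not close that gap. The paper disposes of it in one line: ${\rm Rep}_g$ is an affine $\R$-variety and, by \cite{Thornton}, reduction modulo $\Ii_\s$ is surjective ${\rm Rep}_g(\Oo_\s)\to{\rm Rep}_g(\R_\os)$; the lift exists for free, with no need for a framing on the lift. Your Fenchel--Nielsen reassembly merely relocates the difficulty: lifting each pair-of-pants representation is indeed unproblematic (the pants group is free), but the remaining relators of the graph-of-groups presentation of $\G$ are precisely the identifications of boundary holonomies across the gluing curves and the HNN relations for non-separating curves, and you give no argument that the independently chosen pants lifts and twist lifts can be made to satisfy these \emph{exactly} rather than modulo $\Ii_\s$. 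Moreover the sentence ``transferring via $\Sp(V_\o)\cong\prod_\o\Sp(V)$ and reducing via $\pi_\s$ one obtains a lift'' is circular --- reduction goes in the wrong direction --- and the requirement that the lift carry a maximal framing $\wt\phi$ on all of $\partial\H^2$ lifting $\phi$ is both stronger than what your construction produces and unnecessary.

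There is a second, smaller gap at the end: to conclude that the $\rho_i$ are maximal you invoke the \emph{converse} of \cite[Theorem 8]{BIW} (framing $\Rightarrow$ maximal) for a framing defined only on a dense invariant subset; that implication is not what is quoted in this paper and would itself need justification (extension of the framing to a monotone map on $S^1$, etc.). The paper avoids this entirely: it checks, for each pair of pants with standard generators $c_1c_2c_3=e$, that the two triples of attracting Lagrangians $(L^+_{\rho_i(c_1)},L^+_{\rho_i(c_2)},L^+_{\rho_i(c_3)})$ and $(L^+_{\rho_i(c_1)},\rho_i(c_1)L^+_{\rho_i(c_3)},L^+_{\rho_i(c_2)})$ are maximal for $\o$-almost every $i$ (via Lemma \ref{lem:10.3}(2) and the fact that the Collar Lemma forces Shilov hyperbolicity upstairs, with $q_\s(L^+_{\rho_\o(\g)})=L^+_{\rho(\g)}$), then applies Strubel's criterion \cite[Theorem 5]{Strubel} to get maximality on each pair of pants and additivity of the Toledo invariant \cite[Theorem 1]{BIW} to conclude. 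I would rework your proof along these lines: cite Thornton for the lift, keep your ultraproduct bookkeeping, and replace the last step by the Strubel--plus--additivity argument.
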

\begin{proof}
Let 
$${\rm Rep}_g:=\left\{(A_1,B_1,\ldots, A_g,B_g)\in\Sp(V)^{2g}\left|\;\prod_{i=1}^n[A_i,B_i]=\Id\right.\right\}$$
be the $\R$-variety of representations of $\G$ in $\Sp(V)$. Then it follows from \cite{Thornton} that the reduction modulo $\Ii_\s$ induces a surjection ${\rm Rep}_g(\Oo_\s)\to {\rm Rep}_g(\R_\os)$. Thus we can lift $\rho$ to a representation $\rho_\o:\G\to \Sp(V_\o)(\Oo_\s)$ which we represent by a sequence $(\rho_i)_{i\in\N}$ of representations of $\G$ into $\Sp(V)$. 

Let $\phi:\S\to \Ll(V_\os)$ be a maximal framing for $\rho$. It follows from the Collar Lemma that for every hyperbolic element $\g\in\G$, the image $\rho(\g)$ is Shilov hyperbolic. Then $\rho_\o(\g)$ needs also to be Shilov hyperbolic and we have $q_\s(L^+_{\rho_\o(\g)})=L^+_{\rho(\g)}$ because of uniqueness of attractive fixed Lagrangians.

Fix a decomposition of $\S=\G\backslash \H^2$ into pairs of pants, let $P\subseteq\S$ denote any such pair of pants and let $\{c_1,c_2,c_3\}$ be standard generators of $\pi_1(P)$, in particular $c_1c_2c_3=e$. Let $\xi_1,\xi_2,\xi_3$ be the attractive fixed points in $\partial \H^2$ of $c_1, c_2,c_3$. Then $(\xi_1,\xi_2,\xi_3)$ as well as $(\xi_1,c_1\cdot\xi_3,\xi_2)$ are positively oriented. Thus the images under $\phi$ of the two triples are maximal and hence the triples $(L^+_{\rho_\o(c_1)},L^+_{\rho_\o(c_2)},L^+_{\rho_\o(c_3)})$ and $(L^+_{\rho_\o(c_1)},\rho_\o(c_1)L^+_{\rho_\o(c_3)},L^+_{\rho_\o(c_2)})$ are maximal. It follows that there is a set $E_P\subset \N$ of full $\o$-measure such that for each $n$ in $E_P$, $\rho_i(c_1),\rho_i(c_2),\rho_i(c_3)$ are Shilov hyperbolic and both  $(L^+_{\rho_i(c_1)},L^+_{\rho_i(c_2)},L^+_{\rho_i(c_3)})$ and $(L^+_{\rho_i(c_1)},\rho_i(c_1)L^+_{\rho_i(c_3)},L^+_{\rho_i(c_2)})$ are maximal. It follows then from \cite[Theorem 5]{Strubel} that $\rho_i|_{\pi_1(P)}\to \Sp(V)
$  is maximal for each $n$ in $E_P$. Thus if $P_1,\ldots,P_{2g-2}$ is the pair of pants decomposition, we have that for all $i\in\bigcap_{j=1}^{2g-2}E_{p_j}$, the restriction $\rho_i|_{\pi_1(P_i)}$ is maximal. By additivity of the Toledo invariant (see \cite[Theorem 1]{BIW}) we deduce that $\rho_i$ is maximal. Since  $\bigcap_{j=1}^{2g-2}E_{p_j}$ is of full $\o$-measure, this concludes the proof. 
\end{proof}
\subsection{Asymptotic cones}\label{sec:10.2}
We finish the paper deducing the statements about ultralimits of maximal representations from the general theory of representations admitting a maximal framing.
\begin{proof}[Proof of Theorem \ref{thm:1}]
Let $\rho_k:\G\to\Sp(V)$ be a sequence of maximal representations, $J_k\in\mathbb X_V$  a sequence of basepoints, namely a sequence of compatible complex structures, and  $(\l_k)_{k\in\N}$ an adapted sequence of scales. If the sequence $(\l_k)_{k\in\N}$ is bounded on a set of full $\o$-measure, then we may assume 
$$\sup_{k\in\N}\max_{\g\in S} d(\rho_k(\g)J_k,J_k)<\infty$$
and hence, if we conjugate $\rho_k$ by $g_k\in\Sp(V)$ with $g_kJ_k=x$ a fixed basepoint, it follows that the sequence $\pi_k=g_k\rho_kg_k^{-1}$ is relatively compact in the space of representations. In this case $^\o\!\Xx_{\l}$ is just the Siegel space $\Xx_\R$ with rescaled distance and $^\o\rho_\l$ is an ordinary accumulation point of the sequence $(\pi_k)_{k\in\N}$.

If the sequence $(\l_k)_{k\in\N}$ is unbounded, let $\s:=(e^{-\l_k}) $, which is an infinitesimal in $\R_\o$, and let $J_\o:=[(J_k)]\in \End( V_\o)$ which is a compatible complex structure. Then we conclude from the fact that $(\l_k)$ is adapted to $(\rho_k,J_k)$ that $\rho_\o(\G)\subset \Sp(V_\o)(\Oo_\s)$. 

Furthermore it follows from \cite{APcomp} that the action on the Bruhat-Tits building of $\Sp(V_{\os})$ coming from the representation $\rho_\os:\G\to \Sp(V_\os)$ coincides with the ultralimit $^\o\!\rho_\l:\G\to {\rm Iso}(^\o\!\Xx_{\l})$ under the identification of $^\o\!\Xx_{\l}$ with the Bruhat-Tits building $\Bb_{V_\os}$. Theorem \ref{thm:1} follows then from Corollary \ref{cor:10.4} and Theorem \ref{thm:dec}
\end{proof}

 We now characterize the cases which lead to actions without a global fixed point. Recall from the introduction that when $S$ be a finite generating set for $\G$, and $\rho$ is a maximal representation we denote by $D_S(\rho)(x)$ the displacement function. 
 
 The function $D_S(\rho)$ is convex and since $\rho(\G)$ is not contained in any proper parabolic subgroup of $\Sp(V)$ we have that for every $C>0$, the convex set $\{x|\, D_S(\rho)(x)\leq C\}$ must be compact, in particular $D_S(\rho)(x)$  achieves its minimum that we will denote by $\mu_S(\rho)=\min_{x\in\Xx}D_S(\rho)(x)$. 

The function $\rho\mapsto \mu_S(\rho)$ descends then to a proper function 
$${\rm Hom}_{max}(\G,\Sp(V))/\Sp(V)\to [0,\infty)$$
on the character variety of maximal representations. Let now $(\rho_k)_{k\in\N}$ be a sequence of maximal representations, $x_k\in\Xx$ a sequence of basepoints and $\l_k$ an adapted sequence of scales. Furthermore let $y_k\in\Xx$ be such that $\mu_S(\rho_k)=D_S(\rho_k) (y_k)$.

\begin{prop}\label{prop:int2}
 The representation $^\o\rho_\l$ on $^\o\Xx_\l$ has no global fixed point if and only if 
 $$\lim_\o \frac{\l_k}{\mu_S(\rho_k)}<\infty\phantom{XX}\text{ and }\phantom{XX}\lim_\o \frac{d(y_k,x_k)}{\l_k}<\infty$$
 in which case $^\o\!\Xx_\l=^\o\!\Xx_\mu$, the distances on the asymptotic cones are homothetic and the actions $^\o\!\rho_\l$ and $^\o\!\rho_\mu$ coincide.
\end{prop}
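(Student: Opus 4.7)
Proof proposal.

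The plan is to prove the biconditional in both directions and then verify the identification of the cones. Throughout we use the uniform lower bound $\mu_k := \mu_S(\rho_k) \geq (\ln 2)\sqrt{n}$ from Corollary \ref{cor:intro1} and the adaptedness bound $\mu_k \leq D_S(\rho_k)(x_k) \leq C_0\l_k$ for some $C_0 > 0$ on an $\o$-full set.

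For the backward direction, assume both $\lim_\o \l_k/\mu_k < \infty$ and $\lim_\o d(y_k, x_k)/\l_k < \infty$, and pick $M>0$ with $\l_k \leq M \mu_k$ along $\o$. For any sequence $(z_k)$ defining a point of $^\o\!\Xx_\l$, we have
\[D_S(\rho_k)(z_k)/\l_k \geq \mu_k/\l_k \geq 1/M,\]
so the displacement of $[z_k]$ under $^\o\!\rho_\l(\G)$ is at least $1/M$, precluding any global fixed point.

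For the forward direction, the main technical ingredient is the following automatic bound: $\lim_\o d(y_k, x_k)/\l_k < \infty$ whenever $(\l_k)$ is adapted. To establish this we observe that both $x_k$ and $y_k$ lie in the convex sublevel set $\Ss_k := \{z : D_S(\rho_k)(z) \leq C_0 \l_k\}$. Using Corollary \ref{cor:intro1}, for each $k$ we pick $\eta_k \in S$ with $L(\rho_k(\eta_k)) \geq \mu_k/C$, and combine the CAT(0) growth estimate for the displacement function of the Shilov hyperbolic isometry $\rho_k(\eta_k)$ with the contribution of the remaining generators in $S$ to bound $\mathrm{diam}(\Ss_k) \leq C'' \l_k$ for a constant $C''$ depending only on $|S|$ and $n$. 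Granted this, suppose $^\o\!\rho_\l$ has no global fixed point and, for contradiction, that $\lim_\o \l_k/\mu_k = \infty$, i.e.\ $\mu_k/\l_k \to 0$ along $\o$. Then $(y_k)$ defines a point of $^\o\!\Xx_\l$ with $D_S(\rho_k)(y_k)/\l_k = \mu_k/\l_k \to 0$, so $[y_k]$ is a global fixed point, contradicting the assumption. Hence both conditions are necessary.

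For the identification, the two conditions imply $\l_k$ and $\mu_k$ are comparable along $\o$, so after passing to an $\o$-subsequence we may assume $\l_k/\mu_k \to \alpha \in [1,\infty)$. The identity on sequences induces a canonical bijection between $^\o\!\Xx_\l$ and $^\o\!\Xx_\mu$ (the notion of bounded rescaled distance from $x_k$ coincides since $\l_k$ and $\mu_k$ are comparable), with distances related by $d_\l = \alpha \, d_\mu$, which is a homothety. Both actions send $[z_k]$ to $[\rho_k(\g) z_k]$ and so coincide as $\G$-actions. The anticipated main obstacle is the uniform-in-$k$ diameter estimate for $\Ss_k$: while the Collar Lemma supplies uniform lower bounds on translation lengths, combining these with CAT(0) displacement estimates for Shilov hyperbolic isometries to yield a linear-in-$\l_k$ diameter bound that is uniform across the sequence of representations is the delicate point.
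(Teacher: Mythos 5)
Your backward direction and the final identification of the two cones are correct and essentially identical to the paper's. The forward direction, however, has a genuine gap, and it sits exactly where you flagged it. You base everything on the claim that adaptedness alone forces $\lim_\o d(y_k,x_k)/\l_k<\infty$, via a uniform diameter bound $\mathrm{diam}\{z:D_S(\rho_k)(z)\leq C_0\l_k\}\leq C''\l_k$. No such bound is proved, and the sketched mechanism cannot produce it: for a single Shilov hyperbolic isometry $g$ the sublevel set $\{z: d(gz,z)\leq 2L(g)\}$ already contains the whole (unbounded) min-set of $g$, and in rank $\geq 2$ the joint sublevel set of the displacement function of a finite family can be a long, nearly flat region whose diameter is not controlled linearly by the level \emph{uniformly over the sequence} $(\rho_k)$ (think of representations in which two subsurface groups are ``centred'' at points drifting apart inside a flat). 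Note also that if your claim were true, the second condition in the statement would be a consequence of adaptedness alone, and the proposition would collapse to ``no fixed point iff $\lim_\o \l_k/\mu_S(\rho_k)<\infty$'' --- which is neither how it is stated nor how it is proved. Finally, your derivation of $\lim_\o\l_k/\mu_S(\rho_k)<\infty$ depends on this unproven input, since it needs $[y_k]$ to be a point of $^\o\Xx_\l$ before it can be declared a global fixed point.

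The paper's route is different on both counts. First, ``no global fixed point'' is converted, via Corollary \ref{cor:3}, into the existence of a generator $\g_0$ of a connected generating set $T$ with $\lim_\o L(\rho_k(\g_0))/\l_k>0$; combined with $L(\rho_k(\g_0))\leq d(\rho_k(\g_0)y_k,y_k)\leq K\mu_S(\rho_k)$ this yields $\l_k\leq C\mu_S(\rho_k)$ along $\o$, while adaptedness gives the reverse inequality. Second, $\lim_\o d(x_k,y_k)/\l_k<\infty$ is obtained not from a diameter bound but from a basepoint-independent anchor: for two crossing hyperbolic elements $\g,\eta$ the tubes $\Yy_{\phi_k(\g^-),\phi_k(\g^+)}$ and $\Yy_{\phi_k(\eta^-),\phi_k(\eta^+)}$ meet in a single point $z_k$, and the framing/reduction theory of Sections \ref{sec:4}--\ref{sec:fixedpoint} shows that $(z_k)$ represents a genuine point of both $^\o\Xx_\l$ and $^\o\Xx_\mu$; the triangle inequality through $z_k$, together with the already established comparability of $\l_k$ and $\mu_S(\rho_k)$, then bounds $d(x_k,y_k)/\l_k$. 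To repair your argument you would need to replace the diameter estimate by such an anchor-point construction (or prove the uniform diameter bound, which I do not believe holds).
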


\begin{rem}
The fact that if $^\o\!\rho_\l$ has no global fixed point then the limit $\lim_\o \frac{\l_k}{\mu_S(\rho_k)}$ is finite can also be deduced combining \cite[Proposition 4.4]{APcomp} and \cite[Corollary 3]{APell}.
\end{rem}

\begin{proof}[{Proof of Proposition \ref{prop:int2}}]
 For the if part: changing the sequence on a set of $\o$-measure zero, we may assume that for some constant $C>0$, we have $\mu_S(\rho_k)/C\leq \l_k\leq C \mu_S(\rho_k)$ and $d(y_k,x_k)\leq C \l_k$ for all $k\in\N$. This readily implies that the asymptotic cones $^\o\!\Xx_\l$ and $^\o\!\!\Xx_\mu$ are equal, that the induced distances are homothetic with factor $\lim_\o \frac {\l_k}{\mu_S(\rho_k)}$ and that the actions $^\o\!\rho_\l$, $^\o\!\rho_\mu$ coincide. Thus we have to verify that $^\o\!\rho_\mu$ does not have a global fixed point. But this follows immediately from the fact that 
 $$\max_{\g\in S}\frac{d(\rho_k(\g)x,x)}{\mu_S(\rho_k)}\geq 1 \quad \forall x\in\Xx.$$
 
 We next show the only if part. Let $T$ be a finite connected generating set, and let us denote by $K$ the maximal length of an element of $T$ with respect to the generating set $S$. Since $^\o\!\rho_\l$ does not have a global fixed point, it follows from Corollary \ref{cor:3} that  there is $\g_0\in T$ with $L(^\o\rho_\l(\g_0))=\lim_\o\frac {L(\rho_k(\g_0))}{\l_k}>0$. 
Since 
 $$L(\rho_k(\g_0))\leq d(\rho_k(\g_0)y_k,y_k)\leq K\mu_S(\rho_k)\leq K D_S(\rho_k)(x_k)$$ and $\lim_\o \frac {D_S(\rho_k)(x_k)}{\l_k}<\infty$, we may assume that the sequences $(\l_k)_{k\in\N}$ and $(\mu_S(\rho_k))_{k\in\N}$ are equivalent, namely that
 there are positive constants $C_1,C_2$ such that $C_1\mu_S(\rho_k)\leq \l_k\leq C_2\mu_S(\rho_k)$ for all $k\in\N$.

 Pick now two hyperbolic elements $\g,\eta$ in $\G$ with intersecting axes. If $\phi_k:S^1\to\Ll(V)$ denotes the boundary map associated to $\rho_k$, we have $\Yy_{\phi_k(\g^+),\phi_k(\g^-)}\cap \Yy_{\phi_k(\eta^+),\phi_k(\eta^-)}=\{z_k\}$ and the sequence $(z_k)_{k\in\N}$ in $^\o\Xx_\l$ represents a point in the intersection $\mathbb Y_\g^\l\cap \mathbb Y_\eta ^\l$ (see Section \ref{sec:fixedpoint}). Thus we get $\lim_\o \frac {d(x_k,z_k)}{\l_k}<\infty.$
The same applies to $^\o\Xx_\mu$ and hence $\lim_\o \frac {d(y_k,z_k)}{\mu_S(\rho_k)}<\infty.$ Using the triangle inequality and taking into account that the sequences $(\l_k)_{k\in\N}$ and $(\mu_S(\rho_k))_{k\in\N}$ are equivalent, we deduce 
$$\lim_\o \frac {d(x_k,y_k)}{\l_k}<\infty.$$
\end{proof}

\begin{proof}[{Proof of Corollary \ref{cor:intro1}}]
 The first inequality follows from the Collar Lemma, while the last follows by contradiction from Proposition \ref{prop:int2}.
\end{proof}

\begin{proof}[Proof of Corollary \ref{cor:intro2}]

Applying iteratively Theorem \ref{thm:1} it is possible to obtain a canonical decomposition of the surface in subsurfaces with geodesic boundary with the property that  all curves strictly contained in a subsurface have the same growth rate. The set $\calC$ of curves defining this decomposition is the union of the curves given by Theorem \ref{thm:1} and all the curves contained in subsurfaces of type (B) selected by applying Theorem \ref{thm:1} to the restrictions of the representations to those subsurfaces. One can apply Theorem \ref{thm:1} at most $3g-3+p$ times corresponding to the case when at each step  precisely one curve is added and all the complementary pieces are of type (B). Hence there are at most $3g-3+p$ distinct growth rates among curves having nontrivial intersection with $\mathcal C$. There are three possibilities for the remaining curves: either a curve is contained in a subsurface defined by the decomposition $\calC$, or it is one of the curves in $\calC$ or it corresponds to a puncture in the surface. The claim follows since there are at most $2g-2+p$ complementary components.

\end{proof}

\section{Appendix A (by Thomas Huber)}
\begin{prop}\label{prop:appendix}
Let $\F$ be a real closed field. Let $n$ be a positive integer and assume that $a_1,\ldots,a_n\geq 1$. Then we have
$$(a_1a_2\ldots a_n-1)^n\geq(a_1^n-1)(a_2^n-1)\ldots(a_n^n-1)$$
with equality if and only if $a_1=\ldots=a_n$
\end{prop}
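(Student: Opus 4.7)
The plan is to reduce the statement, by the substitution $c_i=a_i^n-1\geq 0$ (so $a_i=(1+c_i)^{1/n}$, using that $\F$ is real closed and every positive element has a unique positive $n$-th root), to the classical Mahler inequality
\[
\Bigl(\prod_{i=1}^n (x_i+y_i)\Bigr)^{1/n} \;\geq\; \Bigl(\prod_{i=1}^n x_i\Bigr)^{1/n} + \Bigl(\prod_{i=1}^n y_i\Bigr)^{1/n}\qquad (x_i,y_i\geq 0)
\]
specialised to $x_i=1$, $y_i=c_i$. With this substitution the claim becomes, after taking positive $n$-th roots on both sides (legitimate since $\prod_i a_i\geq 1$),
\[
\Bigl(\prod_i(1+c_i)\Bigr)^{1/n} \;\geq\; 1+\Bigl(\prod_i c_i\Bigr)^{1/n},
\]
which is exactly Mahler with $x_i=1$, $y_i=c_i$; raising back to the $n$-th power recovers the stated inequality.

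To prove Mahler in $\F$, I would divide both sides by $\bigl(\prod_i(x_i+y_i)\bigr)^{1/n}$, reducing it to
\[
\Bigl(\prod_i \tfrac{x_i}{x_i+y_i}\Bigr)^{1/n} + \Bigl(\prod_i \tfrac{y_i}{x_i+y_i}\Bigr)^{1/n} \;\leq\; 1,
\]
and then apply AM--GM to each geometric mean separately, whose sum telescopes to
\[
\tfrac{1}{n}\sum_i \tfrac{x_i}{x_i+y_i} + \tfrac{1}{n}\sum_i \tfrac{y_i}{x_i+y_i} \;=\; \tfrac{1}{n}\sum_i 1 \;=\; 1.
\]
The only non-formal point is that AM--GM must be invoked in an arbitrary real closed field; this follows either from Cauchy's forward/backward induction (which uses only ordered-field operations together with the existence of $n$-th roots of positive elements) or, more slickly, by Tarski's transfer principle applied to the first-order statement of AM--GM.

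The main thing to verify carefully is the equality case. Equality in the two invocations of AM--GM forces both $\tfrac{x_i}{x_i+y_i}$ and $\tfrac{y_i}{x_i+y_i}$ to be independent of $i$, which (with $x_i=1$, $y_i=c_i$) amounts to $c_1=\cdots=c_n$ and hence, since each $a_i\geq 1$, to $a_1=\cdots=a_n$. Conversely on the diagonal both sides of the original inequality coincide with $(a^n-1)^n$. Degenerate configurations where some $a_i=1$ (so $c_i=0$) are easily handled directly: if some but not all $a_i$ equal $1$, the right-hand side vanishes while the left is strictly positive, and if all equal $1$ both sides vanish; in either case the equality claim is consistent. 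I do not anticipate any serious obstacle beyond making this equality analysis precise.
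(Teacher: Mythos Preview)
Your argument is correct and takes a genuinely different route from the paper's.

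The paper proves a two-variable lemma $(cx-1)^n\geq(c^nx-1)(x-1)^{n-1}$ for $c,x\geq 1$ by induction on $n$, the inductive step resting on the explicit factorisation
\[
(cx-1)(c^nx-1)-(c^{n+1}x-1)(x-1)=x(c-1)^2(c^{n-1}+\cdots+1)\geq 0.
\]
It then inducts on $n$ in the main statement: assuming $a_1$ is largest and setting $b=(a_2\cdots a_n)^{1/(n-1)}$, the induction hypothesis (applied to $a_i^{n/(n-1)}$, $i\geq 2$) shows the right-hand side is bounded above by $(a_1^n-1)(b^n-1)^{n-1}$, and the lemma with $c=a_1/b$, $x=b^n$ finishes it. Equality in the lemma forces $c=1$, hence $a_1=b$, hence (since $a_1$ is maximal) all $a_i$ equal.

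Your approach instead substitutes $c_i=a_i^n-1$ to rewrite the inequality as the special case $x_i=1$, $y_i=c_i$ of Mahler's inequality, which you then derive from AM--GM. Both arguments need real closedness at the same point (existence of positive $n$-th roots), so neither is ``more elementary'' in that respect. The paper's proof has the virtue of being self-contained and purely polynomial in its core lemma; yours is more conceptual and makes clear that the inequality is a disguised instance of a classical one. Your equality analysis is fine: with $x_i=1$ either AM--GM equality condition alone already forces the $c_i$ constant, and you have handled the degenerate case $c_i=0$ correctly.
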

For $\F=\R$ this follows easily from the convexity of the function $\frac{e^x}{e^x-1}$; here we reproduce the proof due to Thomas Huber for general real closed fields.
We start with a key lemma:
\begin{lem}
Let $n$ be a positive integer and let $c,x\geq 1$. Then we have 
\begin{equation}\label{eqn:4}(cx-1)^n\geq (c^nx-1)(x-1)^{n-1}\end{equation}
with equality if and only if $n=1$ or $c=1$.
\end{lem}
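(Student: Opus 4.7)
My plan is to prove the inequality by induction on $n$, with the base case $n=1$ giving equality trivially (both sides equal $cx-1$). For the inductive step, assume $(cx-1)^n \geq (c^n x-1)(x-1)^{n-1}$; multiplying both sides by $cx - 1 \geq 0$ gives
$$(cx-1)^{n+1} \geq (cx-1)(c^n x - 1)(x-1)^{n-1},$$
so it suffices to establish
$$(cx-1)(c^n x - 1) \geq (c^{n+1}x - 1)(x-1).$$
The key computation I will carry out is the identity
$$(cx-1)(c^n x - 1) - (c^{n+1}x - 1)(x-1) = x(c-1)(c^n - 1),$$
which is straightforward by expansion (the $x^2$ terms cancel and the remaining terms regroup cleanly). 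Since $x, c \geq 1$, the right-hand side is nonnegative, which closes the induction.

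For the equality analysis, I will observe that the inductive argument produces two sources of potential slack: the inductive hypothesis, and the factor $x(c-1)(c^n - 1)$. If $c = 1$, both sides of the original inequality reduce to $(x-1)^n$, so equality holds for every $n$. If $n = 1$, the inequality is an identity. Otherwise $n \geq 2$ and $c > 1$; then $x(c-1)(c^n-1) > 0$ (since $x \geq 1 > 0$), giving strict inequality already at the step from $n-1$ to $n$, regardless of whether the inductive hypothesis is sharp. This exhausts the cases.

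The only step that requires any care is the algebraic identity above, but it is a routine expansion, so I do not anticipate a genuine obstacle; the induction structure makes the argument essentially mechanical once the right regrouping $c^{n+1} - c^n - c + 1 = (c-1)(c^n-1)$ is observed. The proof will thus consist of the base case, the one-line multiplication by $cx-1$, the algebraic identity, and the case analysis for the equality statement.
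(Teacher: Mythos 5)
Your proof is correct and follows essentially the same route as the paper's: induction on $n$, multiplication by $cx-1$, and the factorization of $(cx-1)(c^nx-1)-(c^{n+1}x-1)(x-1)$ as $x(c-1)(c^n-1)$, which is identical to the paper's $x(c-1)^2(c^{n-1}+\cdots+c+1)$. One small imprecision in your equality discussion: the slack actually contributed at the step from $n-1$ to $n$ is $x(c-1)(c^{n-1}-1)(x-1)^{n-2}$, which vanishes when $x=1$ and $n\geq 3$, so strictness there does not come from that final step alone; it is easiest to either treat $x=1$ separately (where the right-hand side is $0$ and the left-hand side is $(c-1)^n>0$) or note that strictness propagates through the induction because $cx-1>0$ whenever $c>1$. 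The paper itself does not spell out the equality case, so your treatment is already more explicit than the original.
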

\begin{proof}
We use induction. For $n=1$ the inequality is in fact an equality. By induction 
$$(cx-1)^{n+1}=(cx-1)(cx-1)^n\geq (cx-1)(c^nx-1)(x-1)^{n-1}$$
(observe that all factors are non-negative) and it suffices to show that
$$(cx-1)(c^nx-1)\geq(c^{n+1}x-1)(x-1)$$
holds. But the difference of the left and the right hand side factors as 
$$x(c-1)^2(c^{n-1}+\ldots+c+1)$$
and is clearly non-negative.
\end{proof}
Now we turn to the proof of the main result and proceed again by induction. For $n=1$ there is nothing to show, hence let $n\geq 2$. By symmetry we may assume that $a=a_1\geq a_i$ for all $i\geq 2$. By the induction hypothesis the right hand side of the inequality does not decrease when we replace $a_2,\ldots, a_n$ by their geometric mean $b=(a_2\ldots a_n)^{1/(n-1)}$. Therefore it suffices to show the inequality 
$$(ab^{n-1}-1)^n\geq(a^n-1)(b^n-1)^{n-1}$$
where $a\geq b\geq 1$. But this is a direct consequence of our lemma: just set $c=a/b\geq 1$ and $x=b^n\geq 1$ in \ref{eqn:4}. Equality only holds for $c=1$, that is for $a=b$. But this implies $a_1=\ldots=a_n$ by the maximal choice of $a_1$.


\begin{thebibliography}{MSWW14}

\bibitem[Ale08]{Alessandrini}
D.~Alessandrini.
\newblock Tropicalization of group representations.
\newblock {\em Algebr. Geom. Topol.}, 8(1):279--307, 2008.

\bibitem[BILW05]{BILW}
M.~Burger, A.~Iozzi, F.~Labourie, and A.~Wienhard.
\newblock Maximal representations of surface groups: symplectic {A}nosov
  structures.
\newblock {\em Pure Appl. Math. Q.}, 1(3, Special Issue: In memory of Armand
  Borel. Part 2):543--590, 2005.

\bibitem[BIW10]{BIW}
M.~Burger, A.~Iozzi, and A.~Wienhard.
\newblock Surface group representations with maximal {T}oledo invariant.
\newblock {\em Ann. of Math. (2)}, 172(1):517--566, 2010.

\bibitem[Bru88a]{Brum2}
G.~W. Brumfiel.
\newblock The real spectrum compactification of {T}eichm\"uller space.
\newblock In {\em Geometry of group representations ({B}oulder, {CO}, 1987)},
  volume~74 of {\em Contemp. Math.}, pages 51--75. Amer. Math. Soc.,
  Providence, RI, 1988.

\bibitem[Bru88b]{Brumfiel}
G.~W. Brumfiel.
\newblock The tree of a non-{A}rchimedean hyperbolic plane.
\newblock In {\em Geometry of group representations ({B}oulder, {CO}, 1987)},
  volume~74 of {\em Contemp. Math.}, pages 83--106. Amer. Math. Soc.,
  Providence, RI, 1988.

\bibitem[BT72]{BT}
F.~Bruhat and J.~Tits.
\newblock Groupes r\'eductifs sur un corps local.
\newblock {\em Inst. Hautes \'Etudes Sci. Publ. Math.}, (41):5--251, 1972.

\bibitem[CL14]{Collier-Li}
B.~{Collier} and Q.~{Li}.
\newblock {Asymptotics of certain families of Higgs bundles in the Hitchin
  component}.
\newblock {\em ArXiv e-prints}, May 2014.

\bibitem[EGH55]{EGH}
P.~Erd{\"o}s, L.~Gillman, and M.~Henriksen.
\newblock An isomorphism theorem for real-closed fields.
\newblock {\em Ann. of Math. (2)}, 61:542--554, 1955.

\bibitem[Kap03]{Kap}
I.~Kaplansky.
\newblock {\em Linear algebra and geometry}.
\newblock Dover Publications, Inc., Mineola, NY, revised edition, 2003.
\newblock A second course.

\bibitem[KL97]{KleinerLeeb}
B.~Kleiner and B.~Leeb.
\newblock Rigidity of quasi-isometries for symmetric spaces and {E}uclidean
  buildings.
\newblock {\em Inst. Hautes \'Etudes Sci. Publ. Math.}, (86):115--197 (1998),
  1997.

\bibitem[KNPS15]{KNPS}
L.~{Katzarkov}, A.~{Noll}, P.~{Pandit}, and C.~{Simpson}.
\newblock {Constructing Buildings and Harmonic Maps}.
\newblock {\em ArXiv e-prints}, March 2015.

\bibitem[KSTT05]{KSTT}
L.~Kramer, S.~Shelah, K.~Tent, and S.~Thomas.
\newblock Asymptotic cones of finitely presented groups.
\newblock {\em Adv. Math.}, 193(1):142--173, 2005.

\bibitem[KT04]{KT1}
L.~Kramer and K.~Tent.
\newblock Asymptotic cones and ultrapowers of {L}ie groups.
\newblock {\em Bull. Symbolic Logic}, 10(2):175--185, 2004.

\bibitem[{Le}12]{Le}
I.~{Le}.
\newblock {Higher Laminations and Affine Buildings}.
\newblock {\em ArXiv e-prints}, September 2012.

\bibitem[{Lof}15]{Loftin}
J.~{Loftin}.
\newblock {Convex RP\^{}2 Structures and Cubic Differentials under Neck
  Separation}.
\newblock {\em ArXiv e-prints}, June 2015.

\bibitem[LR75]{LR}
A.~H. Lightstone and A.~Robinson.
\newblock {\em Nonarchimedean fields and asymptotic expansions}.
\newblock North-Holland Publishing Co., Amsterdam-Oxford; American Elsevier
  Publishing Co., Inc., New York, 1975.
\newblock North-Holland Mathematical Library, Vol. 13.

\bibitem[LV80]{LV}
G.~Lion and M.~Vergne.
\newblock {\em The {W}eil representation, {M}aslov index and theta series},
  volume~6 of {\em Progress in Mathematics}.
\newblock Birkh\"auser, Boston, Mass., 1980.

\bibitem[LZ14]{collar}
G.-S. {Lee} and T.~{Zhang}.
\newblock {Collar lemma for Hitchin representations}.
\newblock {\em ArXiv e-prints}, November 2014.

\bibitem[MSWW14]{Mazzeo}
R.~{Mazzeo}, J.~{Swoboda}, H.~{Weiss}, and F.~{Witt}.
\newblock {Ends of the moduli space of Higgs bundles}.
\newblock {\em ArXiv e-prints}, May 2014.

\bibitem[Par00]{APbuil}
A.~Parreau.
\newblock Immeubles affines: construction par les normes et \'etude des
  isom\'etries.
\newblock In {\em Crystallographic groups and their generalizations
  ({K}ortrijk, 1999)}, volume 262 of {\em Contemp. Math.}, pages 263--302.
  Amer. Math. Soc., Providence, RI, 2000.

\bibitem[Par03]{APell}
A.~Parreau.
\newblock Sous-groupes elliptiques de groupes lin\'eaires sur un corps valu\'e.
\newblock {\em J. Lie Theory}, 13(1):271--278, 2003.

\bibitem[Par12]{APcomp}
A.~Parreau.
\newblock Compactification d'espaces de repr\'esentations de groupes de type
  fini.
\newblock {\em Math. Z.}, 272(1-2):51--86, 2012.

\bibitem[{Par}15]{AP3}
A.~{Parreau}.
\newblock {Invariant subspaces for some surface groups acting on A2-Euclidean
  buildings}.
\newblock {\em ArXiv e-prints}, April 2015.

\bibitem[Rob73]{Robinson}
A.~Robinson.
\newblock Function theory on some nonarchimedean fields.
\newblock {\em Amer. Math. Monthly}, 80(6, part II):87--109, 1973.
\newblock Papers in the foundations of mathematics.

\bibitem[Sat80]{Satake}
I.~Satake.
\newblock {\em Algebraic structures of symmetric domains}, volume~4 of {\em
  Kan\^o Memorial Lectures}.
\newblock Iwanami Shoten, Tokyo; Princeton University Press, Princeton, N.J.,
  1980.

\bibitem[Sie43]{Siegel}
C.~L. Siegel.
\newblock Symplectic geometry.
\newblock {\em Amer. J. Math.}, 65:1--86, 1943.

\bibitem[Str15]{Strubel}
T.~Strubel.
\newblock Fenchel-{N}ielsen coordinates for maximal representations.
\newblock {\em Geom. Dedicata}, 176:45--86, 2015.

\bibitem[Tho02]{Thornton}
B.~Thornton.
\newblock {\em Asymptotic cones of symmetric spaces}.
\newblock ProQuest LLC, Ann Arbor, MI, 2002.
\newblock Thesis (Ph.D.)--The University of Utah.

\bibitem[{Zha}13]{Tengren1}
T.~{Zhang}.
\newblock {The degeneration of convex RP\^{}2 structures on surfaces}.
\newblock {\em ArXiv e-prints}, December 2013.

\bibitem[{Zha}14]{Tengren2}
T.~{Zhang}.
\newblock {Degeneration of Hitchin representations along internal sequences}.
\newblock {\em ArXiv e-prints}, September 2014.

\end{thebibliography}
\end{document}